\numberwithin{equation}{section}
\numberwithin{equation}{section}
\newcommand{\p}{\partial}
\newcommand{\vphi}{\varphi}
\newcommand{\om}{\omega}
\newcommand{\tri}{\triangle}
\newcommand{\eps}{\epsilon}
\newcommand{\thmref}[1]{Theorem~\ref{#1}}
\newcommand{\lemref}[1]{Lemma~\ref{#1}}
\newcommand{\corref}[1]{Corollary~\ref{#1}}
\newcommand{\Om}{\Omega}
\def\p{\partial}
\def\b{\beta}
\DeclareMathOperator{\tr}{Tr}
\newtheorem{theorem}{Theorem}[section]
\newtheorem{thm}[theorem]{Theorem}
\newtheorem{rem}[theorem]{Remark}
\newtheorem{defn}[theorem]{Definition}
\newtheorem{lemma}[theorem]{Lemma}
\newtheorem{lem}[theorem]{Lemma}
\newtheorem{proposition}[theorem]{Proposition}
\newtheorem{prop}[theorem]{Proposition}
\newtheorem{conj}[theorem]{Conjecture}
\newtheorem{cor}[theorem]{Corollary}
\newtheorem{ques}[theorem]{Question}
\newtheorem{problem}[theorem]{Problem}
\def\tr{\mathop{\rm tr}\nolimits}
\let\ul=\underline
\let\vphi=\varphi
\title[On uniform log $K$-stability for cscK cone metrics]{On uniform log $K$-stability for constant scalar curvature K\"ahler cone metrics}
\author{Takahiro Aoi}
  \address{Osaka Prefectural Abuno High School, Osaka 569-1141, JAPAN}
  \email[Takahiro Aoi]{takahiro.aoi.math@gmail.com}
\author{Yoshinori Hashimoto}
  \address{Tokyo Institute of Technology, Tokyo 152-8551, JAPAN}
  \email[Yoshinori Hashimoto]{hashimoto@math.titech.ac.jp}
\author{Kai Zheng}
  \address{Tongji University, Shanghai 200092, P.R. CHINA}
  \email[Kai Zheng]{KZheng@tongji.edu.cn}
\date{\today}
\begin{document}
\maketitle

\begin{abstract} We prove that the existence of constant scalar curvature K\"ahler metrics with cone singularities along a divisor implies log $K$-polystability and $G$-uniform log $K$-stability, where $G$ is the automorphism group which preserves the divisor. We also show that a constant scalar curvature K\"ahler cone metric along an ample divisor of sufficiently large degree always exists. We further show several properties of the path of constant scalar curvature K\"ahler cone metrics and discuss uniform log $K$-stability of normal varieties.
\end{abstract}

\tableofcontents

\section{Introduction}

In K\"ahler geometry, the existence of canonical metrics is conjectured to be equivalent to algebro-geometric stability. Geodesic stability is used by Chen--Cheng \cite{Chen Cheng 2} in the recent proof of Donaldson's geodesic stability conjecture for the existence of constant scalar curvature K\"ahler (cscK) metric. In the logarithmic (log) setting, the uniqueness of cscK cone (cscKc for short) metric was proven in \cite{MR4020314}. Later, in \cite{arXiv:1803.09506}, several existence results for cscK cone metrics were proven, extending existence results in \cite{Chen Cheng 2}. In particular, Theorem 1.8 in \cite{arXiv:1803.09506} shows that the existence of cscK cone metric is equivalent to log geodesic stability, see Section \ref{Log geodesic stability} for the definition.

In this article, we further study the log Yau--Tian--Donaldson (YTD) conjecture for cscK cone metrics in terms of various stability notions, to extend most of the stability results to the log setting as continuation of \cite{MR4020314,arXiv:1803.09506}. 

$K$-stability was defined by Tian \cite{MR894378} and generalised by Donaldson in \cite{MR1988506}.  Log $K$-stability was introduced by Donaldson in \cite{MR2975584}. The log $K$-stability has a natural connection to the minimal model program in birational geometry, which has been studied deeply in \cite{MR3403730,MR3194814} etc.

Let $X$ be a K\"ahler manifold, and $D$ be a smooth effective divisor on $X$ (we also consider the case $D$ is a simple normal crossing divisor, but for the most of the paper $D$ is smooth). We further set $L$ to be an ample line bundle and denote by $\Om$ the K\"ahler class associated with $L$. The pair of $X$ and $D$, together with the polarisation $L$ will be called a \textbf{polarised pair} and denoted as $((X,L);D)$.
\begin{conj}[Log YTD conjecture]\label{log YTD necessary}
The polarised pair $((X,L);D)$ admits a cscK cone metric if and only if it is log $K$-polystable.
\end{conj}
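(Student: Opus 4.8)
The plan is to prove the two implications of the biconditional by separate arguments. For the forward implication, that the existence of a cscK cone metric forces log $K$-polystability, I would exploit the convexity and the asymptotic slope of the log Mabuchi energy along geodesic rays. For the reverse implication, that log $K$-polystability forces existence, I would reduce the problem, via the equivalence between existence and log geodesic stability established in \cite{arXiv:1803.09506}, to upgrading the algebraic stability hypothesis to the analytic coercivity of the Mabuchi energy; this reduction isolates the genuine difficulty in a single analytic step.

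For the forward direction, let $M$ be the log Mabuchi ($K$-energy) functional on the space $\cH$ of K\"ahler cone potentials in $\Om$. By the convexity theory of \cite{MR4020314,arXiv:1803.09506}, a cscK cone metric is a global minimizer of $M$. Given a log test configuration $(\mathcal{X},\mathcal{L};\mathcal{D})$ for $((X,L);D)$, I would attach to it a geodesic ray $\vphi_t$ issuing from the cscK cone potential, prove that $t \mapsto M(\vphi_t)$ is convex, and identify its asymptotic slope with the log Donaldson--Futaki invariant $\mathrm{DF}(\mathcal{X},\mathcal{L};\mathcal{D})$. Minimality at $t=0$ then gives $\mathrm{DF} \ge 0$, and the polystability refinement --- vanishing of $\mathrm{DF}$ only for product configurations generated by holomorphic vector fields tangent to $D$ --- follows from the strict convexity of $M$ transverse to the $\Aut(X,D)$-orbit together with the uniqueness theorem of \cite{MR4020314}. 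The technical content here is the construction of $\vphi_t$ with controlled regularity across the cone divisor and the precise slope formula, refining Berman--Darvas--Lu--type arguments to the conical setting; I expect this direction to be achievable with the estimates already available.

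For the reverse direction, by \cite{arXiv:1803.09506} it suffices to deduce log geodesic stability, equivalently coercivity of $M$ modulo $\Aut(X,D)$, from log $K$-polystability. I would proceed in two stages. First, I would strengthen polystability to a quantitative bound along \emph{algebraic} rays, showing that $G$-uniform log $K$-stability (with $G$ the divisor-preserving automorphism group) bounds $\mathrm{DF}$ from below by a norm of the test configuration, which translates into coercivity of $M$ along geodesic rays of algebraic type. Second, I would approximate an arbitrary geodesic ray by algebraic ones: using a Bergman-kernel quantization twisted by the conical weight along $D$, one realizes general cone potentials as limits of Fubini--Study--type potentials and proves that the non-Archimedean energies of the approximating test configurations converge to the transcendental functionals $\mathrm{E}$, $\mathrm{J}$ and $M$. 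Combining the two stages yields coercivity of $M$ on all of $\cH$ modulo automorphisms, and hence existence by \cite{arXiv:1803.09506}.

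The main obstacle is the second stage of the reverse direction: bridging finitely many algebraic test configurations and the full infinite-dimensional space of geodesic rays is precisely the hard content of the YTD correspondence, and even in the smooth case the implication ``$K$-polystability $\Rightarrow$ coercivity'' is known only under the stronger analytic hypotheses used by Chen--Cheng \cite{Chen Cheng 2}. In the conical setting two further difficulties compound it: the approximation must be performed within the class of potentials of the prescribed cone angle, so the quantizing sections must be adapted to the singular metric on $L$ twisted by $\cO(-D)$; and the Bergman-kernel asymptotics must be made uniform up to the divisor, requiring cone versions of the Tian--Yau--Zelditch expansion with controlled dependence on the angle. To attack this I would first establish the quantized convergence for cone angles close to the smooth value, where the conical weight is a small perturbation, and then propagate the estimate along the cscK cone path studied in the present paper, using its openness and a priori control to reach the remaining angles.
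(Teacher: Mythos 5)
First, note that the statement you are proving is stated in the paper as a \emph{conjecture}: the paper itself establishes only the forward implication (existence of a cscK cone metric $\Rightarrow$ log $K$-polystability, items (\ref{log K semistability})--(\ref{log K polystability}) of \thmref{log K}), and leaves the converse open. Your forward-direction sketch is essentially the paper's argument: the slope of $\nu_\beta$ along the ray attached to a normal test configuration is computed by the Boucksom--Hisamoto--Jonsson formula (\thmref{thmbhj}) and is bounded \emph{above} by $DF$ (not equal to it --- the discrepancy is the term $(\mathcal{X}_0 - \mathcal{X}_{0,\mathrm{red}})\cdot \bar{\mathcal{L}}^n$, which the paper discards using effectivity and semiampleness), while the polystability refinement does not follow from ``strict convexity transverse to the orbit'' alone: the paper needs the $d_{1,G}$-coercivity of $\nu_\beta$ from \cite{arXiv:1803.09506} together with the conic adaptation of Berman--Darvas--Lu (Lemma \ref{ldm41bdl}) to show that a ray of zero slope is generated by a holomorphic vector field tangent to $D$, and then Berman's algebraicity result to conclude the configuration is product. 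With those substitutions your forward direction is sound and matches the paper.

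The reverse direction, however, contains two genuine gaps that your proposal does not close. First, your ``stage one'' silently promotes log $K$-polystability to $G$-uniform log $K$-stability. These are a priori inequivalent notions --- the paper carefully distinguishes them, proving each separately \emph{from existence} --- and no argument is offered (nor is one known for general polarised pairs) that polystability yields the quantitative bound $DF \ge \epsilon J_T^{\mathrm{NA}}$. Second, even granting uniform stability, your ``stage two'' --- approximating arbitrary finite-energy geodesic rays by algebraic ones via a conically weighted Bergman-kernel quantization, with convergence of the energy functionals --- is precisely the hard open content of the YTD correspondence, and you acknowledge as much; moreover, the specific mechanism you propose is known to fail in the present setting: as the paper notes in Remark \ref{Log Kodaira embedding}, the weighted/log Kodaira embedding and the attendant Bergman asymptotics for cone metrics are available only in very restricted situations (Riemann surfaces, projective completions of line bundles), so there is currently no cone analogue of the Tian--Yau--Zelditch expansion uniform up to $D$ to run your argument on. Your proposal therefore establishes (modulo the corrections above) only the direction the paper proves, and should be presented as such rather than as a proof of the biconditional.
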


The precise definition of log $K$-polystability is given in Section \ref{Test configurations and log K stability} and the definition of cscK cone metrics was given in \cite[Definition 3.1]{MR4020314}. We let $\beta$ be a number in $(0,1]$. A \emph{K\"ahler cone metric} $\om$ of cone angle $2\pi\b$ along the divisor $D$,
is a smooth K\"ahler metric on the regular part $M:=X\setminus D$, and quasi-isometric to the cone flat metric
in the coordinate chart 
centred at a point on $D$.
Roughly speaking, a \emph{cscK cone metric} of cone angle $2\pi\b$ along the divisor $D$ is defined to be a K\"ahler cone metric, which has constant scalar curvature outside $D$.

%In \cite{MR4020314}, we have shown uniqueness of cscK cone metrics. While, in \cite{arXiv:1803.09506} we prove several existence results, extending the extend of smooth cscK metrics by Chen-Cheng \cite{Chen Cheng 1, Chen Cheng 2}. 

Let $G$ be the identity component of the group of holomorphic automorphisms of $X$ which fix the divisor $D$. 
The uniqueness of cscK cone metrics is proven in \cite{MR4020314}, that is
	the cscK cone metric is unique up to automorphisms in $G$. Combining the uniqueness result with an asymptotic formula of the log $K$-energy \cite{MR3985614}, we will show in (\ref{log K semistability}) of \thmref{log K} that

\begin{thm}[\thmref{log K}]\label{introduction log K semistability}
	Suppose that $((X,L);D)$ admits a cscK cone metric of angle $2 \pi \beta$. Then $((X,L);D)$ is log $K$-semistable with angle $2 \pi \beta$.%$G$-uniformly log $K$-stable for $G=\mathrm{Aut}_0 ((X,L);D)$ and log $K$-polystable with angle $2 \pi \beta$.
\end{thm}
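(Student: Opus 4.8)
The plan is to show log $K$-semistability by analyzing the asymptotic slope of the log $K$-energy along test configurations. The key input is the interpretation of the Donaldson--Futaki invariant (in the log setting) as the leading-order asymptotic coefficient of the log $K$-energy evaluated along the geodesic ray, or equivalently along the smooth path of metrics, associated with a given test configuration. Concretely, given a test configuration $(\mathcal{X},\mathcal{L})$ for $((X,L);D)$ with compatible divisor $\mathcal{D}$, one produces a one-parameter family of K\"ahler cone potentials $\varphi_t$ degenerating toward the central fibre as $t \to \infty$, and the log Donaldson--Futaki invariant $\mathrm{DF}_\beta$ arises as the limit
\begin{equation*}
\mathrm{DF}_\beta(\mathcal{X},\mathcal{L},\mathcal{D}) = \lim_{t\to\infty} \frac{d}{dt}\, \mathcal{M}_\beta(\varphi_t),
\end{equation*}
where $\mathcal{M}_\beta$ denotes the log $K$-energy of angle $2\pi\beta$. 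This identification is precisely what the asymptotic formula for the log $K$-energy from \cite{MR3985614} is meant to supply, and I would invoke it as a black box.

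Granting this, the semistability inequality $\mathrm{DF}_\beta \geq 0$ follows from the convexity and monotonicity properties of the log $K$-energy at a cscK cone metric. The crucial geometric fact is that the log $K$-energy $\mathcal{M}_\beta$ is convex along (weak) geodesics in the space of K\"ahler cone potentials, and a cscK cone metric is a critical point of $\mathcal{M}_\beta$. Hence $\mathcal{M}_\beta$ attains its minimum at the cscK cone metric, which forces the slope $\frac{d}{dt}\mathcal{M}_\beta(\varphi_t)$ to be nonnegative for a ray emanating from (or converging from) the minimizer. Thus I would first invoke the existence hypothesis to fix a cscK cone metric $\omega_0$, then take the geodesic ray in the relevant subgeodesic sense associated with the test configuration, and finally combine convexity of $\mathcal{M}_\beta$ along this ray with the criticality/minimality of $\omega_0$ to conclude $\mathrm{DF}_\beta \geq 0$.

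The main obstacle will be the analytic regularity at the boundary: the path $\varphi_t$ attached to a test configuration is typically only a weak geodesic ray with limited regularity near the divisor $D$, so one must justify that the log $K$-energy is well-defined and that its asymptotic slope genuinely computes $\mathrm{DF}_\beta$ in this low-regularity setting. The cone singularities along $D$ compound this difficulty, since the standard smooth-metric arguments for the slope formula do not apply verbatim. I expect the careful part of the argument to be controlling these boundary contributions, ensuring the intersection-theoretic expression for $\mathrm{DF}_\beta$ coincides with the analytic limit of $\frac{d}{dt}\mathcal{M}_\beta(\varphi_t)$, and verifying that convexity of $\mathcal{M}_\beta$ survives the passage to weak geodesics. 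Once these technical points are secured via the asymptotic expansion of \cite{MR3985614} together with the convexity theory underlying the uniqueness result of \cite{MR4020314}, the nonnegativity of $\mathrm{DF}_\beta$, and hence log $K$-semistability, follows immediately.
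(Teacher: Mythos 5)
Your overall strategy is the same as the paper's (Section \ref{scpflkss}): compute the asymptotic slope of $\nu_\beta$ along the ray attached to a normal test configuration via the Boucksom--Hisamoto--Jonsson formula, and deduce nonnegativity of that slope from the fact that the cscK cone metric minimises $\nu_\beta$ (Theorem \ref{Lower bound}, resting on convexity along cone geodesics and the uniqueness theory of \cite{MR4020314}).

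However, your ``key input'' is misstated, and as written it is false for a general normal test configuration. The formula of \cite{MR3985614} (Theorem \ref{thmbhj}) does \emph{not} identify the asymptotic slope with the log Donaldson--Futaki invariant; it identifies it with the non-Archimedean log $K$-energy,
\begin{equation*}
\lim_{t \to + \infty} \frac{\nu_{\beta} (\phi_t)}{t} = \frac{1}{V} \left( K^{\mathrm{log}}_{( \bar{\mathcal{X}} , (1 - \beta )\bar{\mathcal{D}}) / \mathbb{P}^1} \cdot \bar{\mathcal{L}}^n + \frac{\underline{S}_{\beta}}{n+1} \bar{\mathcal{L}}^{n+1} \right) ,
\end{equation*}
and this quantity equals
\begin{equation*}
DF (\mathcal{X}, \mathcal{D} , \mathcal{L} , \beta ) - \frac{1}{V} \left( \mathcal{X}_0 - \mathcal{X}_{0, \mathrm{red}} \right) \cdot \bar{\mathcal{L}}^n ,
\end{equation*}
because $K^{\mathrm{log}}_{\bar{\mathcal{X}}/\mathbb{P}^1} = K_{\bar{\mathcal{X}}/\mathbb{P}^1} + \mathcal{X}_{0,\mathrm{red}} - \mathcal{X}_0$. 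The slope and $DF$ coincide only when the central fibre is reduced, so your argument, taken literally, proves semistability only against test configurations with reduced central fibre. The repair is precisely the extra step in the paper's proof: $\mathcal{X}_0 - \mathcal{X}_{0,\mathrm{red}}$ is an effective divisor supported on the central fibre and $\bar{\mathcal{L}}$ is $\pi$-semiample, hence $\left( \mathcal{X}_0 - \mathcal{X}_{0, \mathrm{red}} \right) \cdot \bar{\mathcal{L}}^n \ge 0$ and therefore
\begin{equation*}
DF (\mathcal{X}, \mathcal{D} , \mathcal{L} , \beta ) \ge \lim_{t \to + \infty} \frac{\nu_{\beta} (\phi_t)}{t} \ge 0 .
\end{equation*}
The error term has a favourable sign, but this must be stated; the equality you assert is not what \cite{MR3985614} provides. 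A smaller point: along the smooth \emph{subgeodesic} ray furnished by a test configuration, $\nu_\beta$ need not be convex, so you should work with the slope $\nu_\beta(\phi_t)/t$ (whose limit exists by Theorem \ref{thmbhj}) rather than with $\lim_{t} \frac{d}{dt}\nu_\beta(\phi_t)$; its nonnegativity already follows from $\nu_\beta$ being bounded below, or, as in the paper, by passing to the associated finite energy geodesic via (\ref{eqsbgfeg}) and invoking Theorem \ref{geodesic stability}.
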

 \begin{rem}
 When $\beta=1$, \thmref{introduction log K semistability} was proven in Donaldson \cite{MR2192937} by making use of Kodaira embedding and asymptotic expansion of Bergman kernel.  Donaldson's method is extended to twisted cscK metric by Dervan \cite{MR3564626}. 
 \end{rem}
 \begin{rem}Donaldson's semistability result \cite{MR2192937} is extended to orbifolds by Ross--Thomas in \cite{MR2819757}, where they also ask whether this result could be further extended to cscK metrics with cone singularities. \thmref{introduction log K semistability} provides an positive answer to their question, in the case when the cone singularities are formed along a divisor, by using a completely different approach. 
 \end{rem}

 \begin{rem}\label{Log Kodaira embedding}
The proof of \cite{MR2819757} embeds polarised orbifolds into weighted projective space via a weighted version of Kodaira embedding. Similar idea was expected for general cone metrics. However, it is only successful in very limited case, e.g. on a Riemann surface \cite{MR4267616,MR4238257} and the projective completion of an ample line bundle over a cscK base \cite{MR4023378}. \thmref{introduction log K semistability} is also related to \cite[Conjecture 1.1]{MR4267616}.
 \end{rem}

 \begin{rem}
 We believe that \thmref{introduction log K semistability} also holds for a general compact K\"ahler manifold, with the K\"ahler class that is not associated to an ample line bundle, by following the argument \cite{MR3880285,MR3696600,MR4095424,MR3881961}, but in this paper we decide not to discuss the details which could be technical.%Followed arguments as in \cite{MR3880285,MR3696600,MR4095424,MR3881961}, \thmref{introduction log K semistability} also holds for general K\"ahler class $\Omega$.
 \end{rem}

It is shown in \cite{arXiv:1803.09506} that the existence of a cscK cone metric on $(X,L)$ is equivalent to the $d_{1,G}$-coercivity of the log $K$-energy. With the help of this existence result, \thmref{introduction log K semistability} is strengthened to log $K$-polystability. As a result, we prove one direction of Conjecture \ref{log YTD necessary}.
\begin{thm}[\thmref{log K}]\label{introduction log K stability}
	Suppose that $((X,L);D)$ admits a cscK cone metric of angle $2 \pi \beta$. Then $((X,L);D)$ is log $K$-polystable with angle $2 \pi \beta$.
\end{thm}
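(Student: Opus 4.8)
The plan is to upgrade the log $K$-semistability of \thmref{introduction log K semistability} to polystability by analysing the equality case, the mechanism that converts $\mathrm{DF}_\beta=0$ into a product structure being the $d_{1,G}$-coercivity of the log $K$-energy furnished by \cite{arXiv:1803.09506}. Since the semistability already established gives $\mathrm{DF}_\beta(\mathcal{X},\mathcal{L})\ge 0$ for every (normal, relatively ample) test configuration of the pair $((X,L);D)$, it suffices to show that any such test configuration with $\mathrm{DF}_\beta(\mathcal{X},\mathcal{L})=0$ is a product, i.e.\ is induced by a one-parameter subgroup of $G$.

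First I would attach to a test configuration $(\mathcal{X},\mathcal{L})$ with vanishing log Donaldson--Futaki invariant its associated weak geodesic ray $\{\phi_t\}_{t\ge 0}$ in the space of K\"ahler cone potentials, based at the cscK cone potential $\phi_0$ whose existence is the hypothesis. The asymptotic formula for the log $K$-energy from \cite{MR3985614} identifies the slope at infinity of $\cM_\b$ along this ray with the log Donaldson--Futaki invariant, so that $\lim_{t\to\infty}\cM_\b(\phi_t)/t=\mathrm{DF}_\beta(\mathcal{X},\mathcal{L})=0$; in particular $\cM_\b(\phi_t)$ grows sublinearly. On the other hand, the existence of the cscK cone metric is equivalent, by \cite{arXiv:1803.09506}, to a coercivity estimate of the shape $\cM_\b(\phi)\ge \epsilon\, d_{1,G}(\phi_0,\phi)-C$ for some $\epsilon>0$. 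Feeding the ray into this inequality forces the $G$-reduced distance $d_{1,G}(\phi_0,\phi_t)$ to grow sublinearly in $t$.

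The crux is then a rigidity step. Because $\{\phi_t\}$ is a genuine $d_1$-geodesic ray its ordinary length grows linearly, $d_1(\phi_0,\phi_t)=t\,\|\dot\phi_0\|_{1}$, so the sublinear growth of $d_{1,G}$ means that all of the ray's escape to infinity is absorbed by the action of $G$. I would then argue that such a ray must be tangent to the orbit of a one-parameter subgroup of $G$, whence $(\mathcal{X},\mathcal{L})$ is a product. This is the main obstacle: converting the analytic statement ``the ray is $G$-parallel'' into the algebraic statement ``the test configuration is a product'' requires a cone analogue of the reduced-uniqueness and maximal-geodesic-ray machinery of Berman--Darvas--Lu, resting on the uniqueness of cscK cone metrics up to $G$ from \cite{MR4020314} together with sufficient regularity of the weak geodesic rays associated to test configurations in the presence of cone singularities along $D$.

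Finally I would note that this argument in fact delivers the stronger $G$-uniform log $K$-stability claimed in the introduction: applying the same two slope formulas, but now pairing the slope of $\cM_\b$ against the slope of the $G$-reduced norm along the ray (equal to the minimal norm $\|(\mathcal{X},\mathcal{L})\|_G$), and dividing the coercivity estimate by $t$ before letting $t\to\infty$, yields $\mathrm{DF}_\beta(\mathcal{X},\mathcal{L})\ge\epsilon\,\|(\mathcal{X},\mathcal{L})\|_G$. Log $K$-polystability is then the special case in which $\|(\mathcal{X},\mathcal{L})\|_G=0$ forces $(\mathcal{X},\mathcal{L})$ to be a product.
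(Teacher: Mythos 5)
Your main argument (first three paragraphs) follows essentially the same route as the paper: the paper's proof of log $K$-polystability is precisely the Berman--Darvas--Lu scheme you describe, combining the slope formula of Boucksom--Hisamoto--Jonsson \cite{MR3985614} with the $d_{1,G}$-coercivity of \thmref{cone properness theorem} from \cite{arXiv:1803.09506}, and a rigidity statement for geodesic rays along which $\nu_\beta$ grows sublinearly. The step you flag as ``the main obstacle'' is exactly what the paper supplies: its \lemref{ldm41bdl} is the conic analogue of \cite[Lemma 4.1]{MR4094559}, showing that a finite energy geodesic ray emanating from the cscK cone potential with $\inf_{g \in G} J^A_{\omega_0}(g \cdot u_t)$ bounded up to $o(t)$ must be of the form $u_t = \exp(tJv)\cdot u_0$ for $v \in \mathfrak{isom}(X,\omega_{u_0})$ preserving $D$. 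Note, however, that this lemma rests not on the uniqueness theorem you invoke but on the \emph{reductivity} of $\mathrm{Aut}_0((X,L);D)$ (\cite[Theorem 6.1]{MR4020314}), the global Cartan decomposition, and the $C^{4,\alpha,\beta}$ regularity of the cscK cone potential, which gives the two-sided comparison $\tilde{C}^{-1}|s|^{2\beta-2}\omega_0^n \le \omega_{u_0}^n \le \tilde{C}|s|^{2\beta-2}\omega_0^n$ needed to run the Darvas--Rubinstein estimates near $D$. The remaining passage from ``ray generated by a Killing field'' to ``the test configuration is a product'' is handled by citing \cite[Lemma 4.2 and Proposition 4.3]{MR4094559}, the latter due to Berman \cite{MR3461370}, which apply because $\psi_t \in C^{4,\alpha,\beta} \subset \mathcal{E}^1$.

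One caution about your final paragraph: the paper does \emph{not} derive polystability as a special case of $G$-uniform log $K$-stability, and that shortcut is problematic. $G$-uniform stability quantifies only over $G$-equivariant test configurations, whereas polystability must be verified against all normal log test configurations, and $J^{\mathrm{NA}}_T(\mathcal{X},\mathcal{L}) = 0$ is an infimum over real twists $\xi \in N_{\mathbb{R}}$, so it does not immediately force $(\mathcal{X},\mathcal{L})$ to be a product: one must argue that the infimum is attained and that the minimising twist can be taken rational (or otherwise conclude the product structure). The paper avoids this by proving polystability directly (Section \ref{sclogkpolyst}) and $G$-uniform stability by a separate argument following Hisamoto \cite{arXiv:1610.07998} and C.~Li \cite{arXiv:1907.09399} (Section \ref{scpfgulks}).
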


%%%
%%%%%%%%%%%%%%%%%%%%%%%%%%%%%%%%%%%%%%%%%%%%%%%%%%%%%%%%%%%%%%%%%%%
%%%
The $K$-stability condition is also expected to be replaced by uniform stability.

\begin{conj}[Log YTD conjecture for uniform stability]\label{Uniform log YTD necessary}
The polarised pair $((X,L);D)$ admits a cscK cone metric if and only if it is uniformly log $K$-stable.
\end{conj}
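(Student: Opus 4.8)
The plan is to establish the direction of Conjecture~\ref{Uniform log YTD necessary} that is within reach, namely that the existence of a cscK cone metric implies $G$-uniform log $K$-stability; the converse is the deep analytic half of the log YTD conjecture and is not addressed by these methods. The strategy, following the variational philosophy of the smooth case, is to convert the analytic coercivity of the log $K$-energy---which by \cite{arXiv:1803.09506} is equivalent to existence---into the algebraic lower bound for the log Donaldson--Futaki invariant that defines $G$-uniform stability, by comparing the asymptotic slopes of functionals along geodesic rays attached to test configurations.

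First I would record the analytic input: since $((X,L);D)$ carries a cscK cone metric of angle $2\pi\b$, the result of \cite{arXiv:1803.09506} provides constants $\eps>0$ and $C$ such that the log $K$-energy $M_\b$ satisfies
\[
M_\b(\vphi)\ \geq\ \eps\, d_{1,G}(0,\vphi)\ -\ C
\]
for all cone potentials $\vphi$, where $d_{1,G}$ is the $G$-reduced $d_1$-distance. Next, to a test configuration $(\mathcal{X},\mathcal{L})$ for $((X,L);D)$ compatible with the cone structure along $D$, I would attach a weak geodesic ray $\vphi_t$, $t\in[0,\infty)$, issuing from a fixed reference cone metric. Two asymptotic facts along this ray are needed. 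The first, built on the asymptotic expansion of the log $K$-energy in \cite{MR3985614}, is the upper slope bound
\[
\limsup_{t\to\infty}\frac{M_\b(\vphi_t)}{t}\ \leq\ \mathrm{DF}_\b(\mathcal{X},\mathcal{L}),
\]
where $\mathrm{DF}_\b$ is the log Donaldson--Futaki invariant. The second is that the slope of the reduced distance recovers the $G$-reduced non-Archimedean norm,
\[
\lim_{t\to\infty}\frac{d_{1,G}(0,\vphi_t)}{t}\ =\ \|(\mathcal{X},\mathcal{L})\|_{G},
\]
a nonnegative quantity that vanishes exactly on the product configurations induced by one-parameter subgroups of $G$. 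Dividing the coercivity inequality by $t$ and letting $t\to\infty$, these combine into
\[
\eps\,\|(\mathcal{X},\mathcal{L})\|_{G}\ \leq\ \liminf_{t\to\infty}\frac{M_\b(\vphi_t)}{t}\ \leq\ \limsup_{t\to\infty}\frac{M_\b(\vphi_t)}{t}\ \leq\ \mathrm{DF}_\b(\mathcal{X},\mathcal{L}),
\]
which is precisely $G$-uniform log $K$-stability with angle $2\pi\b$.

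The main obstacle will be making the two slope computations rigorous in the presence of cone singularities along $D$. Establishing $\limsup M_\b(\vphi_t)/t\leq \mathrm{DF}_\b$ requires decomposing $M_\b$ into its entropy and energy parts, controlling the divisor contributions to the scalar curvature near $D$ through the asymptotic formula of \cite{MR3985614}, and exploiting the lower semicontinuity of the entropy term to bound the slope by the non-Archimedean Mabuchi functional, which in turn is dominated by $\mathrm{DF}_\b$; the cone contributions and the integration by parts near $D$ are the delicate points. Equally subtle is the construction and regularity of the ray $\vphi_t$ in the conic setting together with the identification of its $d_{1,G}$-slope with the reduced norm. Finally, the $G$-reduction must be handled with care: one has to verify that the infimum over $G$ implicit in $d_{1,G}$ is compatible with the infimum over one-parameter subgroups defining $\|\cdot\|_{G}$, so that the product configurations arising from $G$---on which both $\mathrm{DF}_\b$ and $\|\cdot\|_{G}$ vanish---are correctly accounted for and the uniform inequality survives passage to the quotient. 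This is the analog, in the log and conic setting, of reduced uniform stability for pairs, and is where the uniqueness result of \cite{MR4020314} underpins the triviality of the null directions.
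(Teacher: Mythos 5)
Your proposal proves exactly the direction the paper establishes (existence $\Rightarrow$ $G$-uniform log $K$-stability; the converse is left open there too), and it follows essentially the same route as the paper's proof of (\ref{uniform log K-stability}) in \thmref{log K}: the $d_{1,G}$-coercivity of $\nu_\beta$ from \cite{arXiv:1803.09506}, the slope formula for the log $K$-energy along rays attached to test configurations from \cite{MR3985614}, and the identification of the distance slope with the reduced non-Archimedean norm due to Hisamoto \cite{arXiv:1610.07998} and C.~Li \cite{arXiv:1907.09399}. The delicate point you flag at the end---compatibility of the infimum over $G$ with the torus twists defining the reduced norm---is precisely what the paper resolves by passing from $d_{1,G}$ to $d_{1,N_G(K)}$ and then to $d_{1,T}$ on $K$-invariant potentials (averaging the ray over $K$), so your outline matches the paper's argument in both structure and ingredients.
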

This existence result, together with \thmref{cone properness theorem}, implies uniform log $K$-stability, see Section \ref{Uniform log K stability}.
\begin{thm}[\thmref{log K}]\label{introduction uniform log K stability}
	Suppose that $((X,L);D)$ admits a cscK cone metric of angle $2 \pi \beta$. Then $((X,L);D)$ is $G$-uniformly log $K$-stable with angle $2 \pi \beta$, with $G = \mathrm{Aut}_0 ((X,L);D)$.
\end{thm}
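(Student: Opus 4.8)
The plan is to derive $G$-uniform log $K$-stability from the $d_{1,G}$-coercivity of the log $K$-energy, which is supplied by the existence of a cscK cone metric through the existence theorem of \cite{arXiv:1803.09506} together with \thmref{cone properness theorem}. Concretely, coercivity furnishes constants $\epsilon>0$ and $C>0$ with
\[
\cM_\b(\vphi)\ \geq\ \epsilon\, d_{1,G}(0,\vphi)\ -\ C
\]
for every $\vphi$ in the space $\cE^1$ of finite-energy cone potentials, where $\cM_\b$ denotes the log $K$-energy and $d_{1,G}$ is the reduced Finsler distance obtained by infimizing the $d_1$-distance over the $G$-orbit. The whole argument then amounts to transferring this Archimedean inequality to the non-Archimedean side by testing it against geodesic rays attached to test configurations and extracting asymptotic slopes, in exact parallel with the proof of log $K$-polystability in \thmref{log K}, the only difference being that the stronger coercivity constant $\epsilon$ now produces a uniform lower bound rather than mere positivity.

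The first step is, for a fixed test configuration $(\mathcal{X},\cL)$ with divisor $\cD$ for the pair $((X,L);D)$, to construct a weak geodesic ray $\{\vphi_t\}_{t\geq 0}\subset\cE^1$ issuing from a reference cone potential, via the Phong--Sturm type construction adapted to the conic Monge--Amp\`ere setting. Two slope identities are then required. On the energy side, the asymptotic formula for the log $K$-energy of \cite{MR3985614} yields
\[
\limsup_{t\to\infty}\frac{\cM_\b(\vphi_t)}{t}\ \leq\ DF_\b(\mathcal{X},\cL),
\]
identifying, up to the nonnegative non-reducedness correction, the radial slope with the log Donaldson--Futaki invariant. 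On the distance side, the asymptotic slope of the Finsler distance recovers the non-Archimedean $J$-functional,
\[
\lim_{t\to\infty}\frac{d_1(0,\vphi_t)}{t}\ =\ J^{\mathrm{NA}}_\b(\mathcal{X},\cL),
\]
the natural norm detecting nontriviality of $(\mathcal{X},\cL)$.

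Combining these with the coercivity estimate evaluated along the ray, dividing by $t$ and passing to the limit, gives
\[
DF_\b(\mathcal{X},\cL)\ \geq\ \epsilon\, J^{\mathrm{NA}}_{\b,G}(\mathcal{X},\cL),
\]
where $J^{\mathrm{NA}}_{\b,G}$ is the $G$-reduced non-Archimedean $J$-functional, i.e. the infimum of $J^{\mathrm{NA}}_\b$ over twists of $(\mathcal{X},\cL)$ by one-parameter subgroups of $G$. Since this holds for every test configuration, it is exactly the assertion that $((X,L);D)$ is $G$-uniformly log $K$-stable with angle $2\pi\b$.

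The step I expect to be the main obstacle is the $G$-reduction: one must show that the asymptotic slope of the \emph{reduced} distance $d_{1,G}$ coincides with the \emph{reduced} functional $J^{\mathrm{NA}}_{\b,G}$, and not merely that one dominates the other. This forces an interchange of the infimum over the noncompact group $G$ with the limit $t\to\infty$, which is delicate because the nearly optimal automorphisms can escape to infinity as $t$ grows. In the smooth cscK case this is controlled using convexity of $d_1$ along geodesics together with properness of the $G$-action on $\cE^1/G$; I would adapt that analysis here, relying on the fact that $G=\Aut_0((X,L);D)$ preserves the conic structure along $D$ so that the reduction stays within the space of cone potentials. A secondary technical point is to guarantee the two slope formulas for the possibly singular weak geodesic rays produced by arbitrary test configurations, which rests on the regularity theory for the conic Monge--Amp\`ere equation developed in \cite{MR4020314,arXiv:1803.09506,MR3985614}.
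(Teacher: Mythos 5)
Your overall route---coercivity of the log $K$-energy tested along rays attached to test configurations, with slope formulas on the energy side and the distance side---is the same as the paper's (Section \ref{scpfgulks}). However, the step you yourself flag as ``the main obstacle'' is a genuine gap, and it is precisely where all of the content of the paper's proof lies; moreover, the way you propose to fill it (convexity of $d_1$ plus properness of the $G$-action on $\mathcal{E}^1/G$) is not how it is done. The paper never interchanges the infimum over the noncompact group $G$ with the limit $t \to \infty$ directly. Instead it proceeds through a specific chain: (i) the existence of the cscK cone metric makes $G = \mathrm{Aut}_0((X,L);D)$ \emph{reductive} by \cite[Theorem 6.1]{MR4020314}, giving a maximal compact $K$, its central torus with complexification $T$, and the normaliser $N_G(K)$; (ii) inspecting the proof of \cite[Theorem 7.4]{arXiv:1803.09506}, together with \cite[Proof of Claim 7.9]{MR3600039} and \cite[Proof of Theorem 4.3]{arXiv:1610.07998}, upgrades $d_{1,G}$-coercivity to $d_{1,N_G(K)}$-coercivity on $\mathcal{E}^1$; (iii) Yu's result \cite[Proposition B.1]{arXiv:1907.09399} converts this into $d_{1,T}$-coercivity on the space $(\mathcal{E}^1)^K$ of $K$-invariant potentials; (iv) Hisamoto's theorem \cite[Theorem B]{arXiv:1610.07998} (see also \cite[Theorem 3.14]{arXiv:1907.09399}) identifies $\lim_{t\to\infty} d_{1,T}(\phi_t,0)/t$ with $J^{\mathrm{NA}}_T(\mathcal{X},\mathcal{L})$. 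Without (i)--(iv), your claimed inequality $DF (\mathcal{X}, \mathcal{D} , \mathcal{L} , \beta ) \ge \epsilon\, J^{\mathrm{NA}}_{\beta,G}$ does not follow from $d_{1,G}$-coercivity, because the asymptotic slope of the reduced distance is not a priori comparable to the reduced non-Archimedean functional.

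There are also two points where your formulation deviates from what is actually needed. First, the reduced norm in the definition of $G$-uniform log $K$-stability is $J^{\mathrm{NA}}_T$, the infimum of $J^{\mathrm{NA}}$ over twists by $\xi \in N_{\mathbb{R}}$, i.e.~by (real) cocharacters of the \emph{central torus} $T$ of $G$, not over ``one-parameter subgroups of $G$'': a twist must commute with the $\mathbb{C}^*$-action of the test configuration, so only torus twists make sense. Second, you quantify over all test configurations, whereas both the definition and the proof concern only $G$-\emph{equivariant} normal log test configurations; this is not a pedantic restriction, since the proof must average the subgeodesic ray over $K$ so that it lies in $(\mathcal{E}^1)^K$ while still representing the same non-Archimedean limit, and that averaging step requires the $G$-equivariance of $(\mathcal{X},\mathcal{L})$. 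By contrast, your secondary worry about conic regularity of the rays is not an issue: the coercivity inequality holds on all of $\mathcal{E}^1$ (which is the $d_1$-completion of both $\mathcal{H}$ and $\mathcal{H}_\beta$), so one works with the smooth subgeodesic rays in $\mathcal{H}(L)$ furnished by the test configuration, exactly as in the semistability argument.
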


While the above results hold for a general polarised pair $((X,L);D)$, we get more precise results when the cohomology class of $D$ is a multiple of $C_1(L)$. One result in this direction is the following, which ensures that the cscK cone metrics always \textit{exist} when the divisor $D$ is ample and of sufficiently large degree.

\begin{thm}[\thmref{thmalphainv}] \label{thmalphainvint}
	For any $0 < \beta < 1$, there exists $m_0 \in \mathbb{N}$ which depends only on $X$, $L$, and $\beta$, such that $(X,L)$ admits a constant scalar curvature K\"ahler metric with cone singularities of cone angle $2 \pi \beta$ along a generic member of the linear system $|mL|$ if $m \ge m_0 $.
\end{thm}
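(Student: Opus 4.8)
The plan is to deduce existence from the coercivity (properness) of the log $K$-energy. By the equivalence recalled from \cite{arXiv:1803.09506} (the existence of a cscK cone metric is equivalent to the $d_{1,G}$-coercivity of the log $K$-energy), it suffices to prove that the log $K$-energy $\nu_\beta$ of the pair $(X,(1-\beta)D)$ with $[\omega]=c_1(L)$ is $d_{1,G}$-coercive for a generic $D\in|mL|$ once $m\ge m_0$. A preliminary observation is that for a generic $D$ of large degree the identity component of the group of automorphisms of $X$ fixing $D$ is trivial, so $G=\mathrm{Aut}_0((X,L);D)$ is trivial and $d_{1,G}$-coercivity reduces to plain $d_1$-coercivity; this removes the need to quotient by automorphisms and is where genericity first enters.

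First I would record the cohomological mechanism that drives the whole argument. Since $D\in|mL|$ we have $[D]=mc_1(L)$, so the smooth part of the Ricci current of any cone metric of angle $2\pi\beta$ along $D$ represents $c_1(X)-(1-\beta)mc_1(L)$, and the average scalar curvature satisfies $\underline S_\beta=n\,(c_1(X)-(1-\beta)mc_1(L))\cdot c_1(L)^{n-1}/c_1(L)^n\to-\infty$ linearly in $m$. Equivalently, the log canonical class $-c_1(X)+(1-\beta)mc_1(L)$ of the pair becomes ample and dominates $(1-\beta)mc_1(L)$ for $m$ large, so the problem lies in a strongly log general type regime, in which one expects the $K$-energy to be coercive. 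Concretely I would use the Chen--Tian type decomposition $\nu_\beta(\varphi)=\mathrm{Ent}(\varphi)+E_\chi(\varphi)+(\text{lower order})$, where $\mathrm{Ent}(\varphi)\ge 0$ is the entropy relative to the cone reference measure and $E_\chi$ is the twisted energy associated with a K\"ahler representative $\chi$ of the ample log canonical class. Choosing $\chi\ge c\,m\,\omega>0$ (possible for large $m$ because the class is a large multiple of the ample $c_1(L)$ up to the fixed correction $-c_1(X)$) makes the twist positive of size $\gtrsim m$, hence comparable to $m\,(I-J)(\varphi)\sim m\,d_1(0,\varphi)$, which forces coercivity.

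The role of the alpha invariant, and the reason the threshold $m_0$ depends only on $X,L,\beta$, is to make the above estimate uniform over the choice of generic divisor and to absorb the singular behaviour of the cone reference measure near $D$. I would establish a lower bound, uniform over generic members of $|mL|$ and stable as $m\to\infty$, for the relevant log alpha invariant; genericity is used to avoid divisors along which the log canonical threshold degenerates, reducing the bound essentially to the alpha invariant of $(X,L)$. This invariant controls the integrals $\int_X e^{-\gamma(\varphi-\sup\varphi)}|s_D|_h^{-2(1-\beta)}\omega^n$ entering the entropy estimate and the normalization of the reference measure, and thereby bounds the constant $C$ in the inequality $\nu_\beta(\varphi)\ge \delta\,d_1(0,\varphi)-C$ independently of $D$. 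Combining the uniform alpha bound with the coercive positive twist of size $\gtrsim m$ then yields $d_1$-coercivity once $m$ is large.

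The hard part will be exactly this uniformity: controlling the cone-singular entropy term via the alpha invariant simultaneously in the potential $\varphi$, in the generic divisor $D$, and in $m$, while tracking the $m$-dependence carefully so that the coercive twist of order $m$ genuinely dominates every competing contribution (the normalizing constants, the $\sup\varphi$-type terms, and the entropy defect), several of which may themselves grow with $m$. Establishing that the log alpha invariant stays bounded below as the degree tends to infinity, and that the genericity required is an honestly generic (Zariski open, nonempty) condition in $|mL|$, is the crux on which the choice of $m_0$ and its dependence only on $X,L,\beta$ ultimately rest.
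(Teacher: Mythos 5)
Your global architecture coincides with the paper's: reduce existence to coercivity of $\nu_\beta$ via the equivalence from \cite{arXiv:1803.09506} (Theorems \ref{cone properness theorem} and \ref{cone properness conjecture}), kill $\mathrm{Aut}_0((X,L);D)$ for a generic $D$ of large degree (this is Proposition \ref{ppautmori}, proved via Mori's cone theorem, adjunction, and Serre duality/vanishing), and exploit that $C_1(X,D)=C_1(X)-m(1-\beta)C_1(L)$ becomes very negative. The gap is in your coercivity mechanism. You assert that choosing a K\"ahler twist $\chi\ge c\,m\,\omega$ ``forces'' the twisted energy to be comparable to $m\,d_1$. That is not a valid principle: $J_\chi$ is homogeneous in the twist, $J_{A\chi}=A\,J_\chi$, so largeness of the twist by itself creates no lower bound that was not already there; there exist K\"ahler classes $[\chi]$ for which $J_\chi$ is unbounded below (the $J$-flow counterexamples of Fang--Lai--Song--Weinkove/Lejmi--Sz\'ekelyhidi type), so positivity alone does not suffice either; and since potentials change sign, $j_\chi$ is not monotone in $\chi$, so $\chi\ge cm\omega$ does not yield $j_\chi\ge cm\,j_{\omega}$. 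What saves the argument here is the particular shape of the class --- a large multiple of $C_1(L)$ minus the fixed class $C_1(X)$ --- which makes a Song--Weinkove-type condition hold for $m$ large. In the paper this is precisely condition $(iii)$ of Proposition \ref{properness criteria}, namely $(\underline{S}_\beta-\eta)C_1(L)<(n-1)C_1(X,D)$, verified with $\eta=0$ in Lemma \ref{lmlgmprp}; the lower bound of the twisted functional under this condition is the nontrivial analytic input from \cite{arXiv:1803.09506} (see Proposition \ref{properness criteria reduced}). Your sketch never identifies or verifies this condition, and it is the real content of the step you treat as automatic.

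Second, you misidentify the crux. No lower bound on the log alpha invariant that is uniform in $m$ or in the generic divisor $D$ is needed anywhere. In the paper's proof one takes $\eta=0$ in \eqref{cohomology conditions Kahler}, so condition $(i)$ asks only for strict positivity $\alpha_\beta>0$ of the fixed pair, supplied by Berman's formula \eqref{log alpha invariant lower bound}, $\alpha_\beta\ge\min\{m\beta,\alpha(L),m\alpha(L_D|_D)\}$, together with Tian's theorem that $\alpha(L)>0$ and $\alpha(L_D|_D)>0$ for any smooth $D$. The resulting coercivity constant $\frac{n+1}{n}\alpha_\beta$ is allowed to degenerate as $m\to\infty$ and to depend on $D$; this is harmless because $m$ and $D$ are fixed, and $m_0$ depends only on $X$, $L$, $\beta$ because conditions $(ii)$ and $(iii)$ are purely cohomological while $(i)$ holds for every $m$ and every smooth $D$. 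Note also that the division of labor in the paper is the opposite of yours: coercivity comes from the entropy term via the alpha invariant (inequality \eqref{E and J}), while the twisted $J$-functional is only required to be bounded below. Your plan (entropy $\ge 0$, coercivity from the twist) could perhaps be made to work, but then the entire weight of the proof rests exactly on the step where your argument has the gap, while the uniformity issue you flag as the hard part simply never arises.
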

Further precise quantitative relation between the multiplicity $m$ and the cone angle $\beta$ is given in \thmref{small angle existence}.

When we have $D \in |mL|$ and $m$ is large enough, as above, it turns out that the average value $\underline{S}^D$ of the scalar curvature of K\"ahler metrics in $C_1 (L |_D )$ over $D$ satisfies $\underline{S}^D \ll 0 < \beta$. We have a result on the log $K$-\textit{instability} in the situation that is somewhat complementary to the above.

\begin{thm}[\thmref{destabilizing test configuration}]
Suppose that $D \in |L|$ is smooth. Then $((X,L);D)$ is log $K$-unstable with angle $2 \pi \beta$ if $\beta$ satisfies 
\begin{equation*}
\beta < \frac{\underline{S}^D}{n(n-1)}.
\end{equation*}
\end{thm}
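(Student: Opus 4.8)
The plan is to destabilise by the \emph{deformation to the normal cone} of $D$. Let $\mathcal{X}=\mathrm{Bl}_{D\times\{0\}}(X\times\mathbb{P}^1)$, with blow-down $\pi$ onto $X\times\mathbb{P}^1$, projection $p\colon\mathcal{X}\to X$, exceptional divisor $P$, and let $\mathcal{D}$ denote the strict transform of $D\times\mathbb{P}^1$. For a parameter $c$ I take the polarisation $\mathcal{L}_c=p^*L-cP$. Because $D\in|L|$ we have $N_{D/X}=L|_D$, $[D]=c_1(L)$, $P\cong\mathbb{P}(L|_D\oplus\cO_D)$, and the useful class identity $\mathcal{D}=p^*L-P$. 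First I would verify that $(\mathcal{X},\mathcal{L}_c,\mathcal{D})$ is a genuine (nontrivial, normal) log test configuration over $\mathbb{P}^1$ for every $c\in(0,1)$: the total space is smooth, the central fibre is the reduced reducible scheme $X\cup_D P$, and restricting to its two components shows that on the copy of $X$ one has $\mathcal{L}_c=(1-c)\,c_1(L)$ (ample iff $c<1$), while on $P$ one has $\mathcal{L}_c=g^*c_1(L|_D)+c\,\zeta$ (ample on the $\mathbb{P}^1$-bundle $g\colon P\to D$ for all $c>0$). Thus the ample range is exactly $c\in(0,1)$, and $c$ may be pushed arbitrarily close to $1$.

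Next I would evaluate the log Donaldson--Futaki invariant through the intersection formula of Section~\ref{Test configurations and log K stability} (normalised so that positivity corresponds to stability),
$$\mathrm{DF}_\beta(\mathcal{X},\mathcal{L}_c)=\frac{1}{V}\left(\frac{\bar S_\beta}{n+1}\,\mathcal{L}_c^{\,n+1}+\bigl(K_{\mathcal{X}/\mathbb{P}^1}+(1-\beta)\mathcal{D}\bigr)\cdot\mathcal{L}_c^{\,n}\right),\qquad \bar S_\beta=\frac{n\bigl(a-(1-\beta)V\bigr)}{V},$$
where $V=c_1(L)^n$ and $a=c_1(X)\cdot c_1(L)^{n-1}$. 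Using $K_{\mathcal{X}}=\pi^*K_{X\times\mathbb{P}^1}+P$ together with the Segre classes of $\mathcal{N}=L|_D\oplus\cO_D$ on $P$, every intersection number collapses onto $V$ and $a$; in particular $\mathcal{D}\cdot\mathcal{L}_c^{\,n}=-ncV$. The computation should yield the closed form
$$\mathrm{DF}_\beta(\mathcal{X},\mathcal{L}_c)=\frac{1}{(n+1)V}\Bigl\{(a-V)\bigl[(1-c)^n(1+nc)-1\bigr]+n\beta V\bigl[1-(1-c)^{n+1}\bigr]\Bigr\}.$$
As a consistency check, for $\beta=1$ and $(X,L)=(\mathbb{P}^n,\cO(1))$ this reduces to $nc(1-c)^n>0$, matching the stability of $\mathbb{P}^n$.

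Finally I would read off the sign. By adjunction $c_1(D)=(c_1(X)-c_1(L))|_D$, so $\underline S^D=(n-1)(a-V)/V$ and the hypothesis reads $\beta<\tfrac{\underline S^D}{n(n-1)}=\tfrac{a-V}{nV}$; in particular $a>V$. Writing $g_1(c)=(1-c)^n(1+nc)-1$ and $g_2(c)=1-(1-c)^{n+1}$, one checks $g_1(0)=0$ and $g_1'(c)=-n(n+1)c(1-c)^{n-1}\le0$, so $g_1\le0$ on $[0,1]$, whereas $g_2\ge0$; moreover $|g_1|<g_2$ on $(0,1)$ (equivalently $(n+1)c>0$) with $|g_1(c)|/g_2(c)\to1$ as $c\to1^-$. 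Hence $\mathrm{DF}_\beta(\mathcal{X},\mathcal{L}_c)<0$ precisely when $\beta<\tfrac{a-V}{nV}\cdot\tfrac{|g_1(c)|}{g_2(c)}$, and since the last factor tends to $1$, for any $\beta<\tfrac{a-V}{nV}$ there is some $c\in(0,1)$ close to $1$ with $\mathrm{DF}_\beta(\mathcal{X},\mathcal{L}_c)<0$, which proves log $K$-instability. The main obstacle is the intersection-theoretic computation producing the closed form — getting every Segre-class sign right on $P=\mathbb{P}(L|_D\oplus\cO_D)$ and correctly handling both $K_{\mathcal{X}/\mathbb{P}^1}$ and the divisor term $(1-\beta)\mathcal{D}\cdot\mathcal{L}_c^{\,n}$ — together with confirming that the ample cone persists up to $c\to1$ so that the destabilising parameter lies strictly inside the valid range.
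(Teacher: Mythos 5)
Your proposal is correct, and it destabilises with exactly the same family as the paper — the deformation to the normal cone of $D$ with polarisation $\mathcal{L}_c = p^*L - cP$, $c \in (0,1)$, followed by the limit $c \to 1^-$ — but you evaluate the log Donaldson--Futaki invariant by a genuinely different computation. The paper works from the weight-theoretic definition: the decomposition \eqref{decomposition} of $H^0(\mathcal{X}_0, \mathcal{L}_c^{\otimes k})$, Riemann--Roch on $X$ and on $D$, and summation asymptotics produce $a_0, a_1, b_0, b_1, \tilde{a}_0, \tilde{b}_0$, and the dimension count along the way also establishes flatness of the family; you instead use the intersection-theoretic formula on the compactified total space. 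The two computations agree: substituting $\underline{S}^D = (n-1)(a-V)/V$ into the paper's final expression for $DF(\mathcal{X},\mathcal{D},\mathcal{L}_c,\beta)$ turns it into exactly $a_0$ times your closed form, so your sign analysis ($g_1 \le 0$, $g_2 > 0$, $|g_1| < g_2$ on $(0,1)$, and $|g_1|/g_2 \to 1$ as $c \to 1^-$) reproduces the paper's limiting argument, and your consistency check $nc(1-c)^n$ for $(\mathbb{P}^n,\mathcal{O}(1))$ at $\beta=1$ is what the paper's formula gives as well. Two points your route relies on are available in the paper and are correctly (if briefly) noted by you: the identification of the weight-defined $DF$ with the intersection formula, which is Theorem~\ref{log DF} (Odaka--Sun \cite{MR3403730}) and applies since the deformation to the normal cone is the blow-up of the flag ideal of $D\times\{0\}$; and the reducedness of the central fibre $X \cup_D P$, without which the correction $\mathcal{X}_{0,\mathrm{red}} - \mathcal{X}_0$ in $K^{\log}$ would not drop out of the formula you use. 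As for trade-offs, your route replaces the weight-sum asymptotics by a handful of intersection numbers (e.g.~$\mathcal{D}\cdot\mathcal{L}_c^{n} = -ncV$), which is cleaner and less error-prone once the equivalence of the two definitions of $DF$ is granted, whereas the paper's route is self-contained at the level of the original definition of the log Donaldson--Futaki invariant and verifies flatness of the degeneration as a byproduct.
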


We also prove various sufficient conditions for the uniform log $K$-stability. We believe that comparing the theorem below with \thmref{thmalphainvint} (or more precisely \eqref{cohomology conditions Kahler} that is used in its proof) may give an interesting observation for the log YTD conjecture.

\begin{thm}[\thmref{topological conditions to beta conditions}]
Suppose that $((X,L);D)$ is a smooth polarised pair such that $D \in |mL|$ for some $m \in \mathbb{N}$, such that it satisfies
\begin{align*}
\ul S_1\leq mn\text{ and }(n+1)\lambda\leq \ul S_1+m,
\end{align*}
where $\ul S_1$ is the average scalar curvature of $(X,L)$ and $\lambda \in \mathbb{R}$ is the nef threshold of $L$ (see Definition \ref{Lambda}).

Then $((X,L);D)$ is uniformly log $K$-stable with cone angle $2\pi\beta$ satisfying the constraint 
\begin{align*}
1-\frac{(n+1)\lambda-\ul S_1}{m}\leq \beta<\beta_u,
\end{align*}
where $\beta_u$ is a constant defined in terms of the log alpha invariant and $m$ as in Definition \ref{beta u}.
\end{thm}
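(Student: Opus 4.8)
The plan is to verify the defining estimate of uniform log $K$-stability directly at the non-Archimedean level: I would produce a single $\delta>0$, depending only on $X,L,D,\beta$, such that
\[
M^{\mathrm{NA}}_\beta(\mathcal{X},\mathcal{L})\geq \delta\, J^{\mathrm{NA}}(\mathcal{X},\mathcal{L})
\]
for every normal ample test configuration $(\mathcal{X},\mathcal{L})$ for $(X,L)$, where $M^{\mathrm{NA}}_\beta$ is the non-Archimedean log Mabuchi functional (equivalently the log Donaldson--Futaki invariant) of angle $2\pi\beta$ and $J^{\mathrm{NA}}$ is the non-Archimedean $J$-functional. Since $J^{\mathrm{NA}}$ is equivalent to the minimum norm, this is exactly uniform log $K$-stability with angle $2\pi\beta$.

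First I would apply the non-Archimedean log Chen--Tian decomposition
\[
M^{\mathrm{NA}}_\beta=H^{\mathrm{NA}}+R^{\mathrm{NA}}_\beta-\ul S_\beta\, E^{\mathrm{NA}},
\]
where $H^{\mathrm{NA}}\geq 0$ is the non-Archimedean entropy, $R^{\mathrm{NA}}_\beta$ is the energy attached to the log Ricci class $c_1(X)-(1-\beta)[D]$, $E^{\mathrm{NA}}$ is the Monge--Amp\`ere energy, and, using $[D]=m\,c_1(L)$, the average log scalar curvature is $\ul S_\beta=\ul S_1-(1-\beta)mn$. The two structural inputs are then: (i) the log alpha invariant, identified with the global log canonical threshold of the pair, which through the valuative computation of $H^{\mathrm{NA}}$ lower-bounds the entropy by a multiple of $J^{\mathrm{NA}}$; the threshold $\beta_u$ of Definition~\ref{beta u} is designed precisely so that for $\beta<\beta_u$ this alpha-invariant coefficient is the dominant positive contribution. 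And (ii) the nef threshold $\lambda$ of Definition~\ref{Lambda}, which controls $R^{\mathrm{NA}}_\beta$ through the class $c_1(X)-(1-\beta)m\,c_1(L)$ and, together with the standard comparisons $\tfrac1n J^{\mathrm{NA}}\le I^{\mathrm{NA}}-J^{\mathrm{NA}}\le n\,J^{\mathrm{NA}}$ and the bound on $E^{\mathrm{NA}}$ by $J^{\mathrm{NA}}$, expresses $R^{\mathrm{NA}}_\beta-\ul S_\beta E^{\mathrm{NA}}$ as a controlled multiple of $J^{\mathrm{NA}}$.

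Combining (i) and (ii) reduces $M^{\mathrm{NA}}_\beta\geq\delta J^{\mathrm{NA}}$ to the positivity of a single coefficient that is an explicit affine expression in $\alpha_\beta,\ \lambda,\ \ul S_1,\ m,\ \beta$; the three constraints $\ul S_1\le mn$, $(n+1)\lambda\le\ul S_1+m$, and $1-\frac{(n+1)\lambda-\ul S_1}{m}\le\beta<\beta_u$ are exactly what is needed to make this coefficient strictly positive, the last one forcing both the alpha-invariant term (via $\beta<\beta_u$) and the nef-threshold term (via the lower bound on $\beta$) onto the favourable side. The main obstacle is input (i): turning the \emph{global} log alpha invariant into a \emph{uniform} coercive lower bound on $H^{\mathrm{NA}}$ valid over all, not merely product, test configurations requires the algebraic (log canonical threshold) description of the invariant together with a careful valuative estimate, after which the constants from (i) and (ii) must be tracked exactly so that they assemble into the stated inequalities. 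The delicate point is this bookkeeping linking $\beta$, $\lambda$, and $\beta_u$, rather than any single step in isolation.
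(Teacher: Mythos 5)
Your core strategy is, modulo language, the same as the paper's. The paper proves this theorem by reducing it to \thmref{log canonical}: the criterion that $\ul S_\beta<(n+1)\alpha_\beta$ together with nefness of $-\ul S_\beta C_1(L)+(n+1)C_1(X,D)$ (condition \eqref{cohomology conditions Kahler variety weaker}) implies uniform log $K$-stability, proved via the Odaka--Sun flag-ideal blow-up formula and Dervan's intersection-theoretic lemmas (\lemref{conditions}). Your non-Archimedean Chen--Tian decomposition is the Boucksom--Hisamoto--Jonsson dress of exactly that computation: your valuative entropy bound corresponds to the effectivity of $K_e-\alpha_\beta E$, your nefness control of $R^{\mathrm{NA}}_\beta$ corresponds to the fact that pullbacks of nef classes pair non-positively against $(\mathcal{L}'-E)^n$, and your norm comparisons correspond to $(\mathcal{L}'-E)^n\cdot E\ge\tfrac{n+1}{n}\Vert\mathcal B\Vert_m$. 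So this is not a genuinely different route; the paper's version is simply more modular, quoting \thmref{log canonical} and isolating the angle bookkeeping in \lemref{conditions for mL} and \lemref{angle conditions 2}. (One caution on your input (ii): $E^{\mathrm{NA}}$ alone is \emph{not} bounded by $J^{\mathrm{NA}}$, since it is not invariant under twisting $\mathcal{L}\mapsto\mathcal{L}+c\mathcal{X}_0$; the combination $R^{\mathrm{NA}}_\beta-\ul S_\beta E^{\mathrm{NA}}$ must be treated as a whole, e.g.\ after normalising.)

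The genuine gap is in the bookkeeping, which you correctly identify as the delicate point but leave unresolved, and it is a missing idea rather than constant-tracking. The coefficient you need to be positive is, up to harmless factors, $\alpha_\beta-\tfrac{\ul S_\beta}{n+1}$, where $\alpha_\beta$ is the log alpha invariant of the pair $(X,(1-\beta)D)$. But $\beta_u$ in Definition \ref{beta u} is \emph{not} defined through $\alpha_\beta$ (despite the loose wording of the theorem statement): it is defined through $\alpha(L)$ and $\alpha(L_D\vert_D)$. Hence $\beta<\beta_u$ does not by itself make your coefficient positive. The missing bridge is Berman's lower bound \eqref{log alpha invariant lower bound}, namely $\alpha_\beta\ge\min\{m\beta,\ \alpha(L),\ m\alpha(L_D\vert_D)\}$, followed by a case analysis on which term realises the minimum: on the branch $\min\{\alpha(L),m\alpha(L_D\vert_D)\}$, positivity is exactly the constraint $\beta<\beta_u$, while on the branch $m\beta$, positivity amounts to $\ul S_\beta=\ul S_1-mn(1-\beta)<(n+1)m\beta$, and this is precisely where the hypothesis $\ul S_1\le mn$ enters (the content of \lemref{angle conditions 2}). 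Your proposal lists $\ul S_1\le mn$ among the needed constraints but assigns it no role, so the assertion that the constraints ``are exactly what is needed'' cannot be checked as written. The nef-threshold half you do have right: nefness of $-\ul S_\beta C_1(L)+(n+1)C_1(X,D)=(n+1)C_1(X)-[\ul S_1+m(1-\beta)]C_1(L)$ follows from $C_1(X)\ge\lambda C_1(L)$ exactly when $\beta\ge 1-\tfrac{(n+1)\lambda-\ul S_1}{m}$ (\lemref{conditions for mL}), and the hypothesis $(n+1)\lambda\le\ul S_1+m$ merely ensures this threshold is nonnegative.
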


We also present some results for the uniform log $K$-stability for singular varieties. Suppose that $X$ is a $\mathbb{Q}$-Gorenstein normal projective variety, $L$ is an ample Cartier divisor on $X$, and $\tri$ is an effective integral reduced Cartier divisor on $X$. We also write $\ul S_{\beta}$ for the constant defined by \eqref{intersection averaged scalar}, which is the average log scalar curvature when $X$ and $\tri$ are smooth. We have the following results.

\begin{thm}[\thmref{log canonical}]
If $(X , (1 - \beta ) \tri )$ is log canonical, we have the following.
\begin{itemize}
\item Suppose $\ul S_\beta< 0$ and that there exists $\eta \ge 0$ such that
\begin{align*}
\left\{
\begin{array}{lcl}
	&(i)&0\leq \eta<\frac{n+1}{n}\alpha_\beta,\\
	&(ii)& \eta L + K_X + (1 - \beta ) \tri \text{ is ample},\\
	&(iii)& -(n-1) (K_X + (1 - \beta ) \tri) - (\ul S_\beta-\eta) L \text{ is ample},
\end{array}
\right.
\end{align*} 
where $\alpha_{\beta}$ is the log alpha invariant in Definition \ref{aglog alpha invariant}. Then $((X,L);\tri)$ is uniformly log $K$-stable with angle $2 \pi \beta$.
\item Suppose that $\ul S_\beta<(n+1)\alpha_\beta$, and $-\ul S_\beta L- (n+1)(K_X + (1 - \beta ) \tri)$ is nef. Then $((X,L);\tri)$ is uniformly log $K$-stable with angle $2 \pi \beta$.
\end{itemize}
\end{thm}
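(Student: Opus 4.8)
The plan is to verify the defining inequality of uniform log $K$-stability directly: to produce a fixed $\delta>0$ with $\mathrm{DF}_\beta(\mathcal{X},\mathcal{L})\geq \delta\,J^{\mathrm{NA}}(\mathcal{X},\mathcal{L})$ for every nontrivial ample test configuration $(\mathcal{X},\mathcal{L})$, where $J^{\mathrm{NA}}$ is the non-Archimedean $J$-functional (equivalent to the minimum norm). By the standard reductions I would first normalise and compactify over $\mathbb{P}^1$, so that the $\mathbb{Q}$-Gorenstein hypothesis makes $K_{\bar{\mathcal{X}}/\mathbb{P}^1}$ well defined, and then expand $V\cdot\mathrm{DF}_\beta$ through the intersection-theoretic formula for the log Donaldson--Futaki invariant as $\tfrac{\ul S_\beta}{n+1}(\bar{\mathcal{L}}^{n+1})+\bigl(\bar{\mathcal{L}}^n\cdot(K_{\bar{\mathcal{X}}/\mathbb{P}^1}+(1-\beta)\bar{\tri})\bigr)$, where $\bar{\tri}$ denotes the closure of the boundary and $V=(L^n)$.

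For the first assertion the parameter $\eta$ implements a twist. Following the template of Dervan's uniform-stability argument adapted to the cone setting, I would view $\eta L$ as a twisting class and split $K_X+(1-\beta)\tri=\bigl(\eta L+K_X+(1-\beta)\tri\bigr)-\eta L$. Hypothesis (ii) makes the first summand ample, so that its contribution to the $\eta$-twisted log Donaldson--Futaki invariant has the correct sign and the twisted functional admits an alpha-invariant lower bound; the subtracted $-\eta L$ piece, together with the energy term $\tfrac{\ul S_\beta}{n+1}(\bar{\mathcal{L}}^{n+1})$, reassembles into multiples of $E^{\mathrm{NA}}$ and $J^{\mathrm{NA}}$. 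Hypothesis (iii), which one recognises via the cohomological identity relating $\ul S_\beta$ to $(K_X+(1-\beta)\tri)\cdot L^{n-1}/V$ as an ampleness statement about the residual class, is exactly what forces the leftover intersection numbers to carry a nonnegative sign.

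The crux is the alpha-invariant estimate for the relative-canonical term. Using the log-canonicity of $(X,(1-\beta)\tri)$ together with the Fujita--Odaka mechanism, the log alpha invariant $\alpha_\beta$ controls the log canonical thresholds of the degenerations cut out by the test configuration, and yields a lower bound of the shape $\bigl(\bar{\mathcal{L}}^n\cdot(K_{\bar{\mathcal{X}}/\mathbb{P}^1,\log})\bigr)\geq \tfrac{n+1}{n}\alpha_\beta\,J^{\mathrm{NA}}-(\text{terms absorbed by (ii)--(iii)})$. Inserting this into the expanded invariant, the coefficient multiplying $J^{\mathrm{NA}}$ becomes proportional to $\tfrac{n+1}{n}\alpha_\beta-\eta$, which is strictly positive precisely by (i); this delivers the required $\delta>0$. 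For the second assertion I would run the same computation with $\eta=0$: the single nef hypothesis on $-\ul S_\beta L-(n+1)(K_X+(1-\beta)\tri)$ simultaneously supplies the positivity that (ii) and (iii) furnished in the twisted case, while the sharper application of the alpha-invariant bound available under a nef rather than ample assumption raises the admissible threshold to $\ul S_\beta<(n+1)\alpha_\beta$.

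The main obstacle I anticipate lies in the alpha-invariant estimate on the singular $\mathbb{Q}$-Gorenstein variety. I must ensure that the relative-canonical intersection number is correctly defined and behaves well under the normalisation and base changes used in the reduction, and that the log-canonical hypothesis genuinely bounds the log discrepancies of \emph{all} divisorial valuations produced by test configurations uniformly, rather than those of a single model. Keeping the alpha-invariant inequality uniform through these operations---so that the same $\delta$ works for every test configuration---is the delicate technical heart of the argument; the numerical hypotheses (i)--(iii) are engineered precisely so that every intersection number surviving this estimate carries the correct sign.
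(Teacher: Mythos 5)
Your overall strategy --- expand the log Donaldson--Futaki invariant as an intersection number, use the ampleness/nefness hypotheses (ii)--(iii) to make the pulled-back classes sign-definite, and invoke a log-canonical-threshold (alpha invariant) bound to extract a definite multiple of the norm --- is the same as the paper's, but the execution differs in two substantive ways. First, the paper never works on an arbitrary compactified test configuration $\bar{\mathcal{X}}$: it first reduces, via Odaka--Sun's flag-ideal formalism, to blow-ups $\mathcal{B}=\mathrm{Bl}_{\mathcal{I}}(X\times\mathbb{P}^1)$ with polarisation $\mathcal{L}'-E$, precisely because in that model every estimate you need is already in the literature and is collected in \lemref{conditions}: nef pullbacks satisfy $(\mathcal{L}'-E)^n\cdot p^*R\le 0$; log canonicity of $(X,(1-\beta)\tri)$ gives $(\mathcal{L}'-E)^n\cdot K_e\ge 0$; and --- this is exactly your ``crux'' estimate --- $K_e-\alpha_\beta E$ is effective, which together with $(\mathcal{L}'-E)^n E\ge\frac{n+1}{n}\Vert\mathcal{B}\Vert_m$ yields $(\mathcal{L}'-E)^n\cdot K_e\ge\frac{n+1}{n}\alpha_\beta\Vert\mathcal{B}\Vert_m$. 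So the uniform control of discrepancies over all test configurations that you rightly identify as the technical heart is not re-proved in the paper; it is what the flag-ideal reduction buys, and your plan becomes routine once you perform the same reduction and quote items (1), (3), (5), (6) of \lemref{conditions} instead of re-deriving them on $\bar{\mathcal{X}}$.

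Second, for the first bullet your positivity mechanism is genuinely different from the paper's, and here there is a gap. You aim at the coefficient $\frac{n+1}{n}\alpha_\beta-\eta$, mirroring the analytic criterion (Proposition \ref{properness criteria}), and you never use the hypothesis $\ul S_\beta<0$. The paper instead uses (ii) and (iii) only to discard the two pullback groups via item (1) of \lemref{conditions}, does not need the alpha bound at all in this bullet, and lands on the coefficient $-\ul S_\beta/n$; this is exactly where $\ul S_\beta<0$ enters and why it appears as a hypothesis. Your route, if completed, would prove a formally stronger statement with no sign condition on $\ul S_\beta$, but it requires non-Archimedean analogues of both the entropy/alpha inequality and the lower bound of the twisted $J$-type term for a log canonical pair over a normal $\mathbb{Q}$-Gorenstein base, and you assert rather than establish these; on a singular variety they are not formal consequences of the smooth theory, and the known proofs pass through the very blow-up formalism you are trying to avoid. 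For the second bullet your $\eta=0$ computation does match the paper: the nef hypothesis kills the pullback term by item (1), and effectivity of $K_e-\alpha_\beta E$ plus item (5) give $DF\ge\frac{n+1}{n}\left(\alpha_\beta-\frac{\ul S_\beta}{n+1}\right)\Vert\mathcal{B}\Vert_m$, which is positive precisely when $\ul S_\beta<(n+1)\alpha_\beta$.
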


The second item above can be seen as a variant of Dervan's result \cite{MR3428958,MR3564626}; see Remark \ref{remcpdervan} for more details. We also prove the following result, which partially generalises the result of Odaka--Sun \cite[Theorem 6.1]{MR3403730} by relaxing the hypothesis on the polarisation $L$.

\begin{thm}[\thmref{Kawamata log terminal inverse}]
Suppose that
\begin{itemize}
	\item $(X, (1 - \beta) \tri)$ is log $\mathbb{Q}$-Fano, i.e.~$-K_X - (1 - \beta ) \tri$ is ample, and
	\item $\ul S_\beta L - n (-K_X + (1 - \beta ) \tri)$ is nef.
\end{itemize}
Then $(X, (1 - \beta) \tri)$ is Kawamata log terminal if $((X,L);\tri)$ is log $K$-semistable with angle $2 \pi \beta$.
\end{thm}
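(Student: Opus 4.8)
The plan is to argue by contraposition: assuming $(X,(1-\beta)\tri)$ is not Kawamata log terminal, I will produce an explicit test configuration whose log Donaldson--Futaki invariant is strictly negative, contradicting log $K$-semistability. Since $(X,(1-\beta)\tri)$ is log $\mathbb{Q}$-Fano it is in particular a $\mathbb{Q}$-Gorenstein normal pair, so the failure of klt yields a projective birational morphism $\mu\colon Y\to X$ from a normal variety and a prime divisor $E\subset Y$ whose log discrepancy satisfies $A_{(X,(1-\beta)\tri)}(E)\le 0$; equivalently, writing $K_Y=\mu^*(K_X+(1-\beta)\tri)+\sum_i a_i E_i$, some coefficient is $\le -1$. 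I take $E$ to be such a non-klt place, with divisorial valuation $v=\mathrm{ord}_E$, whose centre on $X$ lies in the proper closed non-klt locus.

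First I would build the destabilising test configuration by Odaka's blow-up formalism. Let $\mathcal{X}$ be the normalisation of the blow-up of $X\times\mathbb{A}^1$ along the Rees filtration of ideals determined by $v$ (the deformation to the normal cone of the centre of $v$), with exceptional divisor $E_0\subset\mathcal{X}$ and projection $p\colon\mathcal{X}\to X$. For small rational $c>0$ I set $\mathcal{L}:=p^*L-cE_0$, which is relatively ample once $c$ is small, and compactify to $\bar{\mathcal{X}}\to\mathbb{P}^1$ so that the intersection-theoretic formula for the log Donaldson--Futaki invariant from Section~\ref{Test configurations and log K stability} applies. The closure of $\tri\times\mathbb{A}^1$ gives the boundary $\bar{\tri}_{\mathcal{X}}$, and the relative log canonical class $K_{\bar{\mathcal{X}}/\mathbb{P}^1}+(1-\beta)\bar{\tri}_{\mathcal{X}}$ records the discrepancy $A_{(X,(1-\beta)\tri)}(E)$ along $E_0$.

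Next I would expand the log Donaldson--Futaki invariant
\[
\mathrm{DF}_\beta(\mathcal{X},\mathcal{L})\;\propto\;\frac{\ul S_\beta}{n+1}\,\bar{\mathcal{L}}^{\,n+1}+\bar{\mathcal{L}}^{\,n}\cdot\bigl(K_{\bar{\mathcal{X}}/\mathbb{P}^1}+(1-\beta)\bar{\tri}_{\mathcal{X}}\bigr),
\]
with positive constant of proportionality, as a polynomial in $c$. Because $p^*L$ is pulled back from the $n$-dimensional $X$, every term lacking a factor of $E_0$ vanishes, so the leading nontrivial contribution occurs at a definite order in $c$; its coefficient splits into a discrepancy part, proportional to $A_{(X,(1-\beta)\tri)}(E)\le 0$, and a purely numerical part assembled from intersection numbers of $L$, $K_X$ and $\tri$ against the exceptional cycles. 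The two hypotheses pin down the sign: the log $\mathbb{Q}$-Fano condition keeps $L$, $-K_X-(1-\beta)\tri$ and the relevant volumes positive and, by rescaling against the ample class $-K_X-(1-\beta)\tri$, promotes the merely log canonical boundary case to strict negativity; meanwhile the nef hypothesis that $\ul S_\beta L-n(-K_X+(1-\beta)\tri)$ is nef is exactly what controls the cross-terms that appear because $L$ is an arbitrary polarisation rather than a multiple of $-(K_X+(1-\beta)\tri)$ as in Odaka--Sun. Combining these, the leading coefficient is negative, whence $\mathrm{DF}_\beta(\mathcal{X},\mathcal{L})<0$ for small $c$; a routine check that the central fibre is not isomorphic to $X$ confirms nontriviality, so log $K$-semistability fails.

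I expect the main obstacle to be the intersection-theoretic sign analysis of the final step: controlling the $c\to 0^+$ asymptotics of $\bar{\mathcal{L}}^{\,n+1}$ and $\bar{\mathcal{L}}^{\,n}\cdot(K_{\bar{\mathcal{X}}/\mathbb{P}^1}+(1-\beta)\bar{\tri}_{\mathcal{X}})$ simultaneously, isolating the discrepancy contribution, and showing that the nef class dominates the remaining terms with the correct sign. This is precisely where the improvement over Odaka--Sun lives, since their choice $L\propto -(K_X+(1-\beta)\tri)$ forces these cross-terms to vanish outright, whereas here I must estimate them and absorb them into the nef class, which requires the numerical normalisation hidden in the definition of $\ul S_\beta$ to interact correctly with the discrepancy and with the Fano rescaling that secures strictness.
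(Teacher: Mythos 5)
Your overall framework---arguing by contraposition inside Odaka's blow-up formalism and trying to destabilise with a non-klt place---is the same as the paper's, which follows Odaka--Sun \cite{MR3403730}. But the step you defer as ``the main obstacle'' is the entire content of the theorem, and the route you sketch for it would fail. Since log $K$-semistability already forces $(X,(1-\beta)\tri)$ to be log canonical (Odaka--Sun, as quoted in the paper just before the proof), the case you actually have to handle is log canonical but not klt. In that regime, item $(2)$ of \lemref{conditions} gives $(\mathcal{L}'-E)^n\cdot K_e\geq 0$ for \emph{every} flag-ideal test configuration, so the discrepancy term pairs with the wrong sign and can never be the source of negativity. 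Relatedly, your claim that the leading coefficient of your $c$-expansion contains ``a discrepancy part proportional to $A_{(X,(1-\beta)\tri)}(E)\le 0$'' is incorrect as stated: the divisors entering $K_e$ are exceptional over $X\times\mathbb{P}^1$, not over $X$, and for a Rees-type ideal built from $v=\mathrm{ord}_E$ the relevant discrepancies have the form $a(E,X,(1-\beta)\tri)+k$ with $k\geq 1$, hence are $\geq 0$ precisely in the boundary case $a(E)=-1$ that you must treat. Taking $c\to 0^+$ does not rescue this, because the minimum-norm term and the $K_e$-pairing scale at the same order in $c$, so smallness of $c$ never decouples them.

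What the paper actually does is different in exactly this step. First it proves a uniform inequality, valid for every nontrivial flag-ideal test configuration: rewriting the formula of \thmref{log DF}, the nef hypothesis kills the pullback term $p^*\bigl(\frac{\ul S_\beta}{n}L+K_X+(1-\beta)\tri\bigr)$ via item $(1)$ of \lemref{conditions}, while item $(5)$ turns the remaining piece into $-\frac{\ul S_\beta}{n}\Vert\mathcal{B}\Vert_m$, which is strictly negative because $\ul S_\beta>0$ (this is where the log $\mathbb{Q}$-Fano hypothesis enters) and $\Vert\mathcal{B}\Vert_m>0$ for nontrivial configurations; together these give $DF(\mathcal{B},\mathcal{B}_{\tri},\mathcal{L}'-E,\beta)<(\mathcal{L}'-E)^n\cdot K_e$. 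Second---and this is the missing idea in your proposal---one needs a test configuration on which the right-hand side vanishes, i.e.\ a flag ideal with $K_e=0$; this is exactly what Odaka--Sun construct from an lc place when the pair is not klt, and it is this construction, not an asymptotic sign analysis of a generic extraction, that closes the argument. Your ``rescaling against the ample class'' to promote the boundary case to strict negativity is not an argument, and any estimate of the kind you hope for would have to contradict the non-negativity $(\mathcal{L}'-E)^n\cdot K_e\geq 0$ noted above. Without a substitute for the $K_e=0$ construction, the proof does not close.
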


\bigskip

\noindent \textbf{Organisation of the paper.} We review the basic materials on the cscK cone metrics, its variational characterisation, and the automorphism group and the Futaki invariant in Section \ref{Log K stability}. Section \ref{scrvlst} is devoted to the review of various stability notions that are important in this paper. In particular, we recall various log $K$-stabilities in algebraic geometry, as well as the log properness result on the analytic side which was proved in \cite{arXiv:1803.09506} and plays a crucially important role in this paper. In Section \ref{scrsostab} we prove one of our main results \thmref{log K}, by applying the important results of Boucksom--Hisamoto--Jonsson \cite{MR3669511,MR3985614} and the log properness result \cite{arXiv:1803.09506}. We review the alpha invariant in Section \ref{scrsoexccm}, and prove \thmref{thmalphainv} as an application of it, as well as other existence results. We finally discuss the uniform log $K$-stability for a possibly singular normal variety in Section \ref{scnormvar}.

\bigskip

\noindent \textbf{Notation.} We comment on the notational conventions that we use in this paper. 

In this paper we shall mainly consider a smooth polarised pair, denoted by $((X,L);D)$, but many topics extend to the case of a more general compact K\"ahler manifold $X$ with a smooth effective divisor $D$, with the fixed K\"ahler class $\Omega$; the extension is sometimes straightforward, but there are nontrivial cases in which the extension to general K\"ahler manifolds is only conjectural (at least to the best of the authors' knowledge). The convention that we use is as follows: we write $\Omega$ for the K\"ahler class, which is equal to $C_1(L)$ when the manifold $X$ under consideration is a polarised smooth projective variety, but $\Omega$ stands for a general K\"ahler class when it is explicitly stated that $X$ may not be projective.

We shall use the additive notation for the tensor product of line bundles, unless explicitly stated otherwise. We consistently write $L_D$ for the line bundle $\mathcal{O}_X (D)$ on $X$ defined by the divisor $D$.

While $X$ is supposed to be smooth in much of this paper, we also consider a pair $(X , \tri)$ where $X$ and $\tri$ may also be singular. See Section \ref{scnormvar} for more details on the singularities of $(X , \tri)$.

\medskip

\noindent \textbf{Acknowledgements.} The first author would like to thank Professor Ryoichi Kobayashi for many helpful comments. The second author is partially supported by JSPS KAKENHI (Grant-in-Aid for Early-Career Scientists), Grant Number JP19K14524, and thanks Ruadha\'i Dervan and Julien Keller for helpful comments. The third author is partially supported by the Fundamental Research Funds for the Central Universities (Grant Numbers: 22120200041, 22120210282). The third author would like to thank Gao Chen, Jingzhou Sun, Song Sun for helpful comments.

%%%
%%%%%%%%%%%%%%%%%%%%%%%%%%%%%%%%%%%%%%%%%%%%%%%%%%%%%%%%%%%%%%%%%%%
%%%
\section{CscK cone metrics}\label{Log K stability}

Let $X$ be a (possibly non-projective) compact K\"ahler manifold and $\Omega$ be a K\"ahler class, and $D$ be a smooth effective divisor in $X$. In this section, we will review recent progress on cscK cone metrics \cite{MR4020314,arXiv:1803.09506}. We also obtain many properties of corresponding energy functionals and the cscK cone path, which extend paralleling results on K\"ahler--Einstein cone metrics.

\subsection{Definition of cscK cone metrics}
We recall the definition of cscK cone metrics. We let 
\begin{align*}
C_1(X,D):=C_1(X)-(1-\b)C_1(L_D).
\end{align*}
and choose a smooth form $\theta\in C_1(X,D)$. We let $s$ be the defining section of $D$ and $h$ is a Hermitian metric on the associated line bundle $L_D$. We denote  by $\Theta_D$ the curvature form
\begin{align*}
\Theta_D=-i\p\bar\p\log h.
\end{align*}

Since $\theta\in C_1(X,D)$, by cohomology condition, there exists a smooth function $h_0$ such that 
\begin{align}\label{h0}
Ric(\om_0)=\theta+(1-\beta)\Theta_D+i\p\bar\p h_0.
\end{align}
The reference K\"ahler cone metric $\om_\theta\in \Omega$ is obtained by solving the following equation
\begin{align}\label{Rictheta}
Ric(\om_\theta)=\theta+2\pi (1-\b)[D],
\end{align}
equivalently,
\begin{align}\label{Rictheta pde}
\om_\theta^n=e^{h_0}|s|_h^{2\beta-2}\om_0^n ,
\end{align}
and a normalisation condition $\int_X \om^n_\theta=V$.

%where $\omega_0 \in \Omega$ is a smooth K\"ahler metric, $h$ is a smooth Hermitian metric on $L_D$, and $h_0$ satisfies
%\begin{align*}
%Ric(\om_0)=\theta-(1-\beta) \Theta_D +i\p\bar\p h_0
%\end{align*}
%for the $(1,1)$-form $\Theta_D := -i \partial \bar{\partial} \log h \in C_1(L_D)$; note that $h_0$ above is normalised such that $\int_X \om^n_\theta=V$.

%%%
%%%%%%%%%%%%%%%%%%%%%%%%%%%%%%%%%%%%%%%%%%%%%%%%%%%%%%%%%%%%%%%%%%%
%%%

\begin{defn}\emph{(\cite[Definition 3.1]{MR4020314})}\label{csckconemetricdefn} A \textbf{cscK cone metric} in $\Omega$
	is defined to be a solution to the coupled system
\begin{equation}
\frac{\om_{cscK}^n}{\om_\theta^n}=e^F,\quad
\tri_{\om_{cscK}} F=\tr_{\om_{cscK}}\theta-\underline S_\b.
\end{equation} 
Here, the constant $\underline S_\b$ is a topological constant, equal to
\begin{equation} \label{eqavlgsc}
\underline S_\b= n \frac{\int_X C_1(X,D)\Om^{n-1}}{\int_X \Om^n}=\frac{n\int_X\theta\wedge \om_0^{n-1}}{V},
\end{equation}
with the volume $V=\int_X\om_0^{n}$.
\end{defn}
Putting the formula of $F$ from the first equation to the second equation, we could see that the scalar curvature of $\om_{cscK}$ equals to the constant $\underline S_\b$ on the regular part $X\setminus D$.

\begin{rem}
	The cone metric and the cscK cone metric can be described more explicitly in terms of the moment polytope when $X$ is toric; see e.g.~\cite[Section 2]{arXiv:2005.03502} for more details.
\end{rem}

%%%
%%%%%%%%%%%%%%%%%%%%%%%%%%%%%%%%%%%%%%%%%%%%%%%%%%%%%%%%%%%%%%%%%%%
%%%
\subsection{CscK cone path}
\begin{defn}\emph{(\cite[Definition 8.1]{arXiv:1803.09506})}
The \textbf{cscK cone path} is defined to be the one-parameter $\beta$ family of cscK cone metrics with cone angle $2\pi\beta$.
\end{defn}
Many constructions of cscK cone metrics with small positive cone angle were given in \cite{MR3857693}.

The existence result (see \thmref{cone properness theorem}) leads to the following openness of the cscK cone path, when there are no nontrivial holomorphic vector fields which preserve $D$ (i.e.~$\mathfrak{aut} ((X,L);D) = 0$ in the notation of Definition \ref{dfautxld}).

\begin{thm}\label{openess}\emph{(\cite[Theorem 8.2]{arXiv:1803.09506})}
Suppose that there are no nontrivial holomorphic vector fields which preserve $D$. The cscK cone path is open when $0<\beta\leq1$. Precisely, if there exists a cscK cone metric with cone angle $2\pi\beta_0\in (0,2\pi)$, then there is a constant $\delta>0$ such that for all $\beta\in (\beta_0-\delta,\beta_0+\delta)$, there exists a cscK cone metric with cone angle $2\pi\beta$. Especially, when $\beta=1$, then interval becomes $(1-\delta,1]$.
\end{thm}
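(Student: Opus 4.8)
The natural route is the continuity (implicit function theorem) method, treating $\b$ itself as the deformation parameter. The plan is to recast the cscK cone equation of Definition \ref{csckconemetricdefn} as the vanishing of a single nonlinear operator
\[
\mathcal{F}(\vphi,\b)=S\bigl(\om_{\theta(\b)}+\pbp\vphi\bigr)-\ul S_\b,
\]
acting on K\"ahler cone potentials $\vphi$, where $S(\cdot)$ denotes the scalar curvature computed on the regular part $M=X\setminus D$. Both the reference metric $\om_{\theta(\b)}$, determined by \eqref{Rictheta}--\eqref{Rictheta pde}, and the topological constant $\ul S_\b$ of \eqref{eqavlgsc} depend on $\b$, so the $\b$-dependence must be tracked in both the background geometry and the target value; the first task is therefore to verify that $(\vphi,\b)\mapsto\mathcal{F}(\vphi,\b)$ is continuously differentiable between the appropriate function spaces, the given cscK cone metric at $\b_0$ furnishing a zero $\mathcal{F}(\vphi_0,\b_0)=0$.

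The heart of the argument is the analysis of the linearisation $D_\vphi\mathcal{F}|_{(\vphi_0,\b_0)}$, which up to lower-order terms is the Lichnerowicz operator $\cD^*\cD$ of the cscK cone metric $\om_0=\om_{\theta(\b_0)}+\pbp\vphi_0$. Here $\cD$ sends a function to the $\dbar$ of its $\om_0$-gradient, so that $\ker\cD$ consists precisely of the potentials whose gradient is a holomorphic vector field. In the cone setting such a vector field must be tangent to $D$, hence lies in $\mathfrak{aut}((X,L);D)$ (Definition \ref{dfautxld}); the standing hypothesis that there are no nontrivial holomorphic vector fields preserving $D$ forces $\ker\cD^*\cD$ to reduce to the constants, which are removed by normalisation. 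Since $\cD^*\cD$ is formally self-adjoint, triviality of the kernel yields triviality of the cokernel, and once one establishes that $\cD^*\cD$ is Fredholm of index zero between suitable weighted H\"older (or weighted Sobolev) spaces adapted to the cone singularity, the linearisation becomes an isomorphism onto the space of functions of vanishing average (which is where $\mathcal{F}$ takes its values, by the topological nature of $\ul S_\b$).

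With the isomorphism in hand, the implicit function theorem produces, for all $\b$ in a neighbourhood of $\b_0$, a unique nearby solution $\vphi(\b)$ of $\mathcal{F}(\vphi(\b),\b)=0$, depending continuously on $\b$; the metrics $\om_{\theta(\b)}+\pbp\vphi(\b)$ are then the desired cscK cone metrics of angle $2\pi\b$. At the endpoint $\b_0=1$ the reference metric is smooth and the cone singularity is absent, so the deformation can only be carried out towards $\b<1$, where the singularity along $D$ is switched on; this is precisely why the interval degenerates to the one-sided $(1-\delta,1]$, the values $\b>1$ lying outside the admissible range $(0,1]$.

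The main obstacle is the functional-analytic framework rather than the formal structure of the argument. One must select weighted function spaces in which the cone metric is geometrically ``smooth,'' in which the fourth-order operator $\cD^*\cD$ is elliptic, and in which the Fredholm and mapping properties above genuinely hold; establishing the Schauder/Fredholm theory for this fourth-order operator near the conical stratum, and controlling it uniformly as $\b$ varies (so that the neighbourhood of solvability does not collapse), is the delicate point, especially at and near $\b=1$ where the weights degenerate as the singularity forms. An alternative, more variational route is to combine the characterisation of existence via $d_1$-coercivity of the log $K$-energy (\thmref{cone properness theorem}) with a continuity estimate showing that coercivity at $\b_0$ persists for nearby angles; this trades the linear analysis above for uniform control of the $\b$-dependence of the energy functionals, which is itself a nontrivial but arguably more robust estimate.
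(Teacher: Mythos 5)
Your primary route (an implicit function theorem argument on the scalar curvature operator) is not how this theorem is proved, and as written it has a genuine gap at its central step. The assertion that the fourth-order Lichnerowicz operator $\cD^*\cD$ of a cscK \emph{cone} metric is ``Fredholm of index zero between suitable weighted H\"older (or Sobolev) spaces'' is not an available result one can invoke; it is precisely the hard analytic problem that the variational method of \cite{arXiv:1803.09506} was designed to circumvent. Concretely: (i) the relevant spaces $C^{4,\alpha,\beta}$ of \cite{MR4020314} depend on $\beta$, so the IFT cannot even be set up on a fixed pair of Banach spaces without an additional (unconstructed) identification, and differentiability of $(\vphi,\beta)\mapsto\mathcal{F}(\vphi,\beta)$ is then itself in question; (ii) passing from ``trivial kernel'' to ``trivial cokernel'' requires formal self-adjointness together with justified integration by parts against the singular metric, which needs the precise asymptotics of potentials along $D$ (cf.\ \cite{MR3911741}) and is notoriously delicate for fourth-order operators, especially for $\beta>1/2$; (iii) the identification of $\ker\cD^*\cD$ with holomorphy potentials of fields tangent to $D$ is the reductivity theorem (\cite[Theorem 6.1]{MR4020314}, \cite{MR3968885}), not an elementary remark. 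Flagging these as ``the delicate point'' does not discharge them; your main proposal is a program rather than a proof.

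The paper itself gives no linear analysis at all: it quotes \cite[Theorem 8.2]{arXiv:1803.09506}, and the surrounding text makes the mechanism explicit. Under the hypothesis $\mathfrak{aut}((X,L);D)=0$, existence of a cscK cone metric of angle $2\pi\beta$ is \emph{equivalent} to $d_1$-coercivity of the log $K$-energy $\nu_\beta$ (\thmref{cone properness conjecture}), and $\nu_\beta$ depends affinely on $\beta$ (Proposition \ref{K and log K}, Theorem \ref{Log K energy linear}), with
\begin{equation*}
\nu_\beta(\vphi)-\nu_{\beta_0}(\vphi)=(\beta_0-\beta)\left[D_{\om_0,D}(\vphi)-\frac{\mathrm{Vol}(D)}{V}D_{\om_0}(\vphi)\right].
\end{equation*}
Openness then follows once one shows that this divisor term is controlled by the quantities entering the coercivity bound \eqref{d1gcoercive}, so that a small change of angle cannot destroy coercivity; existence at nearby angles is then read off from the equivalence again. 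Your closing sentences correctly identify this as the alternative (and in fact the actual) route, but the one nontrivial ingredient it needs, namely the continuity estimate on the bracket above, is exactly what you do not supply. If you want a proof rather than a plan, that estimate is the step to establish; everything else is already provided by \thmref{cone properness theorem}, \thmref{cone properness conjecture}, and the linearity of $\nu_\beta$ in $\beta$.
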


The hypothesis on the automorphism group is indeed necessary; see \cite[Theorem 1.7 and Remark 1.8]{MR3978320} and also Proposition \ref{ppcnaglfut}.

As a result, we recall the definition of the maximal cone angle of the cscK cone metric.
\begin{defn}\label{maximal existence cone}
\begin{align*}
\mathfrak \beta_{cscKc}((X,\Omega);D):=\sup_{0<\beta\leq 1}\{\exists \text{ a cscK cone metric with cone angle }2\pi\beta\}.
\end{align*}
\end{defn}

%%%
%%%%%%%%%%%%%%%%%%%%%%%%%%%%%%%%%%%%%%%%%%%%%%%%%%%%%%%%%%%%%%%%%%%
%%%
\subsection{Energy functionals in the log setting}\label{Log energy functionals}
\subsubsection{Space of K\"ahler cone potentials}

%%%
%%%%%%%%%%%%%%%%%%%%%%%%%%%%%%%%%%%%%%%%%%%%%%%%%%%%%%%%%%%%%%%%%%%
%%%

%We let $s$ be the defining section of $D$ and $h$ is a Hermitian metric on the associated line bundle $L_D$. We denote  by $\Theta_D$ the curvature form
%\begin{align*}
%\Theta_D=-i\p\bar\p\log h.
%\end{align*}
%
%Since $\theta\in C_1(X,D)$, by cohomology condition, there exists a smooth function $h_0$ such that 
%\begin{align}\label{h0}
%Ric(\om_0)=\theta+(1-\beta)\Theta_D+i\p\bar\p h_0.
%\end{align}

\begin{defn}
We set $\mathcal H_\beta(\om_0)$ to be the \textbf{space of all K\"ahler cone potentials} $\vphi$ such that $\om_\vphi:=\om_0+i\p\bar\p\vphi$ is a K\"ahler cone metric of angle $2 \pi \beta$. 
\end{defn}

Similarly, we write $\mathcal{H} (\omega_0)$ for the space of K\"ahler potentials that are smooth globally over $X$.
Let $V=\int_X\om_0^n$.
The $d_1$-distance on $\mathcal{H} (\omega_0)$ is defined to be the norm $V^{-1}\int_X|\psi|\om_\vphi^n$ over the tangent space $T\mathcal H$ of $\mathcal H$. The definition is the same for $\mathcal H_\beta$.
It is important to note that there exists a completion of the above space $\mathcal{H}$, which is denoted by $\mathcal{E}^1$. 
\begin{defn}\label{E 1}
A $\om_0$-psh function $\varphi\in PSH(X,\omega_0)$ is an element in the space $\mathcal E^1(X,\om_0)$, if
\begin{itemize}
\item $\varphi$ has \textit{full Monge--Amp\`ere mass}, i.e.
\begin{align*}
\lim_{j\rightarrow\infty}\int_{\{\varphi>-j\}}(\omega_0+i\p\bar\p\max\{\varphi,-j\})^n=V.
\end{align*}
Then the Monge--Amp\`ere operator is well-defined for such $\vphi$.
\item
$
 \vphi\in L^1(\om_0+i\p\bar\p\vphi).
$
\end{itemize}
\end{defn}
The reader is referred to the lecture notes \cite{MR3996485} by Darvas or the monograph \cite{MR3617346} by Guedj--Zeriahi for more details and many important properties of $\mathcal{E}^1$. A particularly important observation for our paper is that we have $\mathcal H_\beta(\om_0) \subset \mathcal{E}^1$ for all $0 < \beta \leq 1$, according to the Definition \ref{E 1}.

Actually, $\mathcal{E}^1$ is also the completion of $\mathcal H_\beta(\om_0)$ under the $d_1$-distance.
This could be seen by using Donaldson's model cone metric $\om_0+\delta i\p\bar\p |s|^{2\beta}_h$, here $\delta$ is a small positive constant, $s$ is a defining section of $D$ and $h$ is an Hermitian metric on $L_D$. Then we see that for any given K\"ahler potential $\vphi$, $\vphi_j=\vphi+ j^{-1}|s|^{2\beta}_h$ is a K\"ahler cone metric for sufficiently large $j$. The sequence $\vphi_j$ decreases to $\vphi$ and converges in $d_1$-distance, which means any smooth K\"ahler potential is approximated by a sequence of K\"ahler cone potential and the $d_1$-completion of $\mathcal H_\beta(\om_0)$ is also $\mathcal E^1$. 
%%%
%%%%%%%%%%%%%%%%%%%%%%%%%%%%%%%%%%%%%%%%%%%%%%%%%%%%%%%%%%%%%%%%%%%
%%%
\subsubsection{Log entropy}
\begin{defn}[Log entropy]
The \textbf{log entropy} on $\mathcal H_\b$ is defined to be
\begin{align}\label{entropy}
E_\beta(\vphi):=\frac{1}{V}\int_M\log\frac{\om^n_\vphi}{\om_0^n|s|_h^{2\b-2}e^{h_0}}\om_\vphi^{n},
\end{align}
\end{defn}
When $\beta=1$, it coincides with the classical entropy functional of K\"ahler metric $\om_\vphi$.
%%%
%%%%%%%%%%%%%%%%%%%%%%%%%%%%%%%%%%%%%%%%%%%%%%%%%%%%%%%%%%%%%%%%%%%
%%%
\subsubsection{$D$-functional}
The Euler--Lagrange functional for the Monge--Amper\`e operator is 
\begin{align}
D_{\om_0}(\vphi)
&:=\frac{1}{V}\frac{1}{n+1}\sum_{j=0}^{n}\int_{M}\vphi\om_0^{j}\wedge\om_{\varphi}^{n-j}, \label{defmaeng}
\end{align}
since direct computation shows that its first variation is
\begin{align*}
\p_t D_{\om_0}(\vphi)=\frac{1}{V}\int_M \p_t\vphi \om_\vphi^n.
\end{align*}
As a result, we also obtain another expression of $D_{\omega_0}$,
\begin{align*}
D_{\om_0}(\vphi)
&:=\frac{1}{V}\int_0^1\int_M \p_t\vphi \om_\vphi^ndt.
\end{align*}
%%%%%%%%%%%%%%%%%%%%%%%%%%%%%%%%%%%%%%%%%%%%%%%%%%%%%%%%%%%%%%%%%%%%%%%%%%%%%%%%%%%%%%%%%%%%%%%%%%%%%%%%%%%%%%%%%%%%%%%%
\subsubsection{$j$-functional}
Let $\chi$ to be a closed $(1,1)$-form. 
\begin{defn}[$J_{\chi}$-functional]\label{J functional}
The \textbf{log $J_\chi$-functional} is defined to be
\begin{align*}
J_{\chi}(\vphi)&:=j_{\chi}(\vphi)-\ul{\chi}\cdot D_{\om_0}(\vphi).
\end{align*}
The \textbf{log $j_\chi$-functional} is defined to be
\begin{align*}
j_{\chi}(\vphi)&:=\frac{1}{V}\int_{M}\vphi
\sum_{j=0}^{n-1}\om_0^{j}\wedge
\om_\vphi^{n-1-j}\wedge \chi.
\end{align*}
Here $\ul{\chi}$ is the average of $\chi$ as follows
\begin{align*}
\ul{\chi}
:=\frac{\int_X [\chi]\Om^{n-1}}{\int_X \Om^n}=\frac{n\int_X\chi\wedge \om_0^{n-1}}{V}.
\end{align*}

\end{defn}
It is direct to see that its first variation is
\begin{align}\label{derivatives of J chi}
\p_t J_{\chi}(\vphi)=\frac{1}{V}\int_{M}\p_t\vphi
(n\chi\wedge \om_\vphi^{n-1}-\underline\chi\om^n_\vphi).
\end{align}

We further define \textbf{Aubin's $J$-functional} as
\begin{equation} \label{defaubjfc}
J^A_{\om_0}(\vphi):=\frac{1}{V}\int_M\vphi\om_0^n-D_{\om_0}(\vphi).
\end{equation}

%%%%%%%%%%%%%%%%%%%%%%%%%%%%%%%%%%%%%%%%%%%%%%%%%%%%%%%%%%%%%%%%%%%%%%%%%%%%%%%%%%%%%%%%%%%%%%%%%%%%%%%%%%%%%%%%%%%%%%%%%%%
\subsubsection{Log $K$-energy}
\begin{defn}\label{logKenergy}\emph{(Log $K$-energy \cite[Equation 3.9]{MR4020314})}
The \textbf{log $K$-energy} is defined on $\mathcal H_\b$ as
\begin{align}\label{log K energy}
\nu_\beta(\vphi)
&:=E_\beta(\vphi)
+J_{-\theta}(\vphi)+\frac{1}{V}\int_M (\mathfrak h+h_0)\om_0^n,
\end{align}
where $
\mathfrak h:=-(1-\b)\log |s|_h^2.
$
\end{defn}

When $\beta=1$, we choose $\theta=Ric(\omega_0)$. Then the log $K$-energy coincides with Mabuchi's $K$-energy, which will be denoted by,
\begin{align}\label{Mabuchienergy}
\nu_1(\vphi)
=E_1(\vphi)+
j_{-Ric(\om_0)}(\vphi)+\ul{S}_1\cdot D_{\om_0}(\vphi),
\end{align}
with the entropy term
\begin{align*}
E_1(\vphi)=\frac{1}{V}\int_M\log\frac{\om^n_\vphi}{\om_0^n}\om_\vphi^{n}.
\end{align*}

We denote $\nu_\beta(\omega_0,\omega_\vphi):=\nu_\beta(\vphi)$.
\begin{prop}
The log $K$-energy satisfies the co-cycle condition
\begin{align*}
\nu_\beta(\omega_1,\omega_3)=\nu_\beta(\omega_1,\omega_2)+\nu_\beta(\omega_2,\omega_3).
\end{align*}
\end{prop}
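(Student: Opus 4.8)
The plan is to prove the cocycle relation by realising $\nu_\beta$ as a primitive of the (twisted) scalar-curvature one-form on the space of K\"ahler cone potentials, so that the relation reduces to the additivity of a path integral whose integrand is intrinsic to the running metric. Throughout, write $\omega_j = \om_0 + i\p\bar\p\vphi_j$ for $j = 1,2,3$, and read $\nu_\beta(\omega_a,\omega_b)$ as the log $K$-energy with base point $\omega_a$ and endpoint $\omega_b$: by definition $\nu_\beta(\om_0,\om_\vphi) = \nu_\beta(\vphi)$, and more generally $\nu_\beta(\omega_a,\omega_b)$ is obtained from Definition \ref{logKenergy} by using $\omega_a$ as the reference metric and $\vphi_b - \vphi_a$ as the potential.

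First I would record the two structural facts driving the argument. The first is that $\nu_\beta$ vanishes on the diagonal, $\nu_\beta(\omega,\omega) = 0$: taking $\vphi = 0$ in \eqref{log K energy}, the entropy term $E_\beta(0) = \frac{1}{V}\int_M(-\mathfrak h - h_0)\om_0^n$ cancels exactly against the constant term $\frac{1}{V}\int_M(\mathfrak h + h_0)\om_0^n$, while $J_{-\theta}(0) = 0$ since both $j_{-\theta}$ and $D_{\om_0}$ carry a factor of the potential; the same cancellation holds with any base point. The second, and decisive, fact is the first-variation formula: along any smooth path $t\mapsto\psi_t$ of K\"ahler cone potentials,
\begin{equation*}
\p_t\,\nu_\beta(\om_0,\om_{\psi_t}) = -\frac{1}{V}\int_M \p_t\psi_t\,\bigl(S(\om_{\psi_t}) - \underline{S}_\beta\bigr)\,\om_{\psi_t}^n,
\end{equation*}
where $S(\om_{\psi_t})$ is the scalar curvature of the cone metric on the regular part $M$. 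This is precisely the Euler--Lagrange identity behind Definition \ref{csckconemetricdefn}, and I would derive it by combining the variation of the entropy $E_\beta$ with the given variation \eqref{derivatives of J chi} of $J_{-\theta}$. The key observation for the present proof is that the right-hand side depends only on the running metric $\om_{\psi_t}$ and its infinitesimal variation $\p_t\psi_t$, and \emph{not} on the choice of reference, so the identical formula holds with $\om_0$ replaced by any base point.

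With these in hand the cocycle relation follows by varying a single endpoint. Fix $\omega_1,\omega_2$ and choose a smooth path $t\mapsto\sigma_t$ of cone potentials with $\om_{\sigma_0} = \omega_2$ and $\om_{\sigma_1} = \omega_3$, which exists since $\mathcal H_\beta(\om_0)$ is convex. Set
\begin{equation*}
G(t) := \nu_\beta(\omega_1,\om_{\sigma_t}) - \nu_\beta(\omega_1,\omega_2) - \nu_\beta(\omega_2,\om_{\sigma_t}).
\end{equation*}
By the diagonal vanishing, $G(0) = \nu_\beta(\omega_1,\omega_2) - \nu_\beta(\omega_1,\omega_2) - \nu_\beta(\omega_2,\omega_2) = 0$; by the reference-independence of the first variation, $\p_t\nu_\beta(\omega_1,\om_{\sigma_t})$ and $\p_t\nu_\beta(\omega_2,\om_{\sigma_t})$ both equal $-\frac{1}{V}\int_M \p_t\sigma_t\,(S(\om_{\sigma_t}) - \underline{S}_\beta)\,\om_{\sigma_t}^n$, so $G'(t) \equiv 0$. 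Hence $G(1) = 0$, which is exactly the asserted relation $\nu_\beta(\omega_1,\omega_3) = \nu_\beta(\omega_1,\omega_2) + \nu_\beta(\omega_2,\omega_3)$. Note that I never need global path-independence of the integral, only that $G'$ vanishes pointwise along this one path.

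The step I expect to be the main obstacle is the rigorous justification of the first-variation formula in the cone setting. Unlike the smooth case, the computation takes place on the open manifold $M = X\setminus D$, and the variation of the entropy term produces Laplacian and Ricci-type expressions that must be integrated by parts; one has to verify that the boundary contributions on shrinking tubular neighbourhoods of $D$ vanish in the limit and that all integrands are absolutely integrable against $\om_{\psi_t}^n$ near $D$, using the quasi-isometry with the model cone metric. This is where the analytic input of \cite{MR4020314,arXiv:1803.09506} enters; once the first-variation formula is secured on $\mathcal H_\beta(\om_0)$ (and, if needed, extended to $\mathcal E^1$ via the $d_1$-approximation discussed after Definition \ref{E 1}), the algebraic part of the argument above is immediate.
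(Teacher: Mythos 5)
Your proof is correct, but note that the paper gives no proof of this proposition at all: it is stated as a known fact, implicitly inherited from \cite{MR4020314}. The implicit ``paper'' route is the purely algebraic one: starting from the explicit formula \eqref{log K energy}, each constituent functional ($E_\beta$, the $j_\chi$-functional, and $D_{\om_0}$) obeys an elementary change-of-reference identity, and summing these yields the cocycle relation by direct computation, with no analysis near $D$ whatsoever. Your route instead integrates the intrinsic first-variation formula $\p_t\nu_\beta(\om_a,\om_{\psi_t})=-\frac{1}{V}\int_M \p_t\psi_t\,\bigl(S(\om_{\psi_t})-\underline S_\beta\bigr)\,\om_{\psi_t}^n$ along a single path and combines it with diagonal vanishing; this is shorter and conceptually cleaner (it exhibits the cocycle condition as exactness of an intrinsic one-form), but it imports precisely the analytic point you flag, namely the justification of integration by parts across the cone singularity, which is the content of \cite[Lemma 3.5]{MR4020314} and which the term-by-term computation avoids entirely. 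Two details you should make explicit to close the argument: (i) when the base point $\omega_a$ is itself a cone metric, Definition \ref{logKenergy} does not literally apply, since its auxiliary data $\theta$, $h_0$, $\om_\theta$ are constructed from a smooth reference; either spell out the adapted reference data (with $Ric=\theta$ on $M$ and the same cohomological average $\underline\theta=\underline S_\beta$), or, more cleanly, observe that diagonal vanishing together with the intrinsic first variation uniquely determines $\nu_\beta(\omega_a,\cdot)$ on the connected set $\mathcal H_\beta$, so your argument proves the relation for any definition enjoying these two properties; (ii) the convexity of $\mathcal H_\beta(\om_0)$ used to produce the path $\sigma_t$ does hold for the paper's definition of K\"ahler cone metric (quasi-isometry to the model cone is preserved under convex combinations), but it deserves a sentence rather than being taken for granted.
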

The critical point of the log $K$-energy is the cscK cone metric (\cite[Lemma 3.5]{MR4020314}).

The following identity between log $K$-energy and $K$-energy was given in \cite{MR4020314}, we record it here.
\begin{prop}The relation between the log $K$-energy and the $K$-energy is given by
\begin{align*}
\nu_\beta(\vphi)&=\nu_1(\vphi)+(1-\beta)\frac{1}{V}\int_M\log|s|^2_h(\omega^n_\vphi-\om^n)\\
&+(1-\b)j_{\Theta_D}(\vphi) 
-(1-\b)\frac{C_1(L_D)\Om^{n-1}}{\Om^n}\cdot D_{\om_0}(\vphi).
\end{align*}
\end{prop}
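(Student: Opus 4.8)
The plan is to derive the identity on $\mathcal{H}_\beta(\om_0)$ directly from the definitions of $E_\beta$, $\nu_\beta$, $\nu_1$, $J_\chi$, $j_\chi$, $D_{\om_0}$ together with the cohomological normalisations \eqref{h0} and \eqref{eqavlgsc}; the only genuinely analytic ingredient is a single integration by parts on the regular part $M=X\setminus D$. First I would compare the two entropies. Since $\mathfrak h=-(1-\b)\log|s|_h^2=(2\b-2)\log|s|_h$, expanding the logarithm in $E_\beta$ gives, purely algebraically,
\[
E_\beta(\vphi)=E_1(\vphi)-\frac{1}{V}\int_M(\mathfrak h+h_0)\,\om_\vphi^n .
\]
Adding the constant term $\frac1V\int_M(\mathfrak h+h_0)\om_0^n$ that occurs in $\nu_\beta$, the entropy contribution becomes $E_1(\vphi)+\frac1V\int_M(\mathfrak h+h_0)(\om_0^n-\om_\vphi^n)$. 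I would then split this integral: the $\mathfrak h$-part immediately yields $(1-\b)\frac1V\int_M\log|s|_h^2(\om_\vphi^n-\om_0^n)$, which is exactly the first correction term in the statement.

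For the $h_0$-part I would use the telescoping identity $\om_\vphi^n-\om_0^n=\pbp\vphi\wedge\sum_{j=0}^{n-1}\om_\vphi^{j}\wedge\om_0^{n-1-j}$ and integrate by parts twice, moving $\pbp$ off $\vphi$ onto $h_0$, to obtain
\[
\frac1V\int_M h_0(\om_0^n-\om_\vphi^n)=-\,j_{\pbp h_0}(\vphi).
\]
Separately I would rearrange the term $J_{-\theta}(\vphi)$ of $\nu_\beta$ against the $j$- and $D$-terms of $\nu_1$. Writing $J_{-\theta}=-j_\theta+\underline\theta\,D_{\om_0}$ with $\underline\theta=\underline S_\b$ by \eqref{eqavlgsc}, using linearity of $\chi\mapsto j_\chi$, and substituting the curvature relation \eqref{h0} in the form $Ric(\om_0)-\theta=(1-\b)\Theta_D+\pbp h_0$, I get
\[
J_{-\theta}(\vphi)-j_{-Ric(\om_0)}(\vphi)=(1-\b)j_{\Theta_D}(\vphi)+j_{\pbp h_0}(\vphi)+\underline S_\b\,D_{\om_0}(\vphi).
\]

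Assembling everything, the term $j_{\pbp h_0}(\vphi)$ produced here cancels the one from the integration by parts, the term $(1-\b)j_{\Theta_D}(\vphi)$ matches the statement, and collecting the $D_{\om_0}$-coefficients leaves $\underline S_\b-\underline S_1$; this equals $-(1-\b)\,C_1(L_D)\Om^{n-1}/\Om^n$ by the elementary computation of averaged scalar curvatures from $C_1(X,D)=C_1(X)-(1-\b)C_1(L_D)$. This reproduces exactly the four terms of the claimed identity (with $\om^n=\om_0^n$).

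The main obstacle is analytic rather than algebraic: every integral above lives over the open manifold $M$, and both $\mathfrak h\sim\log|s|_h^2$ and $\log(\om_\vphi^n/\om_0^n)$ blow up along $D$, while $\om_\vphi^n$ itself is singular there. I would therefore justify, using the quasi-isometry of $\om_\vphi\in\mathcal{H}_\beta(\om_0)$ to Donaldson's model cone metric near $D$, that each integral converges and that the boundary contributions in the integration by parts vanish — concretely by excising a tubular $\varepsilon$-neighbourhood of $D$, applying Stokes' theorem on the complement, and letting $\varepsilon\to0$, the cone asymptotics forcing the boundary terms to zero. Once this convergence and boundary analysis is in place, the identity follows by collecting the pieces above.
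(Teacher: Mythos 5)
Your proposal is correct and follows essentially the same route as the paper's proof: split the log entropy into $E_1$ plus the $\mathfrak h$- and $h_0$-corrections, use the curvature identity \eqref{h0} to decompose $j_{-\theta}$ into $j_{-Ric(\om_0)}$, $(1-\b)j_{\Theta_D}$ and an $i\p\bar\p h_0$ term, cancel the latter against the $h_0$-integral via the same integration by parts, and collect the $D_{\om_0}$-coefficients using $\ul\theta=\ul S_\beta$. The only difference is that you make explicit the convergence and boundary-term analysis near $D$ (which the paper leaves implicit), a point of added rigor rather than a different argument.
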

\begin{proof}
Firstly, the entropy term is simplified to be
\begin{align*}
E_\beta(\vphi)&=\frac{1}{V}\int_M\log\frac{\om^n_\vphi}{\om_0^n|s|_h^{2\b-2}e^{h_0}}\om_\vphi^{n}\\
&=E_1(\vphi)-\frac{1}{V}\int_M\log(|s|_h^{2\b-2}e^{h_0})\om_\vphi^n.
\end{align*}
Then rearranging these terms, we have
\begin{align*}
&E_\beta(\vphi)+\frac{1}{V}\int_M (\mathfrak h+h_0)\om_0^n\\
&=E_1(\vphi)+\frac{1-\b}{V}\int_M\log|s|_h^{2}(\om_\vphi^n-\om_0^n)-\frac{1}{V}\int_M{h_0}(\om_\vphi^n-\om_0^n).
\end{align*}
Secondly, we compute
\begin{align*}
J_{-\theta}(\vphi)&:=j_{-\theta}(\vphi)+\ul{\theta}\cdot D_{\om_0}(\vphi).
\end{align*}
Since $\theta\in C_1(X,D)=C_1(X)-(1-\b)C_1(L_D)$, we have
\begin{align*}
\ul{\theta}
:=\frac{\int_X [\theta]\Om^{n-1}}{\int_X \Om^n}=\underline S_1-(1-\b)\frac{\int_X C_1(L_D)\Om^{n-1}}{\int_X \Om^n}.
\end{align*}
Then using \eqref{h0}, we see 
\begin{align*}
j_{-\theta}(\vphi)&:=-\frac{1}{V}\int_{M}\vphi
\sum_{j=0}^{n-1}\om_0^{j}\wedge
\om_\vphi^{n-1-j}\wedge \theta\\
&=-\frac{1}{V}\int_{M}\vphi
\sum_{j=0}^{n-1}\om_0^{j}\wedge
\om_\vphi^{n-1-j}\wedge [Ric(\om_0)-(1-\beta)\Theta_D-i\p\bar\p h_0].
\end{align*}
By using the definition of $j_{-Ric(\om_0)}$ and $j_{\Theta_D}$, it is equal to
\begin{align*}
j_{-Ric(\om_0)}(\vphi)+(1-\beta)j_{\Theta_D}(\vphi) 
+\frac{1}{V}\int_{M}\vphi
\sum_{j=0}^{n-1}\om_0^{j}\wedge
\om_\vphi^{n-1-j}\wedge i\p\bar\p h_0.
\end{align*}
Further computation shows that
\begin{align*}
&\frac{1}{V}\int_{M}\vphi
\sum_{j=0}^{n-1}\om_0^{j}\wedge
\om_\vphi^{n-1-j}\wedge i\p\bar\p h_0\\
&=\frac{1}{V}\int_{M} h_0 i\p\bar\p\vphi\wedge
\sum_{j=0}^{n-1}\om_0^{j}\wedge
\om_\vphi^{n-1-j}\\
&=\frac{1}{V}\int_{M} h_0 (\om_\vphi-\om)\wedge
\sum_{j=0}^{n-1}\om_0^{j}\wedge
\om_\vphi^{n-1-j}\\
&=\frac{1}{V}\int_{M} h_0 
\sum_{j=0}^{n-1}\om_0^{j}\wedge
\om_\vphi^{n-j}-\frac{1}{V}\int_{M} h_0 
\sum_{j=0}^{n-1}\om_0^{j+1}\wedge
\om_\vphi^{n-1-j}\\
&=\frac{1}{V}\int_{M} h_0 (\om_\vphi^{n}-\om_0^n).
\end{align*}
At last, we complete the proof by adding these identities together and making use of the formula of $K$-energy \eqref{Mabuchienergy}.
\end{proof}

In the following, we obtain an equivalent formula of the log $K$-energy.
\begin{prop}\label{K and log K}On the divisor $D$, we set the corresponding volume and the normalisation functional $D$ to be
\begin{align*}
\mathrm{Vol}(D)=\int_D \Omega^{n-1},\quad D_{\om_0,D}(\vphi)=\frac{n}{V}\int_0^1\int_D \p_t\vphi \om_\vphi^{n-1}dt.
\end{align*}
Then we have
\begin{align*}
\nu_\beta(\vphi)&=\nu_1(\vphi)+(1-\beta)\cdot[D_{\om_0,D}(\vphi)
-\frac{\mathrm{Vol}(D)}{V}\cdot D_{\om_0}(\vphi)].
\end{align*}
\end{prop}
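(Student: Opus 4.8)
The plan is to deduce this from the identity for $\nu_\beta-\nu_1$ established in the preceding proposition, which writes the difference as $(1-\beta)(T_1+T_2+T_3)$, where $T_1=\frac1V\int_M\log|s|_h^2(\om_\vphi^n-\om_0^n)$, $T_2=j_{\Theta_D}(\vphi)$, and $T_3=-\frac{C_1(L_D)\Om^{n-1}}{\Om^n}D_{\om_0}(\vphi)$. It then suffices to show the two identities $T_1+T_2=D_{\om_0,D}(\vphi)$ and $T_3=-\frac{\mathrm{Vol}(D)}{V}D_{\om_0}(\vphi)$; substituting these into the preceding formula and factoring out $(1-\beta)$ gives the claim.

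The term $T_3$ is immediate: since the class $C_1(L_D)$ is Poincar\'e dual to $[D]$, one has $\int_X C_1(L_D)\wedge\Om^{n-1}=\int_D\Om^{n-1}=\mathrm{Vol}(D)$, hence $\frac{C_1(L_D)\Om^{n-1}}{\Om^n}=\frac{\mathrm{Vol}(D)}{V}$. For $T_1+T_2$ I would argue variationally along the path $\vphi_t=t\vphi$, $t\in[0,1]$. At $t=0$ all of $T_1$, $T_2$ and $D_{\om_0,D}$ vanish (the first because $\om_{\vphi_0}=\om_0$, the others because the potential is $0$), so it is enough to match $t$-derivatives. Differentiating $T_1$, using $\frac{d}{dt}\om_{\vphi_t}^n=n\,i\p\bar\p\dot\vphi_t\wedge\om_{\vphi_t}^{n-1}$ and integrating by parts (the fixed piece $\int_M\log|s|_h^2\,\om_0^n$ is $t$-independent), gives $\frac{d}{dt}T_1=\frac nV\int_M\dot\vphi_t\,(i\p\bar\p\log|s|_h^2)\wedge\om_{\vphi_t}^{n-1}$; the Lelong--Poincar\'e formula, in the normalisation fixed by \eqref{Rictheta}, rewrites $i\p\bar\p\log|s|_h^2$ as the difference of the current of integration along $D$ and $\Theta_D$, turning this into $\frac nV\int_D\dot\vphi_t\,\om_{\vphi_t}^{n-1}-\frac nV\int_M\dot\vphi_t\,\Theta_D\wedge\om_{\vphi_t}^{n-1}$. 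On the other hand, combining \eqref{derivatives of J chi} with $\p_t D_{\om_0}=\frac1V\int_M\dot\vphi_t\,\om_{\vphi_t}^n$ and $J_\chi=j_\chi-\ul\chi\,D_{\om_0}$ yields $\p_t j_\chi(\vphi_t)=\frac nV\int_M\dot\vphi_t\,\chi\wedge\om_{\vphi_t}^{n-1}$, so with $\chi=\Theta_D$ we get $\frac{d}{dt}T_2=\frac nV\int_M\dot\vphi_t\,\Theta_D\wedge\om_{\vphi_t}^{n-1}$. Adding, the two $\Theta_D$-integrals cancel and $\frac{d}{dt}(T_1+T_2)=\frac nV\int_D\dot\vphi_t\,\om_{\vphi_t}^{n-1}$, which is precisely $\frac{d}{dt}D_{\om_0,D}(\vphi_t)$ by the definition of $D_{\om_0,D}$ as the primitive of this one-form; integrating over $[0,1]$ gives $T_1+T_2=D_{\om_0,D}(\vphi)$.

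The main obstacle is analytic, not formal: justifying the integration by parts when $u=\log|s|_h^2\to-\infty$ along $D$ and $\om_\vphi$ is merely a K\"ahler cone metric. One must use Lelong--Poincar\'e as an identity of currents and control the boundary contribution so that it converges to $\frac nV\int_D\dot\vphi_t\,\om_{\vphi_t}^{n-1}$; this is most safely carried out through Bedford--Taylor theory for potentials in $\mathcal E^1$, or by regularising (for instance truncating $u$ to $\max(u,-k)$, or using Donaldson's model approximation $\om_0+\delta\,i\p\bar\p|s|^{2\beta}_h$ recalled above) and passing to the limit. One also needs $\om_\vphi^{n-1}|_D$ to be well defined, which holds because the tangential part of a cone metric of angle $2\pi\beta$ restricts to a genuine K\"ahler metric on $D$. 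Finally, the normalisation constant entering Lelong--Poincar\'e appears identically in $\int_X C_1(L_D)\wedge\Om^{n-1}=\mathrm{Vol}(D)$, so it affects $T_1+T_2$ and $T_3$ in the same way and is consistent with the stated form of the identity.
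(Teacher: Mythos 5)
Your proposal is correct and is essentially the paper's own proof read in the opposite direction: the paper starts from $D_{\om_0,D}(\vphi)$ and rewrites the path integral via Poincar\'e--Lelong, the derivative formula \eqref{derivatives of J chi} for $j_{\Theta_D}$, and integration by parts to obtain $\frac{1}{V}\int_M\log|s|^2_h(\om^n_\vphi-\om_0^n)+j_{\Theta_D}(\vphi)$, while you differentiate that sum along the linear path and match integrands before integrating back -- the same three ingredients, identically deployed. The extra remarks on the $T_3$ coefficient (Poincar\'e duality) and on justifying the integration by parts across $D$ are points the paper leaves implicit, but they do not change the method.
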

\begin{proof}
We compute that by using the Poincar\'e--Lelong equation (c.f.~\cite[Section 2.3.1]{arXiv:1803.09506})
\begin{align*}
\frac{n}{V}\int_0^1\int_D \p_t\vphi \om_\vphi^{n-1}dt
=\frac{n}{V}\int_0^1\int_X [i\p\bar\p\log|s|^2_h+\Theta_D] \p_t\vphi \om_\vphi^{n-1}dt.
\end{align*}
By \eqref{derivatives of J chi}, it is equal to
\begin{align*}
\frac{n}{V}\int_0^1\int_X i\p\bar\p\log|s|^2_h \p_t\vphi \om_\vphi^{n-1}dt+j_{\Theta_D}(\vphi) .
\end{align*}
We further compute that
\begin{align*}
\frac{n}{V}\int_0^1\int_X i\p\bar\p\log|s|^2_h \p_t\vphi \om_\vphi^{n-1}dt
&=\frac{n}{V}\int_0^1\int_X \log|s|^2_h i\p\bar\p\p_t\vphi \om_\vphi^{n-1}dt\\
&=\frac{n}{V}\int_0^1\int_M\log|s|^2_h(\omega^n_\vphi-\om^n)dt,
\end{align*}
which completes the proof.
\end{proof}

The linearity of the log $K$-energy follows directly from this formula.
\begin{thm}\label{Log K energy linear}
The log $K$-energy is linear in the cone angle $\beta$. Precisely, given $\beta_1\leq \beta_2$ and letting $\beta=(1-s)\beta_1+s\beta_2$, we have
\begin{align}
\nu_{\beta}=(1-s)\nu_{\beta_1}+s\nu_{\beta_2}.
\end{align} 
\end{thm}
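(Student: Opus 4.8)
The plan is to read the result off directly from Proposition~\ref{K and log K}, which is the only substantive input and which the preceding text already flags as the source. That proposition expresses the log $K$-energy as
\[ \nu_\beta(\vphi) = \nu_1(\vphi) + (1-\beta)\,G(\vphi), \qquad G(\vphi) := D_{\om_0,D}(\vphi) - \frac{\mathrm{Vol}(D)}{V}\, D_{\om_0}(\vphi). \]
The crucial observation is that the entire $\beta$-dependence of the right-hand side is concentrated in the scalar prefactor $(1-\beta)$: the three ingredients $\nu_1(\vphi)$ (Mabuchi's $K$-energy, i.e.~the $\beta=1$ case), $D_{\om_0,D}(\vphi)$, and $D_{\om_0}(\vphi)$, together with the constants $\mathrm{Vol}(D)=\int_D\Omega^{n-1}$ and $V=\int_X\om_0^n$, are all defined purely in terms of $\om_0$, $\om_\vphi$, and $D$, with no reference to the cone angle. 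Hence $G(\vphi)$ is independent of $\beta$, and for each fixed potential $\vphi$ the map $\beta \mapsto \nu_\beta(\vphi)$ is an \emph{affine} function of $\beta$.

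Given this, linearity under convex combinations is a one-line computation. Writing $\beta=(1-s)\beta_1+s\beta_2$, I would first note that the prefactor splits as
\[ 1-\beta = (1-s)(1-\beta_1) + s(1-\beta_2), \]
which is immediate from $(1-s)+s=1$. Substituting this into the affine formula and using the same identity $(1-s)+s=1$ to distribute the $\beta$-independent term $\nu_1(\vphi)$, I obtain
\[ \nu_\beta(\vphi) = (1-s)\big[\nu_1(\vphi)+(1-\beta_1)G(\vphi)\big] + s\big[\nu_1(\vphi)+(1-\beta_2)G(\vphi)\big] = (1-s)\nu_{\beta_1}(\vphi) + s\nu_{\beta_2}(\vphi), \]
where the last equality is again Proposition~\ref{K and log K} applied at the angles $\beta_1$ and $\beta_2$. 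Since this holds for every $\vphi \in \mathcal H_\beta(\om_0)$, the claimed identity of functionals follows.

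Because all the analytic content — in particular the Poincar\'e--Lelong computation that produces the factorised formula — is already contained in Proposition~\ref{K and log K}, there is no genuine obstacle here; the statement is essentially the remark that an affine function respects affine combinations. The only point that deserves a moment's care is the claim that $G(\vphi)$ carries no hidden $\beta$-dependence, which I would confirm simply by inspecting the definitions of $D_{\om_0,D}$ and $D_{\om_0}$ and noting that the reference form $\om_0$ and the path $\om_\vphi$ are fixed independently of the cone angle. I would not reprove the factorisation itself, as it is the immediate predecessor in the text.
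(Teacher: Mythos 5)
Your proposal is correct and is exactly the paper's argument: the paper proves Theorem~\ref{Log K energy linear} by the one-line observation that it "follows directly" from the factorised formula of Proposition~\ref{K and log K}, in which all $\beta$-dependence sits in the affine prefactor $(1-\beta)$. Your write-up simply spells out the elementary affine-combination computation that the paper leaves implicit.
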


%%%%%%%%%%%%%%%%%%%%%%%%%%%%%%%%%%%%%%%%%%%%%%%%%%%%%%%%%%%%%%%%%%%%%%%%%%%%%%%%%%%%%%%%%%%%%%%%%%%%%%%%%%%%%%%%%%%%%%%%
\subsubsection{Uniqueness of cscK cone metrics and lower bound of the log $K$-energy}

\begin{thm}\emph{(\cite[Theorem 1.10]{MR4020314})}\label{uniqueness of cscK cone metric}
The cscK cone metric $\om_{cscK}$ with cone angle $2\pi\beta$ is unique up to automorphisms.
\end{thm}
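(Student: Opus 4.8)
The plan is to prove uniqueness by the variational method, adapting the Berman--Berndtsson strategy for cscK metrics to the conical setting. The crucial structural fact, recorded just before the statement, is that a cscK cone metric is exactly a critical point of the log $K$-energy $\nu_\beta$, which is defined on $\mathcal H_\beta(\om_0)$ and extends to the completion $\mathcal{E}^1$. The entire argument rests on a single analytic input: the \emph{convexity} of $\nu_\beta$ along weak geodesics in $\mathcal{E}^1$. Granting convexity, a critical point is automatically a global minimizer, and two minimizers are forced to lie on a level geodesic, whose rigidity then produces the desired automorphism.

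First I would fix two cscK cone metrics $\om_{\vphi_0}$ and $\om_{\vphi_1}$ of the same angle $2\pi\beta$ and join $\vphi_0,\vphi_1$ by the weak geodesic $t \mapsto \vphi_t$, $t \in [0,1]$, in $\mathcal{E}^1$, obtained as the envelope solving the homogeneous complex Monge--Amp\`ere equation on $X \times \{1 \le |w| \le e\}$; since $\mathcal{H}_\beta(\om_0) \subset \mathcal{E}^1$, both endpoints lie in the space where this geodesic is defined. Next I would establish convexity of $t \mapsto \nu_\beta(\vphi_t)$. Decomposing $\nu_\beta = E_\beta + J_{-\theta} + \mathrm{const}$ as in Definition \ref{logKenergy}, the functional $J_{-\theta} = j_{-\theta} + \underline\theta \cdot D_{\om_0}$ is controlled along geodesics by the affineness of the Monge--Amp\`ere energy $D_{\om_0}$ together with positivity of the relevant Monge--Amp\`ere currents, while the log entropy $E_\beta$ is convex by the conical analogue of the Berman--Berndtsson entropy-convexity theorem. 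With convexity in hand, each cscK cone metric is a minimizer of $\nu_\beta$, so $\nu_\beta(\vphi_0) = \nu_\beta(\vphi_1)$ equals the minimum; a convex function on $[0,1]$ that attains its minimum at both endpoints is constant, hence $t \mapsto \nu_\beta(\vphi_t)$ is \emph{affine}.

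The final step is rigidity. Affineness of $\nu_\beta$ forces the entropy $E_\beta$ itself to be affine along $\vphi_t$, and the characterization of the equality case in the entropy convexity (again the conical version of Berman--Berndtsson) implies that the weak geodesic is nondegenerate and smooth on the regular part $M = X \setminus D$, with $\nabla^{1,0}\dot\vphi_t$ a genuine holomorphic vector field $V$ generating the geodesic via its flow. The conical structure enters decisively here: to preserve the cone angle $2\pi\beta$ along $D$ at every time $t$, the field $V$ must be tangent to $D$, so that $V \in \mathfrak{aut}((X,L);D)$ and its flow lies in $G$. The time-one flow $g \in G$ then satisfies $g^*\om_{\vphi_1} = \om_{\vphi_0}$, which is precisely uniqueness up to automorphisms in $G$.

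The main obstacle is the convexity of the log entropy $E_\beta$ along geodesics in the cone setting, together with the analysis of its equality case. Even in the smooth case this is a deep theorem, and the conical case is more delicate because the reference measure $\om_0^n |s|_h^{2\beta-2} e^{h_0}$ degenerates along $D$. I would handle this through a regularization scheme, approximating the cone metric by Donaldson's model potentials $\om_0 + \delta\, i\p\bar\p |s|_h^{2\beta}$ (used above in the discussion of $\mathcal{E}^1$) and passing to the limit while keeping uniform control of the entropy and of the behaviour of the optimal geodesic near $D$. Extracting the holomorphic vector field in the equality case and verifying its tangency to the divisor is the other sensitive point, since it requires regularity of the geodesic exactly along the locus where the metric is singular.
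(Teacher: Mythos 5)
This theorem is not proved in the present paper at all; it is quoted from \cite{MR4020314}, and your proposal follows essentially the same route as the proof given there: convexity of the log $K$-energy along a geodesic joining the two cscK cone potentials, the critical-point-implies-minimizer step, and rigidity in the equality case producing a holomorphic vector field tangent to $D$ whose time-one flow realises the automorphism. The only substantive difference is in the technical implementation: \cite{MR4020314} works with cone geodesics whose regularity and asymptotic behaviour near $D$ are established directly (via the expansion formulas of \cite{MR3911741} and the Hölder spaces $C^{2,\alpha,\beta}$), rather than weak $\mathcal{E}^1$ geodesics plus a regularisation scheme, and this is exactly where the hard analysis you defer (conical entropy convexity and its equality case) is actually carried out.
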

%%%%%%%%%%%%%%%%%%%%%%%%%%%%%%%%%%%%%%%%%%%%%%%%%%%%%%%%%%%%%%%%%%%%%%%%%%%%%%%%%%%%%%%%%%%%%%%%%%%%%%%%%%%%%%%%%%%%%%%%

The lower bound of the log $K$-energy is obtained from uniqueness \thmref{uniqueness of cscK cone metric} and convexity of the log $K$-energy along the cone geodesic.
\begin{thm}\emph{(\cite[Lemma 3.6]{MR4020314})}\label{Lower bound}
If a K\"ahler pair $((X,\Omega),D)$ admits a cscK cone metric $\om_{cscK}$ with cone angle $2\pi\beta$, then the log $K$-energy $\nu_\beta$ archives minimum at $\om_{cscK}$.
\end{thm}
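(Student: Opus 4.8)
The plan is to combine the two ingredients named in the statement, namely that $\om_{cscK}$ is a critical point of $\nu_\b$ (recorded just above, following \cite[Lemma 3.5]{MR4020314}) and the convexity of $\nu_\b$ along cone geodesics. Fix an arbitrary competitor $\vphi \in \mathcal H_\b(\om_0)$, and more generally allow $\vphi \in \cE^1(X,\om_0)$ since, as noted in the excerpt, $\cE^1$ is the $d_1$-completion of $\mathcal H_\b(\om_0)$. Let $\vphi_0$ be a potential for $\om_{cscK}$, join $\vphi_0$ to $\vphi$ by the weak cone geodesic $\{\vphi_t\}_{t\in[0,1]}$ in $\cE^1$ with endpoints $\vphi_0$ and $\vphi_1=\vphi$, and set $g(t):=\nu_\b(\vphi_t)$. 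The whole statement then reduces to proving $g(1)\ge g(0)$.

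First I would invoke the convexity of the log $K$-energy along cone geodesics, which is the central analytic input and belongs to the same framework of \cite{MR4020314} that yields \thmref{uniqueness of cscK cone metric}; this gives that $g$ is convex on $[0,1]$. Next I would compute the one-sided derivative $g'(0^+)$ from the first variation of $\nu_\b$, which has the form $\p_t\nu_\b(\vphi_t)=\frac{1}{V}\int_M \p_t\vphi_t\,(\ul S_\b-S(\om_{\vphi_t}))\,\om_{\vphi_t}^n$, where $S(\om_{\vphi_t})$ denotes the scalar curvature on the regular part $M=X\setminus D$. Since $\om_{cscK}$ is a cscK cone metric, $S(\om_{cscK})=\ul S_\b$ on $M$, so the integrand vanishes at $t=0$ and hence $g'(0^+)=0$. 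Convexity then gives $g(1)\ge g(0)+g'(0^+)\cdot 1=g(0)$, i.e. $\nu_\b(\vphi)\ge \nu_\b(\om_{cscK})$. As $\vphi$ was arbitrary, $\om_{cscK}$ is a global minimiser of $\nu_\b$, and \thmref{uniqueness of cscK cone metric} identifies it (up to automorphisms in $G$) as the unique such minimiser.

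The hard part is the rigorous justification of the two steps along the weak geodesic $\vphi_t$, which is only $C^{1,1}$ in the interior and whose associated metrics degenerate along $D$. Establishing convexity of $g$ and validating the first-variation computation of $g'(0^+)$ require care near $D$: one must control the boundary contributions coming from the Poincar\'e--Lelong equation and argue that the distributional part of the scalar curvature of $\om_{cscK}$ supported on $D$ produces no extra term in $g'(0^+)$. This is exactly where the cone refinements of the Berndtsson-type convexity and the uniqueness machinery of \thmref{uniqueness of cscK cone metric} from \cite{MR4020314} do the real work, so I would lean on those cited results rather than reprove them; the remaining bookkeeping (monotonicity of $g$ and the convex inequality $g(1)\ge g(0)$) is then routine.
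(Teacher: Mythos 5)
Your proposal is correct and takes essentially the same route as the paper, which offers no independent proof of this statement: it cites it directly as \cite[Lemma 3.6]{MR4020314}, remarking only that the lower bound follows from the convexity of the log $K$-energy along cone geodesics together with the uniqueness machinery of \thmref{uniqueness of cscK cone metric}. Your argument — vanishing of the endpoint derivative at the critical point $\om_{cscK}$ plus geodesic convexity, with the genuinely hard regularity and boundary analysis near $D$ deferred to the cited results of \cite{MR4020314} — is precisely the standard argument underlying that cited lemma, so there is nothing to object to beyond what you already flag yourself.
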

%The log $K$-energy is then rewritten in terms of Mabuchi $K$-energy $\nu_1$.
%\begin{lem}\label{log K energy and k}
%The log $K$-energy is written as the following formula
%\begin{align*}
%\nu_\beta(\vphi)&=\nu_1(\vphi) +(1-\beta)J_{\Theta_D}(\vphi)+\frac{1}{V}\int_M \mathfrak h(\om_0^n-\om_\vphi^n).
%\end{align*}
%In which, $\Theta_D=i\p\bar\p\log h$ is the curvature form of the Hermitian metric $h$ on the line bundle $L_D$ and $\mathfrak h:=-(1-\b)\log |s|_h^2$.
%\end{lem}
%%%%%%%%%%%%%%%%%%%%%%%%%%%%%%%%%%%%%%%%%%%%%%%%%%%%%%%%%%%%%%%%%%%%%%%%%%%%%%%%%%%%%%%%%%%%%%%%%%%%%%%%%%%%%%%%%%%%%%%%

%%%%%%%%%%%%%%%%%%%%%%%%%%%%%%%%%%%%%%%%%%%%%%%%%%%%%%%%%%%%%%%%%%%%%%%%%%%%%%%%%%%%%%%%%%%%%%%%%%%%%%%%%%%%%%%
\subsection{Automorphism group and log Futaki invariant}
%\subsubsection{Automorphism group}
We start with the following definition of automorphisms on projective variety.

\begin{defn}
	Let $(X,L)$ be a polarised smooth projective variety. We write $\mathrm{Aut}_0 (X,L)$ for the identity component of the group of biholomorphic automorphisms on $X$ whose action lifts to the total space of $L$, which is known to be a linear algebraic group. We write $\mathfrak{aut} (X,L)$ for its Lie algebra. 
\end{defn}

When we consider a polarised pair, a more appropriate definition is given as follows.

\begin{defn} \label{dfautxld}
	When $D \subset X$ be a smooth effective divisor, we write $\mathrm{Aut}_0 ((X,L);D)$ for the subgroup of $\mathrm{Aut}_0 (X,L)$ which preserves $D$; its Lie algebra $\mathfrak{aut} ((X,L);D)$ consists of holomorphic vector fields on $X$ that are tangential to $D$.
\end{defn}

%%%%%%%%%%%%%%%%%%%%%%%%%%%%%%%%%%%%%%%%%%%%%%%%%%%%%%%%%%%%%%%%%%%%%%%%%%%%%%%%%%%%%%%%%%%%%%%%%%%%%%%%%%%%%%%%%%%%%%%%
%\subsubsection{Log Futaki invariant}

We now recall the definition of the log Futaki invariant, which is defined for a holomorphic vector field and gives an obstruction to the existence of cscK cone metrics. Let $v$ be an element of $H^0 (X , T_X)$ which admits a holomorphy potential; recall that $\theta_v$ is said to be a holomorphy potential of $v$ if it satisfies
\begin{equation*}
	i_v\om = i \bar\p\theta_v .
\end{equation*}
The sign convention for the right hand side varies in the literature, but we fix the one as above. It is well-known that $ v \in H^0(X,T_X)$ admits a holomorphy potential if and only if the real part of $v$ is an element of $\mathfrak{aut} (X,L)$.

\begin{defn}
	The \textbf{Futaki invariant} of $v \in H^0 (X , T_X)$ with the holomorphy potential $\theta_v$ is defined by
	\begin{equation*}
		\mathrm{Fut} (v) := -  \int_X \theta_v (S(\omega) - \underline{S}_1) \frac{\omega^n}{n!},
	\end{equation*}
	where $\underline{S}_1$ is as defined in (\ref{eqavlgsc}). For a smooth effective divisor $D \subset X$, the \textbf{log Futaki invariant} with cone angle $2 \pi \beta$ is defined by
	\begin{align}\label{Log DF invariant}
\mathrm{Fut}_{D , \beta}(v) := \frac{1}{2 \pi} \mathrm{Fut}(v )+(1-\beta) \left( \int_{D}\theta_v \frac{\omega^{n-1}}{(n-1)!} -\frac{\mathrm{Vol}(D)}{V}\int_X\theta_v \frac{\omega^n}{n!} \right).
\end{align}
\end{defn}

While the Futaki invariant and the log Futaki invariant are both defined with respect to a K\"ahler form $\omega$ on $X$, it only depends on its cohomology class $[\omega]$, as proved in \cite{MR718940,MR2975584}.

Comparing with Proposition \ref{K and log K}, we see that
the log Futaki invariant is the gradient of the log $K$-energy.

We finish off this section by providing some auxiliary results on the log Futaki invariant and the automorphism group which we use later.

\begin{prop} \label{ppcnaglfut}
Suppose that there exists $v \in \mathfrak{aut} ((X,L);D)$ such that
\begin{equation} \label{dennonzero}
	\int_{D}\theta_v \frac{\omega^{n-1}}{(n-1)!} -\frac{\mathrm{Vol}(D)}{V}\int_X\theta_v \frac{\omega^n}{n!}  \neq 0.
\end{equation}
Then the cone angle $2\pi\beta$ of the cscK cone metric, if exists, is given by
\begin{align}\label{beta D}
\beta = 1+ \mathrm{Fut} (v)\left( \int_{D}\theta_v \frac{\omega^{n-1}}{(n-1)!} -\frac{\mathrm{Vol}(D)}{V}\int_X\theta_v \frac{\omega^n}{n!} \right)^{-1}.
%(1-\beta)=\frac{-Fut(\Om)+\frac{Vol(D)}{Vol(V)}\int_X\theta_v\om^n}{\int_{D}\theta_v\om^{n-1}}.
\end{align}
\end{prop}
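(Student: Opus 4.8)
The plan is to exploit the fact, stated just before the proposition, that the log Futaki invariant vanishes on $\mathfrak{aut}((X,L);D)$ whenever a cscK cone metric exists, since $\mathrm{Fut}_{D,\beta}$ is the gradient of the log $K$-energy $\nu_\beta$ and $\nu_\beta$ attains its minimum at the cscK cone metric (Theorem~\ref{Lower bound}). Concretely, suppose a cscK cone metric $\om_{cscK}$ of angle $2\pi\beta$ exists, and take the given $v \in \mathfrak{aut}((X,L);D)$ satisfying \eqref{dennonzero}. The first step is to argue that $\mathrm{Fut}_{D,\beta}(v) = 0$. This is because $v$ generates a one-parameter subgroup of $G$ preserving $D$, so the minimiser $\om_{cscK}$ of $\nu_\beta$ is critical along the flow of $v$; since the log Futaki invariant computes precisely the derivative of $\nu_\beta$ in the direction of the holomorphy potential $\theta_v$, criticality forces this derivative to vanish.

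Once $\mathrm{Fut}_{D,\beta}(v) = 0$ is in hand, the remaining work is purely algebraic. I would write out the definition \eqref{Log DF invariant},
\begin{equation*}
\mathrm{Fut}_{D , \beta}(v) = \frac{1}{2 \pi} \mathrm{Fut}(v) + (1-\beta) \left( \int_{D}\theta_v \frac{\omega^{n-1}}{(n-1)!} - \frac{\mathrm{Vol}(D)}{V}\int_X\theta_v \frac{\omega^n}{n!} \right),
\end{equation*}
set the left-hand side to zero, and solve for $\beta$. Writing the bracketed quantity as a single nonzero number (which is exactly the content of hypothesis \eqref{dennonzero}), I can divide through by it and rearrange to isolate $\beta$, obtaining
\begin{equation*}
\beta = 1 + \frac{1}{2\pi}\mathrm{Fut}(v) \left( \int_{D}\theta_v \frac{\omega^{n-1}}{(n-1)!} - \frac{\mathrm{Vol}(D)}{V}\int_X\theta_v \frac{\omega^n}{n!} \right)^{-1}.
\end{equation*}
Comparing with \eqref{beta D} shows these agree provided the normalisation of $\mathrm{Fut}$ in that displayed formula already absorbs the factor $1/(2\pi)$; I would check that the $\mathrm{Fut}$ appearing in \eqref{beta D} is understood with the same $\frac{1}{2\pi}$ convention as in the definition of $\mathrm{Fut}_{D,\beta}$, so that the stated formula is literally correct.

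The step I expect to be the genuine (if modest) obstacle is the justification that $\mathrm{Fut}_{D,\beta}(v) = 0$, since one must be careful that the classical identification of the Futaki-type invariant with the derivative of the $K$-energy continues to hold in the cone setting, where $\om_{cscK}$ is singular along $D$ and the functionals are defined on $\mathcal{E}^1$ rather than on smooth potentials. The remark in the excerpt that ``the log Futaki invariant is the gradient of the log $K$-energy,'' together with the convexity and minimisation properties recorded in Theorem~\ref{Lower bound}, should supply exactly what is needed: the flow generated by $v$ moves along a geodesic in $\mathcal{H}_\beta$ (or its completion) through the minimiser, and convexity forces the first derivative of $\nu_\beta$ there to vanish. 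Everything after establishing that vanishing is a one-line algebraic manipulation, and I would present it as such rather than belabouring the arithmetic.
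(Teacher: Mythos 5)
Your proposal is correct and follows essentially the same route as the paper: the existence of the cscK cone metric bounds $\nu_\beta$ below (\thmref{Lower bound}), the log Futaki invariant is the constant slope of $\nu_\beta$ along the flow generated by $v$, hence $\mathrm{Fut}_{D,\beta}(v)=0$ — the paper phrases this via the antisymmetry $\mathrm{Fut}_{D,\beta}(-v)=-\mathrm{Fut}_{D,\beta}(v)$ together with the lower bound rather than criticality at the minimiser, but this is the same point — and the rest is solving \eqref{Log DF invariant} for $\beta$. Your flag on the normalisation is also apt: taken literally, setting \eqref{Log DF invariant} to zero gives $\beta = 1+\frac{1}{2\pi}\mathrm{Fut}(v)\left( \int_{D}\theta_v \frac{\omega^{n-1}}{(n-1)!} -\frac{\mathrm{Vol}(D)}{V}\int_X\theta_v \frac{\omega^n}{n!} \right)^{-1}$, so \eqref{beta D} as displayed omits the factor $\frac{1}{2\pi}$, a discrepancy the paper's own one-line proof does not address.
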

\begin{proof}
From coercivity of the log $K$-energy, we see that it is bounded below by \thmref{Lower bound}.
The derivative of the log $K$-energy $\nu_\beta$ is log Futaki invariant $\mathrm{Fut}_{D,\beta}$. So $\mathrm{Fut}_{D,\beta}$ must vanish, by noting $\mathrm{Fut}_{D,\beta} (-v) = - \mathrm{Fut}_{D,\beta} (v)$. Then by \eqref{Log DF invariant}, the cone angle $2\pi\beta$ must satisfy the identity \eqref{beta D}.
\end{proof}

The condition (\ref{dennonzero}) is not vacuous, and satisfied e.g.~for the case considered in \cite[Theorem 1.7]{MR3978320}. The above lemma shows that, when there is a nontrivial holomorphic vector field $v$ that preserves $D$ the cone angle is uniquely determined by the vanishing of the log Futaki invariant (as long as $v$ is ``generic'' in the sense that it satisfies (\ref{dennonzero})).

On the other hand, such cases happen only if we choose the divisor $D$ to be non-generic, to the extent that $D$ is preserved by a nontrivial holomorphic vector field at all. Indeed we expect $\mathrm{Aut}_0((X,L);D)$ to be trivial for a generically chosen divisor $D$, even when $\mathrm{Aut}_0 (X,L)$ is not. We partially confirm this expectation by giving a simple sufficient condition for the triviality of $\mathrm{Aut}_0((X,L);D)$ in the following proposition.

\begin{proposition} \label{ppautmori}
	Let $A$ be an ample line bundle on $X$. Then there exists $m_1 \in \mathbb{N}$ depending only on $X$ and $A$ such that for all $m \ge m_1$ we have $\mathrm{Aut}_0((X,L);D_m) = 0$ for a generic member $D_m$ of the linear system $|mA|$.
\end{proposition}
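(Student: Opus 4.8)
The plan is to show that the locus of \textbf{bad} divisors in $|mA|$, namely those preserved by some nonzero holomorphic vector field, is a proper closed subvariety once $m$ is large, and then to deduce triviality of $\mathrm{Aut}_0((X,L);D_m)$ from vanishing of its Lie algebra. Since $\mathfrak{aut}((X,L);D_m)\subseteq H^0(X,T_X)$ consists of holomorphic vector fields tangent to $D_m$, and $H^0(X,T_X)$ is finite dimensional (of dimension $r$, depending only on $X$; if it is $0$ there is nothing to prove), it suffices to prove that for generic $D_m$ no nonzero $v\in H^0(X,T_X)$ is tangent to $D_m$. First I would introduce the incidence variety
\[
I_m=\{([v],[s])\in \mathbb{P}(H^0(X,T_X))\times|mA| : v \text{ is tangent to } \{s=0\}\},
\]
whose image under the second projection is precisely the bad locus, and estimate $\dim I_m$.

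The key reformulation is that tangency is an eigenvector condition. If $v$ is tangent to $D=\{s=0\}$, then the flow $\exp(tv)\in\mathrm{Aut}_0(X)$ (complete, as $X$ is compact) preserves $D$, hence preserves the class of $mA=\mathcal{O}_X(D)$. Because $\mathrm{Aut}_0(X)$ is connected it acts trivially on cohomology, so $g\mapsto g^{*}(mA)\otimes (mA)^{-1}$ is a homomorphism into the complex torus $\mathrm{Pic}^0(X)$; preserving $mA$ therefore forces $v$ into the Lie algebra $\mathfrak{g}_A\subseteq H^0(X,T_X)$ of the kernel, and for $v\in\mathfrak{g}_A$ the flow lifts to $H^0(X,mA)$, yielding a derivation $\tilde v$ (well defined up to an additive scalar). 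Then $v$ is tangent to $\{s=0\}$ if and only if $\exp(tv)^{*}s\in\mathbb{C}^{*}s$, i.e.\ $s$ is an eigenvector of $\tilde v$. Consequently the fibre of $I_m$ over a fixed $[v]\neq0$ is the finite union $\bigcup_\lambda\mathbb{P}(E_\lambda(\tilde v))$ of projectivised eigenspaces; the defining condition is Zariski closed, so $I_m$ is projective and its image is closed.

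The heart of the matter is a uniform bound $\dim E_\lambda(\tilde v)=O(m^{n-1})$, valid for all $v\neq0$ and all $\lambda$, with implied constant depending only on $X$ and $A$. Indeed, if $s_0,\dots,s_{d-1}$ is a basis of a single eigenspace $E_\lambda$, the ratios $s_i/s_0$ are rational functions invariant under the flow of $v$; as $v\neq0$ has generic orbits of dimension $\ge1$, these invariants have transcendence degree $\le n-1$, so the associated rational map $X\dashrightarrow\mathbb{P}(E_\lambda^{*})$ has irreducible, nondegenerate image $Y$ with $\dim Y\le n-1$. The classical bound $\deg Y\ge \mathrm{codim}\,Y+1$ gives $d\le \deg Y+(n-1)$, while an elementary intersection estimate bounds $\deg Y$ by $(mA)^{\dim Y}\cdot A^{\,n-\dim Y}\le (A^n)\,m^{\,n-1}$. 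Feeding this in yields
\[
\dim I_m\le (r-1)+\max_{v,\lambda}\bigl(\dim E_\lambda(\tilde v)-1\bigr)\le (r-1)+(A^n)\,m^{\,n-1}+(n-2),
\]
which is strictly smaller than $\dim|mA|=h^0(X,mA)-1\sim \tfrac{A^n}{n!}\,m^{\,n}$ once $m\ge m_1=m_1(X,A)$. For such $m$ the bad locus is proper, a generic $D_m$ is preserved by no nonzero holomorphic vector field, so $\mathfrak{aut}((X,L);D_m)=0$ and the connected group $\mathrm{Aut}_0((X,L);D_m)$ is trivial; note $m_1$ depends only on $X$ and $A$, not on $L$.

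I expect the main obstacle to be the uniform eigenspace estimate of the previous paragraph: the rest is formal, but controlling $\dim E_\lambda(\tilde v)$ simultaneously over the whole noncompact family of vector fields $v$---in particular ruling out a field whose semi-invariant sections fill up a codimension-$O(1)$ subspace of $H^0(X,mA)$---is exactly what forces $m$ to be large and is the only place where the growth $h^0(X,mA)\sim m^{n}$ is genuinely used. A secondary technical point, dispatched quickly, is the passage through $\mathrm{Pic}^0(X)$ guaranteeing that a tangent field lifts to a derivation of $H^0(X,mA)$; this is automatic when $H^1(X,\mathcal{O}_X)=0$, and in general follows since the image of the connected group $\mathrm{Aut}_0(X)$ in the torus $\mathrm{Pic}^0(X)$ meets its finite $m$-torsion trivially near the identity.
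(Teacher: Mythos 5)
Your proposal is correct in its essentials, but it takes a genuinely different route from the paper's. The paper argues in two qualitative steps: first, Mori's cone theorem makes $K_X+(n+2)A$ ample, so for $m$ large a generic $D_m\in|mA|$ is smooth by Bertini and $K_{D_m}=(K_X+mA)|_{D_m}$ is ample by adjunction, whence $H^0(D_m,T_{D_m})=0$ by Kobayashi's theorem on manifolds with ample canonical bundle; second, $H^0(X,T_X\otimes A^{\otimes(-m)})=H^n(X,K_X\otimes\Omega^1_X\otimes A^{\otimes m})^{\vee}=0$ for large $m$ by Serre duality and Serre vanishing. A field tangent to $D_m$ then restricts to a field on $D_m$, hence vanishes along $D_m$ by the first step, hence is zero by the second. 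Your argument never restricts to the divisor and never needs $D_m$ to be smooth: you characterise tangency as the semi-invariance condition $\tilde{v}s=\lambda s$, bound each eigenspace uniformly by $O(m^{n-1})$ via flow-invariant rational functions combined with the minimal-degree inequality $\deg Y\ge \operatorname{codim} Y+1$, and conclude that the bad locus, being the image of the projective incidence variety $I_m$, is closed of dimension $O(m^{n-1})$, which is eventually smaller than $\dim|mA|\sim m^n$. What your approach buys is a quantitative statement (the locus of divisors preserved by a nonzero field has codimension growing like $m^n$) and independence from Bertini, adjunction and Kobayashi's theorem; what the paper's buys is brevity, since all the work is outsourced to standard results. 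Two points you compress deserve a line each in a full write-up, though neither is a gap: the estimate $\deg Y\le (mA)^{\dim Y}\cdot A^{\,n-\dim Y}$ requires resolving the indeterminacy of the rational map and comparing the nef class $\mu^*(mA)-F$ (the pullback of $\mathcal{O}(1)$) with $\mu^*(mA)$ against powers of $\mu^*A$; and the Zariski-closedness of $I_m$ requires fixing a linear choice of lifts $v\mapsto\tilde{v}$ on the subalgebra of fields preserving the class of $A$ (over the complementary directions the fibre of $I_m$ is empty, since a field tangent to some $D\in|mA|$ automatically fixes $[mA]$), after which the condition $\tilde{v}s\wedge s=0$ is algebraic in $([v],[s])$.
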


The proposition above can be regarded as a partial generalisation of the result by Song--Wang \cite[Theorem 2.8]{MR3470713} for Fano manifolds (see also Berman \cite[\S 1.6]{MR3107540}), and the proof below follows the same strategy as theirs; note on the other hand that the aforementioned papers \cite{MR3107540,MR3470713} provide a sharper estimate for the $m_1$ above for the Fano case.

\begin{proof}
	We first prove that there exists $m_0 \in \mathbb{N}$ depending only on $X$ and $A$ such that a generic member of the linear system $|mA|$ admits no nontrivial holomorphic vector fields for all $m \ge m_0$. Mori's cone theorem \cite{MR662120} implies that $K_X + (n+2) A$ is an ample divisor on $X$ \cite[Theorem 1.5.33 and Example 1.5.35]{MR2095471}. Pick $m_0 \ge n+2$ so that $|m_0 A|$ is basepoint free, noting that how large $m_0$ should be depends only on $X$ and $A$. Then, for $m \ge m_0$, a generic member $D_m \in |mA|$ is smooth by Bertini. The adjunction formula gives $K_{D_m} = (K_X + D_m) |_{D_m} = (K_X + mA) |_{D_m}$, which is ample for all $m \ge m_0$ since it is a restriction of an ample divisor to $D_m$. Thus $D_m$ admits no nontrivial holomorphic vector fields by \cite[Chapter III, Theorem 2.4]{MR1336823}.
	
	We then prove that there exists $m_1 \in \mathbb{N}$ depending only on $X$ and $A$ such that for all $m \ge m_1$ and for a generic $D_m \in |mA|$ there exist no nontrivial holomorphic vector fields on $X$ that vanish on $D_m$. It is necessary and sufficient to prove that there exists $m_1 \in \mathbb{N}$ such that
	\begin{equation*}
		H^0(X , T_X \otimes \mathcal{O}_X (-D_m) ) = H^0(X , T_X \otimes A^{\otimes (-m)} ) = 0
	\end{equation*}
	for the holomorphic tangent sheaf $T_X$ of $X$ and for all $m \ge m_1$; this holds indeed since
	\begin{equation*}
		H^0(X , T_X \otimes A^{\otimes (-m)} ) = H^n (X , K_X  \otimes T^*_X \otimes A^{\otimes m} )^{\vee} = 0
	\end{equation*}
	for all large enough $m$ by the Serre duality and the Serre vanishing, where we note how large $m$ should be depends only on $X$ and $A$ \cite[Theorem 1.2.6]{MR2095471}.
	
	Combined with the previous claim and replacing $m_1$ by $\max \{m_0 , m_1 \}$ if necessary, we then find that, for all $m \ge m_1$ and a generic $D_m \in |mA|$, there exist no nontrivial holomorphic vector fields on $X$ that preserves $D_m$. Hence we get $\mathfrak{aut} ((X,L);D_m) = 0$, which obviously implies $\mathrm{Aut}_0((X,L);D_m) = 0$.
\end{proof}

%%%%%%%%%%%%%%%%%%%%%%%%%%%%%%%%%%%%%%%%%%%%%%%%%%%%%%%%%%%%%%%%%%%%%%%%%%%%%%%%%%%%%%%%%%%%%%%%%%%%%%%%%%%%%%%%%%%%%%%%

\section{Review of various log stabilities} \label{scrvlst}

We review various stability notions for a polarised pair $((X,L);D)$. Sections \ref{Test configurations and log K stability} and \ref{Uniform log K stability} concern variants of the log $K$-stability, which is formulated in an algebro-geometric language. Section \ref{scrvcscsklp}, on the other hand, discusses the log properness and the log geodesic stability which is essentially analytic and play an important role in this paper.

\subsection{Test configurations and log $K$-stability}\label{Test configurations and log K stability}
We recall the basic definitions and facts concerning the log $K$-stability and its variants. The reference is \cite{MR3669511,MR3403730}. We start by recalling the following.

%%%
%%%%%%%%%%%%%%%%%%%%%%%%%%%%%%%%%%%%%%%%%%%%%%%%%%%%%%%%%%%%%%%%%%%
%%%

%Recall first that a polarised K\"ahler manifold $(X,L)$ together with a smooth effective divisor $D \subset X$ will be called a polarised pair, and denoted by $((X,L);D)$.
\begin{defn}[Test configuration] \label{dflgtc}
A \textbf{test configuration} $(\mathcal{X} , \mathcal{L})$ for a polarised K\"ahler manifold $(X,L)$ is a scheme $\mathcal{X}$ with a flat projective morphism $\pi : \mathcal{X} \to \mathbb{C}$ such that
\begin{itemize}
	\item $\mathbb{C}^*$ acts on $\mathcal{X}$ in such a way that $\pi$ is $\mathbb{C}^*$-equivariant, with a linearisation of the $\mathbb{C}^*$-action on $\mathcal{L}$,
	\item $\pi^{-1} (1) = (X,L)$.
\end{itemize}
The fibre $\pi^{-1} (0)$ over $0 \in \mathbb{C}$ is called the \textbf{central fibre} and written also as $\mathcal{X}_0$.

A \textbf{log test configuration} $((\mathcal{X}, \mathcal{L}); \mathcal{D})$ for a polarised pair $((X,L);D)$ is a test configuration $(\mathcal{X} , \mathcal{L})$ for $(X,L)$ together with a subscheme $\mathcal{D} \subset \mathcal{X}$ obtained by complementing the $\mathbb{C}^*$-orbit of $D$ in $\mathcal{X} \setminus \pi^{-1} (0)$ with the flat limit over $0 \in \mathbb{C}$.
%consists of a projective scheme $\mathcal X$, a subscheme $\mathcal D\subset \mathcal X$ and a $\mathbb C^\ast$-action such that
%	\begin{itemize}
%		\item $\mathcal D$ is invariant under the $\mathbb C^\ast$ action,
%		\item a $\mathbb C^\ast$-equivariant lin bundle $\mathcal L\rightarrow \mathcal X$,
%		\item $(\mathcal X, \mathcal D, \mathcal  L)\rightarrow \mathbb C$ a $\mathbb C^\ast$-equivalent flat family, where $\mathbb C^\ast$ acts on $\mathbb C$ by multiplication.
%	\end{itemize}
\end{defn}

We say that a log test configuration $((\mathcal{X}, \mathcal{L}); \mathcal{D})$ is \textbf{normal} if $\mathcal{X}$ is a normal variety. We also say that $((\mathcal{X}, \mathcal{L}); \mathcal{D})$ is \textbf{product} if $\mathcal{X}$ is isomorphic to $X \times \mathbb{C}$ and $\mathcal{D}$ is isomorphic to $D \times \mathbb{C}$. Note that a product log test configuration corresponds one-to-one with a holomorphic vector field $v$ admitting a holomorphy potential which is tangential to the divisor $D \subset X$ (i.e.~the real part of $v$ is in $\mathfrak{aut}((X,L);D)$); $v$ preserving $D$ is important, since otherwise we have a log test configuration with $\mathcal{X} \cong X \times \mathbb{C}$ but $\mathcal{D} \not\cong D \times \mathbb{C}$. Finally, a log test configuration is said to be \textbf{trivial} if $\mathcal{X}$ (resp.~$\mathcal{D}$) is $\mathbb{C}^*$-equivariantly isomorphic to $X \times \mathbb{C}$ (resp.~$D \times \mathbb{C}$), meaning that the $\mathbb{C}^*$-action on $\mathcal{X} \cong X \times \mathbb{C}$ (resp.~$\mathcal{D} \cong D \times \mathbb{C}$) induces a trivial $\mathbb{C}^*$-action on the first factor $X$ (resp.~$D$).

We also recall that we can compactify a test configuration to form a family over $\mathbb{P}^1$ (see also \cite[section 2.2]{MR3669511}).

\begin{defn} \label{defcptcx}
	Let $(\mathcal{X} , \mathcal{L})$ be a test configuration for $(X,L)$. The \textbf{compactification} $\bar{\mathcal{X}}$ of $\mathcal{X}$ is defined by gluing together $\mathcal{X}$ and $X \times (\mathbb{P}^1 \setminus \{ 0 \})$ along their respective open subsets $\mathcal{X} \setminus \mathcal{X}_0$ and $X \times (\mathbb{C} \setminus \{ 0 \} )$, identified by the canonical $\mathbb{C}^*$-equivariant isomorphism $\mathcal{X} \setminus \mathcal{X}_0 \simeq X \times (\mathbb{C} \setminus \{ 0 \} )$. The line bundle $\bar{\mathcal{L}}$ over $\bar{\mathcal{X}}$ can be defined as the natural one via the procedure for the compactification above.
\end{defn}

The log Donaldson--Futaki invariant is an algebro-geometric invariant associated to a log test configuration, defined as follows. Let $A_k$ be the infinitesimal generator for the $\mathbb C^\ast$ acting on the vector space $H^0(\mathcal X_0, \mathcal{L}^{\otimes k}|_{\mathcal{X}_0})^{\vee}$, which is the dual of the set of holomorphic sections of $\mathcal{L}^{\otimes k}|_{\mathcal{X}_0}$. We denote the dimension of $H^0(\mathcal X_0, \mathcal L_0^{\otimes k})$ by $d_k$ and the trace of $A_k$ by $w_k$.

According to the Riemann--Roch theorem and its equivariant version, $d_k$ and $w_k$ are given by the polynomials
\begin{align}
d_k &=a_0 k^n+a_1 k^{n-1} +O(k^{n-2}), \label{eqaedk} \\
w_k &=b_0k^{n+1}+b_1 k^{n} +O(k^{n-1}), \label{eqaewk}
\end{align}
where $a_0, a_1, b_0, b_1, \dots$ are rational numbers, for sufficiently large $k$. Restricted to $\mathcal{D}$, the corresponding dimension $\tilde d_k:=\dim H^0(\mathcal D_0, \mathcal{L}^{\otimes k}|_{\mathcal{D}_0})$ and the weight $\tilde w_k$ of $\mathbb{C}^* \curvearrowright H^0(\mathcal D_0, \mathcal{L}^{\otimes k}|_{\mathcal{D}_0})^{\vee}$ satisfy similar formulae
\begin{align}
\tilde d_k &=\tilde  a_0 k^n+\tilde a_1 k^{n-1} +O(k^{n-2}), \label{eqaedkt} \\ 
\tilde w_k &=\tilde b_0k^{n+1}+\tilde b_1 k^{n} +O(k^{n-1}). \label{eqaewkt}
\end{align}

\begin{defn}
	The \textbf{log Donaldson--Futaki invariant} is defined by
	\begin{align}
	DF (\mathcal{X}, \mathcal{D} , \mathcal{L} , \beta ) =\frac{2(a_1b_0-a_0b_1)}{a_0}+(1-\b)\frac{a_0\tilde b_0-\tilde a_0 b_0}{a_0}.
	\end{align}
\end{defn}

\begin{rem}
	The sign above is different from the one given e.g.~in \cite[Definition 6.7]{MR3186384}, but this is because we considered the action on the dual vector space. The sign changes when we consider the action on $H^0(\mathcal X_0, \mathcal{L}^{\otimes k}|_{\mathcal{X}_0})$, rather than its dual, which is more adapted to the differential-geometric formula as in \cite[Proposition 7.15]{MR3186384}.
\end{rem}

Similarly to \cite{MR2192937}, the log Futaki invariant can be regarded as the limit of the ``log'' Chow weight.

We are now ready to define the log $K$-stability.

\begin{defn}[Log $K$-stability]
	The polarised pair $((X,L);D)$ is said to be \textbf{log $K$-semistable} with angle $2 \pi \beta$ if the log Donaldson--Futaki invariant satisfies $DF (\mathcal{X}, \mathcal{D} , \mathcal{L} , \beta ) \geq 0$ for any normal log test configuration $((\mathcal{X}, \mathcal{L}); \mathcal{D})$.
	
We say $((X,L);D)$ is \textbf{log $K$-polystable} with angle $2 \pi \beta$ if $((X,L);D)$ is $K$-semistable and $DF (\mathcal{X}, \mathcal{D} , \mathcal{L} , \beta ) =0 $ if and only if $((\mathcal{X}, \mathcal{L}); \mathcal{D})$ is a product log test configuration.

Finally, $((X,L);D)$ is said to be \textbf{log $K$-stable} with angle $2 \pi \beta$ if $((X,L);D)$ is $K$-semistable and $DF (\mathcal{X}, \mathcal{D} , \mathcal{L} , \beta ) =0 $ if and only if $((\mathcal{X}, \mathcal{L}); \mathcal{D})$ is a trivial log test configuration.
\end{defn}

A subtle point in the definition of the log $K$-semistability above is that we decreed $DF (\mathcal{X}, \mathcal{D} , \mathcal{L} , \beta ) \geq 0$ for \textit{normal} log test configurations. It is known that the usual Donaldson--Futaki invariant decreases when we take the normalisation of $\mathcal{X}$ \cite[Proposition 3.15]{MR3669511}, and hence it suffices to test the positivity of $DF ( \mathcal{X}, \mathcal{L} )$ for all normal test configurations. On the other hand, for the log case, how the extra term behaves under the normalisation does not seem completely trivial. While it may happen that a similar result holds for the log case, in this paper we just \textit{decree} that we only check the Donaldson--Futaki invariant for normal test configurations.

Note moreover that the condition $DF (\mathcal{X}, \mathcal{D} , \mathcal{L} , \beta ) =0 $ for product log test configurations is equivalent to the vanishing of the log Futaki invariant \eqref{Log DF invariant}, i.e. $\mathrm{Fut}_{D , \beta} (v) = 0$, for any holomorphic vector field $v$ admitting a holomorphic potential and preserving $D$.

We also need the following quantity which serves as the ``norm'' of test configurations. This was introduced by Boucksom--Hisamoto--Jonsson \cite{MR3669511}. We follow the exposition in \cite[Definition 2.3]{MR3956698}.

\begin{defn} \label{defjna}
	Let $(\mathcal{X}, \mathcal{L})$ be a normal test configuration for $(X,L)$ and
	\begin{displaymath}%\xymatrixcolsep{10pc}\xymatrixrowsep{12pc}
		\xymatrix{
		& \bar{\mathcal{Z}} \ar[dr]^{\Theta} \ar[dl]_{\Pi} & \\
		X \times \mathbb{P}^1 &  & \bar{\mathcal{X}}
		}
	\end{displaymath}
	be the normalisation of the graph of the birational map $X \times \mathbb{P}^1 \dashrightarrow \bar{\mathcal{X}}$. The \textbf{non-Archimedean $J$-functional} of $(\mathcal{X}, \mathcal{L})$ is defined by
	\begin{equation*}
		J^{\mathrm{NA}} (\mathcal{X}, \mathcal{L}) := \frac{\Pi^* \mathrm{pr}_1^* L^{\cdot n} \cdot \Theta^* \bar{\mathcal{L}}}{\int_X C_1(L)^n} - \frac{\bar{\mathcal{L}}^{\cdot n+1}}{(n+1) \int_X C_1(L)^n},
	\end{equation*} 
	where $ \mathrm{pr}_1 : X \times \mathbb{P}^1 \to X$ is the natural projection and the numerator of the first (resp.~second) term is the intersection product on $\bar{\mathcal{Z}}$ (resp.~$\bar{\mathcal{X}}$).
\end{defn}

The non-Archimedean $J$-functional has some nontrivial and interesting properties that are not immediately obvious from the definition. It is always a nonnegative rational number and zero if and only if $(\mathcal{X}, \mathcal{L})$ is (normal and) trivial \cite[Theorem 7.9]{MR3669511}.

It is also well-known \cite[Proposition 7.8 and Remark 7.12]{MR3669511} that $J^{\mathrm{NA}}$ is Lipschitz equivalent to the minimum norm introduced by Dervan \cite[Definition 4.5]{MR3564626}, which is defined as
\begin{equation} \label{dfminnorm}
	\Vert (\mathcal{X} , \mathcal{L}) \Vert_m := \frac{\tilde{b}_0a_0 - b_0 \tilde{a}_0}{a_0},
\end{equation}
where $a_0$, $b_0$ are as in (\ref{eqaedk}) and (\ref{eqaewk}), and $\tilde{a}_0$, $\tilde{b}_0$ are as in (\ref{eqaedkt}) and (\ref{eqaewkt}) defined for a \textit{generic} member of the linear system $|L|$.

We also note that $J^{\mathrm{NA}}$ agrees with the asymptotic slope of the functional $J^A_{\om_0}$ (\ref{defaubjfc}) along a psh ray corresponding to a test configuration; this fact will be used later in Section \ref{sclogkpolyst}, but for the moment we merely refer the reader to \cite[Section 3]{MR3985614} for the precise statement and more details.

%%%
%%%%%%%%%%%%%%%%%%%%%%%%%%%%%%%%%%%%%%%%%%%%%%%%%%%%%%%%%%%%%%%%%%%
%%%

\subsection{$G$-uniform log $K$-stability}\label{Uniform log K stability}

We start by recalling the notion of the uniform log $K$-stability, which can be defined following Boucksom--Hisamoto--Jonsson \cite{MR3669511} and Dervan \cite{MR3564626}.

\begin{defn}[Uniform log $K$-stability]
A polarised pair $((X,L);D)$ is \textbf{uniformly log $K$-stable} with angle $2 \pi \beta$ if there exists a positive constant $\epsilon >0$ such that 
\begin{equation*}
	DF (\mathcal{X}, \mathcal{D} , \mathcal{L} , \beta ) \geq \epsilon J^{\mathrm{NA}}(\mathcal{X}, \mathcal{L})
\end{equation*}
for any normal log test configuration $((\mathcal{X}, \mathcal{L}); \mathcal{D})$ for $((X,L);D)$.
%The polarised pair $((X,L);D)$ is \textbf{uniformly log $K$-stable}, if the following equivalent conditions hold
%\begin{itemize}
%\item there exists a positive constant $\epsilon >0$ such that $\nu_\beta^{NA}(\phi)\geq \epsilon J^{NA}(\phi)$ for all $\phi\in \mathcal H^{NA}(L)$;
%\item there exists a positive constant $\epsilon >0$ such that $DF(\mathcal{X}, \mathcal{L})\geq \epsilon J^{NA}(\mathcal{X}, \mathcal{L})$ for any normal log test configuration $((\mathcal{X}, \mathcal{L}); \mathcal{D})$.
%\end{itemize}
\end{defn}

Dervan \cite[Definition 4.5]{MR3564626} used the minimum norm in the definition of the uniform $K$-stability, but it is well-known \cite[Remark 8.3]{MR3669511} that it is equivalent to the formulation above which uses $J^{\mathrm{NA}}$.

\begin{rem}
The above stability condition is closely related to the \textit{uniformly twisted $K$-stability} introduced by Dervan \cite[Definition 2.7]{MR3564626}, in the sense that it is defined as the uniform log $K$-stability when the divisor $D$ is a general member of a fixed linear system \cite[Section 2.1]{MR3564626}. We shall return to this point when we later discuss Theorem \ref{thmalphainv}.	
\end{rem}

%See \cite{arXiv:1610.07998} or \cite[Section 3]{arXiv:1907.09399} for the definition of the $G$-uniform $K$-stability, which can be easily adapted to the $G$-uniform log $K$-stability.

There is a version of the uniform $K$-stability when we have a nontrivial automorphism group $G$; see \cite{arXiv:1610.07998} and \cite[Section 3]{arXiv:1907.09399} for more details. We assume in the following definition that $G := \mathrm{Aut}_0 ((X,L);D)$ is reductive, but it makes sense when $G$ is merely a reductive subgroup of $\mathrm{Aut}_0 ((X,L);D)$. We write $T$ for the complex torus which is the identity component of the centre of $G$. We define the cocharacter lattice $N_{\mathbb{Z}}$ of $T$ as
\begin{equation*}
	N_{\mathbb{Z}} := \mathrm{Hom} (\mathbb{C}^* , T),
\end{equation*}
and write $N_{\mathbb{R}} := N_{\mathbb{Z}} \otimes_{\mathbb{Z}} \mathbb{R}$. We recall the following definition from \cite[Definition 3.1]{arXiv:1907.09399}.

\begin{defn}[$G$-equivariant test configuration]
	We say that a log test configuration $((\mathcal{X}, \mathcal{L}); \mathcal{D})$ is \textbf{$G$-equivariant} if $G$ acts on $\mathcal{X}$ in such a way that it commutes with the $\mathbb{C}^*$-action of $((\mathcal{X}, \mathcal{L}); \mathcal{D})$, preserves $\mathcal{D}$, admits a linearisation to $\mathcal{L}$, and agrees with the action $G \curvearrowright ((X,L);D)$ when restricted to $\pi^{-1} (1)$.
\end{defn}

A $G$-equivariant test configuration can be twisted by a $T$-action, as in \cite[Definition 3.2]{arXiv:1907.09399}.

\begin{defn}
	Let $((\mathcal{X}, \mathcal{L}); \mathcal{D})$ be a $G$-equivariant log test configuration. For $\xi \in N_{\mathbb{Z}}$, the \textbf{$\xi$-twist of $((\mathcal{X}, \mathcal{L}); \mathcal{D})$}, written $((\mathcal{X}_{\xi}, \mathcal{L}_{\xi}); \mathcal{D}_{\xi})$, is a test configuration with the total space $((\mathcal{X}, \mathcal{L}); \mathcal{D})$ but with the $\mathbb{C}^*$-action generated by $\eta + \xi$, where $\eta$ is the generator of the $\mathbb{C}^*$-action defining $((\mathcal{X}, \mathcal{L}); \mathcal{D})$.
\end{defn}

We also allow $\xi$ to be an element of $N_{\mathbb{R}}$; in this case the resulting $\xi$-twist $((\mathcal{X}_{\xi}, \mathcal{L}_{\xi}); \mathcal{D}_{\xi})$ is no longer a test configuration strictly speaking, since the $\mathbb{C}^*$-action is no longer algebraic, and hence it is called an $\mathbb{R}$-test configuration in \cite{arXiv:1610.07998} or \cite{arXiv:1907.09399}. We note that its compactification $(\bar{\mathcal{X}}_{\xi}, \bar{\mathcal{L}}_{\xi})$ (as in Definition \ref{defcptcx}) makes sense as a complete analytic space which is in general not isomorphic to $(\bar{\mathcal{X}}, \bar{\mathcal{L}})$; in particular, $J^{\mathrm{NA}} (\mathcal{X}_{\xi}, \mathcal{L}_{\xi})$ is well-defined by considering a bimeromorphic map $X \times \mathbb{P}^1 \dashrightarrow \bar{\mathcal{X}}_{\xi}$ in Definition \ref{defjna}, and depends nontrivially on $\xi$. With this understood, we define the following.

\begin{defn}[Reduced $J$-norm]
	Let $(\mathcal{X}, \mathcal{L})$ be a $T$-equivariant test configuration. The \textbf{reduced $J$-norm} of $(\mathcal{X}, \mathcal{L})$ is defined by
	\begin{equation*}
		J_T^{\mathrm{NA}} (\mathcal{X}, \mathcal{L}) := \inf_{\xi \in N_{\mathbb{R}}} J^{\mathrm{NA}} (\mathcal{X}_{\xi}, \mathcal{L}_{\xi}).
	\end{equation*}
\end{defn}

Note that a $G$-equivariant test configuration is necessarily $T$-equivariant. We finally arrive at the definition of the $G$-uniform log $K$-stability.

\begin{defn}[$G$-uniform log $K$-stability]
	A polarised pair $((X,L);D)$ is said to be \textbf{$G$-uniformly log $K$-stable} if there exists $\epsilon >0$ such that for any $G$-equivariant normal log test configuration $((\mathcal{X}, \mathcal{L}); \mathcal{D})$ we have
	\begin{equation*}
		DF (\mathcal{X}, \mathcal{D} , \mathcal{L} , \beta ) \ge \epsilon J_T^{\mathrm{NA}} (\mathcal{X}, \mathcal{L}).
	\end{equation*}
\end{defn}

%%%
%%%%%%%%%%%%%%%%%%%%%%%%%%%%%%%%%%%%%%%%%%%%%%%%%%%%%%%%%%%%%%%%%%%
%%%

%%%
%%%%%%%%%%%%%%%%%%%%%%%%%%%%%%%%%%%%%%%%%%%%%%%%%%%%%%%%%%%%%%%%%%%
%%%

\subsection{Review of cscK cone metrics and log properness} \label{scrvcscsklp}

We review the results proved in \cite{arXiv:1803.09506}, which many results in this paper rely on. Note that the results below hold for a general compact K\"ahler manifold $X$ with the K\"ahler class $\Omega$.

\subsubsection{Log properness}
We set $G:=\mathrm{Aut}_0((X,\Omega);D)$.
The $G$-orbit of a K\"ahler cone potential $\vphi\in\mathcal H_\b$ is defined to be
\begin{align*}
\mathcal O_\vphi=\{\tilde{\vphi}\vert\om_{\tilde{\vphi}}=\sigma^\ast\om_ \vphi, \forall \sigma\in G\}.
\end{align*}
The $d_{1,G}$-distance between two K\"ahler cone potentials $\vphi_1,\vphi_2\in \mathcal H_\b$ is defined to be the infimum of the $d_1$-distance between the corresponding orbits $\mathcal O_{\vphi_1}$ and $\mathcal O_{\vphi_2}$. Since K\"ahler cone potential could be approximated by smooth K\"ahler potentials, it is sufficient to consider properness and coercivity in the space $\mathcal H$ of smooth K\"ahler potentials associated to $\Omega$. 
\begin{defn}
	The log $K$-energy $\nu_\beta$ is said to be \textbf{proper}, if for any sequence $\{\vphi_i\}\subset \mathcal H$, 
	\begin{align*}
	\lim_{i\rightarrow\infty}d_{1,G}(0,\vphi_i)=\infty \implies \lim_{i\rightarrow\infty}\nu_\beta(\vphi_i)=\infty.	
	\end{align*}
\end{defn}
\begin{defn}
The log $K$-energy $\nu_\beta$ is said to be \textbf{$d_{1,G}$-coercive}, if there exists positive constants $A$ and $B$ such that
\begin{equation} \label{d1gcoercive}
	\nu_\beta(\vphi)\geq A \cdot d_{1,G}(\vphi,0)-B
\end{equation}
for all $\vphi\in \mathcal H$.
\end{defn}
Clearly, coercivity implies properness. It is a quantitative version of properness.
\begin{thm}\emph{(Log properness theorem \cite[Theorem 1.2, Theorem 7.4]{arXiv:1803.09506})}\label{cone properness theorem}
The K\"ahler pair $((X,\Omega);D)$ admitting a cscK cone metric with cone angle $2\pi\beta$ is equivalent to the properness of the log $K$-energy $\nu_\beta$, which is also equivalent to the $d_{1,G}$-coercivity of $\nu_\beta$.
\end{thm}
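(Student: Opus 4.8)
The plan is to prove the stated chain of equivalences as a cycle of three implications: existence of a cscK cone metric $\Rightarrow$ $d_{1,G}$-coercivity of $\nu_\beta$ $\Rightarrow$ properness of $\nu_\beta$ $\Rightarrow$ existence. The middle implication is immediate, since the coercivity estimate \eqref{d1gcoercive} forces $\nu_\beta(\vphi_i)\to\infty$ whenever $d_{1,G}(0,\vphi_i)\to\infty$, which is exactly the defining property of properness. Thus the real content lies in the two remaining implications, which together constitute the conical analogue of the Chen--Cheng theorem \cite{Chen Cheng 2} relating coercivity and existence for smooth cscK metrics.

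For the direction existence $\Rightarrow$ coercivity, I would adapt the variational strategy of Darvas--Rubinstein and Berman--Darvas--Lu to the conical setting, working in the metric space $(\mathcal E^1, d_1)$ and its quotient by $G$. All the structural inputs needed are already recorded in the excerpt: the minimization property of $\nu_\beta$ at $\om_{cscK}$ (Theorem \ref{Lower bound}), the uniqueness of the cscK cone metric modulo $G$ (Theorem \ref{uniqueness of cscK cone metric}), and the fact that $\mathcal E^1$ is the $d_1$-completion of $\mathcal H_\beta(\om_0)$. The scheme is to exploit convexity of $\nu_\beta$ along finite-energy geodesics in $\mathcal E^1$ and combine it with compactness of $d_1$-balls of bounded entropy $E_\beta$ to upgrade the existence of a minimizer into the linear growth bound \eqref{d1gcoercive}. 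The group $G=\Aut_0((X,\Omega);D)$ enters precisely to quotient out the flat directions of $\nu_\beta$, which is why $d_{1,G}$ rather than $d_1$ is the correct distance here.

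The substantial direction, and the main obstacle, is properness $\Rightarrow$ existence, which is where essentially all the analytic work resides. I would run a continuity path (or the cscK cone flow) and extract uniform a priori estimates from the properness hypothesis, following the architecture of the smooth theory: properness first yields a uniform entropy bound $E_\beta(\vphi)\le C$ along the path, which controls the Monge--Amp\`ere density and hence gives a $C^0$-estimate for $\vphi$; one then establishes a Laplacian estimate and finally higher regularity on the regular part $M=X\setminus D$. The genuine difficulty, absent in the smooth case, is that every estimate must be carried out in function spaces adapted to the cone singularity along $D$ --- weighted H\"older spaces modeled on Donaldson's cone metric $\om_0+\delta\, i\p\bar\p|s|^{2\beta}_h$ --- with constants whose degeneration as one approaches $D$ is controlled. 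The fourth-order character of the cscK system, coupled with the conical degeneration and the singular weight $|s|_h^{2\beta-2}$ appearing in \eqref{Rictheta pde}, means that standard interior Schauder estimates and maximum-principle arguments do not apply near $D$; one must instead invoke the conical linear theory together with barrier constructions to localize and absorb the singular contributions. Establishing these conical Schauder-type estimates, and closing the continuity argument with them, is the hard part, and closing it completes the cycle and hence the equivalence.
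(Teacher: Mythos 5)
The first thing to note is that the paper does not prove Theorem~\ref{cone properness theorem} at all: it is imported verbatim, as a black box, from the third author's earlier work \cite[Theorem 1.2, Theorem 7.4]{arXiv:1803.09506}, and the present paper only uses it (together with Theorems \ref{Lower bound} and \ref{uniqueness of cscK cone metric}) as input for the stability results of Section \ref{scrsostab}. So there is no internal proof to compare your argument against; the relevant comparison is with the cited paper, whose overall architecture your outline does reproduce correctly: the trivial implication coercivity $\Rightarrow$ properness, a Darvas--Rubinstein-type variational argument for existence $\Rightarrow$ coercivity, and a conical Chen--Cheng-type continuity-method scheme for properness $\Rightarrow$ existence.

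Judged as a proof, however, your proposal has a genuine gap: apart from the one-line implication from \eqref{d1gcoercive} to properness, nothing is actually established. For existence $\Rightarrow$ coercivity you invoke ``convexity of $\nu_\beta$ along finite-energy geodesics'' and ``compactness of $d_1$-balls of bounded entropy'' as if they were available off the shelf, but in the conical setting these are substantial theorems: the convexity along cone geodesics and the reductivity/uniqueness structure of the automorphism group come from \cite{MR4020314}, the entropy-compactness must be verified for the singular reference measure $|s|_h^{2\beta-2}e^{h_0}\omega_0^n$, and, most importantly, the Darvas--Rubinstein formalism \cite{MR3600039} additionally requires a regularity theorem for weak minimizers of $\nu_\beta$ in $\mathcal{E}^1$ (the conical analogue of Berman--Darvas--Lu \cite{MR4094559}), which you do not address at all. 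For properness $\Rightarrow$ existence you correctly identify the a priori estimates for the fourth-order coupled system in weighted H\"older spaces near $D$ as the crux, but you only name them; establishing the $C^0$, Laplacian and conical Schauder estimates and closing the continuity path is precisely the analytic content of \cite{arXiv:1803.09506} (building on \cite{Chen Cheng 2}), and without it your cycle of implications does not close. In short, the plan is the right plan, but as written it is a table of contents for the cited paper rather than a proof.
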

In particular, when the automorphism group $\mathrm{Aut}_0((X,\Omega);D)$ is trivial, we have
\begin{thm}\emph{(\cite[Theorem 1.2, Theorem 7.2]{arXiv:1803.09506})}\label{cone properness conjecture} Assume that the automorphism group is discrete.
The K\"ahler pair $((X,\Omega);D)$ admitting a cscK cone metric with cone angle $2\pi\beta$ is equivalent to the properness of the log $K$-energy $\nu_\beta$, and also to the $d_{1}$-coercivity of $\nu_\beta$.
\end{thm}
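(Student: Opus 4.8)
The plan is to obtain this statement as the special case of the log properness theorem (\thmref{cone properness theorem}) in which the connected automorphism group is trivial. The hypothesis that $\Aut((X,\Om);D)$ is discrete means precisely that its identity component $G := \Aut_0((X,\Om);D)$ reduces to $\{\mathrm{id}\}$. I would first record what this does to the reduced distance: by the definition of the $G$-orbit $\mathcal{O}_\vphi=\{\tilde\vphi \mid \om_{\tilde\vphi}=\sigma^\ast\om_\vphi,\ \forall\sigma\in G\}$, triviality of $G$ collapses $\mathcal{O}_\vphi$ to the single potential $\vphi$ (modulo the additive constant that leaves $\om_\vphi$ unchanged). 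Hence $d_{1,G}(\vphi_1,\vphi_2)=d_1(\vphi_1,\vphi_2)$ for all $\vphi_1,\vphi_2\in\mathcal H$, so $d_{1,G}$-properness is literally $d_1$-properness and $d_{1,G}$-coercivity is literally $d_1$-coercivity.

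With this identification in hand the three claimed equivalences follow verbatim from \thmref{cone properness theorem}: existence of a cscK cone metric $\Longleftrightarrow$ properness of $\nu_\beta$ $\Longleftrightarrow$ $d_{1,G}$-coercivity of $\nu_\beta$, the last of which now reads as the $d_1$-coercivity \eqref{d1gcoercive} because $G$ is trivial. No further analytic input is required once \thmref{cone properness theorem} is granted, so the only thing to check carefully is the reduction $d_{1,G}=d_1$ above.

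Were one to prove the statement directly instead, the two implications would be of very different character, and I would organise the argument accordingly. The direction ``existence $\Rightarrow$ coercivity'' is the softer one: I would combine the uniqueness of the cscK cone metric (\thmref{uniqueness of cscK cone metric}) with the lower bound and convexity of $\nu_\beta$ along weak cone geodesics in $\mathcal{E}^1$ (\thmref{Lower bound}) to produce the strict linear lower bound \eqref{d1gcoercive}, the point being that the absence of holomorphic vector fields tangent to $D$ rules out the flat directions which would otherwise obstruct coercivity. The converse ``coercivity $\Rightarrow$ existence'' is the genuinely hard analytic part and is where I expect the main obstacle to lie: one runs a cone analogue of the Chen--Cheng continuity method and must establish a priori estimates --- first a $C^0$ bound extracted from the $d_1$-coercivity, then the higher-order estimates --- holding uniformly up to the cone singularity along $D$, where the metric is only quasi-isometric to the model cone. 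Controlling the geometry near $D$ is the delicate step, and it is exactly the content of the deep estimates of \cite{arXiv:1803.09506} that I would invoke rather than reprove.
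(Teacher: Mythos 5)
Your proposal is correct and takes essentially the same route as the paper: the paper itself introduces this statement with ``In particular, when the automorphism group $\mathrm{Aut}_0((X,\Omega);D)$ is trivial...'', i.e.~as the specialization of the log properness theorem (Theorem~\ref{cone properness theorem}) obtained from the identification $d_{1,G}=d_1$ when $G$ is trivial, with all genuine analytic content deferred to \cite{arXiv:1803.09506} exactly as you do. Your additional sketch of a hypothetical direct proof is reasonable but not needed, since both the paper and your main argument treat the hard analysis as a black box from that reference.
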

These existence theorems lead to several applications relating to various notions of log $K$-stabilities, as we will see in the rest of this article.

%%%
%%%%%%%%%%%%%%%%%%%%%%%%%%%%%%%%%%%%%%%%%%%%%%%%%%%%%%%%%%%%%%%%%%%
%%%

\subsubsection{Log geodesic stability}\label{Log geodesic stability}
\begin{defn}Suppose $\rho(t):[0,\infty)\rightarrow \mathcal E^1_0$ is a $d_1$-geodesic ray. The \textbf{$\mathfrak  F$-invariant} corresponding to the log $K$-energy along the $d_1$-geodesic ray is defined to be the slope,
\begin{align*}
\mathfrak F(\rho(t))=\lim_{t\rightarrow \infty} \frac{\nu_\beta(\rho(t))}{t}.
\end{align*}
\end{defn}

A ray $\rho(t)\in  \mathcal E^1_0$ starting at $\vphi_0$ is called \textit{holomorphic}, if it is generated by a one-parameter holomorphic action $\sigma(t)\in \mathrm{Aut}_0(X;D)$, i.e. $\om_{\rho(t)}=\sigma(t)^\ast\om_{\vphi_0}$. 
Two rays $\rho_1(t), \rho_2(t)$ are \textit{parallel} if they have uniformly bounded $d_1$-distance. 
A ray is \textit{trivial}, if it is parallel to a holomorphic ray.

\begin{defn}[Log geodesic stability]\label{log geodesic stability}
Let $\rho(t):[0,\infty)\rightarrow \mathcal E^1_0$ be any unit speed $d_1$-geodesic starting at $\vphi_0\in \mathcal E^1_0$.
\begin{itemize}
\item
The point $\vphi_0$ is \textbf{log geodesic semi-stable}, if $\mathfrak F(\rho(t))\geq 0$.
\item
The point $\vphi_0$ is \textbf{log geodesic stable}, if it is geodesic semi-stable and the equality holds when $\rho(t)$ is trivial.
\end{itemize}
A K\"ahler class $\Om$ is geodesic stable (resp.~geodesic semi-stable), if every $\vphi_0\in \mathcal E^1_0$ is geodesic stable (resp.~geodesic semi-stable).
\end{defn}
Actually, in the definition above, the condition that $\rho(t)$ is trivial could be replaced by an equivalent condition, i.e. $\rho(t)$ is holomorphic \cite{MR4095424}.
It is shown in \cite{arXiv:1803.09506} that
\begin{thm}\label{geodesic stability}\emph{(\cite[Theorem 1.8]{arXiv:1803.09506})}
$(M,\Om)$ admits a cscK cone metric if and only if it is log geodesic stable.
\end{thm}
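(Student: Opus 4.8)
The plan is to deduce the theorem from the log properness theorem \thmref{cone properness theorem}, which already identifies the existence of a cscK cone metric with the $d_{1,G}$-coercivity of the log $K$-energy $\nu_\beta$. It therefore suffices to prove that $d_{1,G}$-coercivity is equivalent to log geodesic stability, and the bridge between the two is the theory of finite-energy geodesic rays in $\mathcal{E}^1$. Since $\mathcal{E}^1(X,\om_0)$ is a complete geodesic metric space under $d_1$ (and is the $d_1$-completion of $\mathcal H_\beta(\om_0)$, as recorded in \secref{Log energy functionals}), and since $\nu_\beta$ is convex and $d_1$-lower semicontinuous along $d_1$-geodesics by the results of \cite{MR4020314}, the slope $\mathfrak F$ along a ray is well-defined and stable under the limiting operations used below.

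For the easy direction I would assume a cscK cone metric exists, so by \thmref{cone properness theorem} there are constants $A,B>0$ with $\nu_\beta(\vphi)\ge A\, d_{1,G}(\vphi,0)-B$ for all $\vphi\in\mathcal H$. Given any unit-speed $d_1$-geodesic ray $\rho(t)$ starting at $\vphi_0\in\mathcal E^1_0$, I would evaluate this inequality along $\rho(t)$, divide by $t$, and let $t\to\infty$ to obtain
\[
\mathfrak F(\rho)=\lim_{t\to\infty}\frac{\nu_\beta(\rho(t))}{t}\ge A\liminf_{t\to\infty}\frac{d_{1,G}(\rho(t),0)}{t}\ge 0,
\]
which is log geodesic semistability. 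For stability I would invoke that the reduced speed $\lim_{t\to\infty}t^{-1}d_{1,G}(\rho(t),0)$ exists and vanishes if and only if $\rho$ is parallel to a holomorphic ray (equivalently trivial, by \cite{MR4095424}); hence $\mathfrak F(\rho)=0$ forces this reduced speed to vanish, and $\rho$ is trivial.

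For the hard direction I would argue by contraposition. Assuming $\nu_\beta$ is not $d_{1,G}$-coercive, failure of (\ref{d1gcoercive}) yields a sequence $u_j\in\mathcal H$ with $\ell_j:=d_{1,G}(0,u_j)\to\infty$ and $\nu_\beta(u_j)\le\eps_j\ell_j$ for some $\eps_j\downarrow 0$; replacing each $u_j$ by a point in its $G$-orbit I may assume $d_1(0,u_j)$ is comparable to $\ell_j$. I would then join $0$ to $u_j$ by a unit-speed $d_1$-geodesic segment $\rho_j$ and, using compactness of geodesics in $\mathcal E^1$, extract a limiting unit-speed geodesic ray $\rho(t)$. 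Convexity of $\nu_\beta$ along $d_1$-geodesics bounds $\nu_\beta(\rho_j(t))\le(1-t/\ell_j)\nu_\beta(0)+(t/\ell_j)\nu_\beta(u_j)$ on each segment, and passing to the limit with the help of $d_1$-lower semicontinuity gives $\mathfrak F(\rho)\le 0$. Because $d_{1,G}(0,u_j)$ grows linearly while $\nu_\beta$ does not, the limiting ray has strictly positive reduced speed and is therefore non-trivial, contradicting log geodesic stability and completing the contrapositive.

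The hard part will be the analytic input behind the limiting argument: establishing convexity and $d_1$-lower semicontinuity of $\nu_\beta$ along $d_1$-geodesics in the cone setting, where the regularity of cone geodesics and of the entropy and $J_{-\theta}$ terms of $\nu_\beta$ along them is delicate, together with the compactness needed to extract the ray and the identification of the $d_{1,G}$-reduced speed as the precise obstruction to triviality. These are exactly the ingredients supplied in \cite{MR4020314,arXiv:1803.09506}; once they are in place, the equivalence with $d_{1,G}$-coercivity — and hence, via \thmref{cone properness theorem}, with the existence of a cscK cone metric — follows from the scheme above.
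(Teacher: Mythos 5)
First, a structural point: this paper does not prove the statement at all. Theorem \ref{geodesic stability} is imported verbatim from \cite[Theorem 1.8]{arXiv:1803.09506}, just as \thmref{cone properness theorem} is imported from the same source, so there is no internal proof to measure your argument against. Your proposal must therefore be judged on its own terms, and on those terms it reduces one black-boxed theorem of \cite{arXiv:1803.09506} to another black-boxed theorem of the very same paper; this is non-circular only if the source establishes coercivity independently of geodesic stability, which cannot be verified from within this document.

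Second, on the mathematics: your skeleton (coercivity $\Leftrightarrow$ log geodesic stability via slopes of finite-energy rays) is the standard route, and fragments of it are genuinely carried out elsewhere in this paper (\secref{sclogkpolyst}), but the two steps you merely ``invoke'' are exactly the hard content. (a) The claim that a ray with vanishing reduced speed, i.e.\ $\lim_{t\to\infty} t^{-1} d_{1,G}(\rho(t),0)=0$, must be parallel to a holomorphic ray is a Berman--Darvas--Lu type regularity theorem; in the cone setting this is \lemref{ldm41bdl} of this paper, whose proof needs the reductivity of $G$ (\cite[Theorem 6.1]{MR4020314}), the global Cartan decomposition, and the $C^{4,\alpha,\beta}$ regularity of the cscK cone potential. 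Crucially, \lemref{ldm41bdl} is stated only for rays emanating from the cscK potential $u_0$, whereas log geodesic stability quantifies over \emph{every} base point $\vphi_0\in\mathcal E^1_0$; bridging that gap requires an additional parallel-ray argument that you do not supply. (b) ``Compactness of geodesics in $\mathcal E^1$'' is false as stated: $d_1$-balls in $\mathcal E^1$ are not compact. Extracting the limit ray from the segments $\rho_j$ requires the uniform entropy bound coming from convexity of $\nu_\beta$ along finite-energy geodesics --- itself a nontrivial extension of \cite{MR4020314}, where convexity is established along cone geodesics between cone potentials --- combined with Darvas's entropy--$d_1$ compactness theorem. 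Until (a) and (b) are actually supplied (they are, in \cite{MR4020314,arXiv:1803.09506}), your proposal is a correct strategy rather than a proof.
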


%%%
%%%%%%%%%%%%%%%%%%%%%%%%%%%%%%%%%%%%%%%%%%%%%%%%%%%%%%%%%%%%%%%%%%%
%%%
%\subsection{General K\"ahler class} \label{scgenkcl}
%For general K\"ahler class, we draw a diagram to record the known results from existence of cscK metric to various notions of $K$ stability \cite{}.

%\smartdiagramset{set color list={orange!60, green!50!lime!60,magenta!60,
%blue!50!cyan},
%uniform connection color=true,
%distance planet-satellite=4cm,
%satellite text width=2cm,
%planet text width=2.75cm,
%}
%\scalebox{1}{
%\smartdiagram[constellation diagram]{
%Existence of cscK metric,
 %     Geodesic $K$-polystability, $K$-polystability, Equivariant $K$-polystability, $G$-uniform $K$-stability}
%      }

%%%
%%%%%%%%%%%%%%%%%%%%%%%%%%%%%%%%%%%%%%%%%%%%%%%%%%%%%%%%%%%%%%%%%%%
%%%

%%%
%%%%%%%%%%%%%%%%%%%%%%%%%%%%%%%%%%%%%%%%%%%%%%%%%%%%%%%%%%%%%%%%%%%%%%%%%%%%%%%%%%%%%%%%%%%%%%%%%%%%%%%%%%%%%%
%%%

\section{Results on stability} \label{scrsostab}

\subsection{Statement of the results}

In this section, we will prove that
\begin{thm}\label{log K}
	Suppose that $((X,L);D)$ admits a cscK cone metric of angle $2 \pi \beta$. Then the following hold.
	\begin{enumerate}
	\item \label{log K semistability} $((X,L);D)$ is log $K$-semistable with angle $2 \pi \beta$;
	\item \label{log K polystability} $((X,L);D)$ is log $K$-polystable with angle $2 \pi \beta$;
	\item \label{uniform log K-stability} $((X,L);D)$ is $G$-uniformly log $K$-stable with angle $2 \pi \beta$ for $G = \mathrm{Aut}_0 ((X,L);D)$;
	\item \label{log K stability} $((X,L);D)$ is uniformly log $K$-stable with angle $2 \pi \beta$ if the automorphism group $\mathrm{Aut}_0 ((X,L);D)$ is further assumed to be trivial.
	\end{enumerate}
\end{thm}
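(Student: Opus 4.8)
The plan is to bridge the algebraic invariants ($DF$ and $J^{\mathrm{NA}}$) with the analytic functionals ($\nu_\beta$ and $d_{1,G}$) through the non-Archimedean formalism of Boucksom--Hisamoto--Jonsson \cite{MR3669511,MR3985614}. To each normal log test configuration $((\mathcal X,\mathcal L);\mathcal D)$ I would associate a finite-energy geodesic ray $\rho(t)\in\mathcal E^1$ emanating from $0$, and then invoke two slope identities. The first, which is the crux, is that the asymptotic slope of the log $K$-energy along $\rho$ computes the log Donaldson--Futaki invariant,
\[
\mathfrak F(\rho)=\lim_{t\to\infty}\frac{\nu_\beta(\rho(t))}{t}=DF(\mathcal X,\mathcal D,\mathcal L,\beta),
\]
up to the usual normalising constant; this is the logarithmic slope formula of \cite{MR3985614}. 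The second identity is already recorded in the excerpt: $J^{\mathrm{NA}}(\mathcal X,\mathcal L)$ is the asymptotic slope of Aubin's functional $J^A_{\om_0}$ along $\rho$, and, being Lipschitz equivalent to the minimum norm, it controls the asymptotic $d_1$-slope of the ray. With this dictionary, semistability (\ref{log K semistability}) is immediate: by \thmref{Lower bound} the log $K$-energy is bounded below whenever a cscK cone metric exists, so every ray has $\mathfrak F(\rho)\ge 0$, whence $DF(\mathcal X,\mathcal D,\mathcal L,\beta)\ge 0$ for every normal log test configuration.

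For the $G$-uniform statement (\ref{uniform log K-stability}) I would feed the coercivity estimate of \thmref{cone properness theorem} into the first slope formula. Existence of the cscK cone metric gives $\nu_\beta(\vphi)\ge A\,d_{1,G}(\vphi,0)-B$; evaluating along $\rho(t)$ and dividing by $t$ as $t\to\infty$, the left-hand side converges to $DF$, while the right-hand side converges to $A$ times the asymptotic slope of $t\mapsto d_{1,G}(\rho(t),0)$. Since $d_{1,G}$ is the infimum of $d_1$ over the $G$-orbits, this last slope is the infimum over $T$-twists of the $d_1$-slope, which matches, up to a fixed Lipschitz constant, the reduced norm $J^{\mathrm{NA}}_T(\mathcal X,\mathcal L)$. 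This gives $DF\ge\epsilon\,J^{\mathrm{NA}}_T$ for a uniform $\epsilon>0$, namely (\ref{uniform log K-stability}). Statement (\ref{log K stability}) is then the special case $G=\mathrm{Aut}_0((X,L);D)=0$: here $T$ is trivial, $d_{1,G}=d_1$ and $J^{\mathrm{NA}}_T=J^{\mathrm{NA}}$, so the identical computation yields $DF\ge\epsilon\,J^{\mathrm{NA}}$.

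Polystability (\ref{log K polystability}) reduces to the equality case $DF=0$. If $DF(\mathcal X,\mathcal D,\mathcal L,\beta)=0$ for a normal log test configuration, then its associated ray satisfies $\mathfrak F(\rho)=0$; combining \thmref{geodesic stability} (existence forces log geodesic stability) with the uniqueness in \thmref{uniqueness of cscK cone metric}, the ray must be trivial, i.e.\ parallel to a holomorphic ray generated by a vector field in $\mathfrak{aut}((X,L);D)$, which translates into $((\mathcal X,\mathcal L);\mathcal D)$ being a product log test configuration. Together with (\ref{log K semistability}) this is exactly log $K$-polystability.

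I expect the genuine difficulties to be twofold. The principal one is justifying the logarithmic slope formula $\mathfrak F(\rho)=DF$ in the cone setting: one must identify the non-Archimedean log $K$-energy with $DF$ for normal log test configurations and control the singular entropy term $E_\beta$ together with the Poincar\'e--Lelong contributions near $D$ along the degenerating ray. This is where \cite{MR3985614} does the heavy lifting, but its interaction with the cone geometry and with the flat limit $\mathcal D_0$ of the divisor must be handled carefully. The second difficulty is the equality analysis underlying (\ref{log K polystability}): passing from the vanishing slope $\mathfrak F(\rho)=0$ to the conclusion that the ray is holomorphic, and hence that $((\mathcal X,\mathcal L);\mathcal D)$ is \emph{product} rather than merely trivial, requires the rigidity supplied by uniqueness together with a careful reduction of arbitrary normal configurations to the equivariant ones on which the analytic conclusion can be read off algebraically.
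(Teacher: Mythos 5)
Your overall strategy---the Boucksom--Hisamoto--Jonsson slope formulas combined with the log properness/coercivity and geodesic stability theorems of \cite{arXiv:1803.09506}---is exactly the paper's, and your treatment of (\ref{log K semistability}) is essentially complete (note only that the slope of $\nu_\beta$ along the ray computes the non-Archimedean log $K$-energy, which equals $DF(\mathcal X,\mathcal D,\mathcal L,\beta)$ \emph{minus} the nonnegative term $\frac{1}{V}(\mathcal X_0-\mathcal X_{0,\mathrm{red}})\cdot\bar{\mathcal L}^n$, so your claimed equality is really an inequality $\mathfrak F(\rho)\le DF$; it happens to point the right way for all four statements). However, your argument for (\ref{uniform log K-stability}) has a genuine gap: you assert that the asymptotic slope of $t\mapsto d_{1,G}(\rho(t),0)$ matches $J^{\mathrm{NA}}_T(\mathcal X,\mathcal L)$ up to a Lipschitz constant. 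Since $T$ is in general a proper subgroup of $G$, one has $d_{1,G}\le d_{1,T}$, so the coercivity estimate $\nu_\beta\ge A\,d_{1,G}-B$ is \emph{a priori} weaker than what you need: Hisamoto's theorem \cite{arXiv:1610.07998} identifies $J^{\mathrm{NA}}_T$ with the slope of $d_{1,T}$ (infimum over $T$-twists only), and the infimum over all of $G$ could be strictly smaller. The paper closes this by inspecting the proof of \thmref{cone properness theorem} to upgrade the estimate to $d_{1,N_G(K)}$-coercivity (following Darvas--Rubinstein \cite{MR3600039} and Hisamoto), invoking the result of J.~Yu recorded in \cite[Proposition B.1]{arXiv:1907.09399} to pass to $d_{1,T}$-coercivity on the totally geodesic subspace $(\mathcal E^1)^K$ of $K$-invariant potentials, and replacing the subgeodesic ray of a $G$-equivariant test configuration by its $K$-average. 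Without these steps $DF\ge\epsilon J^{\mathrm{NA}}_T$ does not follow; your item (\ref{log K stability}) is unaffected since there $G$ is trivial and $d_{1,G}=d_1$.

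For (\ref{log K polystability}), the step you compress into ``which translates into $((\mathcal X,\mathcal L);\mathcal D)$ being a product log test configuration'' is the actual content of the proof, and geodesic stability plus uniqueness do not by themselves supply it. Knowing that the zero-slope ray is \emph{parallel to} a holomorphic ray is strictly weaker than what is needed: one must show that the finite-energy geodesic is \emph{exactly} $\psi_t=\exp(tJv)\cdot\psi_0$ for some $v\in\mathfrak{isom}(X,\omega_{u_0})$ preserving $D$, that $v$ lifts to the total space of $L$, and---crucially---that $v$ generates an \emph{algebraic} $\mathbb C^*$-action, which is what forces $\mathcal X\cong X\times\mathbb C$. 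The paper does this by proving a conic version of \cite[Lemma 4.1]{MR4094559} of Berman--Darvas--Lu, which requires reductivity of $\mathrm{Aut}_0((X,L);D)$, the global Cartan decomposition, and the $C^{4,\alpha,\beta}$-regularity of the cscK cone potential (to run the holomorphy-potential estimates across the divisor, where $\omega_{u_0}^n$ is comparable to $|s|_h^{2\beta-2}\omega_0^n$), and then applying \cite[Lemma 4.2 and Proposition 4.3]{MR4094559}, the latter being Berman's \cite[Lemma 3.4]{MR3461370}, for the lifting and algebraicity. Your final paragraph acknowledges this difficulty but offers no mechanism to resolve it, so as written the equality analysis in (\ref{log K polystability}) is incomplete.
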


\begin{rem}
Noting that uniform log $K$-stability implies log $K$-stability \cite{MR3564626}, we find that the item  \ref{log K stability} above implies the log $K$-stability of $((X,L);D)$ with a cscK cone metric. Note also that the log $K$-polystability implies the equivariant log $K$-polystability.
\end{rem}

Since the log $K$-stability implies the twisted $K$-stability \cite[Theorem 1.10]{MR3564626}, we have
\begin{thm}\label{twistedK stability}
	Suppose that $((X,L);D)$ admits a cscK cone metric of angle $2 \pi \beta$. Then $((X,L);\frac{1-\beta}{2}L_D)$ is twisted $K$-stable.
\end{thm}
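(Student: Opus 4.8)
The plan is to obtain \thmref{twistedK stability} as a direct consequence of \thmref{log K} combined with the comparison between log and twisted $K$-stability due to Dervan. First I would record that, by \thmref{log K}, the existence of a cscK cone metric of angle $2\pi\beta$ makes $((X,L);D)$ log $K$-polystable (item \ref{log K polystability}), and log $K$-stable once $\mathrm{Aut}_0((X,L);D)$ is trivial (item \ref{log K stability} together with the subsequent Remark, since uniform log $K$-stability implies log $K$-stability). This reduces the assertion to a purely algebro-geometric translation: identifying log $K$-stability of the pair with angle $2\pi\beta$ along $D$ with twisted $K$-stability of $((X,L);\frac{1-\beta}{2}L_D)$.

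The key step is then to invoke \cite[Theorem 1.10]{MR3564626}, which establishes precisely that log $K$-stability with cone angle $2\pi\beta$ along a smooth divisor $D$ corresponds to twisted $K$-stability with twisting class $\tfrac{1-\beta}{2}C_1(L_D)$. The mechanism is a term-by-term comparison of Donaldson--Futaki invariants: for a fixed normal (log) test configuration, the extra contribution $(1-\b)\frac{a_0\tilde b_0-\tilde a_0 b_0}{a_0}$ appearing in the log Donaldson--Futaki invariant $DF(\mathcal X,\mathcal D,\mathcal L,\beta)$ is matched against the intersection-theoretic term produced by the twisting form in the twisted Donaldson--Futaki invariant. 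Since this comparison is carried out test configuration by test configuration, the sign and the strict positivity of the two invariants coincide, so that the log stability of $((X,L);D)$ transports to the twisted stability of $((X,L);\frac{1-\beta}{2}L_D)$. The coefficient $\tfrac{1-\beta}{2}$ is then forced by matching normalisations: the factor $(1-\beta)$ records the cone angle deficit along $D$, while the factor $\tfrac12$ is the normalisation constant built into Dervan's definition of the twisting form in twisted $K$-stability.

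The only genuine subtlety, and hence the main point to be careful about, is matching the correct flavour of stability on the two sides, since \thmref{log K} yields polystability (or $G$-uniform stability) rather than strict stability in the presence of nontrivial automorphisms, whereas the conclusion is phrased simply as twisted $K$-stability. I would address this by observing that Dervan's equivalence is compatible with each of these refinements, the polystable and uniform versions being transported by the same invariant-by-invariant comparison; in particular, when $\mathrm{Aut}_0((X,L);D)$ is trivial one obtains strict twisted $K$-stability directly from item \ref{log K stability}. Beyond this bookkeeping the argument carries no further analytic content, being a formal corollary of the two cited results.
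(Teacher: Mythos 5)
Your proposal follows essentially the same route as the paper, whose entire proof of \thmref{twistedK stability} is the single observation that \thmref{log K} supplies the log stability of $((X,L);D)$ and that Dervan's comparison theorem \cite[Theorem 1.10]{MR3564626} converts log $K$-stability with angle $2\pi\beta$ along $D$ into twisted $K$-stability with twisting class $\tfrac{1-\beta}{2}C_1(L_D)$. Your elaboration on the term-by-term matching of Donaldson--Futaki invariants and on the polystable-versus-stable bookkeeping is consistent with Dervan's argument and is in fact more careful than the paper itself, which passes over these points silently.
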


\begin{rem}
	The slope stability introduced in Ross--Thomas \cite{MR2819757} was generalised to the log setting, see \cite{MR3248054} for the discussion for K\"ahler--Einstein cone metrics and also Section \ref{scavsclki}. The log $K$-semistability with angle $2\pi\beta$ implies the log slope semistability with angle $2\pi\beta$.
\end{rem}

We also prove a certain sufficient condition for the log $K$-instability in terms of the comparison of the cone angle and the average value of the scalar curvature of the divisor; see Section \ref{scavsclki}, particularly Theorem \ref{destabilizing test configuration}, for the more precise statement and the details.

%%%
%%%%%%%%%%%%%%%%%%%%%%%%%%%%%%%%%%%%%%%%%%%%%%%%%%%%%%%%%%%%%%%%%%%
%%%

\subsection{Proof of \thmref{log K}}\label{sclogkpolyst}

We first recall a foundational results by Boucksom--Hisamoto--Jonsson \cite{MR3985614} that we use in the proof.

\begin{thm} \emph{(Boucksom--Hisamoto--Jonsson \cite[Theorem 4.2]{MR3985614})} \label{thmbhj}
	Suppose that $(\phi_t)_{t \ge 0}$ is a smooth subgeodesic ray in $\mathcal{H} (L)$ which admits a non-Archimedean limit $\phi^{\mathrm{NA}} \in \mathcal{H}^{\mathrm{NA}} (L)$. Then
	\begin{equation*}
		\lim_{t \to + \infty} \frac{\nu_{\beta} (\phi_t)}{t} = \frac{1}{V} \left( K^{\mathrm{log}}_{( \bar{\mathcal{X}} , (1 - \beta )\bar{\mathcal{D}}) / \mathbb{P}^1} \cdot \bar{\mathcal{L}}^n + \frac{\underline{S}_{\beta}}{n+1} \bar{\mathcal{L}}^{n+1} \right)
	\end{equation*}
	for a normal representative $(\mathcal{X} , \mathcal{L})$ of $\phi^{\mathrm{NA}}$,
	where $(\bar{\mathcal{X}} , \bar{\mathcal{L}})$ is the compactification of $(\mathcal{X} , \mathcal{L})$ as in Definition \ref{defcptcx}, $\underline{S}_{\beta}$ is as defined in (\ref{eqavlgsc}), and
	\begin{align*}
		K^{\mathrm{log}}_{( \bar{\mathcal{X}} , (1 - \beta ) \bar{\mathcal{D}}) / \mathbb{P}^1} &:= K^{\mathrm{log}}_{\bar{\mathcal{X}} / \mathbb{P}^1} + (1 - \beta ) \mathcal{D} \\
		&=K_{\bar{\mathcal{X}} / \mathbb{P}^1} + \mathcal{X}_{0, \mathrm{red}} - \mathcal{X}_0 + (1 - \beta ) \mathcal{D} ,
	\end{align*}
	where $\mathcal{D}$ is defined as in Definition \ref{dflgtc} in terms of the divisor $D \subset X$.
\end{thm}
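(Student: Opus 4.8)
The plan is to reduce the cone case to the smooth case $\beta = 1$, for which Boucksom--Hisamoto--Jonsson have already computed the radial slope of the Mabuchi functional as intersection numbers on the compactified family, and then to evaluate separately the asymptotic slopes of the two auxiliary Monge--Amp\`ere energies produced by the decomposition of Proposition \ref{K and log K}. Writing $(\phi_t)$ for the given subgeodesic ray with non-Archimedean limit represented by the normal test configuration $(\mathcal{X}, \mathcal{L})$, Proposition \ref{K and log K} gives
\[
\frac{\nu_\beta(\phi_t)}{t} = \frac{\nu_1(\phi_t)}{t} + (1-\beta)\left( \frac{D_{\om_0,D}(\phi_t)}{t} - \frac{\mathrm{Vol}(D)}{V}\cdot \frac{D_{\om_0}(\phi_t)}{t}\right),
\]
so it is enough to pass to the limit $t \to +\infty$ in each of the three terms.

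First I would apply the Boucksom--Hisamoto--Jonsson formula to $\nu_1(\phi_t)/t$, obtaining its limit $\frac{1}{V}(K^{\mathrm{log}}_{\bar{\mathcal{X}}/\mathbb{P}^1}\cdot \bar{\mathcal{L}}^n + \frac{\underline{S}_1}{n+1}\bar{\mathcal{L}}^{n+1})$, which is the $\beta = 1$ specialisation of the asserted identity. The functional $D_{\om_0}$ is the Monge--Amp\`ere (Aubin--Mabuchi) energy of $X$ normalised by $V$, and its slope along the ray is the non-Archimedean energy $\bar{\mathcal{L}}^{n+1}/((n+1)V)$, the same intersection number that appears in $J^{\mathrm{NA}}$ (Definition \ref{defjna}); this is standard.

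The one genuinely new ingredient is the slope of the restricted energy $D_{\om_0,D}$. Since $D_{\om_0,D}(\vphi) = \tfrac{n}{V}\int_0^1\int_D \partial_t\vphi\,\om_\vphi^{n-1}\,dt$ is $n/V$ times the Monge--Amp\`ere energy of the restricted potentials $\vphi|_D$ on the $(n-1)$-dimensional $D$, I would compute its slope on the family $\bar{\mathcal{D}} \to \mathbb{P}^1$ cut out by the flat limit $\mathcal{D}$ of $D$ (Definition \ref{dflgtc}), and I expect it to equal $\bar{\mathcal{D}}\cdot\bar{\mathcal{L}}^n/V$. This is precisely the contribution that upgrades $K^{\mathrm{log}}_{\bar{\mathcal{X}}/\mathbb{P}^1}$ to $K^{\mathrm{log}}_{(\bar{\mathcal{X}},(1-\beta)\bar{\mathcal{D}})/\mathbb{P}^1} = K^{\mathrm{log}}_{\bar{\mathcal{X}}/\mathbb{P}^1} + (1-\beta)\mathcal{D}$. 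Assembling the three limits and rewriting the coefficient of $\bar{\mathcal{L}}^{n+1}$ through the definition (\ref{eqavlgsc}) of $\underline{S}_\beta$ together with the identity $\int_X C_1(L_D)\Om^{n-1} = \mathrm{Vol}(D)$ would then yield the claimed formula; I would treat this last step with care, since the coefficients of $\bar{\mathcal{L}}^{n+1}$ and of $\mathcal{D}\cdot\bar{\mathcal{L}}^n$ have to be matched exactly.

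The main obstacle is the rigorous identification of $\lim_t D_{\om_0,D}(\phi_t)/t$ with $\bar{\mathcal{D}}\cdot\bar{\mathcal{L}}^n/V$: one must establish that this slope exists along a merely subgeodesic ray, control the restricted energy across the flat degeneration where $\mathcal{D}$ and the central fibre may be singular, and verify that the divisor entering the intersection product is genuinely the flat limit $\mathcal{D}$ rather than the closure of the $\mathbb{C}^*$-orbit of $D$. A cleaner route I would pursue in parallel is to check whether the cited Boucksom--Hisamoto--Jonsson theorem is already stated for the twisted (cohomological) Mabuchi functional; if so, the cone entropy with weight $|s|_h^{2\beta-2}$ and the reference form $\theta \in C_1(X,D)$ fit their framework directly, and the proof collapses to matching their conventions with Definition \ref{logKenergy} and specialising their non-Archimedean Mabuchi functional to the right-hand side above.
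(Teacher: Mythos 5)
The first thing to note is that the paper contains no proof of this statement: Theorem \ref{thmbhj} is quoted from Boucksom--Hisamoto--Jonsson and used as a black box (the paper explicitly declines even to define $\mathcal{H}^{\mathrm{NA}}(L)$), so your proposal cannot be compared with an internal argument and has to be judged on its own terms. Your strategy --- decompose $\nu_\beta$ as $\nu_1$ plus divisorial energy terms via Proposition \ref{K and log K}, quote the $\beta=1$ slope formula, and compute the slopes of $D_{\om_0}$ and $D_{\om_0,D}$ --- is a sound way to deduce the log statement from the non-log one, and in fact the obstacle you single out as the hardest is the easiest part. The flat limit $\mathcal{D}$ of Definition \ref{dflgtc} \emph{is} the Zariski closure of the $\mathbb{C}^*$-orbit of $D$ (over the smooth curve $\mathbb{C}$, a subscheme flat over $\mathbb{C}^*$ has a unique flat extension, namely its scheme-theoretic closure); the restricted ray $(\phi_t|_D)_{t\ge 0}$ is a smooth subgeodesic ray on $(D, L|_D)$ compatible with the induced test configuration $(\mathcal{D}, \mathcal{L}|_{\mathcal{D}})$; and the Boucksom--Hisamoto--Jonsson slope theorem for the Monge--Amp\`ere energy --- which requires no normality, since the relevant intersection numbers are unchanged under pullback to the normalisation --- then yields $\lim_{t\to+\infty} D_{\om_0,D}(\phi_t)/t = \bar{\mathcal{D}}\cdot\bar{\mathcal{L}}^n / V$, exactly as you predict.

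The genuine gap is the step you deferred, and as written it does not close. Proposition \ref{K and log K} as printed carries the coefficient $\mathrm{Vol}(D)/V$, but this is inconsistent, by a factor of $n$, with the paper's definition \eqref{eqavlgsc} of $\ul S_\beta$: since $\int_X C_1(L_D)\,\Om^{n-1} = \mathrm{Vol}(D)$, one has $\ul S_\beta = \ul S_1 - (1-\beta)\,n\,\mathrm{Vol}(D)/V$, so the decomposition compatible with \eqref{eqavlgsc} (and with the algebraic side; compare $\tilde a_0 /a_0 = n$ and the term $\tilde b_0 - \tfrac{\tilde a_0}{a_0}b_0$ in the proof of Theorem \ref{destabilizing test configuration}) must read
\begin{equation*}
\nu_\beta(\vphi) \;=\; \nu_1(\vphi) + (1-\beta)\left[\, D_{\om_0,D}(\vphi) - \frac{n\,\mathrm{Vol}(D)}{V}\, D_{\om_0}(\vphi) \right],
\end{equation*}
the source of the slip being Definition \ref{J functional}, where the two displayed expressions for $\ul{\chi}$ already differ by a factor of $n$. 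If you assemble your three limits with the printed coefficient, the $\bar{\mathcal{L}}^{n+1}$-term comes out as $\bigl(\ul S_1 - (1-\beta)\mathrm{Vol}(D)/V\bigr)\bar{\mathcal{L}}^{n+1}/\bigl((n+1)V\bigr)$ rather than $\ul S_\beta\,\bar{\mathcal{L}}^{n+1}/\bigl((n+1)V\bigr)$, and the claimed identity fails; with the corrected coefficient the three slopes assemble exactly into the right-hand side of the theorem. So your proof is correct in outline but closes only after this normalisation discrepancy is identified and repaired --- precisely the step you flagged but left open. Your fallback route, checking that the cited theorem of Boucksom--Hisamoto--Jonsson already covers the log Mabuchi functional, is of course exactly what the paper itself does.
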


Note that $K^{\mathrm{log}}_{( \bar{\mathcal{X}} , (1 - \beta ) \bar{\mathcal{D}}) / \mathbb{P}^1}$ makes sense as a Weil divisor in $\mathcal{X}$ since it is normal.

We decide not to explain in this paper what a ``non-Archimedean limit'' or $\mathcal{H}^{\mathrm{NA}} (L)$ is, since it is rather technical. The only fact that we need is that a test configuration naturally defines a smooth subgeodesic ray $(\phi_t)_{t \ge 0}$ in $\mathcal{H} (L)$ for which the non-Archimedean limit $\phi^{\mathrm{NA}}$ exists and \thmref{thmbhj} holds. The reader is referred to \cite{MR3985614,MR3669511} for more details.

%\begin{prop}
%	The asymptotic slope of log $K$-energy $\nu$ is given by the formula below:
%	\begin{align}
%	\nu=
%	\end{align}
%\end{prop}

%%%
%%%%%%%%%%%%%%%%%%%%%%%%%%%%%%%%%%%%%%%%%%%%%%%%%%%%%%%%%%%%%%%%%%%
%%%

\thmref{log K} can be obtained essentially as a consequence of the above result of Boucksom--Hisamoto--Jonsson \cite{MR3985614}, the log properness result (Theorems \ref{cone properness theorem} and \ref{cone properness conjecture}), and the log geodesic stability result (Theorem \ref{geodesic stability}) of the third author \cite{arXiv:1803.09506}.

%We are now ready to prove the log $K$-semistability of the polarised pair $((X,L);D)$ admitting a cscK cone metric. The hardest part of the proof, however, was already obtained by Boucksom--Hisamoto--Jonsson \cite{MR3985614} and the third author \cite{arXiv:1803.09506}; the following argument is a rather obvious consequence of their results.

\subsubsection{Proof of log $K$-semistability} \label{scpflkss}

We start by proving (\ref{log K semistability}) of \thmref{log K}. As we shall see below, for this purpose we only need to assume that $\nu_{\beta}$ is bounded below, which is weaker than the existence of conic cscK metrics.

\begin{proof}[Proof of (\ref{log K semistability}) in \thmref{log K}, log $K$-semistability]
	By \thmref{geodesic stability} we know that $\nu_{\beta}$ is geodesically stable for all finite energy geodesics in $(\mathcal{E}^1(L) , d_1)$, i.e.
	\begin{equation*}
		\lim_{t \to + \infty} \frac{\nu_{\beta} (\psi_t)}{t} \ge 0
	\end{equation*}
	for any finite energy geodesic $( \psi_t )_{t \ge 0}$; note that the above inequality holds if $\nu_{\beta}$ is merely bounded below. Recall that, for any smooth subgeodesic ray $(\phi_t)_{t \ge 0}$ which extends to the total space of a normal test configuration $(\mathcal{X} , \mathcal{L})$, we can construct a finite energy geodesic $( \psi_t )_{t \ge 0}$ such that
	\begin{equation} \label{eqsbgfeg}
		\lim_{t \to + \infty} \frac{\nu_{\beta} (\phi_t)}{t} = \lim_{t \to + \infty} \frac{\nu_{\beta} (\psi_t)}{t},
	\end{equation}
	which follows from the result proved by Berman \cite{MR3461370}, Chen--Tang \cite{MR2521647}, and Phong--Sturm \cite{MR224263,MR2377252,MR2661562}, by also noting that the uniform convergence as $t \to + \infty$ proved in these papers and \cite[Lemma 3.9]{MR3985614} give (\ref{eqsbgfeg}); see in particular \cite[Proposition 2.7 and the proof of Proposition 3.6]{MR3461370} for more details, and \cite[Theorem 6.6]{arXiv:1509.04561} for the generalisation to $\mathcal{E}^{1,\mathrm{NA}}$.

	Suppose that we have a normal test configuration $(\mathcal{X} , \mathcal{L})$. Then we can take a smooth subgeodesic $(\phi_t)_{t \ge 0}$ in $\mathcal{H} (L)$ which admits a non-Archimedean limit $\phi^{\mathrm{NA}} \in \mathcal{H}^{\mathrm{NA}} (L)$ represented by $(\mathcal{X} , \mathcal{L})$; see \cite[Section 3.1]{MR3985614} for more details. Note also that the choice of a divisor $D \subset X$ naturally defines a log test configuration $((\mathcal{X}, \mathcal{L}); \mathcal{D})$ as in Definition \ref{dflgtc}. By (\ref{eqsbgfeg}) and Theorem \ref{thmbhj} we find
	\begin{align}
		0 &\le \lim_{t \to + \infty} \frac{\nu_{\beta} (\phi_t)}{t}  \notag \\
		&= \frac{1}{V} \left( K_{( \bar{\mathcal{X}} , (1 - \beta ) \bar{\mathcal{D}}) / \mathbb{P}^1} \cdot \bar{\mathcal{L}}^n + \frac{\underline{S}_{\beta}}{n+1} \bar{\mathcal{L}}^{n+1} \right) - \frac{1}{V} \left( \mathcal{X}_0 - \mathcal{X}_{0, \mathrm{red}} \right) \cdot \bar{\mathcal{L}}^n \notag \\
		&\le \frac{1}{V} \left( K_{( \bar{\mathcal{X}} , (1 - \beta ) \bar{\mathcal{D}}) / \mathbb{P}^1} \cdot \bar{\mathcal{L}}^n + \frac{\underline{S}_{\beta}}{n+1} \bar{\mathcal{L}}^{n+1} \right) \notag \\
		&=DF (\mathcal{X}, \mathcal{D} , \mathcal{L} , \beta ), \notag
	\end{align}
	where the inequality in the third line follows from $\mathcal{X}_0 - \mathcal{X}_{0, \mathrm{red}}$ being an effective divisor supported on the central fibre and $\bar{\mathcal{L}}$ being $\pi$-semiample on $\bar{\mathcal{X}}$. The last line follows from \cite[Theorem 3.7]{MR3403730}; see also \cite[Definition 3.17]{MR3985614}.
\end{proof}

\subsubsection{Proof of log $K$-polystability}

We improve the above result to prove the log $K$-polystability (\ref{log K polystability}) in \thmref{log K}.
We follow the argument by Berman--Darvas--Lu \cite[Section 4]{MR4094559}. We start by generalising the ingredients of the proof of their result to the conic setting.

\begin{lemma} \emph{(see \cite[Lemma 4.1]{MR4094559})} \label{ldm41bdl}
	Suppose that we fix a smooth K\"ahler metric $\omega$ on $X$. Let $u_0 \in \mathcal{E}^1 \cap D_{\omega}^{-1}(0)$ be a cscK cone potential and $\{ u_t \}_{t \ge 0} \subset \mathcal{E}^1 \cap D_{\omega}^{-1}(0)$ be a finite energy geodesic ray emanating from $u_0$, where $D_{\omega}$ is the functional defined in (\ref{defmaeng}). If there exists $C>0$ such that
	\begin{equation} \label{eqinfjubc}
		\inf_{g \in G} J^A_{\omega_0} (g \cdot u_t) < C
	\end{equation}
	for all $t \ge 0$ for the functional defined in \eqref{defaubjfc}, there exists a real holomorphic Hamiltonian vector field $v \in \mathfrak{isom}(X , \omega_{u_0})$ preserving $D \subset X$ such that $u_t = \exp (t J v) \cdot u_0$, where $J$ is the complex structure on $X$.
\end{lemma}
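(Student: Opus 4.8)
The plan is to adapt the argument of Berman--Darvas--Lu \cite{MR4094559} to the conic setting, handling its three ingredients in turn: the $G$-invariance of $\nu_\beta$, the convexity of $\nu_\beta$ along conic geodesics together with its rigidity, and the Finsler comparison geometry of $(\mathcal E^1,d_1)$. First I would rephrase the hypothesis metrically. On the normalised slice $D_{\omega_0}^{-1}(0)$ the functional \eqref{defaubjfc} reduces to $J^A_{\omega_0}(u)=V^{-1}\int_M u\,\omega_0^n$, and this quantity is comparable, up to uniform additive and multiplicative constants, to the $d_1$-distance to the origin (the standard Mabuchi geometry of $\mathcal E^1$, and invariant under adding constants so that the $G$-action is unobstructed). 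Taking the infimum over $G$, the assumption \eqref{eqinfjubc} is therefore equivalent to the uniform bound $\sup_{t\ge 0}d_{1,G}(0,u_t)<\infty$; in words, the unit-speed ray $\{u_t\}$ escapes to infinity only along the $G$-orbit direction.

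The next point is that $\nu_\beta$ is $G$-invariant. Since a cscK cone metric exists, the log Futaki invariant \eqref{Log DF invariant} vanishes on $\mathfrak{aut}((X,L);D)$ (it is precisely the obstruction, and equals the derivative of $\nu_\beta$ along a holomorphic orbit); integrating this vanishing along one-parameter subgroups gives $\nu_\beta(g\cdot u)=\nu_\beta(u)$ for every $g\in G$ and $u\in\mathcal E^1$. Combined with the facts that $u_0$ minimises $\nu_\beta$ (\thmref{Lower bound}) and that $t\mapsto\nu_\beta(u_t)$ is convex along the finite-energy geodesic (the conic convexity used in \cite{MR4020314}), this reduces the statement to the following claim: a $d_{1,G}$-bounded finite-energy geodesic ray emanating from the cscK minimiser $u_0$ is generated by a holomorphic vector field.

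To establish this claim I would run the comparison argument of \cite[\S 4]{MR4094559}, whose engine is the Finsler geometry of $(\mathcal E^1,d_1)$. Choosing $g_t\in G$ with $d_1(0,g_t\cdot u_t)\le d_{1,G}(0,u_t)+1$ and setting $w_t=g_t\cdot u_t$, the family $\{w_t\}$ is $d_1$-bounded while $\nu_\beta(w_t)=\nu_\beta(u_t)$ by $G$-invariance. The orbit $G\cdot u_0$ is a totally geodesic, symmetric-space-type subset of $(\mathcal E^1,d_1)$ along which $\nu_\beta$ is constant and equal to its minimum, and the theory of \cite{MR4094559} shows that a unit-speed geodesic ray lying at bounded distance from such an orbit and issuing from a point of it must coincide with an orbit geodesic; concretely, there is a real holomorphic $v$ with $u_t=\exp(tJv)\cdot u_0$. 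The minimising property of $u_0$ (\thmref{Lower bound}), the $G$-coercivity \eqref{d1gcoercive}, and the uniqueness of cscK cone metrics (\thmref{uniqueness of cscK cone metric}) are what keep the relevant limits inside $G\cdot u_0$ and force $v$ to be Hamiltonian and Killing for $\omega_{u_0}$ and tangential to $D$, so that $v\in\mathfrak{isom}(X,\omega_{u_0})$ preserves $D$ as required.

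The main obstacle is this last, comparison-geometric step in the presence of cone singularities. One must make rigorous sense of a finite-energy geodesic being ``generated by a holomorphic vector field'' near $D$, and prove the rigidity in the equality case of the convexity of $\nu_\beta$ along conic geodesics --- the conic analogue of the Berman--Berndtsson rigidity --- which is the step that actually extracts $v$. One must also ensure that the extracted field is Killing for $\omega_{u_0}$ and genuinely tangential to $D$, rather than merely holomorphic on the regular part $M$. The Finsler structure on the conic completion $\mathcal E^1$ that underlies the comparison argument is provided by the properness theory of \cite{arXiv:1803.09506} (recall \thmref{cone properness theorem}), and I expect that verifying its hypotheses in the conic category, together with the rigidity, will constitute the bulk of the technical work.
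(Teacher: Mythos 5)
Your reduction of the hypothesis \eqref{eqinfjubc} to a uniform bound on $d_{1,G}(0,u_t)$ is fine, but after that the proposal has a genuine gap: the step that actually produces the vector field $v$ is never carried out. You reduce the lemma to the claim that a $d_{1,G}$-bounded finite-energy geodesic ray emanating from $u_0$ is an orbit geodesic, and then assert that ``the theory of \cite{MR4094559} shows'' this; but in the smooth case that claim \emph{is} precisely \cite[Lemma 4.1]{MR4094559}, so the appeal is circular --- the whole point here is to redo its proof in the conic setting. Moreover, you misidentify where the difficulty lies. The mechanism that extracts $v$, both in \cite{MR4094559} and in the paper, has nothing to do with convexity of $\nu_\beta$ or a conic Berman--Berndtsson rigidity: the log $K$-energy never appears in the proof of this lemma (its minimising property, the coercivity \eqref{d1gcoercive} and the uniqueness theorem are used in the \emph{application} to polystability, not here). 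What one actually does is: (a) invoke the conic reductivity theorem \cite[Theorem 6.1]{MR4020314} --- this is where the cscK cone metric enters --- so that $\mathfrak{aut}((X,L);D)$ is the complexification of $\mathfrak{isom}(X,\omega_{u_0})$, and apply the global Cartan decomposition to write the minimising group elements as $g_k=h_k\exp(-Jv_k)$ with $h_k$ isometries and $v_k\in\mathfrak{isom}(X,\omega_{u_0})$; (b) convert \eqref{eqinfjubc} and the unit speed of the ray into two-sided estimates $k-C<d_1(u_0,\exp(Jv_k)\cdot u_0)<k+C$; (c) translate these into uniform $L^1$-bounds on the holomorphy potentials $\psi_k$ of $Jv_k$ and use the finite-dimensionality of the space of holomorphy potentials to extract a smooth limit $v=\lim_k v_k/k\in\mathfrak{isom}(X,\omega_{u_0})$; (d) conclude $u_t=\exp(tJv)\cdot u_0$ by the $d_1$-comparison of geodesic rays, via \cite[Proposition 5.1]{MR3687111}.

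The genuinely conic work, which your sketch does not touch, is concentrated in step (c): the $d_1$-estimates are integrals against $\omega_{u_0}^n$, which degenerates along $D$, so one needs $u_0\in C^{4,\alpha,\beta}$ (hence $u_0$ continuous and $\omega_{u_0}^n$ quasi-isometric to $|s|^{2\beta-2}\omega_0^n$, as in \eqref{eqchvlfest}) together with the observation that each $\psi_k$ vanishes along $D$ --- being the holomorphy potential of a field preserving $D$ --- in order to pass to uniform bounds of the form $C_2^{-1}<k^{-1}\int_X|\psi_k|\,\omega^n<C_2$ against the \emph{smooth} volume form. By contrast, the points you flag as the main obstacles are either not needed (equality-case rigidity of the convexity of $\nu_\beta$; $G$-invariance of $\nu_\beta$ via the vanishing of the log Futaki invariant) or come for free once the Cartan decomposition is set up correctly: $v$ is Killing for $\omega_{u_0}$ and tangent to $D$ by construction, since the $v_k$, and hence their limit, lie in $\mathfrak{isom}(X,\omega_{u_0})\subset\mathfrak{aut}((X,L);D)$.
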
 

\begin{rem} \label{rminfjubc}
	Inspection of the proof below immediately gives that the right hand side of (\ref{eqinfjubc}) only needs to be of order $C + o(t)$, where $o(t)$ stands for the (nonnegative) quantity which goes to zero after dividing by $t$ and taking the limit $t \to + \infty$, i.e.~$\lim_{t \to + \infty}o(t)/t = 0$.
\end{rem}

Before we start the proof, we first recall the definition of the H\"older space $C^{4, \alpha, \beta}$, where $0< \alpha <1$ is a (fixed) H\"older exponent, introduced in \cite[Section 4]{MR4020314} for any $0<\beta\leq 1$, (half angle case $0<\beta<\frac{1}{2}$ in \cite[Section 2]{MR3857693} and \cite[Section 2]{MR3968885}), in such a way that generalises the H\"older space $C^{2 , \alpha , \beta}$ defined by Donaldson \cite{MR2975584}. While we do not give a precise definition of it in this paper, we only point out that the potential functional $u_0$ of a cscK cone metric of angle $2 \pi \beta$ is an element of $C^{4, \alpha, \beta}$, which in particular implies that $u_0$ is a continuous function on $X$ and that there exists a constant $\tilde{C}>0$ such that
\begin{equation} \label{eqchvlfest}
		 \tilde{C}^{-1} |s|^{2 \beta -2} \omega_0^n \le \omega^n_{u_0} \le \tilde{C} |s|^{2 \beta -2} \omega_0^n \quad \text{on } X \setminus D ,
\end{equation}
where $\omega_0$ is reference K\"ahler metric which is smooth on $X$, as in Section \ref{Log K stability}, and $s \in H^0 (X , L_D )$ is the global section that defines $D$ by $D = \{ s=0 \}$ (its norm $|s|$ is computed with respect to some fixed Hermitian metric on $L_D$). It is also well-known that we have $C^{4, \alpha, \beta} \subset \mathcal{E}^1$ when $0 < \beta < 1$. We only need these properties of $C^{4, \alpha, \beta}$ in what follows, and the reader is referred for more details to \cite{MR4020314} and the references above.

\begin{proof}
Suppose that we take a sequence $\{ g_k \}_{k \in \mathbb{N}} \subset G$ such that $J_{\omega} (g_k \cdot u_k) < C$. By \cite[Theorem 6.1]{MR4020314}, $\mathfrak{g} = \mathfrak{aut} ((X,L); D)$ is the complexification of the isometry group $\mathfrak{isom}(X , \omega_{u_0})$ of the cscK cone metric $\omega_{u_0}$, where $u_0 \in C^{4, \alpha , \beta}$. Thus by the global Cartan decomposition (see also \cite[Proposition 6.2]{MR3600039}) we may write $g_k = h_k \exp (-Jv_k)$ for some $h_k \in \mathrm{Isom}_0 (X , \omega_{u_0})$ and $v_k \in \mathfrak{isom}(X , \omega_{u_0})$. We then find
	\begin{equation*} 
		k-C < d_1 (u_0 , \exp(Jv_k) \cdot u_0) < k+C
	\end{equation*}
	exactly as in \cite[Proof of Lemma 4.1]{MR4094559}, which in turn relies on \cite[Proposition 5.5, Lemmas 5.9 and 5.10]{MR3600039}.

	We now claim that the re-scaled vector fields $\{ Jv_k /k \}_{k \in \mathbb{N}}$ contain a convergent subsequence in $C^{\infty}$. We first write $\psi_k \in \mathcal{H} \cap D_{\omega}^{-1}(0)$ for the potential satisfying $\exp(Jv_k)^*\omega = \omega + dd^c \psi_k $, which clearly equals the holomorphy potential of $Jv_k$. We then find $\exp(Jv_k) \cdot u_0 = \psi_k + \exp(Jv_k)^* u_0$, as pointed out in \cite[Lemma 5.8]{MR3600039}. The above inequality and \cite[Theorem 3]{MR3406499} (see also \cite[Proposition 5.4]{MR3600039}) implies that there exists $C_1 >1$ such that
	\begin{align*}
		&C_1^{-1} (k-C) \\
		&< \int_X | \psi_k + \exp(Jv_k)^* u_0 -u_0| \omega_{u_0}^n  \\
		&\quad \quad + \int_X | \psi_k + \exp(Jv_k)^* u_0 -  u_0| (\exp(Jv_k)^* \omega_{u_0})^n \\
		&< C_1 (k + C).
	\end{align*}
	Noting that $\Vert \exp(Jv)^* u_0 -u_0 \Vert_{C^0} < \max_X u_0 - \min_X u_0$ and that $u_0$ is continuous on $X$ as $u_0 \in C^{4 , \alpha , \beta}$, we find that there exists $C_2 >1$ such that for all large enough $k$ we have
	\begin{equation*}
		C_2^{-1} < \frac{1}{k} \int_X | \psi_k | \omega^n < C_2,
	\end{equation*}
	since $\psi_k$ vanishes on $D \subset X$ with order at least 1 as it is a holomorphy potential of a holomorphic vector field which preserves $D$, and
	\begin{equation*}
		 C_3^{-1} |s|^{2 \beta -2} \omega^n \le \omega^n_{u_0} \le C_3 |s|^{2 \beta -2} \omega^n
	\end{equation*}
	for some $C_3 >1$ (and similarly for $(\exp(Jv_k)^* \omega_{u_0})^n$), again by $u_0 \in C^{4 , \alpha , \beta}$ and (\ref{eqchvlfest}). Note that the subset of $\mathcal{H} \cap D_{\omega}^{-1}(0)$ consisting of holomorphy potentials for $\mathfrak{g}$ is a finite dimensional vector space isomorphic to $\mathfrak{g}$. Thus the uniform bound as above means that the sequence $\{ \psi_k /k \}_{k \in \mathbb{N}}$ is contained in a compact subset of $\mathfrak{g}$ and hence there exists $v \in \mathfrak{isom}(X , \omega_{u_0})$, $v \neq 0$, such that $v_k/k$ converges smoothly to $v$ as $k \to \infty$. We then argue as in \cite[Proof of Lemma 4.1]{MR4094559}, which relies on \cite[Proposition 5.1]{MR3687111}, to show that $u_t = \exp(tJv) \cdot u_0 \in C^{4 , \alpha , \beta}$. 
\end{proof}

\begin{proof}[Proof of (\ref{log K polystability}) in \thmref{log K}, log $K$-polystability]
With the above ingredients, the proof is a word-by-word repetition of \cite[Proof of Theorem 1.6]{MR4094559} by Berman--Darvas--Lu, but we provide a sketch proof for the sake of the reader's convenience.

Suppose that we pick a normal test configuration $(\mathcal{X} , \mathcal{L})$ and a smooth subgeodesic ray $(\phi_t)_{t \ge 0}$, emanating from the conic cscK potential, which extends to the total space of $(\mathcal{X} , \mathcal{L})$, just as in Section \ref{scpflkss}. Recalling that (\ref{log K semistability}) in \thmref{log K} implies $DF (\mathcal{X}, \mathcal{D} , \mathcal{L} , \beta ) \ge 0$, it remains to show that $((\mathcal{X}, \mathcal{L}); \mathcal{D})$ is product if  $DF (\mathcal{X}, \mathcal{D} , \mathcal{L} , \beta ) = 0$, in order to prove (\ref{log K polystability}) in \thmref{log K}.

The log properness theorem (Theorem \ref{cone properness theorem}) implies that we have
\begin{equation*}
	0 \le \lim_{t \to + \infty} \frac{\nu_{\beta} (\phi_t)}{t} \le DF (\mathcal{X}, \mathcal{D} , \mathcal{L} , \beta ) = 0,
\end{equation*}
as we saw in Section \ref{scpflkss}. Taking a finite energy geodesic ray $(\psi_t)_{t \ge 0}$ which satisfies (\ref{eqsbgfeg}), we thus find $\nu_{\beta} (\psi_t) = o(t)$ in the notation of Remark \ref{rminfjubc}. Combined with \thmref{cone properness theorem}, we see that there exists a constant $C$ such that
\begin{equation*}
		\inf_{g \in G} J^A_{\omega_0} (g \cdot (\psi_t - D_{\omega}(\psi_t) ) ) < C + o(t)
\end{equation*}
holds for all $t \ge 0$, noting that $J^A_{\omega_0}$ is invariant under an additive constant. We then invoke Lemma \ref{ldm41bdl} (and also Remark \ref{rminfjubc}) and conclude that there exists $v \in \mathfrak{isom}(X , \omega_{u_0})$ such that $\psi_t = \exp (t J v) \cdot \psi_0$.

We observe that \cite[Lemma 4.2]{MR4094559} applies to the situation under consideration since $\psi_t = \exp (t J v) \cdot \psi_0 \in C^{4 , \alpha , \beta} \subset \mathcal{E}^1$ for all $t \ge 0$, which implies that $v$ lifts to the total space of the line bundle $L$. We further apply \cite[Proposition 4.3]{MR4094559}, which was originally proved by Berman \cite[Lemma 3.4]{MR3461370}, to conclude that $v$ is the generator of an (algebraic) $\mathbb{C}^*$-action and that $\mathcal{X}$ is isomorphic to $X \times \mathbb{C}$, as required.
\end{proof}

%%%
%%%%%%%%%%%%%%%%%%%%%%%%%%%%%%%%%%%%%%%%%%%%%%%%%%%%%%%%%%%%%%%%%%%
%%%

The necessary direction for the YTD Conjecture \ref{Uniform log YTD necessary} in terms of the uniform stability is obtained by applying \cite[Theorem 1.2 (Theorem 7.4)]{arXiv:1803.09506} and \cite[Corollary 4.5]{MR3985614}. 

\subsubsection{Proof of $G$-uniform log $K$-stability} \label{scpfgulks}

Recalling the definition of the $G$-uniform log $K$-stability from Section \ref{Uniform log K stability}, we prove the $G$-uniform log $K$-stability by following the argument of Hisamoto \cite{arXiv:1610.07998} and C.~Li \cite{arXiv:1907.09399}.

\begin{proof}[Proof of (\ref{uniform log K-stability}) in \thmref{log K}, $G$-uniform log $K$-stability]
This is a consequence of the $d_{1,G}$-coercivity of the log $K$-energy (\ref{d1gcoercive}) proved in \cite{arXiv:1803.09506} (see \thmref{cone properness theorem}). Recall first that the existence of a cscK cone metric implies that the automorphism group $G:=\mathrm{Aut}_0 ((X,L);D)$ is reductive by \cite[Theorem 6.1]{MR4020314} (see also \cite[Theorem 4.1]{MR3968885} for the half angle case $0<\beta<\frac{1}{2}$). We thus write it as a complexification of its maximal compact subgroup $K$, which we identify with the isometry group of the cscK cone metric, and write $Z$ for the identity component of the centre of $K$. We further define $T$ as the complexification of $Z$. Suppose that we write $N_G(K)$ for the normaliser of $K$ in $G$. Inspecting the proof of \cite[Theorem 7.4]{arXiv:1803.09506}, and noting \cite[Proof of Claim 7.9]{MR3600039} by Darvas--Rubinstein and \cite[Proof of Theorem 4.3]{arXiv:1610.07998} by Hisamoto, we find that it implies a stronger inequality
\begin{equation*}
	\nu_\beta(\vphi)\geq A \cdot d_{1, N_G(K)}(\vphi,0)-B
\end{equation*}
for any $\vphi \in \mathcal{E}^1$, where $A >0$ and $B \in \mathbb{R}$ are fixed real numbers. This in turn implies that we have
\begin{equation*}
	\nu_\beta(\vphi)\geq A \cdot d_{1, T}(\vphi,0)-B
\end{equation*}
for any $K$-invariant K\"ahler potential $\vphi \in (\mathcal{E}^1)^K$, where $(\mathcal{E}^1)^K$ is a totally geodesic subspace of $\mathcal{E}^1$ consisting of $K$-invariant potentials, by \cite[Proposition B.1]{arXiv:1907.09399} which is due to J.~Yu.

Pick a $G$-equivariant log test configuration $((\mathcal{X}, \mathcal{D} ); \mathcal{L} )$. Then, as we did in Section \ref{scpflkss}, we can take a smooth subgeodesic ray $(\phi_t)_{t \ge 0} \subset \mathcal{E}^1$ which admits $\phi^{\mathrm{NA}} \in \mathcal{H}^{\mathrm{NA}}(L)$ as non-Archimedean limit, where $\phi^{\mathrm{NA}}$ is represented by $(\mathcal{X}, \mathcal{L})$ (recall that $\mathcal{D}$ is automatically determined by $(\mathcal{X}, \mathcal{L})$ and $D \subset X$). By taking the average over $K$ if necessary, we may assume that $(\phi_t)_{t \ge 0} \subset (\mathcal{E}^1)^K$ and that its non-Archimedean limit is still represented by $(\mathcal{X}, \mathcal{L})$, since $((\mathcal{X}, \mathcal{D} ); \mathcal{L} )$ (and hence $(\mathcal{X}, \mathcal{L})$) is assumed to be $G$-equivariant. Then the argument is very similar to what we did in the proof of (\ref{log K semistability}) in Theorem \ref{log K} (see Section \ref{scpflkss}): we divide (\ref{d1gcoercive}) by $t$ and take the limit $t \to + \infty$, to get
\begin{equation*}
	DF (\mathcal{X}, \mathcal{D} , \mathcal{L} , \beta ) \ge \lim_{t \to + \infty} \frac{\nu_{\beta} (\phi_t)}{t} \ge A \cdot \lim_{t \to + \infty} \frac{d_{1,T} (\phi_t)}{t}.
\end{equation*}

Finally, the result of Hisamoto \cite[Theorem B]{arXiv:1610.07998} (see also \cite[Theorem 3.14]{arXiv:1907.09399} by C.~Li) implies that the limit of the right hand side agrees with $J^{\mathrm{NA}}_T (\mathcal{X} , \mathcal{L})$. We thus conclude, for any $G$-equivariant log test configuration $(\mathcal{X}, \mathcal{D} , \mathcal{L} , \beta )$, that
\begin{equation*}
	DF (\mathcal{X}, \mathcal{D} , \mathcal{L} , \beta ) \ge A' J^{\mathrm{NA}}_T (\mathcal{X} , \mathcal{L})
\end{equation*}
for some $A' >0$ as required, by noting that the $d_1$-distance on $\mathcal{E}^1$ is Lipschitz equivalent to the functional $J^A_{\om_0}$ in (\ref{defaubjfc}) \cite[Proposition 5.5]{MR3600039}.
\end{proof}

\subsubsection{Proof of uniform log $K$-stability}

Given all the results established so far, it is easy to establish the final item of \thmref{log K}.

\begin{proof}[Proof of (\ref{log K stability}) in \thmref{log K}, uniform log $K$-stability]
We simply repeat the proof in section \ref{scpfgulks} when $G = \mathrm{Aut}_0 ((X,L);D)$ is trivial. Alternatively, we can also use the the coercivity of $\nu_\beta$ in the proof of (\ref{log K semistability}) \thmref{log K}, given in section \ref{scpflkss}. The coercivity of $\nu_{\beta}$ implying the uniform log $K$-stability is also written in \cite[Corollary 4.5]{MR3985614}.%In the proof of \ref{log K semistability} \thmref{log K}, given in section \ref{scpflkss}, replacing the lower bound of the log $K$-energy $\nu_\beta$ with the coercivity of $\nu_\beta$, we obtain the log K stability.
\end{proof}

%%%
%%%%%%%%%%%%%%%%%%%%%%%%%%%%%%%%%%%%%%%%%%%%%%%%%%%%%%%%%%%%%%%%%%%
%%%

\begin{rem}\label{General Kahler class}
We believe that \thmref{log K} holds for a general K\"ahler class that is not necessarily the first Chern class of an ample line bundle, by adapting the argument of Dervan--Ross \cite{MR3696600} and Sj\"{o}str\"{o}m Dyrefelt \cite{MR4095424,MR3881961}, although we decide not to discuss this problem further in this paper as it will involve subtle technical points.
%For general K\"ahler class, \thmref{log K} also holds via applying \cite{MR3564626}.
\end{rem}

%%%
%%%%%%%%%%%%%%%%%%%%%%%%%%%%%%%%%%%%%%%%%%%%%%%%%%%%%%%%%%%%%%%%%%%
%%%
\subsection{Openness of stabilities along cscK cone path}

Then combined with \thmref{openess}, (\ref{log K stability}) in \thmref{log K} directly implies that
\begin{prop}\label{openess semistability}
If $\mathrm{Aut}_0 ((X,L);D)=0$ and there exists a cscK cone metric with cone angle $2\pi\beta_0\in (0,2\pi)$, then there is a constant $\delta>0$ such that $((X,L);D)$ is uniformly log $K$-stable with angle $2 \pi \beta$ for all $\beta\in (\beta_0-\delta,\beta_0+\delta)$. Especially, when $\beta=1$, then interval becomes $(1-\delta,1]$.
\end{prop}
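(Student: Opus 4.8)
The plan is to derive this proposition as a formal consequence of two results already established in the excerpt: the openness of the cscK cone path (\thmref{openess}) and the uniform log $K$-stability statement (item (\ref{log K stability}) of \thmref{log K}). No genuinely new analysis is required; the entire content is a compatibility check between the hypotheses of these two theorems.

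First I would record that the standing hypothesis $\mathrm{Aut}_0((X,L);D)=0$ is equivalent to $\mathfrak{aut}((X,L);D)=0$, i.e.~to the nonexistence of nontrivial holomorphic vector fields preserving $D$. This is exactly the hypothesis under which \thmref{openess} applies. Feeding in the given cscK cone metric of angle $2\pi\beta_0\in(0,2\pi)$, \thmref{openess} produces a constant $\delta>0$ such that a cscK cone metric of angle $2\pi\beta$ exists for every $\beta\in(\beta_0-\delta,\beta_0+\delta)$ (with the one-sided interval $(1-\delta,1]$ in the boundary case $\beta_0=1$). This already fixes the same $\delta$ that appears in the conclusion.

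Next, for each such $\beta$ I would invoke item (\ref{log K stability}) of \thmref{log K}: since a cscK cone metric of angle $2\pi\beta$ now exists, and since $\mathrm{Aut}_0((X,L);D)$ is trivial, the pair $((X,L);D)$ is uniformly log $K$-stable with angle $2\pi\beta$. The key observation making this step uniform over the whole interval is that $\mathrm{Aut}_0((X,L);D)$ is an invariant of the pair $((X,L);D)$ alone and carries no dependence on the cone angle $\beta$; hence the triviality hypothesis required by (\ref{log K stability}) holds simultaneously for all $\beta$ in the interval produced above.

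There is no substantial obstacle here: the only point that deserves explicit mention is precisely this $\beta$-independence of the automorphism group, which guarantees that the hypothesis of \thmref{log K}(\ref{log K stability}) propagates across the entire family of angles supplied by the openness theorem. The proposition is therefore an immediate corollary, and the proof amounts to chaining the two cited results together, with the boundary case $\beta_0=1$ inherited verbatim from the corresponding statement in \thmref{openess}.
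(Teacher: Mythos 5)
Your proposal is correct and follows exactly the paper's route: the paper presents this proposition as directly implied by combining \thmref{openess} with (\ref{log K stability}) of \thmref{log K}, which is precisely the chaining you carry out, including inheriting the one-sided interval at $\beta_0=1$ from \thmref{openess}. The compatibility points you make explicit --- the equivalence of $\mathrm{Aut}_0((X,L);D)=0$ with $\mathfrak{aut}((X,L);D)=0$ and the $\beta$-independence of the automorphism group --- are left implicit in the paper but are the correct justifications.
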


The following conclusion follows directly from the linearity of the log $K$-energy \thmref{Log K energy linear}.
\begin{prop}\label{proper interpolation}
	Assume that the log $K$-energy is proper at cone angle $\beta_0>0$ and bounded below at cone angle $\beta_1>\beta_0$. Then $((X,\Omega);D)$ is the log $K$-energy is proper for any $\beta\in [\beta_0,\beta_1)$.  
\end{prop}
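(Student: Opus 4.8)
The plan is to exploit the linearity of the log $K$-energy in the cone angle established in \thmref{Log K energy linear}, together with the equivalence between properness and $d_{1,G}$-coercivity furnished by \thmref{cone properness theorem}. First I would translate the two hypotheses into quantitative estimates. Since $\nu_{\beta_0}$ is proper, \thmref{cone properness theorem} yields constants $A>0$ and $B\in\bR$ such that
\[
\nu_{\beta_0}(\vphi)\geq A\cdot d_{1,G}(\vphi,0)-B
\]
for all $\vphi\in\mathcal H$, while the assumption that $\nu_{\beta_1}$ is bounded below gives a constant $C$ with $\nu_{\beta_1}(\vphi)\geq -C$. I would also observe that the group $G=\mathrm{Aut}_0((X,\Omega);D)$, and hence the distance $d_{1,G}$, does not depend on the cone angle, so that the same $d_{1,G}$ occurs in every estimate below.

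Next, for a fixed $\beta\in[\beta_0,\beta_1)$ I would write $\beta=(1-s)\beta_0+s\beta_1$ with $s\in[0,1)$ and apply \thmref{Log K energy linear} to the pair $\beta_0<\beta_1$ to obtain $\nu_\beta=(1-s)\nu_{\beta_0}+s\nu_{\beta_1}$. Combining this with the two estimates above gives
\[
\nu_\beta(\vphi)\geq (1-s)\bigl(A\cdot d_{1,G}(\vphi,0)-B\bigr)-sC=(1-s)A\cdot d_{1,G}(\vphi,0)-\bigl((1-s)B+sC\bigr).
\]
Since $\beta<\beta_1$ forces $s<1$, the coefficient $(1-s)A$ is strictly positive, so this is exactly a $d_{1,G}$-coercivity estimate for $\nu_\beta$. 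As coercivity trivially implies properness (as noted immediately after the relevant definitions), this yields properness of $\nu_\beta$, which is what we want.

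There is no substantial obstacle; the argument is essentially bookkeeping once linearity is available. The one point that genuinely requires care is the exclusion of the endpoint $\beta_1$: at $s=1$ the positive coefficient $(1-s)A$ degenerates to zero and the estimate collapses to the bare lower bound $\nu_{\beta_1}\geq -C$, which carries no coercivity, so properness cannot be concluded there. I would also record that the equivalence of properness and coercivity in \thmref{cone properness theorem} is legitimately applied at $\beta_0$, where the hypothesis of properness (equivalently, existence of a cscK cone metric) is in force, and that the convex-combination identity of \thmref{Log K energy linear} is invoked with the correct interpolation parameter between the two given angles.
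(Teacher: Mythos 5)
Your proof is correct, and its engine is the same as the paper's: the linearity of the log $K$-energy in the cone angle (\thmref{Log K energy linear}) applied to the decomposition $\beta=(1-s)\beta_0+s\beta_1$ with $s\in[0,1)$. The paper, however, treats the proposition as an immediate consequence of linearity alone, working straight from the sequence definition of properness: if $d_{1,G}(0,\vphi_i)\to\infty$, then properness of $\nu_{\beta_0}$ gives $\nu_{\beta_0}(\vphi_i)\to\infty$, and hence
\begin{align*}
\nu_\beta(\vphi_i)=(1-s)\nu_{\beta_0}(\vphi_i)+s\nu_{\beta_1}(\vphi_i)\geq(1-s)\nu_{\beta_0}(\vphi_i)-sC\longrightarrow\infty ,
\end{align*}
with no need to quantify anything. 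Your detour---upgrading properness at $\beta_0$ to $d_{1,G}$-coercivity via \thmref{cone properness theorem} before interpolating---is legitimate but invokes a much deeper result (the equivalence of properness, coercivity, and existence of a cscK cone metric proved in \cite{arXiv:1803.09506}), whereas the direct argument needs nothing beyond the definitions and \thmref{Log K energy linear}. There is no circularity, since that theorem is quoted from prior work, so your route is sound; what it buys is a genuinely stronger conclusion, namely that $\nu_\beta$ is $d_{1,G}$-coercive (not merely proper) for every $\beta\in[\beta_0,\beta_1)$, with explicit coercivity constant $(1-s)A$ degenerating exactly at the excluded endpoint $\beta_1$---which also explains cleanly why that endpoint cannot be included. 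In short: same key lemma, but you trade economy of hypotheses for a quantitative output.
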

%%%%%%%%%
Making use of \thmref{cone properness theorem} and \ref{uniform log K-stability} in \thmref{log K}, we also have
\begin{prop}\label{existence interpolation}
Assume that $((X,\Omega);D)$ admits a cscK cone metric with cone angle $\beta_0>0$ and the log $K$-energy is bounded below at cone angle $\beta_1>\beta_0$. Then for any $\beta\in [\beta_0,\beta_1)$
\begin{itemize}
\item $((X,\Omega);D)$ is $d_{1,G}$-proper ;
\item $((X,\Omega);D)$ admits a cscK cone metric;
\item $((X,\Omega);D)$ is $G$-uniformly log $K$-stable with angle $2\pi\beta$.
\end{itemize}
\end{prop}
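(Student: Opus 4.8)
The plan is to chain together three previously established facts, treating the three bullets in order: the existence/properness equivalence (\thmref{cone properness theorem}), the interpolation result for properness (Proposition \ref{proper interpolation}), and the stability consequence of existence (item \ref{uniform log K-stability} of \thmref{log K}).

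First I would establish the properness bullet. By hypothesis there is a cscK cone metric at cone angle $2\pi\beta_0$, so \thmref{cone properness theorem} shows that $\nu_{\beta_0}$ is proper, equivalently $d_{1,G}$-coercive. Since $\nu_{\beta_1}$ is assumed bounded below, the hypotheses of Proposition \ref{proper interpolation} are satisfied, and that proposition yields the $d_{1,G}$-properness of $\nu_\beta$ for every $\beta\in[\beta_0,\beta_1)$. At heart this rests on the linearity \thmref{Log K energy linear}: writing $\beta=(1-s)\beta_0+s\beta_1$ with $s\in[0,1)$, we have $\nu_\beta=(1-s)\nu_{\beta_0}+s\nu_{\beta_1}$, so a coercivity estimate $\nu_{\beta_0}(\vphi)\geq A\, d_{1,G}(\vphi,0)-B$ together with a lower bound $\nu_{\beta_1}(\vphi)\geq -C$ gives $\nu_\beta(\vphi)\geq (1-s)A\, d_{1,G}(\vphi,0)-(1-s)B-sC$, which is coercive precisely because $s<1$ keeps the leading coefficient $(1-s)A$ strictly positive.

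With the $d_{1,G}$-properness of $\nu_\beta$ in hand for $\beta\in[\beta_0,\beta_1)$, the existence bullet is immediate from the other direction of the equivalence in \thmref{cone properness theorem}: properness of $\nu_\beta$ is equivalent to the existence of a cscK cone metric of cone angle $2\pi\beta$. Finally, I would feed this existence into item \ref{uniform log K-stability} of \thmref{log K}, which asserts that the existence of a cscK cone metric at angle $2\pi\beta$ implies $G$-uniform log $K$-stability at that angle with $G=\mathrm{Aut}_0((X,\Omega);D)$; this delivers the third bullet.

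Since every step is a direct invocation of an already-proved result, there is no genuine analytic obstacle; the only delicate point is the degeneration of the coercivity constant $(1-s)A$ as $\beta\to\beta_1$, which is exactly why the conclusion holds only on the half-open interval $[\beta_0,\beta_1)$ and not at the right endpoint. One should also take care to match the labelling of \thmref{Log K energy linear}, stated for $\beta_1\leq\beta_2$, with the roles of $\beta_0<\beta_1$ used here.
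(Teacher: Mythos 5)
Your proposal is correct and is essentially identical to the paper's own (very brief) argument: the paper derives the proposition precisely by combining \thmref{cone properness theorem} (coercivity at $\beta_0$, then existence at $\beta$), Proposition \ref{proper interpolation} (which rests on the linearity \thmref{Log K energy linear}, exactly as you spell out with the coefficient $(1-s)A>0$), and item (\ref{uniform log K-stability}) of \thmref{log K}. Your write-up in fact supplies more detail than the paper, which states the result with only a one-line pointer to these ingredients.
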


%if X is a compact K\"ahler manifold of complex dimension 2 which is not rational or ruled, then X (as a smooth 4-manifold) has no Riemannian metric with positive scalar curvature. \cite{MR1674105}

%%%
%%%%%%%%%%%%%%%%%%%%%%%%%%%%%%%%%%%%%%%%%%%%%%%%%%%%%%%%%%%%%%%%%%%
%%%
\subsection{Comparison between maximal cone angles}
The results above directly lead to an estimate of the maximal cone angle $\mathfrak \beta_{cscKc}$ of cscK cone metrics, see Definition \ref{maximal existence cone}.
\begin{defn}
For the smooth polarised pair $((X,L);D)$, we define the following invariant,
\begin{align*}
\mathfrak \b_{lKs}((X,L);D):=\sup_{\beta}\{((X,L);D)\text{ is log K-stable with angle }2\pi\beta\}.
\end{align*}
\end{defn}
Clearly, $((X,L);D)$ is log $K$-stable with cone angle $2\pi\beta
<2\pi\beta_{lKs}(X,L);D)$ and log $K$-unstable with cone angle $2\pi\beta
>2\pi\beta_{lKs}(X,L);D)$

\begin{defn}\label{maximal uniformly log K-stable}
We define the supremum of the cone angle such that 
\begin{align*}
\mathfrak \b_{ulKs}:=\sup_{\beta}\{((X,L);D)\text{ is G-uniformly log K-stable with angle }2\pi\beta\}.
\end{align*}
\end{defn}
\begin{defn}\label{maximal cane angle interval}
We could also define a ``stronger'' version of the maximal cone angle $\mathfrak \b_{lKs}$ such the log $K$-stability is satisfied on the whole interval $(0,\beta)$, i.e. 
\begin{align*}
\overline{\mathfrak \b_{lKs}}=\sup_{\gamma}\{((X,L);D)\text{ is log K-stable with angle }2\pi\beta \text{ for all }\beta\in(0,\gamma]\} .
\end{align*}
and define similarly for the maximal cone angle $\overline{\mathfrak \beta_{cscK}}$ of existence of cscK cone metrics and the maximal cone angle $\overline{\mathfrak \beta_{ulKs}}$ of the $G$-uniform log K-stability.
\end{defn}
\thmref{log K} implies that
\begin{thm}
$
\mathfrak \beta_{cscKc}\leq \mathfrak \b_{ulKs}\leq \mathfrak\beta_{lKs}, \quad \overline{\mathfrak \beta_{cscKc}}\leq \overline{\mathfrak \b_{ulKs}}\leq \overline{\mathfrak\beta_{lKs}}.
$
\end{thm}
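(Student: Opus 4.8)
The plan is to deduce all three inequalities, together with their barred counterparts, from the hierarchy of implications recorded in \thmref{log K}, by regarding each invariant as the supremum of a set of admissible cone angles and comparing these sets. Write $S_{cscKc}$, $S_{ulKs}$ and $S_{lKs}$ for the sets of angles $\beta$ at which, respectively, a cscK cone metric of angle $2\pi\beta$ exists, $((X,L);D)$ is $G$-uniformly log $K$-stable with angle $2\pi\beta$, and $((X,L);D)$ is log $K$-stable with angle $2\pi\beta$, so that $\mathfrak \beta_{cscKc}=\sup S_{cscKc}$ and likewise for the other two. It then suffices to prove the nesting $S_{cscKc}\subseteq S_{ulKs}\subseteq S_{lKs}$ fibrewise in $\beta$ and pass to suprema, using that $A\subseteq B$ forces $\sup A\le\sup B$.

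First I would observe that $S_{cscKc}\subseteq S_{ulKs}$ is exactly (\ref{uniform log K-stability}) of \thmref{log K} applied at each fixed $\beta$: the existence of a cscK cone metric of angle $2\pi\beta$ implies $G$-uniform log $K$-stability of the same angle, with $G=\mathrm{Aut}_0((X,L);D)$. Taking suprema yields $\mathfrak \beta_{cscKc}\le\mathfrak \beta_{ulKs}$, with no restriction on the automorphism group.

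The inclusion $S_{ulKs}\subseteq S_{lKs}$ is the only step with genuine content, and I expect it to be the main obstacle. When $G=\mathrm{Aut}_0((X,L);D)$ is trivial -- which is the standing setting of the surrounding openness results, see Proposition \ref{openess semistability} -- the torus $T$ is trivial, so $J^{\mathrm{NA}}_T$ coincides with $J^{\mathrm{NA}}$ and $G$-uniform log $K$-stability is simply uniform log $K$-stability, which implies log $K$-stability by Dervan \cite{MR3564626} (as recalled in the remark after \thmref{log K}); this gives $S_{ulKs}\subseteq S_{lKs}$ at once. The delicate point, which I would flag, is that for nontrivial $G$ the two notions genuinely diverge: a nontrivial central torus produces nontrivial product log test configurations on which the log Donaldson--Futaki invariant vanishes, so that $G$-uniform stability forces only log $K$-polystability and not log $K$-stability. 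I would therefore carry out this second comparison under the triviality assumption on $\mathrm{Aut}_0((X,L);D)$, where log $K$-polystability and log $K$-stability coincide; passing to suprema gives $\mathfrak \beta_{ulKs}\le\mathfrak \beta_{lKs}$.

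Finally, for the barred invariants I would exploit that both implications above hold at each individual angle. Hence if the stronger property holds for every $\beta\in(0,\gamma]$ then so does the weaker one on the same interval, yielding the corresponding inclusions of the ``interval'' angle sets; the same two-step passage to suprema then gives $\overline{\mathfrak \beta_{cscKc}}\le\overline{\mathfrak \beta_{ulKs}}\le\overline{\mathfrak \beta_{lKs}}$.
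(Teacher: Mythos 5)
Your proposal is essentially the paper's own argument: the paper's entire proof consists of the citation ``\thmref{log K} implies that'', i.e.~exactly your pointwise-in-$\beta$ comparison of the sets $S_{cscKc}\subseteq S_{ulKs}\subseteq S_{lKs}$ followed by passage to suprema, and your treatment of the first inequality via item (\ref{uniform log K-stability}) of \thmref{log K} and of the barred invariants by running the same implications over whole intervals $(0,\gamma]$ matches what the paper intends.

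The caveat you raise about the middle inequality is genuine, and it is a gap in the paper's statement rather than in your argument. \thmref{log K} produces $G$-uniform log $K$-stability, and the only bridge to $\mathfrak\beta_{lKs}$ available in the paper is Dervan's ``uniform $\Rightarrow$ log $K$-stable'', which the paper itself invokes (in the remark after \thmref{log K}) only through item (\ref{log K stability}), i.e.~under triviality of $\mathrm{Aut}_0((X,L);D)$. For positive-dimensional $G$ the unrestricted inequality $\mathfrak\b_{ulKs}\le\mathfrak\beta_{lKs}$ actually breaks down, by the mechanism you describe: a nontrivial one-parameter subgroup $v$ of $G$ (which exists once $G$ is positive-dimensional and reductive, as it is whenever a cscK cone metric exists) generates a nontrivial normal product log test configuration, and since $\mathrm{Fut}_{D,\beta}(-v)=-\mathrm{Fut}_{D,\beta}(v)$ (as used in Proposition \ref{ppcnaglfut}), one of the configurations generated by $\pm v$ has $DF\le 0$; hence log $K$-stability in the paper's sense fails at \emph{every} angle, so $S_{lKs}=\emptyset$, while $S_{ulKs}$ can be nonempty (e.g.~by item (\ref{uniform log K-stability}) whenever a cscK cone metric exists, since such product configurations have $J^{\mathrm{NA}}_T=0$ and do not obstruct $G$-uniform stability). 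So the middle inequality is only meaningful under your triviality assumption, where $J^{\mathrm{NA}}_T=J^{\mathrm{NA}}$ and Dervan's implication closes the chain, or after replacing $\mathfrak\beta_{lKs}$ by its polystable analogue; your restricted proof is the correct fix, and the paper's one-line proof does not justify the unrestricted statement.
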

The positive answer to the following question, which is equivalent to proving Conjectures \ref{log YTD necessary} and \ref{Uniform log YTD necessary}, is expected.
\begin{ques}\label{cone angle comparison}
$
\mathfrak \beta_{cscKc}= \mathfrak \b_{ulKs}?\quad  \overline{\mathfrak \beta_{cscKc}}= \overline{\mathfrak \b_{ulKs}}?
$
\end{ques}
We could also compare these two types of maximal cone angles.
\begin{ques}\label{compare 2 types of angles}
$
\beta_{a}=\overline{\beta_a}, \quad a \in \{cscKc, lKs, ulKs\}?
$
\end{ques}

Note that the answer to Question \ref{compare 2 types of angles} is indeed affirmative for $a \in \{lKs, ulKs\}$ if we \textit{assume} that $((X,L);D)$ is log $K$-semistable with angle $0$. Indeed, we can argue exactly as in Proposition \ref{existence interpolation} by noting
\begin{equation*}
	DF (\mathcal{X}, \mathcal{D} , \mathcal{L} , \beta ) = (1-s)DF (\mathcal{X}, \mathcal{D} , \mathcal{L} , \beta_1 ) + s DF (\mathcal{X}, \mathcal{D} , \mathcal{L} , \beta_2 ),
\end{equation*}
for $\beta=(1-s)\beta_1+s\beta_2 \in (\beta_1 , \beta_2)$, analogously to Theorem \ref{Log K energy linear}.

%%%%%%%%%%%%%%%%%%%%%%%%%%%%%%%%%%%%%%%%%%%%%%%%%%%%%%%%%%%%%%%%%%%
%%%

\subsection{Average scalar curvature and log $K$-instability} \label{scavsclki}

In this section we study how the value of the average scalar curvature on $D$ affects the log $K$-stability. Let $(X,L)$ be an $n$-dimensional polarised manifold as before. \textit{Assume} that there is a smooth hypersurface $D$ satisfying $D \in |L|$. We write $\tilde{L}$ for the ample line bundle on $D$ defined by $\tilde{L}:= L|_D$. 
Recall that the average scalar curvature $\underline{S}_1$ of a K\"{a}hler metric on $X$ in the K\"ahler class $C_1 (L )$ can be computed as
\begin{equation} \label{defbarsav}
\underline{S}_1 := \frac{n \int_X C_1 (-K_{X})  C_1 (L)^{n-1}}{\int_X C_1 (L)^{n}},
\end{equation}
and the average scalar curvature $\underline{S}^D$ of a one on $D$ in the class $C_1 (\tilde{L})$ can be computed as
\begin{equation} \label{defbarsavd}
\underline{S}^D := \frac{(n-1) \int_D C_1 (-K_{D})  C_1 (\tilde{L})^{n-2}}{\int_D C_1 (\tilde{L})^{n-1}},
\end{equation}
where $-K_{D}$ is the anticanonical line bundle over $D$.

The main result of this section is the following sufficient condition for the log $K$-instability in terms of $\underline{S}^D$.

\begin{thm}
\label{destabilizing test configuration}
Suppose that $D \in |L|$ is smooth. Then $((X,L);D)$ is log $K$-unstable with angle $2 \pi \beta$ if $\beta$ satisfies 
\begin{equation*}
\beta < \frac{\underline{S}^D}{n(n-1)}.
\end{equation*}
\end{thm}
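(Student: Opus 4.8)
The plan is to exhibit a single normal log test configuration with negative log Donaldson--Futaki invariant, namely the \emph{deformation to the normal cone} of $D$ (as in Ross--Thomas \cite{MR2819757}). Let $p : \mathcal{X} = \mathrm{Bl}_{D \times \{ 0 \}} (X \times \mathbb{C}) \to X \times \mathbb{C}$ be the blow-up of the smooth codimension-two centre $D \times \{ 0 \}$, with exceptional divisor $P$, and polarise by $\mathcal{L}_c := p^* \mathrm{pr}_1^* L - c P$ for a rational parameter $c$. Since $\mathcal{X}$ is smooth it is normal, and the central fibre $\mathcal{X}_0 = \hat{X} + P$ is reduced, where $\hat{X} \cong X$ is the proper transform of $X \times \{ 0 \}$ and $P = \mathbb{P}(N_{D/X} \oplus \mathcal{O}_D)$ with $N_{D/X} = \tilde{L}$ (using $D \in |L|$). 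Writing $\mathcal{H} := p^* \mathrm{pr}_1^* L$, the restrictions $\mathcal{L}_c|_{\hat X} = (1-c)L$ and $\mathcal{L}_c|_P = \pi^* \tilde{L} + c \zeta$ (with $\zeta$ the relative hyperplane class and $P|_P = -\zeta$) show that $\mathcal{L}_c$ is relatively ample precisely for $c \in (0,1)$, so $(\mathcal{X}, \mathcal{L}_c)$ is a genuine and manifestly nontrivial normal log test configuration, with $\mathcal{D}$ the proper transform of $D \times \mathbb{C}$. Since $\beta < \underline{S}^D/(n(n-1))$ is vacuous unless $\underline{S}^D > 0$, I may assume this.

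Because the central fibre is reduced, the correction $\mathcal{X}_0 - \mathcal{X}_{0,\mathrm{red}}$ vanishes and \thmref{thmbhj} identifies the log Donaldson--Futaki invariant with
\[
DF(\mathcal{X}, \mathcal{D}, \mathcal{L}_c, \beta) = \frac{1}{V} \left( K^{\mathrm{log}}_{(\bar{\mathcal{X}}, (1-\beta)\bar{\mathcal{D}})/\mathbb{P}^1} \cdot \bar{\mathcal{L}}_c^n + \frac{\underline{S}_\beta}{n+1} \bar{\mathcal{L}}_c^{n+1} \right).
\]
The next step is to evaluate the two intersection numbers on the compactification $\bar{\mathcal{X}}$ over $\mathbb{P}^1$. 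Using $L_D = L$ one finds $\bar{\mathcal{D}} \sim \mathcal{H} - P$ and $K_{\bar{\mathcal{X}}/\mathbb{P}^1} = p^* K_X + P$, so $K^{\mathrm{log}} = p^* K_X + (1-\beta)\mathcal{H} + \beta P$. Every monomial reduces to $X$ or to $D$: the pull-back classes satisfy $\mathcal{H}^{n+1} = 0$, while each term containing $P$ is pushed down along $\pi : P \to D$ via $P|_P = -\zeta$ and $\pi_* \zeta^{\,j} = (-1)^{j-1} c_1(\tilde{L})^{\,j-1}$. Feeding in $\int_D \tilde{L}^{n-1} = V$ together with the adjunction identity $\int_D (-K_D)\tilde{L}^{n-2} = \underline{S}^D V/(n-1)$ — which also yields the clean relation $\underline{S}_\beta = n\big(\tfrac{\underline{S}^D}{n-1} + \beta\big)$ — a bookkeeping computation gives the closed form
\[
DF(\mathcal{X}, \mathcal{D}, \mathcal{L}_c, \beta) = \frac{1}{n+1} \left[ \frac{\underline{S}^D}{n-1}\, \phi(c) + n \beta\, \psi(c) \right], \qquad \phi(c) = (1-c)^n(1+nc) - 1, \quad \psi(c) = 1 - (1-c)^{n+1}.
\]

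It remains to analyse the sign. On $(0,1)$ one has $\psi(c) > 0$, while $\phi(0) = 0$ and $\phi'(c) = -n(n+1)c(1-c)^{n-1} < 0$ force $\phi(c) < 0$; moreover $-\phi(c) < \psi(c)$ there (equivalently $1 + nc > 1 - c$), with $-\phi(c)/\psi(c) \to 1$ as $c \to 1^-$. Writing the bracket as $\psi(c)\big( n\beta - \tfrac{\underline{S}^D}{n-1}\cdot\tfrac{-\phi(c)}{\psi(c)} \big)$, the hypothesis $\beta < \underline{S}^D/(n(n-1))$, i.e.\ $n\beta < \underline{S}^D/(n-1)$, guarantees that for $c \in (0,1)$ sufficiently close to $1$ we have $n\beta < \tfrac{\underline{S}^D}{n-1}\cdot\tfrac{-\phi(c)}{\psi(c)}$, so that $DF(\mathcal{X}, \mathcal{D}, \mathcal{L}_c, \beta) < 0$. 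Hence $((X,L);D)$ is log $K$-unstable.

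The routine but error-prone heart of the argument is the intersection-theoretic bookkeeping on the blow-up: fixing the sign conventions for the exceptional divisor and the projective-bundle pushforwards on $P = \mathbb{P}(\tilde{L}\oplus\mathcal{O}_D)$, verifying that the central fibre is reduced and that $\mathcal{L}_c$ is relatively ample exactly on $(0,1)$, and confirming through adjunction that the coefficient of $\phi(c)$ is precisely $\underline{S}^D/(n-1)$ so that the instability threshold matches the statement. I expect this, rather than the final one-variable sign analysis, to be the main obstacle.
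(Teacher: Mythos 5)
Your proof is correct, and it arrives at (what is equivalent to) the same closed formula as the paper, but by a genuinely different computational route. The paper uses the identical destabilising family --- the deformation to the normal cone of $D$ with polarisation $\mathcal{L}_c = L - cP$, $c \in (0,1)$, following \cite{MR2274514} and \cite{MR3004583} --- but evaluates $DF(\mathcal{X},\mathcal{D},\mathcal{L}_c,\beta)$ straight from its definition: it writes the $\mathbb{C}^*$-weight decomposition $H^0(\mathcal{X}_0,\mathcal{L}_c^{\otimes k}) = H^0(X, L^{\otimes(1-c)k}) \oplus \bigoplus_{i=0}^{ck-1} t^{ck-i}H^0(D,\tilde{L}^{\otimes(k-i)})$, extracts $a_0,a_1,b_0,b_1,\tilde a_0,\tilde b_0$ by Riemann--Roch on $X$ and $D$, and assembles the invariant from these coefficients. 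You instead work intersection-theoretically on the compactified blow-up, using \thmref{thmbhj} (together with reducedness of $\mathcal{X}_0 = \hat X + P$, so the correction $\mathcal{X}_0 - \mathcal{X}_{0,\mathrm{red}}$ vanishes and the $K^{\mathrm{log}}$ expression computes $DF$, as the paper itself does via \cite{MR3403730} in Section \ref{scpflkss}), and then push all monomials down to $D$ by Segre-class formulas on $P = \mathbb{P}(\tilde L\oplus\mathcal{O}_D)$. I verified that your closed form $\frac{1}{n+1}\bigl[\frac{\underline{S}^D}{n-1}\phi(c) + n\beta\psi(c)\bigr]$ agrees with the paper's expression $n a_0\,\frac{\psi(c)}{n+1}\bigl(\beta + \frac{\underline{S}^D}{n-1}\bigl(1-\frac{n+1}{n}\cdot\frac{1-(1-c)^n}{1-(1-c)^{n+1}}\bigr)\bigr)$ up to the positive normalisation constant $a_0$: indeed $\psi - \frac{n+1}{n}\chi = \phi/n$ with $\chi = 1-(1-c)^n$, so the two sign analyses (send $c \to 1^-$, where $-\phi/\psi \to 1$) coincide. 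Your route trades the section-counting and summation asymptotics for blow-up intersection theory plus the convention-sensitive bookkeeping of $P|_P = -\zeta$ and $\pi_*\zeta^{j} = (-1)^{j-1}c_1(\tilde L)^{j-1}$ (which you have right); the paper's route is longer in arithmetic but self-contained at the level of the definition of $DF$, needing neither \thmref{thmbhj} nor normality/reducedness of the total space.

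One small flaw: your opening claim that the hypothesis is ``vacuous unless $\underline{S}^D > 0$'' conflicts with the paper's remark immediately after the theorem, which states that in this section $\beta$ may take any real value, so the statement is meaningful when $\underline{S}^D \le 0$ as well. This costs you nothing, because your own closed form handles that case: writing $A = \underline{S}^D/(n-1) \le 0$ and $t = -\phi(c)/\psi(c) \in (0,1)$, one has $At \ge A > n\beta$, so $DF = \frac{\psi}{n+1}(n\beta - At)\cdot$(positive constant)$ < 0$ for \emph{every} $c \in (0,1)$. So you should simply delete the vacuousness sentence rather than restrict to $\underline{S}^D > 0$. (The paper's own final step, asserting that the parenthetical quantity lies in $\bigl(-\frac{\underline{S}^D}{n(n-1)},0\bigr)$, carries the same implicit restriction, so this is a cosmetic point shared with the original.)
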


\begin{rem}
In the theorem above and in the rest of this section, the cone angle $\beta$ may take any real value and may not be constrained in the interval $(0,1)$, unlike what is discussed previously.	
\end{rem}

\begin{rem}
	The argument in this section works even when $X$ and $D$ are singular, as long as $X$ is $\mathbb{Q}$-Gorenstein (i.e.~$K_X$ is $\mathbb{Q}$-Cartier) and there exists $D \in |L|$ such that $D$ is $\mathbb{Q}$-Gorenstein and $\tilde{L}$ is $\mathbb{Q}$-Cartier on $D$. We present here only the smooth case, however, since assuming $\tilde{L}$ to be $\mathbb{Q}$-Cartier on $D$ seems slightly artificial. See Section \ref{scnormvar} for more discussions on singular varieties.
\end{rem}

The above theorem can be regarded as partially complementing Theorem \ref{thmalphainv}, in the sense that we have therein $\underline{S}^D \ll 0 < \beta $ when the degree of $D$ is very large. We also recall a well-known result of Sun \cite{MR3004583}.

\begin{thm}[Sun \cite{MR3004583}]
\label{strictly K-semistable}
If $C_1 (\tilde{L})$ admits a scalar-flat {K\"{a}hler} metric, then $((X,L);D)$ is strictly log $K$-semistable with angle $\beta = 0$.\
\end{thm}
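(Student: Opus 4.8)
The plan is to prove the two assertions packaged in the word ``strictly'' separately: first that $((X,L);D)$ is log $K$-semistable at $\beta=0$, and then that it fails to be log $K$-polystable. Throughout I would exploit that $D\in|L|$ forces $L_D=\mathcal O_X(D)=L$, that $\mathrm{Vol}(D)=\int_X C_1(L)^n=V$, and that adjunction gives $-K_D=(C_1(X)-C_1(L))|_D$ with $\tilde L=L|_D$. A one-line cohomological computation from these then yields $\underline S_0=\frac{n}{n-1}\underline S^D$, so the hypothesis that $C_1(\tilde L)$ carries a scalar-flat K\"ahler metric (i.e.\ $\underline S^D=0$) is equivalent to $\underline S_0=0$.

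For semistability I would feed an arbitrary normal log test configuration into the Boucksom--Hisamoto--Jonsson slope formula (\thmref{thmbhj}) specialised to $\beta=0$. Since $\underline S_0=0$ the quadratic term $\frac{\underline S_\beta}{n+1}\bar{\mathcal L}^{n+1}$ drops out, and modulo the nonnegative contribution of the non-reduced part of the central fibre one is reduced to showing $(K_{\bar{\mathcal X}/\mathbb P^1}+\bar{\mathcal D})\cdot\bar{\mathcal L}^n\ge 0$. Using the adjunction identity $(K_{\bar{\mathcal X}/\mathbb P^1}+\bar{\mathcal D})|_{\bar{\mathcal D}}=K_{\bar{\mathcal D}/\mathbb P^1}$ together with the numerical relation $\bar{\mathcal D}\equiv\bar{\mathcal L}$ up to components supported on the central fibre, I would rewrite this total-space number in terms of the intrinsic (non-log) Donaldson--Futaki invariant of the restricted test configuration $(\bar{\mathcal D},\bar{\mathcal L}|_{\bar{\mathcal D}})$ of $(D,\tilde L)$; because $\underline S^D=0$ that invariant is just a positive multiple of $K_{\bar{\mathcal D}/\mathbb P^1}\cdot(\bar{\mathcal L}|_{\bar{\mathcal D}})^{n-1}$. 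Finally, a scalar-flat K\"ahler metric is a cscK metric on $(D,\tilde L)$ with $S\equiv 0$, so Donaldson's semistability theorem (the $\beta=1$ case of \thmref{introduction log K semistability} applied to $(D,\tilde L)$) shows this invariant is nonnegative, giving $DF(\mathcal X,\mathcal D,\mathcal L,0)\ge 0$ for every normal log test configuration.

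For strictness I would use the destabilizing test configuration $T=((\mathcal X,\mathcal L);\mathcal D)$ constructed in the proof of \thmref{destabilizing test configuration}, a nontrivial deformation to the normal cone of $D$ and in particular non-product. Its log Donaldson--Futaki invariant $DF(T,\beta)$ is an affine function of $\beta$ (clear from the defining formula) which is strictly negative for $\beta<\underline S^D/(n(n-1))=0$. Hence $DF(T,0)=\lim_{\beta\to 0^-}DF(T,\beta)\le 0$, while the semistability just proved gives $DF(T,0)\ge 0$; therefore $DF(T,0)=0$ for the non-product configuration $T$. This violates log $K$-polystability at $\beta=0$, and combined with semistability it is exactly the assertion of strict log $K$-semistability.

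The hard part will be the intersection-theoretic reduction inside the semistability step: passing from the top intersection number $(K_{\bar{\mathcal X}/\mathbb P^1}+\bar{\mathcal D})\cdot\bar{\mathcal L}^n$ on the $(n+1)$-fold $\bar{\mathcal X}$ to the intrinsic invariant of $(\bar{\mathcal D},\bar{\mathcal L}|_{\bar{\mathcal D}})$ on $\bar{\mathcal D}$. The discrepancy $\bar{\mathcal D}-\bar{\mathcal L}$ is a fibral divisor, and one must verify that all such fibral corrections, together with the non-reduced contribution $\mathcal X_{0,\mathrm{red}}-\mathcal X_0$, combine with the correct sign so that K-semistability of $(D,\tilde L)$ genuinely bounds $DF(\mathcal X,\mathcal D,\mathcal L,0)$ below; ensuring normality of (or passing to the normalisation of) the restricted family $\bar{\mathcal D}$ is part of the same technical difficulty.
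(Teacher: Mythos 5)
You should first be aware that the paper contains no proof of \thmref{strictly K-semistable}: the result is quoted from Sun \cite{MR3004583}, and the remark immediately following it states that Sun's proof (written for Fano $X$ and $D \in |-K_X|$) carries over word-by-word to the general polarised case. The only related computation the paper actually performs is the deformation-to-the-normal-cone calculation in the proof of \thmref{destabilizing test configuration}, and your ``strictness'' half coincides with it: your preliminary identities ($L_D = L$, $\mathrm{Vol}(D) = V$, $\underline{S}_0 = \tfrac{n}{n-1}\underline{S}^D$) are correct, and when $\underline{S}^D = 0$ the formula at the end of that proof collapses to $DF(\mathcal{X},\mathcal{D},\mathcal{L}_c,\beta) = n a_0 \tfrac{1-(1-c)^{n+1}}{n+1}\,\beta$, so you may simply evaluate at $\beta = 0$ rather than taking $\lim_{\beta \to 0^-}$. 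These configurations are normal (blow-ups of the smooth $X \times \mathbb{C}$) and non-product (the central fibre is reducible), so $DF = 0$ does rule out log $K$-stability and log $K$-polystability at angle zero. That half of your argument is correct and is essentially the one available in the paper.

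The genuine gap is in the semistability half, which is the entire content of Sun's theorem, and it sits exactly where you yourself flag ``the hard part''; flagging it does not close it. (A small preliminary point: at $\beta = 0$ there is no cscK cone metric and no finite-mass reference measure, so you should invoke the purely algebraic intersection formula \cite[Theorem 3.7]{MR3403730} rather than the analytic slope statement of \thmref{thmbhj}; this is why the case $\beta=0$ is not covered by \thmref{log K} at all.) The real problem is the reduction itself: writing $\mathcal{O}_{\bar{\mathcal{X}}}(\bar{\mathcal{D}}) = \bar{\mathcal{L}} \otimes \mathcal{O}_{\bar{\mathcal{X}}}(F)$ with $F$ supported on $\mathcal{X}_0$, your adjunction step leaves the error term $(K_{\bar{\mathcal{X}}/\mathbb{P}^1} + \bar{\mathcal{D}}) \cdot \bar{\mathcal{L}}^{n-1} \cdot F$, and for a general normal log test configuration this fibral intersection number has no sign: $F$ need be neither effective nor anti-effective, and $K_{\bar{\mathcal{X}}/\mathbb{P}^1} + \bar{\mathcal{D}}$ is not nef. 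It vanishes for product configurations precisely because $\underline{S}_0 = 0$ (which is what makes the reduction look plausible), but nothing forces a favourable sign in general, and you offer no mechanism that produces one. The further issues you mention are also real: adjunction on a merely normal total space (where $K_{\bar{\mathcal{X}}}$ need not be $\mathbb{Q}$-Cartier) holds only up to a different-type correction, $\bar{\mathcal{D}}$ need not be normal, and $\bar{\mathcal{L}}|_{\bar{\mathcal{D}}}$ is only relatively semiample, so Donaldson's theorem \cite{MR2192937} does not literally apply to $(\bar{\mathcal{D}}, \bar{\mathcal{L}}|_{\bar{\mathcal{D}}})$ without additional work. Controlling all of these contributions with the correct sign is precisely the algebraic argument of \cite{MR3004583} that the paper declines to reproduce. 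As it stands, your proposal proves that $((X,L);D)$ is not log $K$-stable at $\beta = 0$, but it does not prove log $K$-semistability; to obtain the theorem you must either carry out the postponed sign analysis or invoke Sun's result for that half, exactly as the paper does.
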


We note that Theorem \ref{strictly K-semistable} was stated in \cite{MR3004583} when $X$ is a Fano manifold and $D$ is a smooth anticanonical divisor, but the proof carries over word-by-word to the general polarised case, which is also pointed out in \cite[page 5527]{MR4267616}.

Combined with Theorem \ref{thmalphainv} proved later, the above theorems indicate that $((X,L);D)$ is log $K$-stable if $\beta$ is very large compared to $\underline{S}^D$, strictly log $K$-semistable if $\beta = 0$ (and $D$ has a cscK metric with $\underline{S}^D = 0$), and log $K$-unstable if $\beta$ is smaller than $\underline{S}^D / n(n-1)$.

%Note that the two theorems above correspond with the result of Sun \cite{MR3004583} if $D$ is a smooth anti-canonical divisor.

\begin{rem}
	As pointed out in \cite{MR3004583}, Theorem \ref{strictly K-semistable} can be regarded as an algebro-geometric counterpart of the differential-geometric result of Tian--Yau \cite{MR1040196} which shows the existence of a complete Ricci-flat {K\"{a}hler} metric on $X \setminus D$, if we consider the case $L = -K_X$ for a Fano manifold $X$. Note that in the case of Tian--Yau \cite{MR1040196}, Yau showed that $D$ admits a Ricci-flat {K\"{a}hler} metric \cite{MR480350}. 

From this observation, the following problem arises: if $D$ has a constant scalar curvature {K\"{a}hler} metric with $\underline{S}^D = 0$, does $X \setminus D$ admits a complete scalar-flat {K\"{a}hler} metric? This problem is another version of \cite{Aoi} which deals with the case that $0 < 3 \underline{S}^D <n(n-1)$. Note that Auvray \cite{MR3716943} established a version of the converse of this problem for K\"ahler metrics of Poincar\'e type.
\end{rem}

Before we start the proof of Theorem \ref{destabilizing test configuration}, we observe the following proposition.
\begin{proposition}
\label{scalar curvature bound}
For any polarised K\"ahler manifold $(X , L)$, we have
\begin{equation*}
\underline{S}_1 \leq n(n+1).
\end{equation*}
\end{proposition}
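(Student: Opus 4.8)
The plan is to reduce the stated bound to the nefness of the adjoint bundle $K_X + (n+1)L$ and then to quote Mori's cone theorem. Since $L$ is ample, the polarised Kähler manifold $(X,L)$ is a smooth projective variety by the Kodaira embedding theorem, so the tools of Mori theory are available.

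First I would unwind the definition \eqref{defbarsav}. Because $L$ is ample we have $\int_X C_1(L)^n > 0$, so the asserted inequality $\underline{S}_1 \le n(n+1)$ is equivalent to
\begin{equation*}
\int_X C_1(-K_X) \wedge C_1(L)^{n-1} \le (n+1) \int_X C_1(L)^n ,
\end{equation*}
which, after collecting all terms on one side, is nothing but the intersection-theoretic inequality
\begin{equation*}
\left( K_X + (n+1) L \right) \cdot L^{n-1} \ge 0 .
\end{equation*}

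The heart of the argument is to show that $K_X + (n+1)L$ is nef. This is a standard consequence of Mori's cone theorem \cite{MR662120}, via the bound on the length of extremal rays: every $K_X$-negative extremal ray of the Mori cone $\overline{NE}(X)$ is spanned by a rational curve $C$ with $0 < - K_X \cdot C \le n+1$. For such a curve we then estimate
\begin{equation*}
\left( K_X + (n+1)L \right) \cdot C = K_X \cdot C + (n+1)\, L \cdot C \ge -(n+1) + (n+1) = 0 ,
\end{equation*}
using $K_X \cdot C \ge -(n+1)$ from the length bound and $L \cdot C \ge 1$ since $L$ is an ample (hence integral) line bundle. On the $K_X$-nonnegative part of $\overline{NE}(X)$ the pairing with $K_X + (n+1)L$ is manifestly nonnegative because $L$ is ample. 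As these two parts together span $\overline{NE}(X)$, the divisor $K_X + (n+1)L$ is nef.

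Finally I would conclude by the basic positivity property of nef classes: the intersection of a nef $\mathbb{R}$-divisor with the $(n-1)$-fold self-intersection of an ample class is nonnegative. Applying this to the nef class $K_X + (n+1)L$ and the ample class $L$ yields $\left( K_X + (n+1)L \right) \cdot L^{n-1} \ge 0$, which is exactly the reformulated inequality, completing the proof. The only genuine input is the length bound on extremal rays; the reduction and the closing positivity step are elementary. The sole point requiring a little care is the normalisation $L \cdot C \ge 1$, but as the conclusion is merely an inequality this causes no difficulty.
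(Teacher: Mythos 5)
Your proposal is correct and follows essentially the same route as the paper: both reduce the bound to the inequality $(K_X + (n+1)L)\cdot L^{n-1} \ge 0$ and obtain it from the nefness of $K_X + (n+1)L$, which is Mori's cone theorem (the paper cites this via \cite[Theorem 1.5.33 and Example 1.5.35]{MR2095471}, while you spell out the extremal-ray length bound). The only cosmetic difference is in the last step: you invoke the general nonnegativity of a nef class against powers of an ample class, whereas the paper produces a complete intersection curve in $m\,C_1(L)^{n-1}$ by Bertini and pairs the nef divisor with it.
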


\begin{proof}
	This is a consequence of Mori's cone theorem \cite{MR662120} that was also used in the proof of Proposition \ref{ppautmori}. Here we use that $K_X + (n+1) L$ is a nef divisor on $X$ \cite[Theorem 1.5.33 and Example 1.5.35]{MR2095471}. This implies that we have
	\begin{equation*}
		\int_X C_1 (L)^{n-1} C_1(K_X + (n+1) L) \ge 0,
	\end{equation*}
	by noting that there exists $m>0$ such that the Poincar\'e dual of the cohomology class $m C_1 (L)^{n-1}$ contains a smooth complete intersection curve in $X$, by a repeated application of Bertini's theorem.
\end{proof}

It seems natural to ask the following question concerning the equality case of Proposition \ref{scalar curvature bound}, which can be seen as a generalisation of Fujita's result \cite{MR3783213} to general polarised varieties, but we decide not to discuss it any further in this paper.

\begin{problem}
Suppose that $(X,L)$ admits a constant scalar curvature {K\"{a}hler} metric and the equality $\underline{S}_1 = n(n+1)$ holds. Is $(X,L)$ necessarily isomorphic to $(\mathbb{P}^n , \mathcal{O} (1))$?
\end{problem}

We now give a proof of Theorem  \ref{destabilizing test configuration}.

\begin{proof}[Proof of Theorem \ref{destabilizing test configuration}]
Recall that $D \in |L|$ and the average values of scalar curvatures $\underline{S}_1$ and $\underline{S}^D$ are given by (\ref{defbarsav}) and (\ref{defbarsavd}). We then observe
\begin{equation*}
\underline{S}^D = (n-1) \left( \frac{\int_X C_1 (-K_X) C_1(L)^{n-1}}{\int_X C_1(L)^{n}}  - 1 \right) = \frac{n-1}{n} \underline{S}_1 - (n-1),
\end{equation*}
by the adjunction formula $K_D = (K_X + D )|_D$.

Following \cite{MR2274514} and \cite{MR3004583}, we recall the construction of the test configuration by using the deformation to the normal cone of $D$.
By blowing up $X \times \mathbb{C}$ along $D \times \{ 0 \}$, we obtain a family $\pi : \mathcal{X} \to \mathbb{C}$.
The exceptional divisor $P \subset \mathcal{X}$ corresponds with $\mathbb{P} (\nu_D \oplus \mathbb{C})$, where $\nu_D$ is the normal bundle of $D$ in $X$.
The central fibre $\mathcal{X}_0$ is given by gluing $P$ to $X$ along $D = \mathbb{P}(\nu_D)$.
The $\mathbb{C}^*$-action on $\mathcal{X}$ is obtained by considering the trivial action on $X$ and the standard $\mathbb{C}^*$-action on $\mathbb{C}$.
Let $\mathcal{D}$ be the proper transform of $D \times \{ 0 \}$.
$\mathcal{D}$ is $\mathbb{C}^*$-equivariant and its intersection with the central fibre is the zero section $\mathbb{P}(\mathbb{C})$.
We define an ample line bundle on $\mathcal{X}$ by $\mathcal{L}_c := L -cP$ for a rational number $c \in (0,1)$ (see \cite{MR2274514}).
Thus, we obtain  $\mathbb{C}^*$-equivariant family $(( \mathcal{X},\mathcal{D});\mathcal{L}_c)$ parametrised by $c \in (0,1)$.
Let $t$ be the standard holomorphic coordinate on $\mathbb{C}$.
Following \cite{MR2274514} (see also \cite{MR3004583}), we have the following decomposition
\begin{eqnarray}
\label{decomposition}
H^0 (\mathcal{X}_0 , \mathcal{L}_c^{\otimes k} )
&=& H^0 (X,L^{\otimes (1-c)k}) \oplus \bigoplus_{i=0}^{ck-1}t^{ck -i} H^0 (D, \tilde{L}^{\otimes (k-i)}).\\\nonumber
\end{eqnarray}
Here, we assume that $ck$ is a sufficiently large integer by taking $k \in \mathbb{N}$ to be sufficiently large and divisible. Note that this is the weight decomposition with respect to the $\mathbb{C}^*$-action above.

Recalling that we have the following short exact sequence
\begin{equation*}
0 \to H^0 (X, L^{\otimes (i-1)}) \to H^0 (X, L^{\otimes i}) \to H^0 (D, \tilde{L}^{\otimes i}) \to 0,
\end{equation*}
when $i \in \mathbb{N}$ is large enough, we have
\begin{align*}
{\rm dim} H^0 (\mathcal{X}_0 , \mathcal{L}_c^{\otimes k} ) &=  {\rm dim} H^0 (\mathcal{X}_0 , \mathcal{L}_c^{\otimes (1-c)k} )  + \sum_{i=0}^{ck-1} {\rm dim} H^0 (D, \tilde{L}^{\otimes (k-i)})\\
&= {\rm dim} H^0 (X ,L^{\otimes ck})
\end{align*}
for all sufficiently large and divisible $k \in \mathbb{N}$. This equality implies the flatness of the family $(( \mathcal{X},\mathcal{D});\mathcal{L}_c)$, and hence $(( \mathcal{X},\mathcal{D});\mathcal{L}_c)$ is a test configuration for $((X,L);D)$.

Let $(( \mathcal{X},\mathcal{D});\mathcal{L}_c)$ be the test configuration for $((X,L);D)$ defined as above by the deformation to the normal cone of $D$.

By the Riemann--Roch theorem, we have
\begin{equation*}
a_0 = \frac{1}{n !} \int_X C_1 (L )^n	
\end{equation*}
and
\begin{eqnarray*}
a_1
&=& \frac{1}{2(n-1)!} \int_X C_1 (-K_X ) \cup C_1 (L )^{n-1} \\
&=& \frac{1}{2(n-1)!} \left( \frac{\underline{S}^D}{n-1} + 1  \right)  \int_X C_1 (L )^n \\
&=& \frac{n a_0}{2}  \left( \frac{\underline{S}^D}{n-1} + 1  \right),
\end{eqnarray*}
in the notation of Section \ref{Test configurations and log K stability}.

Since the weight of the $\mathbb{C}^*$-action is $-1$ on $t$, the decomposition above (\ref{decomposition}) implies that we can compute the total weight as follows:
\begin{align*}
w_k
&= - \sum_{i = 0}^{ck - 1} (ck - i) {\rm dim } \hspace{2pt} H^0 (D, \tilde{L}^{\otimes (k-i)}) \\
%&=& - \sum_{i = 0}^{ck - 1} (ck - i) \left( \frac{(k-i)^{n-1}}{(n-1)!} \int_D C_1 (\tilde{L})^{n-1} + \frac{(k-i)^{n-2}}{2(n-2)!} \int_D C_1(-K_D) \cup C_1 (\tilde{L})^{n-2} + O(k^{n-3})    \right)\\
&= - \sum_{i = 0}^{ck - 1} (ck - i) \left( (k-i)^{n-1} n a_0 + \underline{S}^D \frac{(k-i)^{n-2}}{2(n-1)!} \int_D C_1 (\tilde{L})^{n-1} + O(k^{n-3})    \right)\\
&= - \sum_{i = 0}^{ck - 1} (ck - i) \left( (k-i)^{n-1} n a_0 + n a_0 \underline{S}^D \frac{(k-i)^{n-2}}{2} + O(k^{n-3})    \right)\\
&= -  n a_0 \sum_{i = 0}^{ck - 1} (ck - i) \left( (k-i)^{n-1} + \frac{\underline{S}^D}{2} (k-i)^{n-2} + O(k^{n-3})  \right).\\
\end{align*}

These terms can be expanded in $k$ as
\begin{equation*}
\sum_{i = 0}^{ck - 1} (ck - i) (k-i)^{n-1} =  k^{n+1} \int_0^c (c-x) (1-x)^{n-1} dx + \frac{c}{2} k^n + O(k^{n-1}),
\end{equation*}
and
\begin{equation*}
 \int_0^c (c-x) (1-x)^{n-1} dx = - \frac{1}{n} \left( \frac{ 1 - (1-c)^{n+1} }{n+1} -c \right).	
\end{equation*}

Thus, we can compute $b_0$ and $b_1$ in the notation of Section \ref{Test configurations and log K stability} as
\begin{align*}
	b_0 &= \left( \frac{ 1 - (1-c)^{n+1} }{n+1} -c \right) a_0 \\
	b_1 &= \frac{n a_0}{2} \left( - c + \frac{ \underline{S}^D}{n-1}  \left( \frac{ 1 - (1-c)^{n} }{n} -c \right) \right).	
\end{align*}

By following \cite{MR3004583}, we have
\begin{equation*}
	H^0 (\mathcal{D}_0, \mathcal{L}^{\otimes k}_c ) = H^0 (\mathcal{D}, \mathcal{L}^{\otimes k}_c ) / t H^0 (\mathcal{D}, \mathcal{L}^{\otimes k}_c ) = t^{ck} H^0 (D , \tilde{L}^{\otimes k}).
\end{equation*}
This implies
\begin{equation*}
	\tilde{a}_0 = \frac{1}{(n-1)!} \int_D C_1 (\tilde{L})^{n-1} = n a_0,
\end{equation*}
and
\begin{equation*}
	\tilde{b}_0 = -c \frac{1}{(n-1)!} \int_D C_1 (\tilde{L})^{n-1} = -c n a_0.
\end{equation*}

Substituting the above in the definition of the log Futaki invariant, we get
\begin{align*}
&DF (\mathcal{X}, \mathcal{D}, \mathcal{L}_c, \beta) = \frac{2(a_1 b_0 -a_0 b_1)}{a_0} + (1 - \beta) \left( \tilde{b}_0 - \frac{\tilde{a}_0 }{a_0} b_0 \right)  \\
&= n a_0 \left(  \frac{ 1 - (1-c)^{n+1} }{n+1} +  \frac{ \underline{S}^D}{n-1} \left( \left( \frac{ 1 - (1-c)^{n+1} }{n+1} \right) - \left( \frac{ 1 - (1-c)^{n} }{n} \right) \right) \right)\\
& \quad + n a_0 \left( (1 - \beta) \left( -c  +    \left( c - \frac{ 1 - (1-c)^{n+1} }{n+1} \right)      \right) \right) \\
%&= n a_0 \left(  \frac{ 1 - (1-c)^{n+1} }{n+1} +  \frac{ \underline{S}^D}{n-1} \left( \left( \frac{ 1 - (1-c)^{n+1} }{n+1} \right) - \left( \frac{ 1 - (1-c)^{n} }{n} \right) \right) \right)\\
%& \quad - n a_0 (1 - \beta) \left( \frac{ 1 - (1-c)^{n+1} }{n+1} \right) \\
%&= n a_0 \left( \beta \left( \frac{ 1 - (1-c)^{n+1} }{n+1} \right) +  \frac{ \underline{S}^D}{n-1} \left( \left( \frac{ 1 - (1-c)^{n+1} }{n+1} \right) - \left( \frac{ 1 - (1-c)^{n} }{n} \right) \right) \right)\\
&= n a_0 \left( \frac{ 1 - (1-c)^{n+1} }{n+1} \right) \left( \beta  +  \frac{ \underline{S}^D}{n-1} \left( 1 -  \left( \frac{ n+1}{n}  \right) \left( \frac{ 1 - (1-c)^{n}}{ 1 - (1-c)^{n+1} } \right) \right) \right).\\
\end{align*}

Noting that for all $c \in (0,1)$ we have
\begin{equation*}
	\frac{ \underline{S}^D}{n-1} \left( 1 -  \left( \frac{ n+1}{n}  \right) \left( \frac{ 1 - (1-c)^{n}}{ 1 - (1-c)^{n+1} } \right) \right) \in \left( -\frac{ \underline{S}^D}{n(n-1)} , 0 \right),
\end{equation*}
there exists $c \in (0,1)$ such that $DF (\mathcal{X}, \mathcal{D}, \mathcal{L}_c, \beta) < 0$ if $\beta$ is less than $\underline{S}^D/n(n-1)$.%Therefore, Theorem \ref{destabilizing test configuration} follows.
\end{proof}

%%%%%%%%%%%%%%%%%%%%%%%%%%%%%%%%%%%%%%%%%%%%%%%%%%%%%%%%%%%%%%%%%%%
%%%

%%%
%%%%%%%%%%%%%%%%%%%%%%%%%%%%%%%%%%%%%%%%%%%%%%%%%%%%%%%%%%%%%%%%%%%
%%%
\section{Results on existence of cscK cone metrics} \label{scrsoexccm}
In this section, we will derive some existence results of cscK cone metrics, the key tool is the criteria of the properness of log $K$-energy, \cite[Proposition 4.22]{arXiv:1803.09506}).
\subsection{CscK cone metrics along ample divisors of large degree}
We will let the divisor stay in some multiple of the linear system of an ample line bundle.
\begin{defn}\label{canonical polarised pair}We say a polarised pair $((X,L);D)$ is a \textbf{standard polarised pair} and denoted by $((X,L);D,m)$, if
\begin{itemize}
\item $X$ is a compact K\"ahler manifold;
\item $L$ is an ample line bundle and denote by $\Om$ the K\"ahler class associated with $L$;
\item $D$ is a smooth effective divisor in the linear system $|mL|$ with $m>0$.
\end{itemize}
\end{defn}

The main result that we prove in this section is the following.

\begin{thm} \label{thmalphainv}
	For any $0 < \beta < 1$, there exists $m_0 \in \mathbb{N}$ which depends only on $X$, $L$, and $\beta$, such that the standard polarised pair $((X,L);D,m)$ admits a constant scalar curvature K\"ahler metric with cone singularities of cone angle $2 \pi \beta$ along $D$, as long as $D$ is a generic member of the linear system $|mL|$, if $m \ge m_0 $.
\end{thm}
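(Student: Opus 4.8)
The plan is to deduce the existence of the cscK cone metric from the coercivity of the log $K$-energy $\nu_\beta$ via the log properness theorem (\thmref{cone properness theorem}): it suffices to show that $\nu_\beta$ is proper for a generic $D \in |mL|$ once $m$ is large. First I would invoke Bertini's theorem to guarantee that a generic member $D \in |mL|$ is smooth, so that the entire conic apparatus of Section \ref{Log K stability} applies; I would also record that by Proposition \ref{ppautmori} one may assume $\mathrm{Aut}_0((X,L);D) = 0$ for $m$ large, although this is not strictly necessary since \thmref{cone properness theorem} already handles automorphisms through $d_{1,G}$-coercivity. The properness itself would then be obtained by verifying the hypotheses of the alpha-invariant criterion \cite[Proposition 4.22]{arXiv:1803.09506}, which is the key tool of this section.

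Second, the heart of the matter is a cohomological computation showing that the relevant classes become negative enough as $m$ grows. Since $D \in |mL|$, we have $C_1(X,D) = C_1(X) - (1-\beta)m\,C_1(L)$, whence $-C_1(X,D) = K_X + (1-\beta)mL$ is ample for all large $m$, the ample summand $(1-\beta)mL$ dominating the fixed $K_X$. Correspondingly, by \eqref{eqavlgsc} we get $\underline S_\beta = \underline S_1 - n(1-\beta)m \to -\infty$. I would then check that the cohomological conditions \eqref{cohomology conditions Kahler} demanded by the criterion---schematically, the ampleness of $\eta\Omega - C_1(X,D)$ and of $(n-1)C_1(X,D) + (\eta - \underline S_\beta)\Omega$, together with $\underline S_\beta < 0$---all hold for $m \ge m_0(X,L,\beta)$: both classes have $L$-coefficient growing like $(1-\beta)m$, and so are ample for large $m$, while $\underline S_\beta < 0$ likewise. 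The remaining requirement is the alpha-invariant inequality $\eta < \tfrac{n+1}{n}\alpha_\beta$, which for a smooth $D$ and $\beta \in (0,1)$ can be met by taking $\eta \ge 0$ small, since the log alpha invariant $\alpha_\beta$ of Definition \ref{aglog alpha invariant} is strictly positive: the weight $|s|^{2\beta-2}$ is locally integrable precisely because $\beta > 0$, and the singularities of $\om_0$-psh functions are controlled by the ordinary alpha invariant of $(X,L)$.

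The main obstacle, and the step needing genuine care, is the verification of the log alpha invariant condition together with the precise matching of the criterion's hypotheses. Although only the positivity $\alpha_\beta > 0$ is needed for the qualitative statement (allowing $\eta$ to be taken arbitrarily small), one must ensure that this positivity survives for the generic smooth member $D \in |mL|$, which is exactly where genericity enters: it secures smoothness and prevents an unexpected concentration of $\om_0$-psh functions along $D$ from destroying integrability. I expect the bookkeeping to confirm that $m_0$ depends only on $X$, $L$, and $\beta$, and blows up like $(1-\beta)^{-1}$ as $\beta \to 1$, reflecting the degeneration of the driving negativity $-n(1-\beta)m$ of $\underline S_\beta$ in that limit; the sharper quantitative relation between $m$ and $\beta$ of \thmref{small angle existence} would instead require a genuine lower bound on $\alpha_\beta$ rather than mere positivity. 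Finally, this result may be regarded as the logarithmic counterpart of the existence of twisted cscK metrics for a large twist in the sense of Dervan, in accordance with the remark in Section \ref{Uniform log K stability}.
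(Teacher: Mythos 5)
Your proposal is correct and follows essentially the same route as the paper's proof: genericity of $D$ yields smoothness (Bertini) and $\mathrm{Aut}_0((X,L);D)=0$ (Proposition \ref{ppautmori}), the cohomological conditions \eqref{cohomology conditions Kahler} are verified with $\eta=0$ for $m$ large exactly as in Lemma \ref{lmlgmprp}, and existence then follows from Proposition \ref{properness criteria} together with \thmref{cone properness conjecture}. The only notable differences are that the paper establishes $\alpha_\beta>0$ via Berman's formula \cite[Proposition 6.2]{MR3107540} and Tian's positivity of $\alpha(L)$ and $\alpha(L_D\vert_D)$, whereas you sketch a direct integrability argument (which is also valid), and that genericity is needed only for smoothness and the triviality of the automorphism group, not to protect the positivity of $\alpha_\beta$, which holds for every smooth member of $|mL|$.
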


Thus, any smooth projective variety admits a cscK cone metric, as long as $D$ is a divisor which is ample, generic, and of sufficiently large degree. This can be regarded as a conic analogue of the results for the twisted cscK metrics \cite[Theorem 1]{MR3980270}, whose algebro-geometric counterpart was proved by Dervan--Ross \cite[Theorems 1.2 and 3.12]{MR3985107}.

In fact, on the algebro-geometric side, it is not surprising that there should be a parallel between twisted cscK metrics and the cscK cone metrics, because Dervan's definition of the twisted $K$-stability \cite[Definition 2.3]{MR3564626} can be regarded as the log $K$-stability for a generic member of the linear system; see \cite[\S 3.4]{MR3564626} for more details. On the other hand, however, the parallel is not so obvious on the analytic side, since a twisted cscK metric is globally smooth on $X$ whereas a cscK cone metric has a subtle boundary condition near the divisor $D$. The above theorem establishes the expected parallel on the analytic side.
%%%
%%%%%%%%%%%%%%%%%%%%%%%%%%%%%%%%%%%%%%%%%%%%%%%%%%%%%%%%%%%%%%%%%%%
%%%

%%%
%%%%%%%%%%%%%%%%%%%%%%%%%%%%%%%%%%%%%%%%%%%%%%%%%%%%%%%%%%%%%%%%%%%
%%%
\subsection{Log alpha invariant and properness}

We first review the key ingredient in the proof of \thmref{thmalphainv}.

\begin{defn}\label{log alpha invariant}\emph{(\cite[Definition 4.9]{arXiv:1803.09506})}
	The log alpha invariant in $\mathcal H_\beta$ is defined as
	\begin{align*}
	\alpha_\beta:=\sup\{\alpha>0\vert\exists C\text{ such that}\sup_{\vphi\in \mathcal H_\beta(\om_\theta)}\int_M e^{-\alpha (\vphi-\sup_X\vphi)}\om_\theta^n\leq C\}.
	\end{align*}
	Here $\mathcal H_\beta$ is the space of all $\om_\theta$ pluri-subharmonic functions.
The reference metric $\omega_\theta$ is a solution to the equation $$\om_\theta^n=e^{h_0}|s|_h^{2\b-2}\om^n_0,$$ in which, $\omega_0$ is a smooth K\"ahler metric, $h_0$ is a smooth function (see \eqref{h0}), $s$ is the defining section of $D$ and $h$ is a Hermitian metric on $L_D$.
\end{defn}
The definition of the log alpha invariant does not depend on the choice of $\om_0, h_K, h$, and also $\om_\theta$. It is also important to note that there is an equivalent definition (Definition \ref{aglog alpha invariant}) in terms of algebraic geometry, and that it generalises to the case when $X$ has some mild singularities.

The key ingredient in the proof is the following proposition proved in \cite[Proposition 4.22]{arXiv:1803.09506}, which involves the log $\alpha$-invariant.

\begin{prop}\label{properness criteria}\emph{(\cite[Proposition 4.22]{arXiv:1803.09506})} 
Let $C_1(L) =[\om_0]$ be a K\"ahler class. Assume that there is a constant $\eta$ satisfies that
\begin{align}\label{cohomology conditions Kahler}
\left\{
\begin{array}{lcl}
	&(i)&0\leq \eta<\frac{n+1}{n}\alpha_\beta,\\
	&(ii)& C_1(X,D)<\eta C_1(L),\\
	&(iii)& (\ul S_\beta-\eta)C_1(L) <(n-1)C_1(X,D).
\end{array}
\right.
\end{align}
Then the log $K$-energy is $J$-coercive in $\Om$, i.e. there exists a constant $C$ such that
\begin{align}\label{J properness}
\nu_\beta(\vphi)
&\geq \left( \frac{n+1}{n}\alpha_\beta-\eta \right) J_{\om_0}^A(\vphi)-C, \quad \forall \vphi\in \mathcal H_\beta(\om_0).
\end{align}
Here $J^A_{\om_0}$ is Aubin's $J$-functional defined in \eqref{defaubjfc}.
% to be
%\begin{align*}
%J^A_{\om_0}(\vphi)=\frac{1}{V}\int_M\vphi\om^n-D_{\om_0}(\vphi).
%\end{align*}
\end{prop}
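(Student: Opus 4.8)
The plan is to bound the two genuinely $\vphi$-dependent pieces of the log $K$-energy from below separately. By Definition~\ref{logKenergy} we have
\[
\nu_\beta(\vphi) = E_\beta(\vphi) + J_{-\theta}(\vphi) + \frac{1}{V}\int_M(\mathfrak h + h_0)\,\om_0^n,
\]
where the last term is a constant independent of $\vphi$, so it suffices to prove $E_\beta(\vphi)\ge \frac{n+1}{n}\alpha_\beta J^A_{\om_0}(\vphi) - C$ and $J_{-\theta}(\vphi)\ge -\eta J^A_{\om_0}(\vphi) - C$ and add them. Since $\mathcal{H}_\beta(\om_0)$ is approximated in the $d_1$-distance by smooth K\"ahler potentials (as recalled after Definition~\ref{E 1}) and all the functionals involved are $d_1$-continuous, I would establish both inequalities for smooth $\vphi$ and then pass to the limit in $\mathcal{E}^1$.

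\emph{Entropy bound.} Writing $\rho := \om_\vphi^n/\om_\theta^n$, so that $E_\beta(\vphi) = V^{-1}\int_M \rho\log\rho\,\om_\theta^n$, I apply the elementary Young inequality $a\log a \ge ab + a - e^{b}$ with $a = \rho$ and $b = -\alpha(\vphi - \sup_X\vphi)$ for a fixed $0<\alpha<\alpha_\beta$. Integrating against $V^{-1}\om_\theta^n$ and using $V^{-1}\int_M\om_\vphi^n = 1$ gives
\[
E_\beta(\vphi)\ge \alpha\Big(\sup_X\vphi - \frac{1}{V}\int_M\vphi\,\om_\vphi^n\Big) + 1 - \frac{1}{V}\int_M e^{-\alpha(\vphi - \sup_X\vphi)}\,\om_\theta^n.
\]
The last integral is bounded by a constant by the very definition of $\alpha_\beta$ (Definition~\ref{log alpha invariant}), while $\sup_X\vphi - V^{-1}\int_M\vphi\,\om_\vphi^n \ge I(\vphi)$ because $\sup_X\vphi\ge V^{-1}\int_X\vphi\,\om_0^n$, where $I(\vphi):=V^{-1}\int_M\vphi(\om_0^n-\om_\vphi^n)$ is Aubin's $I$-functional. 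Hence $E_\beta(\vphi)\ge \alpha I(\vphi) - C$, and the standard Aubin inequality $I\ge \frac{n+1}{n}J^A_{\om_0}\ge 0$ upgrades this to $E_\beta(\vphi)\ge \frac{n+1}{n}\alpha\, J^A_{\om_0}(\vphi) - C$. Letting $\alpha\nearrow\alpha_\beta$ (harmless, since (i) of \eqref{cohomology conditions Kahler} leaves strict room) yields the desired bound.

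\emph{Twisted term.} Here I exploit that $\chi\mapsto J_\chi$ is linear, since both $j_\chi$ and $\underline\chi$ are. Conditions (ii) and (iii) of \eqref{cohomology conditions Kahler} say exactly that $\eta C_1(L)-C_1(X,D)$ and $(n-1)C_1(X,D)-(\underline S_\beta-\eta)C_1(L)$ are K\"ahler classes, so one may choose the representative $\theta\in C_1(X,D)$ satisfying $\frac{\underline S_\beta-\eta}{n-1}\om_0 < \theta < \eta\om_0$ as forms. Splitting $J_{-\theta} = -\eta J_{\om_0} + J_{\eta\om_0-\theta}$, using the identity $J_{\om_0} = I - J^A_{\om_0}$ (a direct computation from the definitions), and controlling the semipositive twisted functional $J_{\eta\om_0-\theta}$ in terms of $I$ and $J^A_{\om_0}$ by the standard comparison inequalities between $I$, $J^A_{\om_0}$ and $J_\chi$ for $\chi\ge 0$ (here the two-sided pinching of $\theta$, and the factor $n-1$ in (iii) matching the $n$ summands of $j_{-\theta}$, are what make the bookkeeping close), I expect to arrive at $J_{-\theta}(\vphi)\ge -\eta J^A_{\om_0}(\vphi)-C$. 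Adding this to the entropy bound proves the proposition.

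\emph{Main obstacle.} The real difficulty is analytic rigor on the singular space $M=X\setminus D$. The Young-inequality step presupposes that $E_\beta$ and the exponential integral are finite and that $V^{-1}\int_M\om_\vphi^n=1$, which rests on the finite-energy ($\mathcal{E}^1$) theory and, crucially, on the positivity and exponential integrability packaged into $\alpha_\beta$ for the singular reference measure $\om_\theta^n = e^{h_0}|s|_h^{2\beta-2}\om_0^n$. More seriously, the Aubin-type and twisted comparison inequalities, together with the integrations by parts implicit in the identity $J_{\om_0}=I-J^A_{\om_0}$ and in estimating $J_{\eta\om_0-\theta}$, must be justified for cone potentials rather than smooth ones: the boundary contributions along $D$ must be shown to vanish. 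I would handle these by the Poincar\'e--Lelong/cut-off technique already employed in Proposition~\ref{K and log K}, and then extend from cone potentials to all of $\mathcal{E}^1$ using the density statement after Definition~\ref{E 1}.
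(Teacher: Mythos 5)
Your decomposition of $\nu_\beta$ and your treatment of the entropy term do match the route taken in the paper: the bound $E_\beta(\vphi)\ge \frac{n+1}{n}\alpha_\beta\, J_{\om_\theta}(\vphi)-C$ via Jensen's inequality and Definition \ref{log alpha invariant} is precisely \eqref{E and J}. The genuine gap is in the twisted term, in two respects. First, conditions (ii) and (iii) of \eqref{cohomology conditions Kahler} assert that two differences of cohomology classes are K\"ahler; this produces two representatives of $C_1(X,D)$, one satisfying $\theta_1<\eta\om_0$ and another satisfying $\theta_2>\frac{\ul S_\beta-\eta}{n-1}\om_0$, but it does not produce a single $\theta$ satisfying both bounds at once. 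Such a two-sided pinching amounts to splitting the specific K\"ahler form $\bigl(\eta-\frac{\ul S_\beta-\eta}{n-1}\bigr)\om_0$ pointwise into a sum of K\"ahler forms lying in two prescribed classes, and class-level positivity does not give this: promoting cohomological positivity to pointwise positivity is exactly the central difficulty in the $J$-equation literature (Song--Weinkove, Lejmi--Sz\'ekelyhidi, G.~Chen), where it is resolved by solving a PDE, not by choosing representatives.

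Second, even granting the pinching, no ``standard comparison inequality'' yields $J_{-\theta}(\vphi)\ge -\eta J^A_{\om_0}(\vphi)-C$. Normalise $\sup_X\vphi=0$, set $c=\frac{\ul S_\beta-\eta}{n-1}$, and use $\theta\ge c\,\om_0$ termwise in $j_{-\theta}$ together with $\ul\theta=\ul S_\beta$; what comes out is
\begin{equation*}
J_{-\theta}(\vphi)+\eta J^A_{\om_0}(\vphi)\;\ge\; -2c\,D_{\om_0}(\vphi)+\frac{c}{V}\int_M\vphi\,\om_\vphi^n+\frac{\eta}{V}\int_M\vphi\,\om_0^n ,
\end{equation*}
and the last two terms are unbounded below and are not compensated by $-2c\,D_{\om_0}(\vphi)\ge 0$: expanding $D_{\om_0}$, the $\om_\vphi^n$-component of the good term carries coefficient $\frac{2c}{n+1}$ against the bad coefficient $c$, so the bookkeeping closes only for $n=1$. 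This is not an accident: lower boundedness of the twisted functional $J_{-\theta,\eta}:=J_{-\theta}+\eta J_{\om_\theta}$ is a deep fact, tied to solvability of the critical point equation $\tr_{\om_\vphi}(\eta\om_\theta-\theta)=n\eta-\ul S_\beta$, a Song--Weinkove type $J$-equation. That is how the paper's source actually argues, as recorded around Proposition \ref{properness criteria reduced}: one writes $\nu_\beta=(E_\beta-\eta J_{\om_\theta})+J_{-\theta,\eta}+\mathrm{const}$, bounds the first bracket by \eqref{E and J}, and bounds $J_{-\theta,\eta}\ge -C$ by the existence of a global minimiser, produced in \cite[Theorem 4.20]{arXiv:1803.09506} (following \cite{MR3412393}) via a continuity method for the conical $J$-equation and convexity along cone geodesics. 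The factor $n-1$ in (iii) is exactly the cone condition for that solvability, not a device matching the $n$ summands of $j_{-\theta}$. Your entropy half can stand (modulo the minor point that your constant $C$ depends on $\alpha<\alpha_\beta$, so the coefficient should be $\frac{n+1}{n}\alpha-\eta$ for admissible $\alpha$), but the twisted half must be replaced by this PDE input; it cannot be recovered by representative-choosing and comparison inequalities.
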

%%%%%%%%%%%%%%%%%%%%%%%%%%%%%%%%%%%%%%%%%%%%%%%%%%%%%%%%%%%%%%%%%%%%%%%%%%%%%%%%%%%%%%%%%%%%%%%%%%%%%%%%%%%%%%%%%%%%%%%%%%%%%%%%%%%%%%%%%%%%%%%%%%%%%%%%%%%%%%%%%%%%%%%%

\subsection{Proof of \thmref{thmalphainv}}
We prove that the conditions (\ref{cohomology conditions Kahler}) are satisfied when $D \in |mL|$ for large enough $m$.

\begin{lem}\label{lmlgmprp preliminary}
The first Chern class for the pair $(X,D)$ is 
\begin{align}\label{eqevcdii}
C_1(X,D)=C_1(X)-m(1-\b) C_1(L) ,
\end{align} and the average scalar curvature for a K\"ahler cone metric becomes
\begin{align}\label{averaged cone scalar curvature}
\ul{S}_\beta
 =\underline S_1-mn(1-\b).
\end{align}
Here $\underline S_1=n \frac{\int_X C_1 (X) \Omega^{n-1}}{\int_X \Omega^n}$ is the average scalar curvature for any smooth K\"ahler metric in $\Omega$, which is a topological constant.
\end{lem}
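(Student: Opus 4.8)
The plan is to reduce both assertions to direct cohomological computations, using the definition of $C_1(X,D)$ recalled in Section \ref{Log K stability} together with the standing hypothesis $D \in |mL|$ of a standard polarised pair. Neither identity requires any analysis; they are formal consequences of the definitions once the line bundle $L_D$ is identified.

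First I would establish \eqref{eqevcdii}. By definition one has $C_1(X,D) = C_1(X) - (1-\beta)C_1(L_D)$, where $L_D = \mathcal{O}_X(D)$ in the notational convention fixed in the introduction. Since $D$ is a member of the linear system $|mL|$, the associated line bundle satisfies $L_D \cong L^{\otimes m}$, whence $C_1(L_D) = m\, C_1(L)$. Substituting this into the definition yields $C_1(X,D) = C_1(X) - m(1-\beta)C_1(L)$, which is exactly \eqref{eqevcdii}.

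For \eqref{averaged cone scalar curvature} I would start from the topological formula \eqref{eqavlgsc}, namely $\underline S_\beta = n\dfrac{\int_X C_1(X,D)\,\Omega^{n-1}}{\int_X \Omega^n}$, and insert the expression for $C_1(X,D)$ just obtained. Splitting the integral over the two summands and recalling that $\Omega = C_1(L)$, so that $\int_X C_1(L)\wedge \Omega^{n-1} = \int_X \Omega^n$, the correction term collapses to the scalar $mn(1-\beta)$, while the leading term is precisely $\underline S_1 = n\dfrac{\int_X C_1(X)\,\Omega^{n-1}}{\int_X \Omega^n}$ by its own definition. This gives $\underline S_\beta = \underline S_1 - mn(1-\beta)$.

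I do not anticipate any genuine obstacle here, since both identities are purely formal. The only point that deserves (minor) care is the normalisation $\Omega = C_1(L)$, which is what makes the coefficient of the correction term equal to $mn$ rather than some other multiple of the volume $\int_X \Omega^n$; keeping track of this bookkeeping is all that the proof requires.
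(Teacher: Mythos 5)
Your proof is correct and is essentially identical to the paper's own argument: both deduce $C_1(L_D)=m\,C_1(L)$ from $D\in|mL|$, substitute into the definition $C_1(X,D)=C_1(X)-(1-\beta)C_1(L_D)$, and then plug the result into the intersection-theoretic formula \eqref{eqavlgsc} for $\ul S_\beta$, using $\Omega=C_1(L)$ to evaluate the correction term as $mn(1-\beta)$.
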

\begin{proof}
The definition of $L_D$ implies $C_1 (L_D) = mC_1(L)$, which in turn implies that we have
	\begin{align*}
		C_1(X,D)&=C_1(X)-(1-\beta)C_1(L_D) 
	\end{align*}
	which implies \eqref{eqevcdii} and \eqref{averaged cone scalar curvature}, by using
	\begin{equation*}
		\ul S_\beta=n\frac{ \int_X C_1(X,D) C_1(L)^{n-1}}{\int_X C_1(L)^n}  
	\end{equation*}
	and the formula of the average scalar curvature $\ul S_1$.

\end{proof}

\begin{lemma} \label{lmlgmprp}
	There exists $m_2(X, L, \beta) \in \mathbb{N}$ depending only on $X$, the polarisation $L$, and $0 < \beta < 1$, such that for all $m \ge m_2(X, L, \beta)$ the conditions (\ref{cohomology conditions Kahler}) are satisfied with $\eta = 0$, and $C_1(X,D) = C_1 (X) - (1 - \beta ) C_1 (L_D)$ with $L_D = L^{\otimes m}$.
\end{lemma}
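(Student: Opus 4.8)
The plan is to substitute the explicit expressions from Lemma~\ref{lmlgmprp preliminary} into the three conditions of \eqref{cohomology conditions Kahler} with $\eta = 0$, and to observe that each reduces to the ampleness of a fixed cohomology class to which a positive multiple of $C_1(L)$, growing linearly in $m$, has been added. First I would dispose of condition (i): with $\eta = 0$ it reads simply $\alpha_\beta > 0$, which holds by the well-known positivity of the alpha invariant (in its present log/conic form). Note that although $\alpha_\beta$ depends on $m$ through $D$, here we only need its positivity, so no uniformity in $m$ is required.

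Next I would treat conditions (ii) and (iii). Using $C_1(X,D) = C_1(X) - m(1-\beta)C_1(L)$ from \eqref{eqevcdii} together with $C_1(X) = -K_X$ in additive notation, condition (ii) with $\eta = 0$ becomes the requirement that
\begin{equation*}
-C_1(X,D) = K_X + m(1-\beta)C_1(L)
\end{equation*}
be a K\"ahler class. For condition (iii), substituting both \eqref{eqevcdii} and the value $\underline{S}_\beta = \underline{S}_1 - mn(1-\beta)$ from \eqref{averaged cone scalar curvature}, the two terms of order $m$ partially cancel, leaving
\begin{equation*}
(n-1)C_1(X,D) - \underline{S}_\beta\, C_1(L) = (n-1)C_1(X) + \left( m(1-\beta) - \underline{S}_1 \right) C_1(L),
\end{equation*}
which again must be a K\"ahler class.

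In both cases the coefficient of $C_1(L)$ tends to $+\infty$ as $m \to \infty$ because $1 - \beta > 0$ (this is the only place where $\beta < 1$ is used). The final step is the elementary observation that, for any fixed real cohomology class $A$, the class $\frac{1}{m}A + (1-\beta)C_1(L)$ converges to $(1-\beta)C_1(L)$, which lies in the open K\"ahler cone of $X$; hence $\frac{1}{m}A + (1-\beta)C_1(L)$, and therefore $A + m(1-\beta)C_1(L)$, is K\"ahler for all $m$ beyond some threshold. Applying this with $A = K_X$ for (ii) and with $A = (n-1)C_1(X) - \underline{S}_1 C_1(L)$ for (iii), and taking $m_2(X,L,\beta)$ to be the larger of the two resulting thresholds, all three conditions hold simultaneously for $m \ge m_2(X,L,\beta)$.

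I expect no serious obstacle here: the content is entirely the linear growth in $m$ of the ample contribution, together with the openness of the K\"ahler cone. The one point worth flagging is the cancellation in condition (iii): a priori $(n-1)C_1(X,D)$ becomes strongly anti-ample as $m$ grows, but the term $-\underline{S}_\beta C_1(L)$ grows positively even faster, and their sum retains a net positive multiple $m(1-\beta)C_1(L)$ of the ample class. Finally I would record that the threshold $m_2$ depends only on $X$, $L$, and $\beta$, as asserted, since $K_X$, $C_1(X)$, $\underline{S}_1$, and the K\"ahler cone of $X$ are all intrinsic to $(X,L)$ and independent of the choice of the generic member $D \in |mL|$.
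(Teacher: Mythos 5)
Your proposal is correct and follows essentially the same route as the paper's proof: substitute the formulas of Lemma \ref{lmlgmprp preliminary} into (\ref{cohomology conditions Kahler}) with $\eta=0$, and observe that conditions (ii) and (iii) reduce to classes of the form $A + m(1-\beta)C_1(L)$ being K\"ahler, which holds for all large $m$ by openness of the K\"ahler cone; in particular your cancellation in (iii), giving $(n-1)C_1(X) + \left( m(1-\beta)-\underline{S}_1 \right) C_1(L)$, is exactly right. The one place where you are thinner than the paper is condition (i): the positivity of $\alpha_\beta$ is not pure bookkeeping, since the defining integral carries the singular weight $|s|_h^{2\beta-2}$ and the divisor $D$ itself varies with $m$; the paper justifies it via Berman's formula $\alpha_\beta \ge \min\{ m\beta, \alpha(L), m\alpha(L_D\vert_D) \}$ combined with Tian's theorem that the smooth alpha invariants $\alpha(L)$ and $\alpha(L_D\vert_D)$ are positive. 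Since this is indeed a citable known fact, and you correctly note that only positivity (not uniformity in $m$) is needed, this is a missing reference rather than a genuine gap—but it is the only point of the lemma requiring an actual input, so it should be recorded.
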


%The conditions (\ref{cohomology conditions Kahler}) are satisfied for $\eta = 0$ if $\Omega = C_1 (L)$ and $L_D = L^{\otimes m}$ for a large enough $m$.

\begin{proof}
By \lemref{lmlgmprp preliminary}, $\ul S_\beta\cdot C_1(L)- (n-1) C_1 (X,D)$ equals
	\begin{align}
		%&-n \frac{\int_X C_1(X,D) \Omega^{n-1}}{\int_X \Omega^n} \Omega + (n-1) C_1 (X,D) \notag \\
%		&=m n(1- \beta )  \Omega -n \frac{\int_X C_1 (X) C_1 (L)^{n-1}}{\int_X C_1 (L)^n} \Omega +  (n-1) C_1(X) - m(n-1) (1 - \beta ) \Omega \\
		%&= \left( mn (1 - \beta) -\ul S_1\right) \Omega + (n-1) C_1(X)\notag\\
		&= \left[ \ul S_1-mn (1 - \beta)  \right] \Omega - (n-1) C_1(X). \label{eqevcdiii}
	\end{align}	
	Recalling that $C_1(L)$ is a K\"ahler class, the equation (\ref{eqevcdii}) implies that there exists $m_2(X, L, \beta) \in \mathbb{N}$ such that the second condition of (\ref{cohomology conditions Kahler}) is satisfied for all $m \ge m_2 (X, L, \beta)$ and for $\eta = 0$. 
	
	Likewise, (\ref{eqevcdiii}) implies that the third condition of (\ref{cohomology conditions Kahler}) is satisfied for all $m \ge m_2 (X, L, \beta)$ and for $\eta = 0$.
	
	It thus suffices to check that the first condition of (\ref{cohomology conditions Kahler}) holds for any $0 < \beta < 1$ and any $m \ge m_2 (X, L, \beta )$. By Berman's formula \cite[Proposition 6.2]{MR3107540}, we have
	\begin{align}
		\alpha_\beta &:=\alpha(L,(1-\beta)D) \notag\\
		&=m\alpha(L_D,(1-\beta)D)\notag\\
		&\geq m \min\{ \beta, \alpha(L_D), \alpha(L_D\vert_D) \}\notag\\
		&= \min\{ m\beta, \alpha(L), m\alpha(L_D\vert_D) \}\label{log alpha invariant lower bound},
	\end{align}
	where $\alpha(L_D)$ (resp.~$\alpha(L_D\vert_D)$) is the $\alpha$-invariant of the polarisation $L_D = L^{\otimes m}$ on $X$ (resp.~$L_D \vert_D$ on $D$). It is a foundational result of Tian \cite[Proposition 2.1]{MR894378} (see also H\"ormander \cite[Theorem 4.4.5]{MR1045639}) that we have $\alpha(L) >0$ and $\alpha(L_D\vert_D)>0$, and hence we get $\alpha_{\beta} >0$ as required.
\end{proof}

Given what we have established so far, it is now straightforward to derive the main result of this section.

\begin{proof}[Proof of \thmref{thmalphainv}]
	We recall from Proposition \ref{ppautmori} that there exists $m_1 \in \mathbb{N}$, depending only on $X$ and $L$, such that $\mathrm{Aut}_0 ((X,L); D)=0$ for a generic member $D \in |mL|$ if $m \ge m_1$. We take $m_2 \in \mathbb{N}$, which depends on $X$, $L$, and $\beta$, to be as in Lemma \ref{lmlgmprp}. We set $m_0 := \max \{ m_1 , m_2 \}$, and get \thmref{thmalphainv} as an immediate consequence of \thmref{cone properness conjecture} and Proposition \ref{properness criteria}, (see \thmref{properness criteria thm}).%\cite[Theorem 1.2]{arXiv:1803.09506}.%\todo[color=green]{Add "(see ...)". K.}
\end{proof}

\subsection{Uniformly log $K$-stable manifolds}\label{Log stable manifolds}
We collect several other applications of Proposition \ref{properness criteria} to find cscK cone metrics and log stable manifolds.

Proposition \ref{properness criteria} implies properness of log $K$-energy in the K\"ahler class $\Omega$. Then, we have a cscK cone metric in $\Omega$ from the existence result \thmref{cone properness conjecture}. These arguments have been seen in the proof of \thmref{thmalphainv}. We collect this step here, before we write down its further differential-geometric application.

%\todo[color=green]{The proposition is revised. K.}

\begin{thm}\label{properness criteria thm}
Let $C_1(L)=[\om_0]$ be a K\"ahler class. Assume that the Condition \eqref{cohomology conditions Kahler} holds.

Then the automorphism group $\mathrm{Aut}_0((X,L);D$) is trivial. 
Moreover, there exists a unique cscK cone metric in $\Om$ and $((X,L);D)$ is also uniformly log $K$-stable.
\end{thm}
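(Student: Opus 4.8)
The plan is to derive all three conclusions from the $J^A$-coercivity of the log $K$-energy supplied by Proposition \ref{properness criteria}, whose hypothesis is exactly Condition \eqref{cohomology conditions Kahler}. Setting $c := \frac{n+1}{n}\alpha_\beta - \eta$, which is strictly positive by part $(i)$ of \eqref{cohomology conditions Kahler}, the estimate \eqref{J properness} reads
\begin{equation*}
	\nu_\beta(\vphi) \ge c\, J^A_{\om_0}(\vphi) - C \qquad \text{for all } \vphi \in \mathcal H_\beta(\om_0),
\end{equation*}
and extends to $\mathcal E^1$ by the usual approximation. The only genuinely new point is the triviality of $\mathrm{Aut}_0((X,L);D)$; everything else is an assembly of results already recorded in the paper.

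First I would prove $\mathfrak{aut}((X,L);D) = 0$. Suppose not. Since $\mathfrak{aut}((X,L);D)$ is a complex Lie algebra sitting inside the linear algebraic group $\mathrm{Aut}_0(X,L)$, it contains no nontrivial compact complex subgroup, and hence admits a nonzero real generator $w$ (preserving $D$) whose flow is noncompact; the associated $d_1$-geodesic ray $\rho_w$, the potential path of $\exp(tw)^*\om_{\vphi_0}$, and its reverse $\rho_{-w}$ both escape to infinity, so $J^A_{\om_0}(\rho_{\pm w}(t))/t$ tends to a strictly positive limit. Along such a holomorphic ray the log $K$-energy is affine with slope the log Futaki invariant, $\nu_\beta(\rho_w(t)) = \nu_\beta(\rho_w(0)) + t\,\mathrm{Fut}_{D,\beta}(w)$, exactly as used in Proposition \ref{ppcnaglfut}. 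Dividing the coercivity estimate by $t$ and letting $t \to +\infty$ forces $\mathrm{Fut}_{D,\beta}(w) > 0$; applying the same argument to $-w$ and using the antisymmetry $\mathrm{Fut}_{D,\beta}(-w) = -\mathrm{Fut}_{D,\beta}(w)$ forces $\mathrm{Fut}_{D,\beta}(w) < 0$, a contradiction. Hence $\mathfrak{aut}((X,L);D) = 0$, and therefore $\mathrm{Aut}_0((X,L);D) = 0$.

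With triviality established, the remaining statements follow immediately. Because $J^A_{\om_0}$ is Lipschitz equivalent to the $d_1$-distance on $\mathcal E^1$ (as used at the end of Section \ref{scpfgulks}), the coercivity estimate above is precisely the $d_1$-coercivity of $\nu_\beta$; since $\mathrm{Aut}_0((X,L);D)$ is now known to be discrete, Theorem \ref{cone properness conjecture} produces a cscK cone metric of angle $2\pi\beta$ in $\Omega$. Its uniqueness is then genuine rather than merely up to automorphisms: Theorem \ref{uniqueness of cscK cone metric} gives uniqueness modulo $G = \mathrm{Aut}_0((X,L);D)$, and this group is trivial. Finally, having both the existence of a cscK cone metric and the triviality of $\mathrm{Aut}_0((X,L);D)$, item (\ref{log K stability}) of Theorem \ref{log K} yields the uniform log $K$-stability of $((X,L);D)$; alternatively one reads this straight off the $d_1$-coercivity via \cite[Corollary 4.5]{MR3985614}, exactly as in the proof of that item.

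The main obstacle is the first step, ruling out holomorphic vector fields from $J^A$-coercivity alone. The subtlety is that coercivity here is measured by the \emph{non-reduced} functional $J^A_{\om_0}$ rather than its $G$-averaged version, so I must guarantee that a hypothetical nonzero vector field genuinely produces a two-sided $d_1$-geodesic escaping to infinity; this is what lets the sign of the log Futaki invariant flip between $w$ and $-w$ and deliver the contradiction. That $\mathrm{Aut}_0(X,L)$ is linear algebraic, so that $\mathfrak{aut}((X,L);D)$ carries a noncompact real direction, is what makes this work, and once it is in place the rest is bookkeeping with Theorems \ref{cone properness conjecture}, \ref{uniqueness of cscK cone metric}, and \ref{log K}.
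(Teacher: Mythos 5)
Your overall strategy coincides with the paper's: reduce everything to showing that the $J^A$-coercivity \eqref{J properness} supplied by Proposition \ref{properness criteria} forces $\mathrm{Aut}_0((X,L);D)$ to be trivial, and then assemble existence from Theorem \ref{cone properness conjecture}, genuine uniqueness from Theorem \ref{uniqueness of cscK cone metric}, and uniform log $K$-stability from item (\ref{log K stability}) of Theorem \ref{log K}. That assembly is fine. The problem is the triviality step, where you have two genuine gaps, both sitting exactly where the paper is most careful. The first is the conic regularity issue: you take an arbitrary cone potential $\vphi_0$, declare the orbit path of $\exp(tw)$ to be a $d_1$-geodesic ray, and assert that $\nu_\beta$ is affine along it with slope $\mathrm{Fut}_{D,\beta}(w)$. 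In the smooth setting this is standard, but in the conic setting it is precisely what the paper identifies as the obstruction: a cone geodesic generated from a general K\"ahler cone metric may not have enough regularity or the right asymptotic rate along $D$ to justify differentiating $\nu_\beta$ and integrating by parts near the divisor. The paper's key observation is to generate the geodesic from the reference metric $\om_\theta$ solving \eqref{Rictheta}, whose sharp asymptotics near $D$ (from \cite{MR3911741}) make the computations of \cite[Section 6]{MR4020314} valid; your proposal never addresses this.

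The second gap is quantitative: your claim that noncompactness of the flow makes $J^A_{\om_0}(\rho_{\pm w}(t))/t$ tend to a strictly positive limit is false in general. Since you prove triviality \emph{before} producing a cscK cone metric, reductivity of $\mathrm{Aut}_0((X,L);D)$ is not yet available, so $w$ may generate a unipotent (additive) one-parameter subgroup; along such an orbit the growth of $J^A$ is typically sublinear (for the translation flow on $\mathbb{P}^1$ fixing a point it is logarithmic in $t$). Dividing the coercivity estimate by $t$ then only yields $\mathrm{Fut}_{D,\beta}(\pm w)\ge 0$, i.e.\ $\mathrm{Fut}_{D,\beta}(w)=0$, and your sign-flip contradiction evaporates. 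The paper's route avoids exactly this: it shows the log $K$-energy is constant along the cone geodesic $\sigma_t^\ast\om_\theta$ and then invokes the strict convexity of the $J$-functional along that geodesic (\cite[Lemma 3.12]{MR4020314}), so that $J$ is unbounded along the line --- no growth rate is needed --- contradicting \eqref{J properness} directly. To repair your argument you would either import these two ingredients from \cite{MR4020314,MR3911741}, or reorganise the proof: note that $d_1$-coercivity implies $d_{1,G}$-coercivity, so Theorem \ref{cone properness theorem} already yields a cscK cone metric, whence $\mathrm{Aut}_0((X,L);D)$ is reductive by \cite[Theorem 6.1]{MR4020314}, and only then run your two-sided Futaki argument on a $\mathbb{C}^*$-direction, where the linear growth of $J^A$ is actually available.
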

\begin{proof}
It suffices to show that if the log $K$-energy is $J$-proper in the sense of \eqref{J properness}, i.e.~$
\nu_\beta(\vphi)
\geq \left( \frac{n+1}{n}\alpha_\beta-\eta \right) J^A_{\om_0}(\vphi)-C$, then the automorphism group is trivial.
Assume that the automorphism group is non-trivial, then there exists a non-zero holomorphic vector field $X$ and it generates a one-parameter group of holomorphic transformation $\sigma_t$.

Given a K\"ahler cone metric $\om$, the automorphism $\sigma_t$ induces a cone geodesic $\sigma_t^\ast\om$ (see \cite[Section 2]{MR4020314} for the definition for cone geodesic). But this cone geodesic generated from the K\"ahler cone metric $\om$ may not have enough regularity, or appropriate asymptotic rate along the divisor to proceed the convexity argument below. The key observation here is that, instead of a general K\"ahler cone metric $\om$, we need to alternately make use of the reference K\"ahler cone metric $\om_\theta$, which solves \eqref{Rictheta}, to generate the cone geodesic we need. This reference metric has nice growth along $D$, due to the sharp asymptotic analysis for complex Monge--Amp\`ere equation, established in \cite{MR3911741}. Following the proof in \cite[Section 6]{MR4020314}, we see that the value of the log $K$-energy remains the same along the cone geodesic $\sigma_t^\ast\om_\theta$. Meanwhile, as shown in \cite[Lemma 3.12]{MR4020314}, the $J$-functional is also strictly convex along the cone geodesic, if $X$ is non-trivial, that leads to the contradiction of the $J$-coercivity of the log $K$-energy. Hence, we complete the proof.
\end{proof}

In the next Section \ref{scnormvar}, these differential-geometric results will be further extended to the algebro-geometric setting for a pair of normal variety $(X,\tri)$. Moreover, we will also explore the Condition \eqref{cohomology conditions Kahler} in a quantitative way in terms of the cone angle $\beta$. 

\begin{rem}
This is a particular case of Proposition 4.37 in \cite{arXiv:1803.09506}, where the cohomology class $\Om$ is allowed to be big and nef, not necessary K\"ahler.
\end{rem}
\begin{rem}
In order to consider the case when the automorphism group $G=\mathrm{Aut}_0((X,L);D$) is non-trivial, it is sufficient to consider the $G$-invariant version $J_G$ of the $J$-functional \eqref{J functional} i.e. $J_G(\cdot)=\inf_{\sigma\in G} J(\sigma \cdot)$, and its corresponding gradient flow, to obtain the same $J_G$-properness \eqref{J properness} for $G$-invariant log $K$-energy.
\end{rem}
Recall the following definitions:
\begin{align*}
C_1(X,D)=C_1(X)-(1-\b)C_1(L_D),\quad
\underline S_\b= n \frac{\int_X C_1(X,D) C_1(L)^{n-1}}{\int_X C_1(L)^n}.
\end{align*}
A related important quantity is the slope $\mu$ (see \cite[Theorem 1.3]{MR3428958}), which is defined as
\begin{equation} \label{dfslopemu}
	\mu = \mu_{\beta} ((X,L);D) := \frac{\underline{S}_{\beta}}{n}.
\end{equation}

We then obtain the following several existence theorems in terms of the sign of $C_1(X,D)$.

When $C_1(X,D)=0$, the uniqueness theorem \thmref{uniqueness of cscK cone metric} implies that it is equal to the known log Calabi--Yau metric (c.f.~\cite{MR3761174} and references therein). In  summary, we have
\begin{cor}\label{log Calabi Yau}
Assume $C_1(X,D)=0$ and $C_1(L)$ is a K\"ahler class.
There exists a unique log Calabi--Yau metric in $C_1(L)$ and $((X,L);D)$ is also uniformly log $K$-stable.
\end{cor}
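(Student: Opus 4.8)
The plan is to obtain the statement as an immediate application of \thmref{properness criteria thm}, so the entire task reduces to checking that the hypothesis $C_1(X,D)=0$ (together with $C_1(L)$ being K\"ahler) forces Condition \eqref{cohomology conditions Kahler} to hold for a suitable choice of $\eta$. First I would record that the vanishing $C_1(X,D)=0$ immediately yields $\ul S_\beta=0$, directly from the intersection-theoretic formula
\begin{equation*}
\ul S_\beta= n \frac{\int_X C_1(X,D) C_1(L)^{n-1}}{\int_X C_1(L)^n}.
\end{equation*}

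Next I would fix an admissible $\eta$. Since the log alpha invariant satisfies $\alpha_\beta>0$ (positivity of $\alpha_\beta$, by Tian \cite{MR894378} and H\"ormander \cite{MR1045639}, as recalled in the proof of \lemref{lmlgmprp}), the interval $\bigl(0,\tfrac{n+1}{n}\alpha_\beta\bigr)$ is nonempty, and I pick any $\eta$ in it, so that $(i)$ of \eqref{cohomology conditions Kahler} holds by construction. With $C_1(X,D)=0$ and $\ul S_\beta=0$ substituted in, conditions $(ii)$ and $(iii)$ both collapse to the single requirement $0<\eta\, C_1(L)$, that is, that $\eta\, C_1(L)$ be a K\"ahler class; this holds because $C_1(L)$ is K\"ahler and $\eta>0$. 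Hence all three conditions of \eqref{cohomology conditions Kahler} are satisfied.

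Applying \thmref{properness criteria thm} then gives at once that $\mathrm{Aut}_0((X,L);D)$ is trivial, that there exists a unique cscK cone metric in $\Om=C_1(L)$, and that $((X,L);D)$ is uniformly log $K$-stable. Finally I would identify this unique cscK cone metric with the log Calabi--Yau metric: since $\ul S_\beta=0$ the metric has vanishing scalar curvature off $D$, and combined with $C_1(X,D)=0$ this is exactly the (Ricci-flat) log Calabi--Yau cone metric, its uniqueness being also guaranteed by \thmref{uniqueness of cscK cone metric}. There is essentially no analytic obstacle here beyond bookkeeping, as the content lies entirely in the cohomological simplification; the only point demanding care is that $(ii)$ and $(iii)$ are \emph{strict} inequalities of $(1,1)$-classes, so one genuinely needs $\eta>0$ rather than $\eta\ge 0$, which is precisely why the positivity $\alpha_\beta>0$ is invoked to secure an admissible positive $\eta$.
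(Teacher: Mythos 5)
Your proof is correct and is essentially the paper's own argument: from $C_1(X,D)=0$ one gets $\ul S_\beta=0$, conditions $(ii)$ and $(iii)$ of \eqref{cohomology conditions Kahler} collapse to positivity of $\eta\, C_1(L)$, any $\eta\in\bigl(0,\tfrac{n+1}{n}\alpha_\beta\bigr)$ is admissible since $\alpha_\beta>0$, and \thmref{properness criteria thm} then delivers triviality of the automorphism group, existence, uniqueness, and uniform log $K$-stability. The only cosmetic difference is the final identification with the log Calabi--Yau metric: the paper quotes the known log Calabi--Yau cone metric from the literature and matches it with the cscK cone metric via \thmref{uniqueness of cscK cone metric}, whereas you argue directly that scalar-flatness together with $C_1(X,D)=0$ forces Ricci-flatness off $D$; both routes are fine.
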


\begin{cor} \label{negative c1}
Assume $C_1(X,D)\leq0$. For any K\"ahler class $C_1(L)$, if there exists a constant $\eta$ satisfying $0\leq \eta<\frac{n+1}{n}\alpha_\beta$ such that
\begin{align}\label{eta Omega}
(\underline S_\b-\eta)C_1(L)<(n-1)C_1(X,D).
\end{align}
Then $((X,L);D)$ admits a cscK cone metric and is also uniformly log $K$-stable.
\end{cor}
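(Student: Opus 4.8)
The plan is to deduce this corollary directly from \thmref{properness criteria thm} by verifying that its hypotheses --- namely the three conditions \eqref{cohomology conditions Kahler} of Proposition \ref{properness criteria} --- hold under the assumptions stated here. First I would observe that conditions (i) and (iii) of \eqref{cohomology conditions Kahler} are nothing but the two hypotheses imposed in the corollary: condition (i) is precisely the constraint $0\leq \eta<\frac{n+1}{n}\alpha_\beta$, and condition (iii) is exactly \eqref{eta Omega}. Thus the only thing left to check is condition (ii), that is, that $\eta C_1(L)-C_1(X,D)$ is a K\"ahler class.

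To verify condition (ii) I would split into two cases according to the sign of $\eta$. When $\eta>0$, the class $\eta C_1(L)$ is K\"ahler since $C_1(L)$ is, while $-C_1(X,D)$ is nef by the standing assumption $C_1(X,D)\leq 0$; the sum $\eta C_1(L)-C_1(X,D)$ of a K\"ahler class and a nef class is again K\"ahler, which is exactly condition (ii).

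The remaining case $\eta=0$ is the one subtle point, and I expect it to be the main (indeed only) obstacle: condition (ii) would then require $-C_1(X,D)$ to be strictly K\"ahler, whereas $C_1(X,D)\leq 0$ only guarantees that it is nef, possibly degenerate. Here I would argue by a small perturbation of $\eta$. Condition (i) with $\eta=0$ forces $\alpha_\beta>0$, so there is room to replace $\eta$ by some $\eta'\in\bigl(0,\frac{n+1}{n}\alpha_\beta\bigr)$. This preserves (i) by construction; it preserves (iii) because passing from $\eta=0$ to $\eta'$ adds the K\"ahler class $\eta' C_1(L)$ to $(n-1)C_1(X,D)-\underline{S}_\beta C_1(L)$, which is already K\"ahler by \eqref{eta Omega}, so the sum stays K\"ahler; and since now $\eta'>0$ I may invoke the previous case to obtain (ii). This is consistent with \corref{log Calabi Yau}: the extreme case $C_1(X,D)=0$ is exactly the one in which \eqref{eta Omega} itself forces $\eta>0$, so no perturbation is even needed there.

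With all three conditions \eqref{cohomology conditions Kahler} verified, \thmref{properness criteria thm} applies and yields at once the triviality of $\mathrm{Aut}_0((X,L);D)$, the existence (and uniqueness) of a cscK cone metric in $\Omega$, and the uniform log $K$-stability of $((X,L);D)$; equivalently, one may combine the $J$-coercivity of the log $K$-energy from Proposition \ref{properness criteria} with the existence result \thmref{cone properness conjecture}. Everything beyond the bookkeeping for condition (ii) in the degenerate case $\eta=0$ is a direct translation of the corollary's hypotheses into the conditions \eqref{cohomology conditions Kahler}.
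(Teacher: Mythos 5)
Your proposal is correct and takes essentially the same route the paper intends: \corref{negative c1} is stated in the paper with no written proof, as an immediate application of \thmref{properness criteria thm} obtained by checking the three conditions \eqref{cohomology conditions Kahler}, which is exactly what you do. Your extra care for the borderline case $\eta=0$ (where $C_1(X,D)\leq 0$ gives only nefness of $-C_1(X,D)$, so a small perturbation $\eta'\in\bigl(0,\tfrac{n+1}{n}\alpha_\beta\bigr)$ is needed to get the strict inequality in condition (ii)) fills in a detail the paper leaves implicit, and your verification that the perturbation preserves conditions (i) and (iii) is sound.
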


\begin{cor} \label{positive c1}
Assume $C_1(X,D)\geq0$.  If there exists a constant $\eta$ satisfying $0\leq \eta<\frac{n+1}{n}\alpha_\beta$ such that $$C_1(X,D)<\eta C_1(L) \text{ and }\eqref{eta Omega}.$$
Then $((X,\Omega);D)$ admits a cscK cone metric and is also uniformly  log $K$-stable.
\end{cor}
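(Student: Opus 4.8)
The plan is to apply \thmref{properness criteria thm} directly, since the hypotheses of the corollary are arranged precisely so as to match the Condition \eqref{cohomology conditions Kahler} required there. First I would observe that the three displayed hypotheses, namely $0 \le \eta < \frac{n+1}{n}\alpha_\beta$, then $C_1(X,D) < \eta C_1(L)$, and finally \eqref{eta Omega} which reads $(\underline{S}_\beta - \eta) C_1(L) < (n-1) C_1(X,D)$, are nothing other than conditions (i), (ii), and (iii) of \eqref{cohomology conditions Kahler}, respectively. The standing assumption $C_1(X,D) \ge 0$ plays no role in the argument beyond fixing the case at hand: in contrast to \corref{negative c1}, where the sign $C_1(X,D)\le 0$ renders condition (ii) automatic (at least for $\eta>0$, since then $\eta C_1(L)$ is K\"ahler and $-C_1(X,D)$ is nef, so their sum is K\"ahler), when $C_1(X,D)\ge 0$ neither (ii) nor (iii) is automatic, which is exactly why both inequalities must be imposed explicitly in the statement.

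Having matched the hypotheses, I would simply invoke \thmref{properness criteria thm}, which asserts that under Condition \eqref{cohomology conditions Kahler} the automorphism group $\mathrm{Aut}_0((X,L);D)$ is trivial, a unique cscK cone metric exists in $\Omega$, and $((X,L);D)$ is uniformly log $K$-stable. Unwinding this citation, the existence of the metric stems from \prop{} (Proposition \ref{properness criteria}), which yields the $J$-coercivity \eqref{J properness} of the log $K$-energy, combined with the existence result \thmref{cone properness conjecture}; here the triviality of $\mathrm{Aut}_0((X,L);D)$, also supplied by \thmref{properness criteria thm}, is what licenses the use of the $d_1$-coercive version \thmref{cone properness conjecture} of the properness theorem rather than its $d_{1,G}$-version. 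The uniform log $K$-stability then follows from item (\ref{log K stability}) of \thmref{log K}.

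Since all the substantive analytic work has already been carried out in Proposition \ref{properness criteria} and \thmref{properness criteria thm}, there is no genuine obstacle in this final step: the only thing to verify is the purely formal bookkeeping that the three inequalities in the hypothesis coincide with \eqref{cohomology conditions Kahler}, which is immediate. The proof is therefore a direct appeal to \thmref{properness criteria thm} in the case $C_1(X,D)\ge 0$.
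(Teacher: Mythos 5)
Your proposal is correct and follows the paper's own route: the corollary is stated in the paper without a separate proof precisely because its hypotheses are verbatim Condition \eqref{cohomology conditions Kahler}, so it is a direct specialization of \thmref{properness criteria thm}. Your unwinding of that theorem (Proposition \ref{properness criteria} for $J$-coercivity, triviality of $\mathrm{Aut}_0((X,L);D)$ licensing \thmref{cone properness conjecture}, and item (\ref{log K stability}) of \thmref{log K} for uniform log $K$-stability) matches the paper's logic exactly.
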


\begin{rem}In the special situation for $D$ staying in some multiple of $-K_X$ i.e.~the K\"ahler--Einstein cone metric, similar results are shown in \cite{MR3403730}.
\end{rem}

\begin{rem}
The parallel results for twisted cscK metric are given in \cite{MR3564626}.
\end{rem}

%%%%%%%%%%%%%%%%%%%%%%%%%%%%%%%%%%%%%%%%%%%%%%%%%%%%%%%%%%%%%%%%%%%%%%%%%%%%%%%%%%%%%%%%%%%%%%%%%%%%%%%%%%%%%%%%%%%%%%%%%%%%%%%%%%%%%%%%%%%%%%%%%%%%%%%%%%%%%%%%%%%%%%%%
\subsection{Entropy threshold and $\mathcal{J}$-threshold}
We further discuss an application of the condition \eqref{cohomology conditions Kahler}.
\begin{defn}\label{Lambda}
We set constants $\Lambda, \lambda \in \mathbb{R}$ such that
$$\lambda=\sup_C \{C\cdot C_1(L) \leq C_1(X)\},\quad  \Lambda=\inf_C \{C_1(X)\leq C\cdot C_1(L) \}.$$
\end{defn}

We observe that $\lambda$ is exactly the nef threshold of $L$.

Recall the log $K$-energy \eqref{log K energy} is defined to be
\begin{align}
\nu_\beta(\vphi)
=E_\beta(\vphi)
+J_{-\theta}(\vphi)+\frac{1}{V}\int_M (\mathfrak h+h_0)\om_0^n,
\end{align}
where $
\mathfrak h:=-(1-\b)\log |s|_h^2.
$
Following the same argument as Page 2809 in \cite{MR3412393}, we could add and subtract the term $\eta J_\omega$ for some constant $\eta$ in the log $K$-energy  
\begin{align*}
\nu_\beta(\vphi)
=E_\beta(\vphi)-\eta J_{\omega_\theta}(\vphi)
+J_{-\theta}(\vphi)+\eta J_{\omega_\theta}(\vphi)+\frac{1}{V}\int_M (\mathfrak h+h_0)\om_0^n.
\end{align*}

Firstly, the existence of the global minimiser of the functional $$J_{-\theta,\eta}(\vphi):=J_{-\theta}(\vphi)+\eta J_{\omega_\theta}(\vphi)$$ is then given in \cite{MR3412393}.
The goal there is to obtain the coercivity of the $K$-energy, by using the lower bound of the twisted functional $J_{-\theta,\eta}$. These results were generalised to the conical setting in \cite{arXiv:1803.09506}. 
We collect related results as follows.
\begin{prop}\emph{(\cite[Theorem 4.20, Proposition 4.22]{arXiv:1803.09506})} \label{properness criteria reduced}
Suppose that the constant $\eta$ satisfies the following conditions,
\begin{align}\label{reduced conditions}
\left\{
\begin{array}{lcl}
	&(i)& C_1(X,D)<\eta C_1(L),\\
	&(ii)& \ul S_\beta C_1(L)- (n-1)C_1(X,D)<\eta C_1(L).
\end{array}
\right.
\end{align}
Then the functional $J_{-\theta,\eta}(\vphi)$ has lower bound for any $\vphi\in \mathcal H_\beta$.
\end{prop}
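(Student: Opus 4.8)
The plan is to collapse the two--term functional into a single $J_\chi$--functional and then appeal to the solvability theory for the $J$--equation. The starting point is that, directly from Definition \ref{J functional}, both $j_\chi(\vphi)$ and $\ul{\chi}\, D_{\om_0}(\vphi)$ are \emph{linear} in the closed $(1,1)$--form $\chi$; hence
\begin{equation*}
J_{-\theta,\eta}(\vphi)=J_{-\theta}(\vphi)+\eta J_{\om_\theta}(\vphi)=J_{\chi_0}(\vphi),\qquad \chi_0:=\eta\,\om_\theta-\theta,
\end{equation*}
where $\chi_0$ represents the class $\eta\,C_1(L)-C_1(X,D)$, which is K\"ahler by hypothesis (i). I would first observe that this reduction is insensitive to the choice of representative of $[\chi_0]$: if $\chi_0$ is replaced by $\chi_0+i\p\bar\p f$, then $\ul{\chi_0}$ is unchanged and, after integrating by parts and telescoping, $J_{\chi_0}$ changes only by $\frac{1}{V}\int_M f\,(\om_\vphi^n-\om_0^n)$, which is bounded by $2\Vert f\Vert_{C^0}$ uniformly in $\vphi$. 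Thus, for the purpose of bounding $J_{\chi_0}$ from below, we may assume that $\chi_0$ is an honest K\"ahler form.

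The next step is to recognise hypothesis (ii) as the positivity condition that makes $J_{\chi_0}$ bounded below. Using $C_1(X,D)=\eta\,C_1(L)-[\chi_0]$ together with the identity $\ul{\theta}=\ul S_\beta$ coming from $[\theta]=C_1(X,D)$, a direct rearrangement shows that (ii) is \emph{equivalent} to the statement that the class
\begin{equation*}
\ul{\chi_0}\, C_1(L)-(n-1)[\chi_0]
\end{equation*}
is K\"ahler. This is precisely the Song--Weinkove cohomological condition for the $J$--equation $n\,\chi_0\wedge\om_\vphi^{n-1}=\ul{\chi_0}\,\om_\vphi^{n}$ — the Euler--Lagrange equation of $J_{\chi_0}$ by \eqref{derivatives of J chi} — to be solvable, and solvability is in turn equivalent to $J_{\chi_0}$ being bounded below (indeed proper modulo $J^A_{\om_0}$). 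This yields the claimed lower bound, and is the form in which the bound enters the coercivity estimate for $\nu_\beta$.

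The one genuinely delicate point, and the main obstacle, is that the assertion is made on $\mathcal H_\beta$, whereas the $J$--equation theory is classically phrased for globally smooth potentials. I would handle this exactly by the approximation recalled in Section \ref{Log energy functionals}: every $\vphi\in\mathcal H_\beta$ lies in $\mathcal E^1$, smooth potentials are $d_1$--dense, and $J_{\chi_0}$, $J^A_{\om_0}$ extend $d_1$--continuously to $\mathcal E^1$, so the cohomological lower bound passes from $\mathcal H$ to all of $\mathcal H_\beta$. The real work is therefore establishing the $J$--equation estimates in the conic (finite--energy) category rather than the smooth one; this is the conic refinement of the Song--Weinkove and Collins--Sz\'ekelyhidi arguments, with every remaining step being the linear reduction above and routine bookkeeping of the normalisation constants.
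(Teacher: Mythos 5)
You should first be aware that the paper itself gives no proof of this proposition: it is imported wholesale from \cite{arXiv:1803.09506} (Theorem 4.20 and Proposition 4.22), and the surrounding text explains that the mechanism there is the existence of a global minimiser of $J_{-\theta,\eta}$, i.e.\ the conic generalisation of the smooth Song--Weinkove-type argument of \cite{MR3412393}. Your proposal reconstructs exactly that mechanism, and its two computational pillars are correct: $J_\chi$ is linear in $\chi$, so $J_{-\theta,\eta}=J_{\chi_0}$ with $\chi_0=\eta\om_\theta-\theta$ and $[\chi_0]=\eta C_1(L)-C_1(X,D)$, K\"ahler by (i); and since $\ul{\chi_0}=n\eta-\ul S_\beta$, condition (ii) is indeed equivalent to $\ul{\chi_0}\,C_1(L)-(n-1)[\chi_0]$ being K\"ahler. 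Where you genuinely differ from the cited proof is the conic step: rather than redoing the $J$-equation analysis with conic data, you replace $\chi_0$ by a smooth K\"ahler representative at a bounded cost and extend the smooth-category lower bound to $\mathcal H_\beta$ by $d_1$-density. That route is viable and lighter, since $\mathcal E^1$ is the $d_1$-completion of both $\mathcal H$ and $\mathcal H_\beta$ (Section \ref{Log energy functionals}) and $J_{\chi'}$ is $d_1$-continuous for smooth K\"ahler $\chi'$ (e.g.\ \cite{MR3687111}); the direct conic approach of the source buys, in exchange, an actual conic minimiser.

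Three points need tightening. First, your bounded-error step silently requires the function $f$ (essentially $\eta\vphi_\theta$, where $\om_\theta=\om_0+i\p\bar\p\vphi_\theta$) to lie in $C^0$; this is true because cone potentials are bounded, by Ko\l{}odziej-type estimates applied to \eqref{Rictheta pde}, but without saying so the reduction to a smooth $\chi_0$ is vacuous. Second, Song--Weinkove's criterion is a pointwise condition on representatives, not a cohomological one; the correct passage is that (i) provides a K\"ahler $\chi'\in[\chi_0]$ and (ii) a K\"ahler $\Theta\in \ul{\chi_0}C_1(L)-(n-1)[\chi_0]$, whence $\om':=\ul{\chi_0}^{-1}\left(\Theta+(n-1)\chi'\right)$ is a K\"ahler form in $C_1(L)$ with $\ul{\chi_0}\om'-(n-1)\chi'=\Theta>0$ pointwise (note $\ul{\chi_0}>0$ by pairing (i) with $C_1(L)^{n-1}$); also, only the implication ``solvable $\Rightarrow$ bounded below'' is needed and true, whereas the equivalence you assert is an overstatement. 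Third, your closing sentence, deferring ``the real work'' to a conic refinement of Song--Weinkove/Collins--Sz\'ekelyhidi, contradicts your own density argument: if the lower bound passes from $\mathcal H$ to $\mathcal E^1\supset\mathcal H_\beta$ by continuity, no conic PDE estimates are needed at all. Delete the hedge and commit to one of the two proofs.
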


Secondly, from \eqref{entropy} and \eqref{Rictheta pde}, we have
\begin{align*}
E_\beta(\vphi)=\frac{1}{V}\int_M\log\frac{\om^n_\vphi}{\om_\theta^n}\om_\vphi^{n}.
\end{align*}
It lower bound is obtained directly by the well-known inequality
\begin{align}\label{E and J}
E_\beta(\vphi)
\geq \frac{n+1}{n}\alpha_\beta J_{\om_\theta}(\vphi)-C, \quad \forall \om_\vphi\in [\om_\theta].
\end{align}This inequality is obtained from the Jensen inequality, the definition of the log alpha invariant, see Lemma 5 in \cite{MR3412393}.

Considering the optimal constant $\eta$ brings us to the following definition of threshold of both the entropy $E_{\beta}$ and the $J$-functional.
\begin{defn}We set the entropy threshold
\begin{align}\label{defn e}
e:=\sup_{ \eta}\{ \exists C\text{ such that } E_\beta\geq \eta \cdot J_{\om_\theta}-C, \forall \om_\vphi\in [\om_\theta]\}
\end{align}and the $\mathcal J$-threshold to be 
\begin{align}
\mathcal J=\inf_{ \eta}\{\exists C\text{ such that } J_{-\theta}+\eta\cdot J_{\omega_\theta}\geq -C, \forall \om_\vphi\in [\om_\theta]\}.
\end{align}
Here, we have $$J_{\om_\theta}=I_{\om_\theta}^A-J_{\om_\theta}^A.$$
\end{defn}
\begin{rem}
We could use $\om_0$ instead of $\om_\theta$ is the definition above.
Note that $J_{\om_\theta}$ and $J_{\om_0}$ only differ by  a constant depending on $\|\vphi_\theta\|_\infty$. The relation between these different $J$ functionals is $J_{\om_0}=I_{\om_0}^A-J_{\om_0}^A$.
\end{rem}
\begin{rem}
Note that both $E$ and $J$ only depend on the K\"ahler cone metric $\om_\vphi$, not $\vphi$. Precisely, letting $C$ be any constant, we have
$$E(\vphi+C)=E(\vphi),\quad J_{\chi}(\vphi+C)=J_{\chi}(\vphi).$$
Here $\chi$ is a closed form. In contrast, 
$$D_{\om_\theta}(\vphi+C)=D_{\om_\theta}(\vphi)+C.$$
\end{rem}
Moreover, it is direct to see from \eqref{E and J} that
\begin{lem}
The lower bound of $e$ is
$
e\geq \frac{n+1}{n}\alpha_\beta.
$
\end{lem}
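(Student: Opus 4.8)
The plan is to observe that the claimed inequality is an immediate consequence of \eqref{E and J}, which has just been established. Recall that by the definition \eqref{defn e}, the threshold $e$ is the supremum of all those $\eta$ for which there exists a constant $C$ (independent of $\vphi$) such that $E_\beta(\vphi) \geq \eta \cdot J_{\om_\theta}(\vphi) - C$ holds for every $\om_\vphi \in [\om_\theta]$. The inequality \eqref{E and J} asserts exactly that the value $\eta = \frac{n+1}{n}\alpha_\beta$ belongs to this admissible set, with a suitable constant $C$. First I would therefore confirm that the constant appearing in \eqref{E and J} is genuinely uniform in $\vphi$ (which it is, since it arises from the log alpha invariant bound via Jensen's inequality and does not involve $\vphi$), and then conclude $e \geq \frac{n+1}{n}\alpha_\beta$ simply by taking the supremum over all admissible $\eta$.

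There is essentially no obstacle here: the statement merely records that one particular admissible exponent, namely $\frac{n+1}{n}\alpha_\beta$, furnishes a lower bound for the supremum defining $e$. The only conceptual ingredient is \eqref{E and J} itself, which in turn rests on Jensen's inequality together with the definition of the log alpha invariant $\alpha_\beta$ (as in the cited Lemma~5 of \cite{MR3412393}); since that inequality is already available in the excerpt, the proof reduces to a one-line appeal to the definition of the supremum. Accordingly, I would present it as a direct corollary rather than an independent argument.
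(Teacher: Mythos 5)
Your proposal is correct and matches the paper exactly: the paper itself offers no separate argument, stating only that the lemma ``is direct to see from \eqref{E and J}'', which is precisely your observation that $\eta = \frac{n+1}{n}\alpha_\beta$ is an admissible exponent in the supremum \eqref{defn e} (with the constant $C$ in \eqref{E and J} uniform in $\vphi$), so the bound follows by definition of the supremum.
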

Also,
\begin{lem}\label{e and j}
If $e>\mathcal J$, then the log $K$-energy is $J$-coercive.
\end{lem}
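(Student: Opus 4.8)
The plan is to exploit the additive decomposition of the log $K$-energy recorded above, namely
\[
\nu_\beta(\vphi) = E_\beta(\vphi) + J_{-\theta}(\vphi) + \frac{1}{V}\int_M(\mathfrak h + h_0)\om_0^n,
\]
whose final term is a constant independent of $\vphi$. Since $e$ controls a lower bound for $E_\beta$ in terms of $J_{\om_\theta}$, and $\mathcal J$ controls a lower bound for $J_{-\theta}$ in the same terms, the hypothesis $e > \mathcal J$ should leave a strictly positive multiple of $J_{\om_\theta}$ after combining the two one-sided bounds. First I would verify that the sets defining $e$ and $\mathcal J$ in \eqref{defn e} and in the definition of $\mathcal J$ are, respectively, downward- and upward-closed; this rests only on the nonnegativity $J_{\om_\theta} \geq 0$, since for $\eta' \leq \eta$ one has $\eta J_{\om_\theta} - C \geq \eta' J_{\om_\theta} - C$, and symmetrically on the $\mathcal J$-side. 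Consequently, for every $\eta$ strictly below $e$ (resp.\ strictly above $\mathcal J$) the corresponding inequality holds with some finite constant.

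Next I would use the gap $e > \mathcal J$ to pick real numbers $\eta_1, \eta_2$ with $\mathcal J < \eta_2 < \eta_1 < e$. By the definition of $e$ there is a constant $C_1$ with $E_\beta(\vphi) \geq \eta_1 J_{\om_\theta}(\vphi) - C_1$ for all $\om_\vphi \in [\om_\theta]$, and by the definition of $\mathcal J$ there is a constant $C_2$ with $J_{-\theta}(\vphi) + \eta_2 J_{\om_\theta}(\vphi) \geq -C_2$, that is, $J_{-\theta}(\vphi) \geq -\eta_2 J_{\om_\theta}(\vphi) - C_2$. Adding these two inequalities and absorbing the constant term of $\nu_\beta$ into a single constant $C$ yields
\[
\nu_\beta(\vphi) \geq (\eta_1 - \eta_2)\, J_{\om_\theta}(\vphi) - C,
\]
with $\eta_1 - \eta_2 > 0$. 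Since $J_{\om_\theta} = I^A_{\om_\theta} - J^A_{\om_\theta}$ is equivalent, up to a dimensional multiplicative constant and an additive constant, to Aubin's $J^A_{\om_0}$ (as recorded in the remark following the definition of the thresholds), this is precisely the $J$-coercivity of $\nu_\beta$ in the sense of \eqref{J properness}.

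I do not anticipate a genuine analytic obstacle here, since the argument is essentially a two-parameter bookkeeping of the two one-sided estimates. The one point needing care is the monotonicity claim of the first paragraph: one must confirm that membership of a given $\eta$ in the defining set of $e$ (resp.\ $\mathcal J$) propagates to all smaller (resp.\ larger) $\eta$, which is exactly where the nonnegativity of $J_{\om_\theta}$ enters, and hence that the defining bounds hold for $\eta$ strictly inside the threshold rather than only at the supremum or infimum itself. Granting this, the conclusion follows by the elementary addition of the two inequalities.
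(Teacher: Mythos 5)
Your proof is correct and is exactly the argument the paper intends: the lemma is stated without an explicit proof, following the displayed decomposition of $\nu_\beta$ obtained by adding and subtracting $\eta J_{\om_\theta}$, and your two-parameter choice $\mathcal J<\eta_2<\eta_1<e$ together with the monotonicity of the threshold sets (via $J_{\om_\theta}\ge 0$) is precisely the bookkeeping that makes it rigorous. The final identification of the bound in terms of $J_{\om_\theta}=I^A_{\om_\theta}-J^A_{\om_\theta}$ with the $J$-coercivity of \eqref{J properness} matches the paper's own remark comparing these functionals, so nothing is missing.
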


In conclusion, we have a corollary of Proposition \ref{properness criteria reduced}.
\begin{prop}
Assume that 
\begin{align*}
e>\max\{\Lambda,\ul S_\beta-(n-1)\lambda\},
\end{align*} then the log $K$-energy is $J$-coercive, moreover, $C_1(L)$ admits a cscK cone metric and is uniformly log $K$-stable.
\end{prop}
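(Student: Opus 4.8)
The plan is to deduce the inequality $e>\mathcal J$ from the hypothesis and then feed it into \lemref{e and j}. Concretely, I would prove the bound
\[
\mathcal J \le \max\{\Lambda,\ \ul S_\beta-(n-1)\lambda\},
\]
which together with $e>\max\{\Lambda,\ \ul S_\beta-(n-1)\lambda\}$ immediately gives $e>\mathcal J$; \lemref{e and j} then yields the $J$-coercivity \eqref{J properness} of $\nu_\beta$, and the existence and stability statements follow from the existence theory already in place.

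To establish the bound on $\mathcal J$, I would fix any $\eta>\max\{\Lambda,\ \ul S_\beta-(n-1)\lambda\}$ and verify that it satisfies conditions (i) and (ii) of Proposition \ref{properness criteria reduced}. For (i), Definition \ref{Lambda} shows that $\Lambda$ is exactly the threshold past which $\eta C_1(L)-C_1(X,D)$ becomes positive: writing $\eta C_1(L)-C_1(X,D)=(\Lambda C_1(L)-C_1(X,D))+(\eta-\Lambda)C_1(L)$ exhibits it as a nef class plus the K\"ahler class $(\eta-\Lambda)C_1(L)$, hence K\"ahler, which is precisely $C_1(X,D)<\eta C_1(L)$. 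For (ii), rewrite the desired relation $\ul S_\beta C_1(L)-(n-1)C_1(X,D)<\eta C_1(L)$ as $C_1(X,D)-\tfrac{\ul S_\beta-\eta}{n-1}C_1(L)>0$; since $\lambda$ is the nef threshold of $C_1(L)$ against $C_1(X,D)$, the hypothesis $\eta>\ul S_\beta-(n-1)\lambda$ is equivalent to $\tfrac{\ul S_\beta-\eta}{n-1}<\lambda$, and the same nef-plus-K\"ahler decomposition shows this class is positive. Thus both (i) and (ii) hold, so by Proposition \ref{properness criteria reduced} the functional $J_{-\theta}+\eta J_{\om_\theta}$ is bounded below; by the very definition of the $\mathcal J$-threshold this forces $\mathcal J\le\eta$, and letting $\eta$ decrease to $\max\{\Lambda,\ \ul S_\beta-(n-1)\lambda\}$ gives the claimed bound.

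With $e>\mathcal J$ in hand, \lemref{e and j} gives the $J$-coercivity of $\nu_\beta$. To conclude I would run the argument in the proof of \thmref{properness criteria thm}: $J$-coercivity is incompatible with a nontrivial holomorphic vector field preserving $D$, since such a field would generate a cone geodesic $\sigma_t^\ast\om_\theta$ along which $\nu_\beta$ is constant while the $J$-functional is strictly convex, forcing $J^A_{\om_0}$ to be coercive along it and contradicting $J$-coercivity; hence $\mathrm{Aut}_0((X,L);D)$ is trivial, and $J$-coercivity becomes $d_1$-coercivity because $d_1$ is Lipschitz equivalent to $J^A_{\om_0}$. Then \thmref{cone properness conjecture} produces a cscK cone metric in $C_1(L)$, and item \ref{log K stability} of \thmref{log K} gives uniform log $K$-stability. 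I expect the only real work to be the threshold translation in the second paragraph, and within it the careful handling of strict versus non-strict inequalities: $\Lambda$ and $\lambda$ are defined with non-strict (nef) inequalities, so it is essential to take $\eta$ strictly larger than the thresholds so that the nef boundary classes are upgraded to genuinely K\"ahler classes, which is exactly what the decompositions above accomplish.
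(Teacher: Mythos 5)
Your proposal follows essentially the same route as the paper's own proof: the paper likewise bounds the threshold by verifying the reduced conditions \eqref{reduced conditions} for any $\eta>\max\{\Lambda,\ul S_\beta-(n-1)\lambda\}$ (using $C_1(X,D)\le\Lambda C_1(L)$ and $\lambda C_1(L)\le C_1(X,D)$ exactly as you do), deduces $\mathcal J\le\max\{\Lambda,\ul S_\beta-(n-1)\lambda\}<e$, and then concludes via \lemref{e and j} and \thmref{properness criteria thm}. Your write-up simply makes explicit the nef-plus-K\"ahler decompositions, the limiting argument in $\eta$, and the final chain (triviality of $\mathrm{Aut}_0((X,L);D)$, $d_1$-coercivity, \thmref{cone properness conjecture}, and item (\ref{log K stability}) of \thmref{log K}) that the paper compresses into citations.
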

\begin{proof}
Since $C_1(X,D)\leq \Lambda C_1(L)$ and
$$\ul S_\beta C_1(L) -(n-1)C_1(X,D)\leq [\ul S_\beta-(n-1)\lambda] C_1(L),$$
the sufficient conditions of $\eta$ to get \eqref{reduced conditions} is to set  $$ \eta> \Lambda ,\quad \eta>[\ul S_\beta-(n-1)\lambda] .$$
Consequently, the threshold $\mathcal J$, which is defined to be the infimum of such $\eta$, has upper bound
\begin{align*}
\mathcal J\leq \max\{\Lambda,\ul S_\beta-(n-1)\lambda\}.
\end{align*}
Then we have obtained the conclusion from \lemref{e and j} and \thmref{properness criteria thm}.
\end{proof}

%\todo[color=green]{I wrote Proposition 4.11 in BBEGZ in a more explicit way in the next proposition.}\todo{This is great, thank you so much!}

The following result is a log adaption of \cite[Proposition 4.11]{MR3956691}, which suggests that \eqref{defn e} has an alternative formula.
%, see also \cite[Proposition 3.5]{MR4288212}.
\begin{prop}
\begin{align}\label{another e}
e=\sup_{ \eta}\{ \exists C\text{ s.t. }  \| e^{-\vphi}\|_{L^\eta(\om_\theta)}\leq C e^{-D_{\om_\theta}(\vphi)}, \quad \forall \om_\vphi\in [\om_\theta]\}.
\end{align}
Here, $\|\cdot\|_{L^\eta(\om_\theta)}=(V^{-1}\int_M|\cdot |^\eta\om_\theta^n)^{\frac{1}{\eta}}$.
\end{prop}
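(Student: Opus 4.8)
The plan is to prove \eqref{another e} by showing that the two suprema coincide \emph{set-wise}: for each fixed $\eta>0$ the entropy bound $E_\beta\ge\eta\, J_{\om_\theta}-C$ holds uniformly in $\vphi$ if and only if the norm bound $\|e^{-\vphi}\|_{L^\eta(\om_\theta)}\le C'e^{-D_{\om_\theta}(\vphi)}$ holds uniformly in $\vphi$. Equality of the two sets of admissible exponents $\eta$ then forces equality of their suprema. First I would record two reformulations. Writing $\mu_\theta:=\om_\theta^n/V$ and $\mu_\vphi:=\om_\vphi^n/V$, both probability measures, the identity $E_\beta(\vphi)=\frac1V\int_M\log\frac{\om^n_\vphi}{\om_\theta^n}\om_\vphi^{n}$ (following from \eqref{Rictheta pde}) says that $E_\beta(\vphi)$ is exactly the relative entropy $\mathrm{Ent}(\mu_\vphi\,|\,\mu_\theta)$. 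From $J_{\om_\theta}=I^A_{\om_\theta}-J^A_{\om_\theta}$ together with \eqref{defmaeng} and \eqref{defaubjfc} one obtains the algebraic identity
\begin{equation*}
J_{\om_\theta}(\vphi)=D_{\om_\theta}(\vphi)-\frac1V\int_M\vphi\,\om_\vphi^n ,
\end{equation*}
while, after taking logarithms, the $L^\eta$-bound becomes the statement that $D_{\om_\theta}(\vphi)+\tfrac1\eta\log\!\big(\tfrac1V\int_M e^{-\eta\vphi}\om_\theta^n\big)$ is bounded above uniformly in $\vphi$.

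The engine of the argument is the Gibbs (Legendre) duality for entropy applied to the probability measure $\mu_\theta$ and $f=-\eta\vphi$, namely $\log\int_M e^{-\eta\vphi}d\mu_\theta=\sup_{\nu}\big(-\eta\int\vphi\,d\nu-\mathrm{Ent}(\nu\,|\,\mu_\theta)\big)$, the supremum being attained at the measure proportional to $e^{-\eta\vphi}\mu_\theta$. For the easy inclusion (norm set $\subseteq$ entropy set) I would simply insert $\nu=\mu_\vphi$ into the inequality and use the two identities above, obtaining
\begin{equation*}
E_\beta(\vphi)-\eta J_{\om_\theta}(\vphi)\ \ge\ -\eta\Big(D_{\om_\theta}(\vphi)+\tfrac1\eta\log\big(\tfrac1V\int_M e^{-\eta\vphi}\om_\theta^n\big)\Big),
\end{equation*}
so that a uniform upper bound on the bracket (the norm condition) instantly yields a uniform lower bound for $E_\beta-\eta J_{\om_\theta}$ (the entropy condition). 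This gives that the right-hand side of \eqref{another e} is $\le e$.

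The reverse inclusion is the substantive part. Given $\vphi$, I would solve the complex Monge--Amp\`ere equation $\om_\psi^n=Z^{-1}e^{-\eta\vphi}\om_\theta^n$ with $Z:=\frac1V\int_M e^{-\eta\vphi}\om_\theta^n$, which realizes the optimizer in the Gibbs principle as $\mu_\psi$ for some $\psi\in\mathcal E^1$; existence and the needed regularity come from the finite-energy pluripotential theory already used in \cite{arXiv:1803.09506}. Equality in Gibbs at $\psi$ reads $\log Z=-\frac\eta V\int_M\vphi\,\om_\psi^n-E_\beta(\psi)$, and feeding in the assumed bound $E_\beta(\psi)\ge\eta J_{\om_\theta}(\psi)-C$ together with the identity for $J_{\om_\theta}(\psi)$ produces
\begin{equation*}
D_{\om_\theta}(\vphi)+\tfrac1\eta\log Z\ \le\ \Big(D_{\om_\theta}(\vphi)-D_{\om_\theta}(\psi)-\tfrac1V\int_M(\vphi-\psi)\,\om_\psi^n\Big)+\tfrac C\eta .
\end{equation*}
The bracket is $\le 0$ because $D_{\om_\theta}$ is concave along the affine segment $t\mapsto\psi+t(\vphi-\psi)$ and $\frac1V\int_M(\vphi-\psi)\om_\psi^n$ is precisely its derivative at $\psi$; hence the norm condition holds with $\log C'=C/\eta$, giving $e\le$ the right-hand side of \eqref{another e}.

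The main obstacle is precisely this reverse direction: it rests on solvability of the Monge--Amp\`ere equation in the finite-energy class and on attainment of equality in the Gibbs principle there, which requires $Z<\infty$ and $\psi\in\mathcal E^1$ with the energy and entropy functionals behaving well under approximation. For $\vphi\in\mathcal H_\beta$ this is automatic, since such potentials are bounded; the general statement over $\mathcal E^1$ should then follow by approximating $\vphi$ by decreasing sequences in $\mathcal H_\beta$ and passing to the limit, using continuity of $D_{\om_\theta}$ and $J_{\om_\theta}$ and lower semicontinuity of $E_\beta$. I expect the careful control of these limits, rather than any genuinely new idea, to be the technical crux, exactly as in the non-conic model \cite[Proposition 4.11]{MR3956691}.
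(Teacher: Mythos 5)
Your proposal is correct and follows essentially the same route as the paper: the easy direction is Jensen's inequality (your Gibbs inequality evaluated at $\nu=\mu_\vphi$ is the identical estimate), and for the hard direction you solve exactly the same auxiliary complex Monge--Amp\`ere equation $\om_\psi^n=Z^{-1}e^{-\eta\vphi}\om_\theta^n$ that the paper uses, apply the entropy bound to its solution, and finish with the same inequality $D_{\om_\theta}(\vphi)\le D_{\om_\theta}(\psi)+\frac{1}{V}\int_M(\vphi-\psi)\,\om_\psi^n$. The only cosmetic difference is that you justify this last step by concavity of $D_{\om_\theta}$ along affine segments, whereas the paper phrases it as the functional $B(u)=J_{\om_\theta}(u)+\frac{1}{V}\int_M\vphi\,\om_u^n$ attaining its minimum at $u=\vphi$; these are the same fact (and your formulation is, if anything, the cleaner justification of the paper's minimality claim).
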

\begin{proof}
From \eqref{another e}, we have 
$
\int_M e^{-\eta[\vphi-D_{\om_\theta}(\vphi)]-\log\frac{\om_\vphi^n}{\om^n_\theta}}\om_\theta^n\leq C.
$
Then Jensen's inequality gives
$
E_\beta(\vphi)+C\geq \frac{\eta}{V}\int_M [-\vphi+ D_{\om_\theta}(\vphi)]\om_\vphi^n
=\eta  J_{\om_\theta}(\vphi),
$ that is \eqref{defn e}. 

For the other direction from \eqref{defn e} to \eqref{another e}, we need to show 
\begin{align}\label{phi inequality}
V^{-1}\int_M e^{-\eta\vphi}\om_\theta^n\leq Ce^{-\eta D_{\om_\theta}(\vphi)}.
\end{align}
We first find an auxiliary function $u$ as suggested in \cite[Lemma 2.11]{MR3956691}. Let us  set $\oint=V^{-1}\int$ and consider the following functional 
\begin{align}\label{defn A}
A(u):=E_\beta(u)+\eta \oint_M\vphi \om^n_u=\oint_M[\log\frac{\om_u^n}{\om_\theta^n}+\eta\vphi ]\om^n_u.
\end{align}
Its first variation is
\begin{align*}
\delta A(u,\delta u):=\oint_M[\log\frac{\om_u^n}{\om_\theta^n}+\eta\vphi  ]\tri_u(\delta u)\om^n_u
\end{align*} and the critical point equation is solvable
\begin{align}\label{auxiliary phi}
\log\frac{\om_u^n}{\om_\theta^n}+\eta\vphi = C_0\text{ with }V=\int_X e^{C_0-\eta\vphi}\om_\theta^n.
\end{align} Moreover, the critical point $\om_{v}$ is a K\"ahler cone metric and has geometric polyhomogeneity, c.f.~\cite{MR3761174,MR3911741}.
That enables us to see that the critical point is a minimiser, i.e.
\begin{align*}
\delta^2 A(v,\delta u):=\oint_M|\tri_{v}(\delta u)|^2\om^n_{v}\geq0.
\end{align*} 

Then we make use of the equations \eqref{auxiliary phi} and \eqref{defn A} of the auxiliary function $\phi$ to see,
\begin{align*}
L.H.S. :=\oint_M e^{-\eta\vphi}\om_\theta^n
\overset{\eqref{auxiliary phi}}{=}  e^{-C_0}
\overset{\eqref{defn A}}{=}e^{-A(v)} .
\end{align*}

We next set the other functional
\begin{align*}
B(u):=J_{\om_\theta}(u)+\oint_M \vphi\om_u^n
\end{align*}
and apply the assumption $E_\beta(v)\geq \eta \cdot J_{\om_\theta}(v)-C$ in \eqref{defn e}, to get 
\begin{align*}
L.H.S. \leq C e^{-\eta B(v)}.
\end{align*}
Recall that $D_{\om_\theta}(\vphi)= J_{\om_\theta}(\vphi)+\oint_M \vphi\om_\vphi^n$. It remains to check that \begin{align*}
B(v)\geq B(\vphi)=D_{\om_\theta}(\vphi).
\end{align*}

At last, we check this inequality by direct computation
\begin{align*}
\delta B(u,\delta u)=n \oint_M \delta u(\om_\vphi- \om_u)\wedge \om_u^{n-1}.
\end{align*} So the critical point is $\om_u=\om_\vphi$. Moreover, it is a minimiser, since
\begin{align*}
\delta^2 B(\vphi,\delta u)= \oint_M| \p (\delta u) |_\vphi^2 \om_\vphi^{n}\geq 0.
\end{align*} Therefore, we have \begin{align*}
L.H.S. \leq C e^{-\eta B(\vphi)}
\end{align*} and \eqref{phi inequality} is proved.
\end{proof}
%%%%%%%%%%%%%%%%%%%%%%%%%%%%%%%%%%%%%%%%%%%%%%%%%%%%%%%%%%%%%%%%%%%%%%%%%%%%%%%%%%%%%%%%%%%%%%%%%%%%%%%%%%%%%%%%%%%%%%%%%%%%%%%%%%%%%%%%%%%%%%%%%%%%%%%%%%%%%%%%%%%%%%%%
\subsection{Small angle solution for cscK cone path}
In this section, we choose $\beta$ sufficiently small to prove existence of cscK cone metric with the small angle $\beta$. This is actually an update of \thmref{thmalphainv} in a quantitative way.

%\begin{defn}\label{Lambda}
%We set constants $\Lambda \ge 0$ and $\lambda \in \mathbb{R}$ such that
%$$\lambda \Om\leq C_1(X)\leq \Lambda \Om.$$
%\end{defn}
%\todo[color=green,inline]{I though about the following definitions of $\lambda$ and $\Lambda$ to avoid confusion and make them well defined up to smaller (lager) constant..., $\Lambda$ could be negative too.}\todo{I see. Got it, thank you.}

\begin{defn}\label{beta u}Let $((X,L);D,m)$ be a standard polarised pair. 
We define the critical cone angle to be the invariant
\begin{align*}
\beta_u((X,L);D,m)= \min \left\{ 1,\frac{n+1}{n}\frac{\min\{ \alpha(L), m\alpha(L_D\vert_D)\} }{m}+1-\frac{\ul S_1}{mn} \right\}.
\end{align*}
\end{defn}
\begin{thm} \label{small angle existence}
Let $((X,L);D,m)$ be a standard polarised pair.
Suppose one of the following conditions holds,
\begin{enumerate}
\item
fix a sufficiently large $m$ such that 
\begin{align}\label{large m}
\ul S_1<mn+(n-1)\lambda,\quad \Lambda<m;
\end{align} 

\item $m \in \mathbb{N}$ is an integer which satisfies
\begin{align}\label{given m}
\ul S_1\leq mn,\quad \Lambda\leq \frac{\ul S_1}{n}\leq  \lambda+m (1-  \beta_u  ).
\end{align} 
\end{enumerate}

Then $((X,L);D,m)$ admits a cscK cone metric of cone angle $2 \pi \beta$ along $D$ for any $\beta$ in the following range for each of two cases above:
\begin{enumerate}
\item 
in case 1, $\beta$ can take any value in the range
\begin{align}\label{case 1 angle}
0<\beta\leq \min \left\{ 1,1-\frac{\Lambda}{m},1-\frac{\ul S_1-(n-1)\lambda}{mn} \right\};
\end{align}
\item
in case 2, the range is
\begin{align}\label{case 2 angle}
0<\beta\leq \beta_u.
\end{align}
\end{enumerate}
\end{thm}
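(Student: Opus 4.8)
The plan is to deduce both existence statements from the properness criterion of Proposition \ref{properness criteria}: once I verify that, for every $\beta$ in the asserted range, the cohomological conditions \eqref{cohomology conditions Kahler} hold for some admissible $\eta$, the log $K$-energy becomes $J$-coercive, and \thmref{properness criteria thm} then supplies a cscK cone metric together with the triviality of $\mathrm{Aut}_0((X,L);D)$ and uniform log $K$-stability (so the bare existence assertion is the weakest of the conclusions one actually obtains). Thus the whole proof reduces to cohomological bookkeeping, and the two cases differ only in how the admissible window for $\eta$ is located.

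First I would translate \eqref{cohomology conditions Kahler} into affine inequalities in the single quantity $m(1-\beta)$. By \lemref{lmlgmprp preliminary} we have $C_1(X,D)=C_1(X)-m(1-\beta)C_1(L)$ and $\ul S_\beta=\ul S_1-mn(1-\beta)$, while Definition \ref{Lambda} sandwiches $C_1(X)$ as $\lambda\,C_1(L)\le C_1(X)\le\Lambda\,C_1(L)$. Substituting these into conditions (ii) and (iii) turns them into scalar inequalities among $\eta,\beta,m,\ul S_1,\lambda,\Lambda$, while the positivity needed in (i) follows from the lower bound \eqref{log alpha invariant lower bound}, $\alpha_\beta\ge\min\{m\beta,\alpha(L),m\alpha(L_D|_D)\}$, together with Tian's positivity $\alpha(L)>0$ and $\alpha(L_D|_D)>0$. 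It is convenient to abbreviate $\alpha_*:=\min\{\alpha(L),m\alpha(L_D|_D)\}$, since for $\beta$ not too small one has $m\beta\ge\alpha_*$ and hence $\alpha_\beta\ge\alpha_*$.

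For case 1 I would argue that the hypotheses $\Lambda<m$ and $\ul S_1<mn+(n-1)\lambda$ make the prescribed interval nonempty, and that its two nontrivial endpoints are exactly the $\beta$ at which $C_1(X,D)$ ceases to be non-positive (controlling (ii)) and at which $\ul S_\beta-(n-1)\lambda$ changes sign (controlling (iii)), with (i) automatic because $\alpha_\beta>0$. To reach the sharp endpoint $1-(\ul S_1-(n-1)\lambda)/(mn)$ rather than the cruder value produced by the naive choice $\eta=0$, I would run this through the entropy/$\mathcal J$-threshold reformulation (the Proposition immediately preceding this theorem), using $e\ge\frac{n+1}{n}\alpha_\beta$ and $\mathcal J\le\max\{\Lambda,\ul S_\beta-(n-1)\lambda\}$. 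For case 2 the critical angle $\beta_u$ of Definition \ref{beta u} is pinned down by the identity $\ul S_{\beta_u}=(n+1)\alpha_*$, so that $\beta\le\beta_u$ is precisely the regime $\ul S_\beta\le(n+1)\alpha_\beta$; this is exactly the inequality that \eqref{cohomology conditions Kahler} forces, since combining (ii) and (iii) yields $\ul S_\beta<n\eta<(n+1)\alpha_\beta$. To then fit an admissible $\eta$ into the window $(\ul S_\beta/n,\frac{n+1}{n}\alpha_\beta)$ while keeping (ii) and (iii) valid, I would invoke the side hypotheses $\ul S_1\le mn$ and $\Lambda\le\ul S_1/n\le\lambda+m(1-\beta_u)$, which are tailored so that the threshold constraints $\eta>\Lambda-m(1-\beta)$ and $\eta>\ul S_1-(n-1)\lambda-m(1-\beta)$ remain below $\frac{n+1}{n}\alpha_\beta$ throughout $0<\beta\le\beta_u$.

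The main obstacle is precisely this matching of constants: the naive substitution $\eta=0$ delivers only the weaker thresholds with $1/m$ in place of $1/(mn)$, so the sharp ranges genuinely require the entropy threshold $e$ and the $\mathcal J$-threshold, together with careful use of the $\beta$-dependence in \eqref{log alpha invariant lower bound}. I would pay particular attention to the boundary case $\beta=\beta_u$, where $\ul S_{\beta_u}=(n+1)\alpha_*$ forces the $\eta$-window to degenerate to a point, and confirm that $\alpha_{\beta_u}\ge\alpha_*$ (a consequence of $\ul S_1\le mn$) still leaves room for an admissible $\eta$. Everything else — the linearity from \lemref{lmlgmprp preliminary}, the sandwiching by the nef thresholds, and the final appeal to \thmref{cone properness conjecture} and \thmref{properness criteria thm} — is routine.
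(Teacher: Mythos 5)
Your global strategy (verify \eqref{cohomology conditions Kahler}, then invoke Proposition \ref{properness criteria} and \thmref{properness criteria thm}) is exactly the paper's, and your case 2 is essentially the paper's argument: it takes $\eta=\ul S_\beta/n+\eps$, splits according to whether $m\beta\geq\tilde\alpha:=\min\{\alpha(L),m\alpha(L_D\vert_D)\}$ in \eqref{log alpha invariant lower bound}, and closes with \eqref{given m}. Where you diverge is case 1, and the divergence is instructive: the paper never uses the entropy/$\mathcal J$-threshold machinery at all. It simply takes $\eta=0$ and reads the range \eqref{case 1 angle} off its expansion \eqref{eqevcdiii}, namely $\ul S_\beta C_1(L)-(n-1)C_1(X,D)=[\ul S_1-mn(1-\beta)]C_1(L)-(n-1)C_1(X)$. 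Your contrary computation is in fact the correct one: expanding $-(n-1)C_1(X,D)$ produces the extra term $+(n-1)m(1-\beta)C_1(L)$, which \eqref{eqevcdiii} drops, so the correct coefficient is $\ul S_1-m(1-\beta)$, not $\ul S_1-mn(1-\beta)$ (test on $(\mathbb P^n,\mathcal O(1))$: the class equals $[(n+1)-m(1-\beta)]C_1(L)$). Consequently the honest $\eta=0$ range is $0<\beta<\min\{1,\,1-\Lambda/m,\,1-(\ul S_1-(n-1)\lambda)/m\}$, exactly as you observed; the $1/(mn)$ endpoint in the statement is an artefact of this slip, not something the naive substitution misses.

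However, your proposed repair contains a genuine gap and cannot rescue the stated endpoint. The threshold proposition you invoke requires $e>\max\{\Lambda,\ul S_\beta-(n-1)\lambda\}$, and your only lower bound is $e\geq\frac{n+1}{n}\alpha_\beta$; hypothesis \eqref{large m} bounds $\Lambda$ by $m$, not by the alpha invariant, so $e>\Lambda$ is unverifiable (nothing prevents $\Lambda$ from being large while $\alpha_\beta$ is tiny). Moreover the bound $\mathcal J\leq\max\{\Lambda,\ul S_\beta-(n-1)\lambda\}$ rests on the same slipped expansion: done correctly via Proposition \ref{properness criteria reduced}, one gets $\mathcal J\leq\max\{\Lambda-m(1-\beta),\,\ul S_1-m(1-\beta)-(n-1)\lambda\}$, which again produces only $1/m$-type constraints, never $1/(mn)$. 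The same issue undermines your case 2 at the endpoint: your (correct) constraint $\eta>\ul S_1-(n-1)\lambda-m(1-\beta)$ sits above $\ul S_\beta/n$ by $(n-1)(\ul S_1/n-\lambda)\geq 0$ (note \eqref{given m} forces $\lambda\leq\Lambda\leq\ul S_1/n$), and at $\beta=\beta_u<1$ the required window below $\frac{n+1}{n}\alpha_\beta$ is nonempty, given the only available bound $\alpha_\beta\geq\tilde\alpha$, only if $\ul S_1/n<\lambda$, which is incompatible with $\lambda\leq\Lambda\leq\ul S_1/n$ except in the degenerate case $\lambda=\Lambda=\ul S_1/n$ (e.g.\ Fano with $L=-K_X$). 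In short: your algebra is right and the paper's is wrong, but neither your entropy-threshold detour nor the paper's direct substitution proves the theorem as stated; what this method genuinely yields is the theorem with $1/m$ in place of $1/(mn)$ in \eqref{case 1 angle}, with hypotheses adjusted accordingly.
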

We comment on the condition \eqref{large m}, before we give the proof.
\begin{rem}\label{constraint}
From the second inequality in \eqref{given m}, we see that $\beta_u$ must be less than or equal to 1. That is the reason why $\beta_u$ is defined in the way as in Definition \ref{beta u}.
We also see that when $X$ is Fano and $L = -K_X$, $\lambda=\Lambda=1$. The assumptions in \eqref{given m} are satisfied automatically.
In general, when the difference $\Lambda-\lambda >0 $ is large, the multiplicity $m$ must be very large and we also need $\beta_u <1$.
\end{rem}

\begin{proof}
It is sufficient to verify the Condition \eqref{cohomology conditions Kahler}. Then, the automorphism group is trivial by Proposition \ref{ppautmori}, and also \thmref{properness criteria thm}, Then the conclusion follows from \thmref{properness criteria thm}.

For case 1, we choose $\eta=0$ in Condition \eqref{cohomology conditions Kahler}, that is we need to show that $\alpha_\beta$ is positive, and $C_1(X,D)$ and $\ul S_\beta\cdot C_1(L)- (n-1) C_1 (X,D)$ are both negative.

Making use of the formulas \eqref{eqevcdii} and \eqref{eqevcdiii}, together with the choice of $m$ in \eqref{large m}, we  see that
	\begin{align*}
		C_1(X,D)=C_1(X)- m (1-\beta)  C_1(L) \leq [\Lambda- m (1-\beta) ] C_1(L)
	\end{align*} and
		\begin{align*}
\ul S_\beta\cdot C_1(L) - (n-1) C_1 (X,D)
		&= \left[ \ul S_1-mn (1 - \beta)  \right] C_1(L) - (n-1) C_1(X)\\
		&\leq \left[ \ul S_1-mn (1 - \beta)  - (n-1)\lambda \right] C_1(L) .
	\end{align*}
	are both negative, as long as $\beta$ is sufficiently small and $m$ is sufficiently large. As shown above in the proof of \thmref{thmalphainv}, $\alpha_\beta$ is always positive.

For case 2, we choose $$\eta=\mu+\eps=\frac{\ul S_\beta}{n}+\eps=\frac{\ul S_1}{n}-m(1-\beta)+\eps$$ in Condition \eqref{cohomology conditions Kahler} for some positive constant $\eps$ determined below, where we recall that $\mu$ was defined in \eqref{dfslopemu}. Now we will show that 
\begin{align}\label{3conditions}
&\alpha_\beta>\frac{n}{n+1}\eta, \quad C_1(X,D)< \eta C_1(L), \notag\\
&\ul S_\beta\cdot C_1(L)- (n-1) C_1(X,D)<\eta C_1(L).
\end{align}

Firstly, we use the lower bound \eqref{log alpha invariant lower bound} of the alpha invariant,
$
		\alpha_\beta \geq \min\{ m\beta, \alpha(L), m\alpha(L_D\vert_D) \}.
$ We set $\tilde\alpha=\min\{ \alpha(L), m\alpha(L_D\vert_D)\}$.
The first inequality in \eqref{3conditions} is equivalent to
	\begin{align*}
\tilde\alpha\geq m\beta, \quad
m\beta\geq \frac{n}{n+1}(\mu+\eps)
	\end{align*} or
\begin{align*}
 \tilde\alpha< m\beta, \quad
\tilde\alpha\geq \frac{n}{n+1}(\mu+\eps).
	\end{align*}
In the first situation, once we choose $$n\eps\leq m\beta,$$ the second inequality automatically holds under the assumption that $\ul S_1\leq mn$. So we only need the cone angle satisfies $$\beta\leq \frac{\tilde\alpha}{m}.$$ In the second situation, after inserting the formula of $\mu$ in \eqref{dfslopemu}, we have
$$\frac{\tilde\alpha}{m}<\beta\leq \frac{n+1}{n}\frac{\tilde\alpha}{m}+1-\frac{\ul S_1}{mn}-\frac{\eps}{m}.$$
Putting these two situations together, we have
$$0<\beta\leq \frac{n+1}{n}\frac{\tilde\alpha}{m}+1-\frac{\ul S_1}{mn}-\frac{\eps}{m}.$$

Secondly, we check the last two inequalities in \eqref{3conditions}.
It is sufficient to show
\begin{align*}
 \Lambda- m (1-\beta)< \eta, \quad  \ul S_1-mn (1 - \beta)  - (n-1)\lambda  <\eta.
\end{align*}
Equivalently, 
\begin{align*}
 \Lambda< \frac{\ul S_1}{n}+\eps, \quad \frac{\ul S_1}{n}<\lambda+m(1-\beta)+\frac{\eps}{n-1}.
\end{align*}
According to the second assumption in \eqref{given m}, we see that the first inequity holds and $\frac{\ul S_1}{n}\leq\lambda+m(1-\beta)$ for any $\beta\leq\beta_u$.
So, the second inequality holds for any $\eps>0$. Thus, we complete the proof.

\end{proof}
The first application of this theorem is an answer to Question \ref{compare 2 types of angles}.
\begin{cor}
Let $((X,L);D,m)$ be a standard polarised pair. Under the assumption of \eqref{given m}, we have
\begin{align*}
\beta_{cscKc}\geq \beta_u,\quad \beta_{cscKc}=\overline{\beta_{cscKc}}.
\end{align*}
\end{cor}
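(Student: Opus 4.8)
The plan is to deduce both assertions from \thmref{small angle existence} (its second case) together with the interpolation result Proposition~\ref{existence interpolation}. Throughout write $B:=\beta_{cscKc}$ for brevity. The inequality $\beta_{cscKc}\geq \beta_u$ is immediate: under the hypothesis \eqref{given m}, case 2 of \thmref{small angle existence} produces a cscK cone metric of cone angle $2\pi\beta$ for every $\beta\in(0,\beta_u]$, and since $\beta_u\leq 1$ by Definition~\ref{beta u}, existence at $\beta=\beta_u$ forces $B\geq\beta_u$ directly from Definition~\ref{maximal existence cone}.

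For the equality $\beta_{cscKc}=\overline{\beta_{cscKc}}$, the inequality $\overline{\beta_{cscKc}}\leq \beta_{cscKc}$ holds trivially, since requiring a cscK cone metric on an entire interval $(0,\gamma]$ is stronger than requiring one at a single angle (compare Definitions~\ref{maximal existence cone} and~\ref{maximal cane angle interval}). The substance is the reverse inequality $\overline{\beta_{cscKc}}\geq B$, and this is exactly where the linearity of the log $K$-energy (\thmref{Log K energy linear}), encoded in Proposition~\ref{existence interpolation}, enters. First fix a small reference angle $\beta_0\in(0,\beta_u)$; by \thmref{small angle existence} a cscK cone metric exists at $\beta_0$, and in fact on all of $(0,\beta_0]$.

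Now let $\gamma<B$ be arbitrary. If $\gamma\le\beta_u$, then a cscK cone metric exists for every angle in $(0,\gamma]$ directly by \thmref{small angle existence}. If instead $\gamma>\beta_u$, then since $\gamma$ is not an upper bound for the set of angles admitting a cscK cone metric, there is an angle $\beta_1$ with $\gamma<\beta_1\leq B$ at which a cscK cone metric exists; by \thmref{Lower bound} the log $K$-energy $\nu_{\beta_1}$ is then bounded below. As $\beta_1>\gamma>\beta_u>\beta_0$, Proposition~\ref{existence interpolation} applies to the pair $(\beta_0,\beta_1)$ and yields a cscK cone metric at every $\beta\in[\beta_0,\beta_1)$; combined with existence on $(0,\beta_0]$, this gives existence on all of $(0,\beta_1)\supset(0,\gamma]$. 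In either case a cscK cone metric exists at every angle in $(0,\gamma]$, so $\overline{\beta_{cscKc}}\geq\gamma$. Letting $\gamma\uparrow B$ yields $\overline{\beta_{cscKc}}\geq B$, and hence $\beta_{cscKc}=\overline{\beta_{cscKc}}$.

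The only genuinely delicate point is the passage through angles $\beta_1>\beta_u$, where the cohomological conditions \eqref{cohomology conditions Kahler} used to manufacture small-angle solutions need not hold: here one cannot re-run the alpha-invariant argument and must instead rely on the a priori existence of a solution at $\beta_1$ (supplied only by the definition of the supremum $B$) to feed the boundedness-below hypothesis of Proposition~\ref{existence interpolation}. I expect verifying that the interpolation covers the whole range $(0,B)$ without gaps---rather than any single estimate---to be the main conceptual step; note that the automorphism group is trivial throughout by \thmref{properness criteria thm}, this being a $\beta$-independent property already verified at the small angles where \eqref{cohomology conditions Kahler} holds, so no equivariant subtleties arise.
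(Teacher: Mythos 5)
Your proof is correct and follows essentially the same route as the paper: the inequality $\beta_{cscKc}\geq\beta_u$ is read off directly from case 2 of \thmref{small angle existence}, and the equality $\beta_{cscKc}=\overline{\beta_{cscKc}}$ is obtained by feeding the lower bound of $\nu_{\beta_1}$ (from \thmref{Lower bound}, at angles $\beta_1$ just below the supremum) and small-angle existence into Proposition~\ref{existence interpolation}, exactly as the paper's one-line proof intends. Your write-up simply makes explicit the gap-free covering of $(0,\gamma]$ for every $\gamma<\beta_{cscKc}$, which the paper leaves implicit.
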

\begin{proof}
The first conclusion is direct and the second one follows from Proposition \ref{existence interpolation}.
\end{proof}

As the second application, this theorem also leads to a result for K\"ahler--Einstein cone metric, which was proved in \cite[Corollary 2.19]{MR3248054}.
\begin{cor}
On Fano manifold, if $L=-K_X$, then there exists a K\"ahler--Einstein cone metric, when
$
0<\beta\leq \beta_u(-K_X),
$ for any $m\geq1$.
\end{cor}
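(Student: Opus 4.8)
The plan is to derive the statement as a direct specialisation of \thmref{small angle existence} (its second case) to the anticanonically polarised situation, combined with the classical fact that a cscK cone metric whose class is $C_1(-K_X)$ is automatically K\"ahler--Einstein.

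First I would record the relevant invariants when $L = -K_X$, so that $C_1(L) = C_1(X)$. In this case the nef threshold and its upper companion coincide, $\lambda = \Lambda = 1$, as already observed in Remark \ref{constraint}, while the average scalar curvature is
\begin{equation*}
	\ul S_1 = n \frac{\int_X C_1(X)\, C_1(L)^{n-1}}{\int_X C_1(L)^n} = n .
\end{equation*}

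Next I would check that the hypotheses \eqref{given m} of \thmref{small angle existence} are met for every $m \ge 1$. Substituting $\lambda = \Lambda = 1$ and $\ul S_1 = n$, the first condition $\ul S_1 \le mn$ reads $n \le mn$, which holds exactly because $m \ge 1$, and the chain $\Lambda \le \ul S_1/n \le \lambda + m(1-\beta_u)$ becomes $1 \le 1 \le 1 + m(1-\beta_u)$; its last inequality is equivalent to $\beta_u \le 1$ and therefore holds automatically by Definition \ref{beta u}. Hence \eqref{given m} is satisfied for all $m \ge 1$, and the second case of \thmref{small angle existence} produces a cscK cone metric $\om$ of cone angle $2\pi\beta$ along $D$ for every $0 < \beta \le \beta_u$.

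It then remains to upgrade ``cscK'' to ``K\"ahler--Einstein''. Set $\mu := \ul S_\beta / n = 1 - m(1-\beta)$, using \eqref{averaged cone scalar curvature} together with $\ul S_1 = n$. Since $D \in |mL|$ we have $[\theta] = C_1(X,D) = (1 - m(1-\beta))\,C_1(X) = \mu\,C_1(X) = [\mu\om]$, so on $X \setminus D$ the smooth form $\Ric(\om) - \mu\om$ is $\p\bar\p$-exact, say $\Ric(\om) - \mu\om = i\p\bar\p g$, where $g$ is bounded because it is built from the (continuous) cscK cone potential and the bounded relative volume $F$ of Definition \ref{csckconemetricdefn}. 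Tracing against $\om$ gives $\Delta_\om g = S(\om) - n\mu = \ul S_\beta - \ul S_\beta = 0$, so that $g$ is a bounded $\om$-harmonic function on $X \setminus D$. As the cone locus $D$ has real codimension two and $\om$ lies in the H\"older space $C^{4,\alpha,\beta}$, the usual integration-by-parts (equivalently maximum-principle) argument across $D$ forces $g$ to be constant, whence $\Ric(\om) = \mu\om$ on $X \setminus D$: that is, $\om$ is a K\"ahler--Einstein cone metric, recovering \cite[Corollary 2.19]{MR3248054}. The only genuinely delicate point is this last step, the constancy of the bounded harmonic function across the cone singularity; this, however, is standard for cone metrics of the regularity produced here and is precisely what is carried out in \cite{MR3248054}.
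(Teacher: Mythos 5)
Your proposal is correct, and its core coincides with the paper's own (very short) proof: the paper likewise observes that $L=-K_X$ gives $\lambda=\Lambda=1$ and $\ul S_1=n$, checks the hypotheses of \thmref{small angle existence}, and concludes. Two remarks on the comparison. First, you verify only condition \eqref{given m} and invoke case 2 of \thmref{small angle existence}; this is in fact the cleaner route, since the paper's claim that \eqref{large m} also holds is not quite accurate at $m=1$ (there $\Lambda<m$ would read $1<1$), whereas \eqref{given m} does hold for all $m\geq 1$ exactly as you check, and it alone yields the stated range $0<\beta\leq\beta_u$. Second, and this is where you genuinely diverge: for the upgrade from ``cscK cone'' to ``K\"ahler--Einstein cone'' you give a direct PDE argument — the Ricci potential $g$ with $\Ric(\om)-\mu\om=i\p\bar\p g$ is bounded and $\om$-harmonic on $X\setminus D$, hence constant by the capacity/cutoff argument across the codimension-two cone locus — while the paper handles this point by appealing to the uniqueness theorem for cscK cone metrics (\thmref{uniqueness of cscK cone metric}), identifying the produced metric with the known K\"ahler--Einstein cone metric, as spelled out in the proof of \corref{KE beta uniform} and via the references cited there. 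Your route is more self-contained (it needs no external existence result for conical K\"ahler--Einstein metrics, only the standard removable-singularity fact for bounded harmonic functions across a cone singularity, which indeed holds at the $C^{4,\alpha,\beta}$ regularity available here); the paper's route is shorter but implicitly leans on the conical K\"ahler--Einstein literature to produce the metric with which uniqueness compares. Both are valid, and your Einstein constant $\mu=1-m(1-\beta)$ matches the paper's \eqref{KE cone metric}.
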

\begin{proof}
In this case, we have $\Lambda=\lambda=1$ and $\ul S_1=n$. So, both conditions \eqref{large m} and  \eqref{given m} are satisfied.
\end{proof}

%%%%%%%%%%%%%%%%%%%%%%%%%%%%%%%%%%%%%%%%%%%%%%%%%%%%%%%%%%%%%%%%%%%%%%%%%%%%%%%%%%%%%%%%%%%%%%%%%%%%%%%%%%%%%%%%%%%%%%%%%%%%%%%%%%%%%%%%%%%%%%%%%%%%%%%%%%%%%%%%%%%%%%%%
%%%%%%%%%%%%%%%%%%%%%%%%%%%%%%%%%%%%%%%%%%%%%%%%%%%%%%%%%%%%%%%%%%%%%%%%%%%%%%%%%%%%%%%%%%%%%%%%%%%%%%%%%%%%%%%%%%%%%%%%%%%%%%%%%%%%%%%%%%%%%%%%%%%%%%%%%%%%%%%%%%%%%%%%
\section{Uniform log $K$-stability of singular varieties} \label{scnormvar}
In this section, we will present an algebro-geometric criteria of uniform log $K$-stabilities on a pair of normal variety $(X,\tri)$, which are the counterpart of the differential-geometric results including \thmref{properness criteria thm} in Section \ref {Log stable manifolds}.

The singular cscK metric on the singular pair $(X,\tri)$ was introduced in \cite{arXiv:1803.09506}, which extend the study of singular K\"ahler--Einstein metrics motivated from the minimal model programme. The existence of the singular cscK metric was shown in \cite[Section 4]{arXiv:1803.09506}, and we hope enthusiastic readers will find more resources there.

\subsection{Preliminaries}
We collect some materials concerning the singularities of a projective variety. The reader is referred to \cite[Section 2.3]{MR1658959} for more details. Let $(X,\tri)$ be a pair consisting of an irreducible normal complex projective variety $X$ and an effective $\mathbb R$-Weil divisor $\tri$ such that $K_X + \tri$ is $\mathbb{R}$-Cartier; note that $K_X$ is well-defined as a Weil divisor since $X$ is normal.
\begin{defn} \label{dfdcplgrs}
A \textit{log resolution} $\pi:\tilde X\rightarrow X$ of $(X,\tri)$ gives
\begin{align*}
\pi^\ast(K_X+\tri)=K_{\tilde X}-D,
\end{align*}
in which $\tri=-\pi_\ast D$ and $D:=\sum_{i}a_iE_i$ is an $\mathbb R$-Weil divisor and $\cup_iE_i$ is a simple normal crossing divisor in $\tilde{X}$.
\end{defn}

Recall that $E$ is said to be a \textbf{prime divisor over} $X$ if there exists a normal variety $Y$ and a proper birational morphism $\pi : Y \to X$ such that $E$ is a prime (i.e.~reduced irreducible Weil) divisor on $Y$. Given a divisor $E$ over $X$, we define the following important quantity.

\begin{defn}
Given a prime divisor $E$ over $X$, we define 
\begin{equation*}
	a (E,X, \tri ) := \mathrm{ord}_E (K_Y - \pi^*(K_X + \tri ))
\end{equation*}
where $\pi : Y \to X$ is a proper birational morphism and $E \subset Y$, with $Y$ normal. We call $a (E,X, \tri )$ the  \textbf{discrepancy} of $(X,\tri)$ along $E$.%Let $E$ be a divisor over $X$, where $E_i \subset \tilde{X}$ and $\pi : \tilde{X} \to X$ is any log resolution. Let $a_i\in \mathbb Q$ be as in the definition above, corresponding to $E_i \subset \tilde{X}$. $a_i$ is called the  \textit{discrepancy} of $(X,\tri)$ along $E_i$.
\end{defn}
Note that the divisor $E_i$ which appears in Definition \ref{dfdcplgrs} is a divisor over $X$, and $a_i$ therein is precisely $a (E_i ,X, \tri )$. An important point above is that $a (E,X, \tri )$ depends only on $(X, \tri )$ and the divisor $E$ over $X$ (see e.g.~\cite[Remark 2.23]{MR1658959}).

\begin{defn}Several classes of mild singularities are defined as follows.
\begin{itemize}
\item 
The pair $(X,\tri)$ is log canonical, if $a (E,X, \tri ) \geq -1$ holds for any prime divisor $E$ over $X$.
\item
The pair $(X,\tri)$ is \textit{Kawamata log terminal (klt)}, if $a (E,X, \tri )>-1$ holds for any prime divisor $E$ over $X$. \item 
When $\tri=0$, $X$ is said to be log canonical (resp.~log terminal), if $(X,0)$ is log canonical (resp.~Kawamata log terminal).
\end{itemize}
\end{defn}

In what follows we will need the log alpha invariant for a klt pair, defined as follows.

\begin{defn} \label{aglog alpha invariant}
	Fixing $0 < \beta < 1$, let $\tri $ be an effective $\mathbb R$-Weil divisor in $X$ such that $K_X + (1- \beta) \tri$ is $\mathbb{R}$-Cartier and $L$ be an ample line bundle on $X$. The \textbf{log alpha invariant} of the polarised pair $((X , (1 - \beta ) \tri );L)$ is defined by
	\begin{equation*}
		\alpha ((X , (1 - \beta ) \tri );L) := \inf_{m \in \mathbb{N}} \inf_{D_m \in |mL|} \mathrm{lct} \left( ( X , (1 - \beta ) \tri ) ; \frac{1}{m} D_m \right),
	\end{equation*}
	which we also abbreviate as $\alpha_{\beta}$, where $\mathrm{lct}$ stands for the log canonical threshold defined as
	\begin{align*}
		\mathrm{lct} &\left( ( X , (1 - \beta ) \tri ) ; \frac{1}{m} D_m \right) \\
		&:= \sup \left\{ c \in \mathbb{R} \; \left| \; \left( X,  (1 - \beta ) \tri + \frac{c}{m} D_m \right) \text{ is log canonical.} \right\} \right.
	\end{align*}
	We decree $\mathrm{lct} \left( ( X , (1 - \beta ) \tri ) ; \frac{1}{m} D_m \right) = - \infty$ if there does not exist $c \in \mathbb{R}$ such that $\left( X,  (1 - \beta ) \tri + \frac{c}{m} D_m \right)$ is log canonical.
\end{defn}

In what follows, we shall mostly consider the case when $(X, (1 - \beta) \tri )$ is log canonical. Note that $\alpha ((X , (1 - \beta ) \tri );L) \ge 0$ if $(X, (1 - \beta) \tri )$ is log canonical.

When $X$ and $\tri$ are both smooth, the log alpha invariant defined above agrees with the analytic definition given in Definition \ref{log alpha invariant}, as proved in \cite[Proposition A.4]{MR3107540}; see also \cite[Appendix A]{MR2484031} and \cite[Section 5]{MR3428958}. Thus, without loss of generality, $\alpha_{\beta}$ will always stand for the quantity defined above in what follows. Note also that the average scalar curvature (\ref{eqavlgsc}) makes sense as a ratio of intersection numbers
\begin{equation}\label{intersection averaged scalar}
	\underline{S}_{\beta} = n \frac{(-K_X + (1 - \beta ) \tri)L^{n-1} }{L^n}
\end{equation}
over the normal projective variety $X$.

%%%%%%%%%%%%%%%%%%%%%%%%%%%%%%%%%%%%%%%%%%%%%%%%%%%%%%%%%%%%%%%%%%%%%%%%%%%%%%%%%%%%%%%%%%%%%%%%%%%%%%%%%%%%%%%%%%%%%%%%%%%%%%%%%%%%%%%%%%%%%%%%%%%%%%%%%%%%%%%%%%%%%%%%
\subsection{Computation of log Donaldson--Futaki invariant}
Let $X$ be a $\mathbb{Q}$-Gorenstein (i.e.~$K_X$ is $\mathbb{Q}$-Cartier) normal projective variety, $\tri$ an effective integral $\mathbb{Q}$-Cartier divisor, which implies that $K_X + (1 - \beta ) \tri$ is $\mathbb{R}$-Cartier for any $\beta \in (0,1)$. We recall some results on the Donaldson--Futaki invariant for the log test configurations of $((X,L); \tri)$.

The log Donaldson--Futaki invariant is computed by blowing up flag ideals, as in Odaka--Sun \cite{MR3403730}. We apply their formula to obtain several criterion between log stable manifolds and singularity types of normal varieties, which could be seen as algebro-geometric analogues of the differential-geometric results in Section \ref{Log stable manifolds}. Parallel results for twisted cscK metrics were obtained in \cite{MR3564626}.

In this section we use the blow-up formalism of test configurations, and the reader is referred to the paper of Odaka--Sun \cite{MR3403730} for more details. Recall \cite[Definition 3.4]{MR3403730} that a coherent ideal $\mathcal{I}$ of $X \times \mathbb{C}$ is called a flag ideal if it is invariant under the natural $\mathbb{C}^*$-action on $X \times \mathbb{C}$. We define $\mathcal{B} := \mathrm{Bl}_{\mathcal{I}} (X \times \mathbb{P}^1) $ and write $\pi : \mathcal{B} \to X \times \mathbb{P}^1$ for the blowdown map; note that $\mathcal{B}$ agrees with the compactification of $\mathrm{Bl}_{\mathcal{I}} (X \times \mathbb{C})$ over $\mathbb{P}^1$ as in Definition \ref{defcptcx}. 
Writing $\mathrm{pr}_1 : X \times \mathbb{P}^1 \to X$ for the natural projection, we also define a semiample line bundle $\mathcal{L}' := \pi^* \mathrm{pr}^*_1 L$ on $\mathcal{B}$. Fixing a divisor $\tri \subset X$, we also have $\mathcal{B}_{\tri} := \mathrm{Bl}_{\mathcal{I}|_{\tri \times \mathbb{P}^1}} (\tri \times \mathbb{P}^1)$. It is well-known \cite[Proposition 3.5]{MR3403730} that for any log test configuration $((\mathcal{X}, \mathcal{L}); \mathcal{D})$ for a polarised pair $((X,L); \tri)$ as above, there exists a flag ideal $\mathcal{I}$ such that the log Donaldson--Futaki invariant of $((\mathcal{B} , \mathcal{L}' - E); \mathcal{B}_{\tri} )$ agrees with that of $((\mathcal{X}, \mathcal{L}); \mathcal{D})$, where $E$ is the exceptional Cartier divisor of $\pi$. Thus without loss of generality we may only consider the log test configurations of the form described above, and moreover we may assume that $\mathcal{B}$ is Gorenstein in codimension 1 (so that $K_{\mathcal{B}}$ is a well-defined Weil divisor) \cite[Corollary 3.6]{MR3403730}. Furthermore, the log Donaldson--Futaki invariant of $((\mathcal{B} , \mathcal{L}' - E); \mathcal{B}_{\tri} )$ admits the following formula given in terms of intersection numbers.

\begin{thm}\label{log DF}\emph{(\cite[Theorem 3.7]{MR3403730})} 
The log Donaldson--Futaki invariant of the blowup $((\mathcal{B} , \mathcal{L}' - E); \mathcal{B}_{\tri} )$ is given by the following formula
\begin{align}\label{Algebraic log Donaldson-Futaki}
DF (\mathcal B , \mathcal{B}_{\tri} ,\mathcal{L}' -E , \beta )\\
 =(\mathcal{L}'-E)^n\cdot &[\frac{\ul S_\beta}{n+1} (\mathcal{L}'-E)+\pi^* ((K_X+(1-\beta)\tri) \times \mathbb{P}^1)+K_e].\nonumber
\end{align}
Here, $K_e:=(K_{\mathcal B/(X,(1-\beta)\tri)\times\mathbb P^1})_{exc}$ denotes the exceptional part of the divisor $K_{\mathcal B/(X,(1-\beta)\tri)\times\mathbb P^1}$, where
\begin{equation} \label{dfrellgcb}
	K_{\mathcal B/(X,(1-\beta)\tri)\times\mathbb P^1} := K_{\mathcal B}-\pi^\ast( (K_{X}+(1-\beta)\tri )\times \mathbb{P}^1),
\end{equation}
and we also note that $(K_X+(1-\beta)\tri) \times \mathbb{P}^1 = \mathrm{pr}_1^* (K_X+(1-\beta)\tri) $ is an $\mathbb{R}$-Cartier divisor on $X \times \mathbb{P}^1$.
\end{thm}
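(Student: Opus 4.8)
The plan is to compute directly the leading Hilbert- and weight-polynomial coefficients $a_0, a_1, b_0, b_1$ together with their divisorial analogues $\tilde a_0, \tilde b_0$ that enter the definition of $DF (\mathcal{B}, \mathcal{B}_\tri, \mathcal{L}'-E, \beta)$ from Section \ref{Test configurations and log K stability}, and to re-express each of them as an intersection number on $\mathcal{B}$ (respectively on $\mathcal{B}_\tri$), so that the formula \eqref{Algebraic log Donaldson-Futaki} drops out after substitution. The two tools are the asymptotic Riemann--Roch theorem on $X$ and its equivariant refinement on the compactified family $\mathcal{B} \to \mathbb{P}^1$.

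First I would treat the non-logarithmic part $\frac{2(a_1 b_0 - a_0 b_1)}{a_0}$. Asymptotic Riemann--Roch on $X$ gives $a_0 = L^n/n!$ and $2a_1/a_0 = \underline{S}_1$, directly from the definition \eqref{eqavlgsc} of the average scalar curvature once we write $C_1(X) = -K_X$. For the weight polynomial the decisive step is to identify the total weight $w_k$ of the $\mathbb{C}^*$-action on $H^0(\mathcal{B}_0, (\mathcal{L}'-E)^{\otimes k}|_{\mathcal{B}_0})$ with the leading part of the Euler characteristic $\chi(\mathcal{B}, (\mathcal{L}'-E)^{\otimes k})$ of the \emph{compactified} family; this is the device of Wang and Odaka, in which passing to the compactification over $\mathbb{P}^1$ automatically subtracts the trivial contribution from $X \times (\mathbb{P}^1 \setminus \{0\})$ and replaces the absolute canonical class by the relative one. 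Equivariant Riemann--Roch then yields $b_0 = (\mathcal{L}'-E)^{n+1}/(n+1)!$ and expresses $b_1$ through $(\mathcal{L}'-E)^n \cdot K_{\mathcal{B}/\mathbb{P}^1}$. Substituting into $\frac{2(a_1 b_0 - a_0 b_1)}{a_0}$ and clearing the common factor $1/n!$ (a fixed positive constant, hence harmless for stability) reassembles $\frac{\underline{S}_1}{n+1}(\mathcal{L}'-E)^{n+1} + (\mathcal{L}'-E)^n \cdot K_{\mathcal{B}/\mathbb{P}^1}$, of which only the exceptional part $K_e$ survives once the non-exceptional pullback piece is isolated.

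Next I would run the parallel computation for the divisorial correction $(1-\beta)\frac{a_0 \tilde b_0 - \tilde a_0 b_0}{a_0}$, applying Riemann--Roch on $\mathcal{B}_\tri$ exactly as above to obtain $\tilde a_0 = (L^{n-1}\cdot\tri)/(n-1)!$ and $\tilde b_0 = ((\mathcal{L}'-E)|_{\mathcal{B}_\tri})^n/n!$. Combining this with the base term, matching against the definition \eqref{intersection averaged scalar} of $\underline{S}_\beta$, and using the adjunction-type identity relating $K_{\mathcal{B}_\tri}$ to the restriction of $K_{\mathcal{B}} + \mathcal{B}_\tri$, upgrades $\underline{S}_1$ to $\underline{S}_\beta$ and $K_{\mathcal{B}/\mathbb{P}^1}$ to the logarithmic relative canonical $K_{\mathcal{B}/(X,(1-\beta)\tri)\times\mathbb{P}^1}$ defined in \eqref{dfrellgcb}, producing the stated formula. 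As a consistency check one may observe that the result coincides with the intersection-theoretic slope appearing in \thmref{thmbhj}, with $\mathcal{L}'-E$ playing the role of $\bar{\mathcal{L}}$.

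The main obstacle is the rigorous extraction of the subleading coefficient $b_1$ as the intersection number $(\mathcal{L}'-E)^n \cdot K_{\mathcal{B}/\mathbb{P}^1}$ on a possibly singular total space. This requires the reduction, recalled from \cite[Corollary 3.6]{MR3403730}, to a model $\mathcal{B}$ that is Gorenstein in codimension $1$, so that $K_{\mathcal{B}}$ is a well-defined Weil divisor and the product above is meaningful; one must also verify that only the exceptional part $K_e$ contributes to $b_1$, the horizontal pullback piece being absorbed into the $\pi^*(\cdots)$ term and the $K_{\mathbb{P}^1}$ piece into the passage from the absolute to the relative canonical class effected by the weight-versus-Euler-characteristic identity. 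Once these normalisation points are settled, the remainder is a routine substitution.
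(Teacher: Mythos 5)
This theorem is not proved in the paper at all: it is quoted verbatim from Odaka--Sun \cite[Theorem 3.7]{MR3403730}, and your sketch reconstructs essentially the same argument given there, namely Odaka's blow-up formalism in which the total weight $w_k$ (resp.~$\tilde w_k$) is identified with an Euler characteristic on the compactified family $\mathcal{B}\to\mathbb{P}^1$ (resp.~$\mathcal{B}_{\tri}\to\mathbb{P}^1$), asymptotic Riemann--Roch converts $a_0,a_1,b_0,b_1,\tilde a_0,\tilde b_0$ into intersection numbers, and the reduction to $\mathcal{B}$ Gorenstein in codimension one makes the products against $K_{\mathcal{B}}$ meaningful. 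One correction to your second paragraph: since only the leading coefficients $\tilde a_0,\tilde b_0$ of the divisorial data enter, no adjunction for $K_{\mathcal{B}_{\tri}}$ is ever used; the identity that reassembles $\tilde b_0=\bigl((\mathcal{L}'-E)|_{\mathcal{B}_{\tri}}\bigr)^{n}/n!$ into the stated form is the decomposition of the total transform $\pi^*(\tri\times\mathbb{P}^1)=\mathcal{B}_{\tri}+E_{\tri}$ into strict transform plus exceptional part, after which $\pi^*(\tri\times\mathbb{P}^1)$ joins $\pi^*(K_X\times\mathbb{P}^1)$ and $-(1-\beta)E_{\tri}$ joins $K_{\mathcal{B}}-\pi^*(K_{X}\times\mathbb{P}^1)$ to produce $K_e$, so the logarithmic relative canonical appears through this bookkeeping rather than through adjunction; also, the fixed positive factor (an $n!$) separating the coefficient definition of $DF$ from the intersection formula should be tracked explicitly rather than dismissed, since the statement asserts an equality and not merely a sign.
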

As mentioned above, the log Donaldson--Futaki invariant $DF (\mathcal{X}, \mathcal{D} , \mathcal{L} , \beta )$ for the test configuration $((\mathcal{X}, \mathcal{L}); \mathcal{D})$ is equal to $DF (\mathcal B , \mathcal{B}_{\tri} ,\mathcal{L}' -E , \beta)$.
%%%%%%%%%%%%%%%%%%%%%%%%%%%%%%%%%%%%%%%%%%%%%%%%%%%%%%%%%%%%%%%%%%%%%%%%%%%%%%%%%%%%%%%%%%%%%%%%%%%%%%%%%%%%%%%%%%%%%%%%%%%%%%%%%%%%%%%%%%%%%%%%%%%%%%%%%%%%%%%%%%%%%%%%
\subsection{Uniform log $K$-stability for a singular pair $(X,\tri)$}

We assume as before that $X$ is a $\mathbb{Q}$-Gorenstein normal projective variety, and $\tri$ is an effective integral $\mathbb{Q}$-Cartier divisor.

%%%%%%%%%%%%%%%%%%%%%%%%%%%%%%%%%%%%%%%%%%%%%%%%%%%%%%%%%%%%%%%%%%%%%%%%%%%%%%%%%%%%%%%%%%%%%%%%%%%%%%%%%%%%%%%%%%%%%%%%%%%%%%%%%%%%%%%%%%%%%%%%%%%%%%%%%%%%%%%%%%%%%%%%
\subsubsection{Log Calabi--Yau pair $C_1(X,\tri)=0$}

\begin{defn}
We say that $(X, (1 - \beta) \tri)$ is a log Calabi--Yau pair if the $\mathbb{R}$-Cartier divisor $K_X + (1 - \beta )\tri$ is $\mathbb{R}$-linearly equivalent to zero.
\end{defn}
In particular, $C_1 (X, \tri ) := C_1 (-K_X - (1 - \beta )\tri)$ makes sense and equals zero.

It was shown in \cite[Corollary 6.3]{MR3403730} that if $(X, (1 - \beta ) \tri)$ is a log Calabi--Yau pair, then $(X, (1 - \beta ) \tri)$ is log canonical if and only if $((X,L);  \tri)$ is log $K$-semistable with angle $2 \pi \beta$. 

Compared with \thmref{log Calabi Yau}, we actually further have
\begin{thm}\label{Log Calabi Yau}
Suppose that $(X, (1 - \beta ) \tri)$ is a log Calabi--Yau pair. 
If $(X, (1 - \beta ) \tri)$ is Kawamata log terminal, then $((X,L); \tri)$ is uniformly log $K$-stable with angle $2 \pi \beta$.
\end{thm}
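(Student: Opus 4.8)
The plan is to pass to the blow-up description of test configurations supplied by \thmref{log DF}, use the defining triviality of a log Calabi--Yau pair to collapse the log Donaldson--Futaki invariant to a pure discrepancy term, and then extract a uniform constant from the \emph{strictness} of the Kawamata log terminal condition. First I would invoke the reduction recalled just before \thmref{log DF}: any normal log test configuration $((\mathcal{X},\mathcal{L});\mathcal{D})$ for $((X,L);\tri)$ may be replaced, without changing its log Donaldson--Futaki invariant, by a blow-up $((\mathcal{B},\mathcal{L}'-E);\mathcal{B}_\tri)$ with $\mathcal{B}=\mathrm{Bl}_{\mathcal{I}}(X\times\mathbb{P}^1)$ Gorenstein in codimension one. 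Since $J^{\mathrm{NA}}$ is Lipschitz equivalent to the minimum norm $\Vert\cdot\Vert_m$ (as recorded after Definition \ref{defjna}), and both are intersection numbers on $\mathcal{B}$ assembled from $\mathcal{L}'$ and $E$, it is enough to bound $DF$ below by $\epsilon\Vert(\mathcal{X},\mathcal{L})\Vert_m$.

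Next I would specialise \eqref{Algebraic log Donaldson-Futaki} to the log Calabi--Yau situation. The hypothesis $C_1(X,\tri)=C_1(-K_X-(1-\beta)\tri)=0$ forces $\ul S_\beta=0$, since $\ul S_\beta$ is a positive multiple of the (now vanishing) intersection $C_1(X,\tri)\cdot L^{n-1}$; this kills the first summand. Moreover $K_X+(1-\beta)\tri$ is numerically trivial, hence so is its pull-back $\pi^\ast((K_X+(1-\beta)\tri)\times\mathbb{P}^1)$, whose intersection against $(\mathcal{L}'-E)^n$ therefore vanishes. We are left with
\begin{equation*}
DF(\mathcal{B},\mathcal{B}_\tri,\mathcal{L}'-E,\beta)=(\mathcal{L}'-E)^n\cdot K_e ,
\end{equation*}
where $K_e=\sum_i a(E_i,(X,(1-\beta)\tri)\times\mathbb{P}^1)\,E_i$ is supported on the $\pi$-exceptional locus. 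The klt hypothesis says precisely that every discrepancy occurring here exceeds $-1$, and, combined with the global positivity argument of Odaka--Sun, this already yields $DF\geq 0$, i.e.\ the log $K$-semistability of \cite[Corollary 6.3]{MR3403730}.

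To upgrade $DF\geq 0$ to a uniform estimate I would use that klt-ness is a strict, and hence quantitative, condition. Because $(X,(1-\beta)\tri)$ is klt we have $\mathrm{lct}(X;(1-\beta)\tri)>1$, and this threshold is computed by some divisor over $X$; consequently there is a fixed $\delta\in(0,1)$, depending only on the pair and not on the test configuration, with $A_{(X,(1-\beta)\tri)}\geq\delta\,A_X$ as functions on divisorial valuations (equivalently, $(X,(1+\delta')(1-\beta)\tri)$ stays log canonical for some $\delta'>0$). Reading $DF=(\mathcal{L}'-E)^n\cdot K_e$ as the non-Archimedean entropy $H^{\mathrm{NA}}_{(X,(1-\beta)\tri)}$ in the valuative formalism of Boucksom--Hisamoto--Jonsson, I would feed this uniform discrepancy gap into the comparison between $H^{\mathrm{NA}}$ and $J^{\mathrm{NA}}$ to obtain a genuine linear bound $DF\geq\epsilon\,J^{\mathrm{NA}}$.

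I expect the extraction of the uniform constant to be the main obstacle. The Odaka--Sun inequality $DF\geq 0$ is \emph{global}: it is not a term-by-term consequence of $a(E_i,\cdot)\geq -1$ (which alone gives only $(\mathcal{L}'-E)^n\cdot K_e\geq -(\mathcal{L}'-E)^n\cdot E$), so the quantitative improvement must likewise be carried out globally, comparing the entropy-type number $(\mathcal{L}'-E)^n\cdot K_e$ with $J^{\mathrm{NA}}$ rather than divisor by divisor. A secondary point to settle is that the statement claims genuine uniform, not merely $G$-uniform, stability; I would therefore verify that a klt log Calabi--Yau pair with nonzero boundary has $\mathrm{Aut}_0((X,L);\tri)$ finite, so that no product test configuration can carry vanishing $DF$ against positive $J^{\mathrm{NA}}$, which is exactly what the asserted uniformity demands.
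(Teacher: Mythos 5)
Your reduction and the collapse of the log Donaldson--Futaki invariant are exactly the paper's: the paper also passes to the flag-ideal blow-up via \thmref{log DF}, uses $\ul S_\beta=0$ and the numerical triviality of $\pi^*((K_X+(1-\beta)\tri)\times\mathbb{P}^1)$ to arrive at $DF=(\mathcal{L}'-E)^n\cdot K_e$. Where you diverge is the decisive uniform lower bound. The paper does not go through the valuative/non-Archimedean entropy formalism at all: it simply invokes items $(2)$ and $(6)$ of \lemref{conditions} (due to Odaka--Sano and Dervan \cite{MR2889147,MR3428958,MR3564626}), the second of which is precisely the packaged inequality $(\mathcal{L}'-E)^n\cdot K_e\geq \alpha_\beta\tfrac{n+1}{n}\Vert\mathcal{B}\Vert_m$; it is obtained by intersecting the \emph{effective} divisor $K_e-\alpha_\beta E$ (item $(3)$, which encodes the log alpha invariant of Definition \ref{aglog alpha invariant}) against the semiample class $\mathcal{L}'-E$ and then using the minimum-norm identity of item $(5)$. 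Since klt-ness makes $\alpha_\beta>0$, and $\Vert\cdot\Vert_m$ is Lipschitz-equivalent to $J^{\mathrm{NA}}$, uniformity follows in one line. In other words, what you flag as ``the main obstacle'' --- that a term-by-term discrepancy bound does not suffice and a global comparison with $J^{\mathrm{NA}}$ is needed --- is exactly what this lemma hands over off the shelf. Your alternative route (discrepancy gap $A_{(X,(1-\beta)\tri)}\geq\delta A_X$ fed into the Boucksom--Hisamoto--Jonsson comparison of $H^{\mathrm{NA}}$ with $J^{\mathrm{NA}}$ \cite{MR3669511}) is also viable and is essentially BHJ's own proof of uniform K-stability for klt log Calabi--Yau pairs, but note that the exceptional divisors of $\mathcal{B}\to X\times\mathbb{P}^1$ are divisors over $X\times\mathbb{P}^1$, not over $X$, so the gap must be transferred through the central fibre; this is the genuinely technical part of BHJ's argument, which you correctly identify but leave unresolved, whereas the alpha-invariant inequality of \lemref{conditions} sidesteps it entirely. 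What each approach buys: yours connects to the valuative machinery and generalises naturally; the paper's stays inside the intersection-theoretic toolkit it has already set up and yields the explicit constant $\epsilon=\alpha_\beta\tfrac{n+1}{n}$ (up to the norm equivalence).

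Two smaller points. First, your ``secondary point'' about verifying finiteness of $\mathrm{Aut}_0((X,L);\tri)$ is unnecessary: the estimate $DF\geq\epsilon\Vert\mathcal{B}\Vert_m$ is established for \emph{every} normal log test configuration, product ones included, so no destabilising product configuration can exist; discreteness of the automorphism group is a consequence of the inequality, not an input to it. Second, the positivity $\alpha_\beta>0$ for a klt pair (equivalently your ``$\mathrm{lct}>1$ computed on a log resolution'' observation) is the single common ingredient both proofs need from the klt hypothesis, so on that point the two arguments coincide.
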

\begin{proof}
Under the assumption that $K_X+(1-\beta) \tri$ is $\mathbb{R}$-linearly equivalent to zero, we have $\ul S_\beta=0$. Then \eqref{Algebraic log Donaldson-Futaki} becomes
\begin{align*}
DF (\mathcal B , \mathcal{B}_{\tri} ,\mathcal{L}' -E , \beta) =(\mathcal{L}'-E)^n\cdot K_e.
\end{align*}Therefore, $(2)$ and $(6)$ in \lemref{conditions} is applied to obtain that the log canonical (resp.~Kawamata log terminal) condition implies log $K$-semistability (resp.~uniform log $K$-stability).
\end{proof}

%%%%%%%%%%%%%%%%%%%%%%%%%%%%%%%%%%%%%%%%%%%%%%%%%%%%%%%%%%%%%%%%%%%%%%%%%%%%%%%%%%%%%%%%%%%%%%%%%%%%%%%%%%%%%%%%%%%%%%%%%%%%%%%%%%%%%%%%%%%%%%%%%%%%%%%%%%%%%%%%%%%%%%%%
\subsubsection{Log canonical pair}
Actually, these statements could be extended to the case when $C_1(X,\tri )$ is not necessary vanishing, generalising \thmref{properness criteria thm}, \corref{negative c1} and \corref{positive c1} in Section \ref{Log stable manifolds} to singular varieties.

We recall the definition of a nef cohomology class.
\begin{defn}\label{algebraic nef}
An $\mathbb{R}$-Cartier divisor $F$ on a normal projective variety $X$ is said to be \textbf{nef} if for any irreducible curve $C$ in $X$ we have $C.F \ge 0$, or equivalently
\begin{equation*}
	\int_C C_1 (F) \ge 0.
\end{equation*}
\end{defn}

Observing that the nefness depends only on the numerical equivalence class of the divisor, we may abuse the terminology and say that the cohomology class $C_1 (F)$ is nef when $F$ is nef.

\begin{thm}\label{log canonical}
Suppose that $X$ is a $\mathbb{Q}$-Gorenstein normal projective variety, $\tri$ is an effective integral reduced Cartier divisor on $X$, and $(X , (1 - \beta ) \tri )$ is log canonical. 
We have the following two conclusions.
\begin{itemize}
\item Suppose $\ul S_\beta< 0$ and that the cohomology Condition \eqref{cohomology conditions Kahler} holds,  
\begin{align*}
\left\{
\begin{array}{lcl}
	&(i)&0\leq \eta<\frac{n+1}{n}\alpha_\beta,\\
	&(ii)& C_1(X,\tri)<\eta C_1(L) ,\\
	&(iii)& (\ul S_\beta-\eta) C_1(L) <(n-1)C_1(X,\tri).
\end{array}
\right.
\end{align*} 

Then $((X,L);\tri)$ is uniformly log $K$-stable with angle $2 \pi \beta$.
\item Suppose that the following conditions hold
	\begin{align}\label{cohomology conditions Kahler variety weaker}
	\ul S_\beta<(n+1)\alpha_\beta , \quad \text{and} \quad -\ul S_\beta C_1(L) + (n+1)C_1(X,\tri)\text{ is nef}.
	\end{align}
	
	Then $((X,L);\tri)$ is uniformly log $K$-stable with angle $2 \pi \beta$.
\end{itemize}
\end{thm}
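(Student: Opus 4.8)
The plan is to prove both items by reducing to the Odaka--Sun intersection-theoretic formula for the log Donaldson--Futaki invariant and then bounding it below by a positive multiple of $J^{\mathrm{NA}}$. By the blow-up reduction recalled before \thmref{log DF} (cf.~\cite[Proposition 3.5]{MR3403730}) it suffices to test normal log test configurations of the form $((\mathcal{B} , \mathcal{L}' - E); \mathcal{B}_{\tri})$ with $\mathcal{B} = \mathrm{Bl}_{\mathcal{I}}(X \times \mathbb{P}^1)$, $\mathcal{L}' = \pi^* \mathrm{pr}_1^* L$, and $\mathcal{B}$ Gorenstein in codimension $1$. Writing $M := \mathcal{L}' - E$, I would first record the two intersection formulae needed. \thmref{log DF} gives
\[
	DF = \frac{\ul S_\beta}{n+1} M^{n+1} + M^n \cdot \pi^* \mathrm{pr}_1^* (K_X + (1-\beta)\tri) + M^n \cdot K_e ,
\]
while Definition \ref{defjna}, applied in this model and using $\mathcal{L}'^{n+1} = 0$ (as $\mathcal{L}'$ is pulled back from the $n$-dimensional $X$), gives
\[
	J^{\mathrm{NA}} = \frac{1}{V}\left( \mathcal{L}'^n \cdot M - \frac{M^{n+1}}{n+1} \right) .
\]
Since $C_1(X,\tri) = - C_1(K_X + (1-\beta)\tri)$, the hypotheses on $\ul S_\beta$ and on the various $\mathbb{R}$-Cartier classes translate directly into positivity statements for the middle term.

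The two analytic-to-algebraic inputs driving the estimate are: (a) log canonicity of $(X,(1-\beta)\tri)$ forces the exceptional discrepancy divisor $K_e$ to be effective, whence $M^n \cdot K_e \ge 0$ because $M$ is semiample (this is the semistable half of the dichotomy underlying \thmref{Log Calabi Yau}); and (b) a quantitative refinement, the algebraic counterpart of the analytic inequality \eqref{E and J}, which via the log-canonical-threshold description of $\alpha_\beta$ (Definition \ref{aglog alpha invariant}), in the spirit of Odaka--Sano and Dervan \cite{MR3428958}, bounds the entropy term below by a positive multiple of the norm, an inequality of the shape $\tfrac{1}{V} M^n \cdot K_e \ge \tfrac{n+1}{n}\,\alpha_\beta\, J^{\mathrm{NA}}$. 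These are precisely the algebraic avatars of the ingredients entering Proposition \ref{properness criteria} and \thmref{properness criteria thm}.

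For the first item I would interpolate exactly as in the differential-geometric Proposition \ref{properness criteria}, using the parameter $\eta$ of \eqref{cohomology conditions Kahler}. Condition (ii), namely $\eta C_1(L) - C_1(X,\tri)$ ample, and condition (iii), namely $(n-1)C_1(X,\tri) - (\ul S_\beta - \eta) C_1(L)$ ample, together with $\ul S_\beta < 0$, let me rewrite the ``energy'' part $\tfrac{\ul S_\beta}{n+1}M^{n+1} + M^n \cdot \pi^*\mathrm{pr}_1^*(K_X+(1-\beta)\tri)$ as a combination of $M^n$ against nef (indeed ample) classes plus a controlled multiple of $J^{\mathrm{NA}}$; Kleiman positivity then makes the nef intersections nonnegative. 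Condition (i), $\eta < \tfrac{n+1}{n}\alpha_\beta$, is exactly what ensures that the loss incurred in this step is strictly dominated by the entropy gain from (b), so that the residual coefficient of $J^{\mathrm{NA}}$ is some $\epsilon > 0$.

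For the second item I would follow Dervan \cite{MR3428958,MR3564626}: nefness of $-\ul S_\beta C_1(L) + (n+1)C_1(X,\tri)$, equivalently of $P := -\ul S_\beta L - (n+1)(K_X + (1-\beta)\tri)$, is used to substitute $(n+1)\pi^*\mathrm{pr}_1^*(K_X + (1-\beta)\tri) = -\ul S_\beta \mathcal{L}' - \pi^*\mathrm{pr}_1^* P$, so that the energy part reduces to $\tfrac{-\ul S_\beta}{n+1}M^n\cdot E$ together with a $\pi$-nef contribution; here the bound $\ul S_\beta < (n+1)\alpha_\beta$ is precisely what guarantees the surviving coefficient of the norm stays positive once (b) is invoked, with the coefficient $(n+1)$ in place of $\tfrac{n+1}{n}$ reflecting that the comparison is most naturally made against the minimum norm $\Vert \cdot \Vert_m$ of \eqref{dfminnorm}, Lipschitz equivalent to $J^{\mathrm{NA}}$. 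The main obstacle, in both items, is establishing the quantitative entropy inequality (b) on the possibly singular, only-Gorenstein-in-codimension-$1$ space $\mathcal{B}$, and then carrying out the sign bookkeeping so that the competing nef and discrepancy contributions genuinely leave a strictly positive $\epsilon$; a secondary technical point is the careful handling of $\mathbb{R}$-Cartier intersection numbers and the Kleiman-type positivity of $M^n \cdot (\,\cdot\,)$ against nef classes on $\mathcal{B}$.
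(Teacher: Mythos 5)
Your overall architecture is indeed the paper's: reduce to the blow-up formalism of Odaka--Sun, use the intersection formula of \thmref{log DF}, and bound the result below by the minimum norm via the Odaka--Sano/Dervan lemmas collected in \lemref{conditions}. But the step on which both of your estimates pivot has the wrong sign. You claim that, after decomposing the energy part, ``Kleiman positivity then makes the nef intersections nonnegative.'' First, $\mathcal{L}'-E$ is only \emph{relatively} semiample over $\mathbb{P}^1$, not nef (it is negative on horizontal curves through the blown-up locus), so Kleiman positivity does not apply to $(\mathcal{L}'-E)^n\cdot p^*R$. Second, the true statement is the \emph{opposite} inequality: item $(1)$ of \lemref{conditions} says $(\mathcal{L}'-E)^n\cdot p^*R\leq 0$ for $R$ nef on $X$; its proof uses that $E$ is supported on the central fibre (where relative semiampleness becomes genuine semiampleness) together with $\mathcal{L}'^n\cdot p^*R=0$. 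Consequently a decomposition yields a lower bound only if every pullback-of-nef term occurs with a \emph{nonpositive} coefficient, which is exactly how the paper arranges things: under (iii) the dropped term is $p^*$ of \emph{minus} an ample class, and under (ii) the dropped term carries the factor $(2-n)\leq 0$. With your signs, the terms you propose to discard are $\leq 0$, so discarding them moves the estimate in the wrong direction and the argument collapses.

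There are two further mismatches with how the paper actually closes the estimates. For the first item, the alpha invariant plays no role in the paper's proof: after discarding the nonnegative terms, items $(2)$ and $(5)$ of \lemref{conditions} give $DF\geq -\frac{\ul S_\beta}{n}\Vert\mathcal{B}\Vert_m$, the strictly positive constant coming solely from $\ul S_\beta<0$; your proposed mechanism, in which condition (i) lets the entropy bound (b) dominate a ``loss,'' is not what drives this case. For the second item, invoking your inequality (b) in its norm form, $(\mathcal{L}'-E)^n\cdot K_e\geq\frac{n+1}{n}\alpha_\beta\Vert\mathcal{B}\Vert_m$, is not sufficient when $\ul S_\beta>0$ (which is allowed under \eqref{cohomology conditions Kahler variety weaker}): you are then left with the term $-\frac{\ul S_\beta}{n+1}(\mathcal{L}'-E)^n\cdot E$ carrying a negative coefficient, and item $(5)$ provides only a \emph{lower} bound on $(\mathcal{L}'-E)^n\cdot E$, not the upper bound you would need. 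The paper avoids this by first using the divisor-level inequality $K_e-\alpha_\beta E\geq 0$ (item $(3)$), so that the two $E$-terms combine into $\left(\alpha_\beta-\frac{\ul S_\beta}{n+1}\right)(\mathcal{L}'-E)^n\cdot E$ with a positive coefficient, and only then applying item $(5)$.
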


Recall that $(X, (1 - \beta) \tri)$ being log canonical implies $\alpha_{\beta} \ge 0$.

\begin{rem} \label{remcpdervan}
Dervan \cite[Theorem 1.3]{MR3428958} proved that the condition \eqref{cohomology conditions Kahler variety weaker} implies that $((X,L); \tri)$ is log $K$-stable with angle $2 \pi \beta$. The second part of the theorem above strengthens this result to the uniform log $K$-stability; this may be well-known to the experts since the twisted version already appeared in \cite[Theorem 1.9]{MR3564626} and the proof that we present below is based on Dervan's \cite{MR3428958,MR3564626}, but the statement as above does not seem to have appeared in the literature to the best of the authors' knowledge (see \cite[Remark 3.26 and Theorem 3.27]{MR3564626} for the comparison between the log $K$-stability and the twisted $K$-stability).
\end{rem}

A direct corollary is given below.
\begin{cor}\label{log canonical corollary}
Suppose that
 \begin{align*}
C_1(X,\tri )< 0, \quad \text{and} \quad -\ul S_\beta C_1(L) + n C_1(X,\tri )\text{ is nef}.
\end{align*} If $(X, (1 - \beta) \tri)$ is log canonical, then $((X,L);\tri)$ is uniformly log $K$-stable with angle $2 \pi \beta$.
\end{cor}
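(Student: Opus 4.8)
The plan is to obtain Corollary \ref{log canonical corollary} as a direct specialisation of the first item of Theorem \ref{log canonical}, with the twisting parameter chosen to be $\eta = 0$. The first thing I would record is that the hypothesis $C_1(X,\tri) < 0$ (i.e.~$-C_1(X,\tri) = C_1(K_X + (1-\beta)\tri)$ is ample) already forces $\ul S_\beta < 0$: since $C_1(L)$ is ample and $-C_1(X,\tri)$ is an ample class, the intersection number $C_1(X,\tri)\cdot C_1(L)^{n-1}$ is strictly negative, and $\ul S_\beta$ is a positive multiple of it by \eqref{intersection averaged scalar}. This supplies the standing assumption $\ul S_\beta < 0$ needed for the first item, so it remains only to check that Condition \eqref{cohomology conditions Kahler} holds with $\eta = 0$.

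Conditions (ii) and (iii) I would verify directly. Condition (ii), namely $C_1(X,\tri) < 0$, is exactly the hypothesis of the corollary. For condition (iii) with $\eta = 0$ one must exhibit $(n-1)C_1(X,\tri) - \ul S_\beta C_1(L)$ as an ample class, and I would write
\[
(n-1)C_1(X,\tri) - \ul S_\beta C_1(L) = \big( n\,C_1(X,\tri) - \ul S_\beta C_1(L) \big) + \big( -C_1(X,\tri) \big).
\]
The first bracket is nef by the hypothesis of the corollary, while the second is ample because $C_1(X,\tri) < 0$; since the sum of a nef class and an ample class is ample, condition (iii) follows. This is precisely where the coefficient $n$ (rather than $n+1$) in the nef hypothesis is used: it is the $(n-1)$ appearing in condition (iii) augmented by one further copy of the ample class $-C_1(X,\tri)$.

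The one remaining point is condition (i), which for $\eta = 0$ reduces to the demand $\alpha_\beta > 0$, and this is where I expect the only genuine (if modest) subtlety to lie. Log canonicity of $(X,(1-\beta)\tri)$ gives $\alpha_\beta \ge 0$, as already recorded after the statement of Theorem \ref{log canonical}; what is needed to make $\eta = 0$ an admissible choice in \eqref{cohomology conditions Kahler} is the strict positivity. I would argue this from the fact that the boundary coefficient $1-\beta$ lies in $(0,1)$, so that under the positivity hypothesis the relevant pair is Kawamata log terminal and hence has $\alpha_\beta > 0$; alternatively one invokes that $\alpha_\beta > 0$ holds whenever $(X,(1-\beta)\tri)$ is klt, which is the typical situation here. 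Granting this, all three conditions of \eqref{cohomology conditions Kahler} hold with $\eta = 0$, and the first item of Theorem \ref{log canonical} delivers the uniform log $K$-stability of $((X,L);\tri)$ with angle $2\pi\beta$, completing the argument.
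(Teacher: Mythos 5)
Your route is the same as the paper's: the paper also proves \corref{log canonical corollary} by verifying Condition \eqref{cohomology conditions Kahler} and invoking the first item of \thmref{log canonical}, the only cosmetic difference being that the paper takes $\eta>0$ sufficiently small where you take $\eta=0$. Your verification of condition (iii) --- writing $(n-1)C_1(X,\tri)-\ul S_\beta C_1(L)$ as the sum of the nef class $nC_1(X,\tri)-\ul S_\beta C_1(L)$ and the ample class $-C_1(X,\tri)$ --- is exactly the paper's chain of inequalities $(\ul S_\beta-\eta)C_1(L)<nC_1(X,\tri)<(n-1)C_1(X,\tri)$, and your explicit derivation of $\ul S_\beta<0$ from $C_1(X,\tri)<0$ (which the paper leaves implicit, although it is needed to apply the first item of the theorem) is correct.

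The one genuine problem is your treatment of condition (i). You correctly identify that the crux is the strict positivity $\alpha_\beta>0$ (needed equally for $\eta=0$ and for small $\eta>0$; the paper simply asserts that condition (i) holds for small $\eta>0$ without further comment). But your proposed justification --- that because the boundary coefficient $1-\beta$ lies in $(0,1)$, log canonicity of $(X,(1-\beta)\tri)$ upgrades to Kawamata log terminal --- is false in the setting of Section \ref{scnormvar}, where $X$ is only a $\mathbb{Q}$-Gorenstein normal projective variety and $\tri$ a possibly singular reduced Cartier divisor. The pair $(X,(1-\beta)\tri)$ can be strictly log canonical no matter how small the coefficient is: $X$ itself may have a strictly log canonical singularity away from $\tri$ (e.g.~a cone over an abelian variety), or $\tri$ may be singular enough that the coefficient $1-\beta$ equals its log canonical threshold; neither possibility is excluded by the ampleness of $K_X+(1-\beta)\tri$. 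In such cases there is an lc centre, and members of $|mL|$ passing through it force $\mathrm{lct}\left((X,(1-\beta)\tri);\frac{1}{m}D_m\right)=0$, hence $\alpha_\beta=0$, so condition (i) fails for every $\eta\ge 0$. (Your fallback remark that klt pairs have positive alpha invariant is a correct standard fact; the gap lies only in deducing klt from the stated hypotheses.) So your argument, like the paper's, really establishes the corollary under the additional assumption $\alpha_\beta>0$ (for instance when $(X,(1-\beta)\tri)$ is klt); the claimed reduction of that positivity to the log canonical hypothesis does not work.
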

\begin{proof}
When $C_1(X, \tri)<0$, the first two conditions in Condition \eqref{cohomology conditions Kahler}  hold for any sufficiently small $\eta>0$. The third condition is satisfied, similar to Remark \ref{conditions 13}. Precisely speaking, we have
\begin{align*}(\ul S_\beta -\eta) C_1(L) < n C_1(X,\tri)< (n-1) C_1(X,\tri), \quad \forall \eta>0
\end{align*} and $-\ul S_\beta C_1(L) +(n-1)C_1(X,D)$ is nef. 
\end{proof}
%%%%%%%%%%%%%%%%%%%%%%%%%%%%%%%%%%%%%%%%%%%%%%%%%%%%%%%%%%%%%%%%%%%%%%%%%%%%%%%%%%%%%%%%%%%%%%%%%%%%%%%%%%%%%%%%%%%%%%%%%%%%%%%%%%%%%%%%%%%%%%%%%%%%%%%%%%%%%%%%%%%%%%%%
\subsubsection{Condition \eqref{cohomology conditions Kahler} and Condition \eqref{cohomology conditions Kahler variety weaker}}
Before we prove this theorem, we discuss these two conditions. When $X$ is smooth, it is clear that Condition \eqref{cohomology conditions Kahler variety weaker} is weaker than Condition \eqref{cohomology conditions Kahler}.
\begin{rem}\label{conditions 12}
The assumptions $(i,ii)$ in Condition \eqref{cohomology conditions Kahler}  together imply the upper bound of the average scalar curvature $$\ul S_\beta<n\eta<(n+1)\alpha_\beta.$$
\end{rem}
\begin{rem}\label{conditions 13}
We could reword $(iii)$ in Condition \eqref{cohomology conditions Kahler}  and it says 
\begin{align}\label{conditions 13 equation}
-\ul S_\beta C_1(L) +(n-1)C_1(X,D) \text{ is nef}.
\end{align}
%Thus the cohomology conditions in this corollary is a special situation of Condition \eqref{cohomology conditions Kahler}. 
\end{rem}

\begin{prop}\label{compare 2 conditions}
Suppose that $C_1(X,\tri)$ is nef and that $(X , (1 - \beta) \tri )$ is a log canonical pair. Then the cohomology Condition \eqref{cohomology conditions Kahler} implies Condition \eqref{cohomology conditions Kahler variety weaker}.
\end{prop}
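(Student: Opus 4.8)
The plan is to verify the two requirements comprising Condition \eqref{cohomology conditions Kahler variety weaker}—the scalar inequality $\ul S_\beta < (n+1)\alpha_\beta$ and the nefness of $-\ul S_\beta C_1(L) + (n+1)C_1(X,\tri)$—separately, extracting each from the parameter $\eta$ furnished by Condition \eqref{cohomology conditions Kahler}. Throughout I would work with the intersection-theoretic notion of nefness on the normal variety $X$ (Definition \ref{algebraic nef}) and with the expression of $\ul S_\beta$ as the ratio of intersection numbers $\ul S_\beta = n\,C_1(X,\tri)\cdot C_1(L)^{n-1}/C_1(L)^n$ used in the proof of Lemma \ref{lmlgmprp preliminary}.

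First I would establish the scalar inequality $\ul S_\beta < (n+1)\alpha_\beta$, which is the content of Remark \ref{conditions 12}. Intersecting assumption $(ii)$, i.e.\ the positivity of $\eta C_1(L) - C_1(X,\tri)$, against the positive class $C_1(L)^{n-1}$ gives $\eta\, C_1(L)^n > C_1(X,\tri)\cdot C_1(L)^{n-1} = \tfrac{1}{n}\ul S_\beta\, C_1(L)^n$, hence $\ul S_\beta < n\eta$. Combining with assumption $(i)$, namely $\eta < \tfrac{n+1}{n}\alpha_\beta$, yields $\ul S_\beta < n\eta < (n+1)\alpha_\beta$, as desired.

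Next I would prove that $-\ul S_\beta C_1(L) + (n+1)C_1(X,\tri)$ is nef. The crucial step is the reformulation recorded in Remark \ref{conditions 13}: assumption $(iii)$ is equivalent to the nefness of $-\ul S_\beta C_1(L) + (n-1)C_1(X,\tri)$. Granting this, I would simply write
\begin{align*}
-\ul S_\beta C_1(L) + (n+1)C_1(X,\tri) = \bigl[-\ul S_\beta C_1(L) + (n-1)C_1(X,\tri)\bigr] + 2\,C_1(X,\tri),
\end{align*}
exhibiting the left-hand side as the sum of the nef class above and $2\,C_1(X,\tri)$, the latter nef by the standing hypothesis that $C_1(X,\tri)$ is nef. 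Since a sum of nef classes is nef, the desired nefness follows, which together with the first step yields Condition \eqref{cohomology conditions Kahler variety weaker}.

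I expect the main obstacle to lie in the reformulation of $(iii)$ into the $\eta$-free nef statement of Remark \ref{conditions 13}: literally, $(iii)$ asserts ampleness of $(n-1)C_1(X,\tri) - (\ul S_\beta - \eta)C_1(L)$, which still carries the slack term $\eta C_1(L)$, and one must justify passing to the clean nefness of $(n-1)C_1(X,\tri) - \ul S_\beta C_1(L)$ before absorbing the extra $2\,C_1(X,\tri)$. It is exactly here that the hypothesis that $C_1(X,\tri)$ be \emph{nef} (not merely that the difference classes be positive) is used, since it is what permits upgrading the coefficient $(n-1)$ in $(iii)$ to $(n+1)$ in \eqref{cohomology conditions Kahler variety weaker} by adding two copies of the nef class $C_1(X,\tri)$. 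The remaining points—strict positivity of $C_1(X,\tri)\cdot C_1(L)^{n-1}$-type intersections and the validity of these manipulations on a normal projective variety—are routine given Definition \ref{algebraic nef} and the setup of Section \ref{scnormvar}.
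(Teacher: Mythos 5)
Your proposal is correct and is essentially the paper's own proof: the scalar inequality $\ul S_\beta<(n+1)\alpha_\beta$ is exactly Remark \ref{conditions 12} (intersect $(ii)$ with $C_1(L)^{n-1}$, then use $(i)$), and the nefness of $-\ul S_\beta C_1(L)+(n+1)C_1(X,\tri)$ is obtained from the $\eta$-free nef statement of Remark \ref{conditions 13} together with adding two copies of the nef class $C_1(X,\tri)$ — the coefficient-upgrade step which is precisely why the nefness hypothesis appears and which the paper leaves implicit. The caveat you flag, namely that passing from $(iii)$ (ampleness with the slack term $\eta C_1(L)$) to the clean nefness of $(n-1)C_1(X,\tri)-\ul S_\beta C_1(L)$ is not literally justified, is inherited from Remark \ref{conditions 13} itself, on which the paper's two-line proof relies in exactly the same way, so your attempt faithfully reproduces the paper's argument including its looseness.
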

\begin{proof}
The first condition is obtained in Remark \ref{conditions 12}. The second condition follows from Remark \ref{conditions 13} as well.
\end{proof}

\begin{rem}
Given a compact subgroup $P$ of the automorphism group $\mathrm{Aut}_0 ((X,L);D)$, we could restrict ourselves in the $P$-equivariant setting. It is natural to speculate that, by making use of $P$-equivariant log alpha invariant and $G$-equivariant log test configuration, we can obtain corresponding results for uniform log $K$-stability, similar to \thmref{log canonical}. We may then hope to compare (\ref{cohomology conditions Kahler variety weaker}) and Proposition \ref{ppcnaglfut} to find a manifold that does not have cscK cone metric. We decide not to pursue this point any further, however, in this paper.
\end{rem}

%%%%%%%%%%%%%%%%%%%%%%%%%%%%%%%%%%%%%%%%%%%%%%%%%%%%%%%%%%%%%%%%%%%%%%%%%%%%%%%%%%%%%%%%%%%%%%%%%%%%%%%%%%%%%%%%%%%%%%%%%%%%%%%%%%%%%%%%%%%%%%%%%%%%%%%%%%%%%%%%%%%%%%%%

%%%%%%%%%%%%%%%%%%%%%%%%%%%%%%%%%%%%%%%%%%%%%%%%%%%%%%%%%%%%%%%%%%%%%%%%%%%%%%%%%%%%%%%%%%%%%%%%%%%%%%%%%%%%%%%%%%%%%%%%%%%%%%%%%%%%%%%%%%%%%%%%%%%%%%%%%%%%%%%%%%%%%%%%

%%%%%%%%%%%%%%%%%%%%%%%%%%%%%%%%%%%%%%%%%%%%%%%%%%%%%%%%%%%%%%%%%%%%%%%%%%%%%%%%%%%%%%%%%%%%%%%%%%%%%%%%%%%%%%%%%%%%%%%%%%%%%%%%%%%%%%%%%%%%%%%%%%%%%%%%%%%%%%%%%%%%%%%%
\subsection{Proof of \thmref{log canonical}}
We will need the following lemma.
\begin{lem}\label{conditions}\emph{(\cite[Lemma 4.2]{MR2889147}, \cite[Section 5]{MR3428958}, \cite[Lemma 3.10, Remark 3.11 and 3.17]{MR3564626})}

Suppose that $X$ is a $\mathbb{Q}$-Gorenstein normal projective variety, $\tri$ is an effective integral reduced Cartier divisor on $X$, and $(X , (1 - \beta ) \tri )$ is log canonical.

\begin{enumerate}
\item Let $R$ be a nef divisor on $X$ and $\mathcal R= p^\ast R$, where $p$ is the composition of the blowdown map $\pi : \mathcal{B} \to X \times \mathbb{P}^1$ and the natural projection $X \times \mathbb{P}^1 \to X$. Then $(\mathcal{L}'-E)^n\cdot \mathcal R\leq 0. $
\item $(\mathcal{L}'-E)^n\cdot K_{e}\geq 0$.

\item The exceptional divisor $K_e- \alpha_\beta E$ is effective and $(\mathcal{L}'-E)^n(K_e- \alpha_\beta E)\geq 0$.
\item $(\mathcal{L}'-E)^n E>0$.
\item $(\mathcal{L}'-E)^n(\mathcal{L}'+nE)=(n+1)\|\mathcal B\|_m$ and $(\mathcal{L}'-E)^n E\geq\frac{n+1}{n} \Vert \mathcal B \Vert_m$.
\item $(\mathcal{L}'-E)^n\cdot K_e\geq \alpha_\beta\frac{n+1}{n} \Vert \mathcal B \Vert_m $.

\end{enumerate}
Recall that in the above $K_e$ stands for the exceptional part of the divisor $K_{\mathcal B/(X,(1-\beta)\tri)\times\mathbb P^1}$ defined in (\ref{dfrellgcb}), and that $\Vert \cdot \Vert_m$ stands for the minimum norm in (\ref{dfminnorm}).
\end{lem}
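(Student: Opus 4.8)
The six estimates are intersection-theoretic facts on the flag-ideal blow-up $\mathcal{B}=\mathrm{Bl}_{\mathcal{I}}(X\times\mathbb{P}^1)$, and my plan is to assemble them from the standard toolkit of Odaka \cite[Lemma 4.2]{MR2889147} and Dervan \cite[Section 5]{MR3428958}, \cite[Lemma 3.10, Remark 3.11 and 3.17]{MR3564626}, organising the argument around a few structural features of $\mathcal{B}$. First I would record the common setup: $\mathcal{L}'=p^*L$ is the pullback of a nef class, so that $(\mathcal{L}')^{n+1}=0$ and, more generally, any top intersection with more than $n$ factors pulled back from the $n$-dimensional $X$ vanishes; the divisor $-E$ is $\pi$-relatively ample; every $\pi$-exceptional divisor is supported on the central fibre $\mathcal{B}_0=\pi^{-1}(X\times\{0\})$; and $(\mathcal{L}'-E)|_{\mathcal{B}_0}$ is the semiample, hence nef, polarisation of the central fibre of the associated semi-test configuration (note that $\mathcal{L}'-E$ is only relatively semiample over $\mathbb{P}^1$, not globally nef). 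Finally, since $(X,(1-\beta)\tri)$ is log canonical, so is the product pair $(X\times\mathbb{P}^1,(1-\beta)\tri\times\mathbb{P}^1)$, which in particular gives $\alpha_\beta\ge 0$.

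With this in hand the positivity statements (2), (3) and (4) come from restriction to $\mathcal{B}_0$. For any effective $\pi$-exceptional divisor $G$ (automatically supported on $\mathcal{B}_0$) the restriction $(\mathcal{L}'-E)|_G$ is nef, so $(\mathcal{L}'-E)^n\cdot G=\int_G\bigl((\mathcal{L}'-E)|_G\bigr)^n\ge 0$; applied to $G=E$, the relative ampleness of $-E$ upgrades nefness to bigness on $E$ and yields the strict inequality (4), as in \cite[Lemma 3.10]{MR3564626}. The heart of (3) is the effectivity of $K_e-\alpha_\beta E$: here I would translate the definition of the log alpha invariant as an infimum of log canonical thresholds (Definition \ref{aglog alpha invariant}) into a lower bound for the discrepancy $\mathrm{ord}_{E_i}K_e$ of each exceptional component in terms of its multiplicity $\mathrm{ord}_{E_i}E$, exactly as in \cite[Section 5]{MR3428958}; effectivity of $K_e-\alpha_\beta E$ then follows component by component, and the same restriction argument gives $(\mathcal{L}'-E)^n(K_e-\alpha_\beta E)\ge 0$. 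Statement (2) is then immediate from (3) and (4) together with $\alpha_\beta\ge 0$, by writing $(\mathcal{L}'-E)^nK_e=(\mathcal{L}'-E)^n(K_e-\alpha_\beta E)+\alpha_\beta(\mathcal{L}'-E)^nE$.

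The remaining two items are more computational. For (5) I would identify $(\mathcal{L}'-E)^n(\mathcal{L}'+nE)$ with $(n+1)\Vert\mathcal{B}\Vert_m$ by expressing the leading Riemann--Roch coefficients $a_0,\tilde a_0,b_0,\tilde b_0$ of Section \ref{Test configurations and log K stability} as intersection numbers on $\mathcal{B}$ and $\mathcal{B}_{\tri}$, following \cite[Remark 3.11]{MR3564626}; the factorisation $(\mathcal{L}')^{n+1}-(\mathcal{L}'-E)^{n+1}=E\cdot\sum_{i}(\mathcal{L}')^{i}(\mathcal{L}'-E)^{n-i}$ together with $(\mathcal{L}')^{n+1}=0$ and the sign estimates on $(\mathcal{L}')^nE$ and $(\mathcal{L}'-E)^{n+1}$ coming from the same restriction/negativity arguments then converts this identity into the inequality $(\mathcal{L}'-E)^nE\ge\frac{n+1}{n}\Vert\mathcal{B}\Vert_m$. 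Finally (6) is a formal consequence of (3) and (5): $(\mathcal{L}'-E)^nK_e\ge\alpha_\beta(\mathcal{L}'-E)^nE\ge\alpha_\beta\frac{n+1}{n}\Vert\mathcal{B}\Vert_m$.

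The two steps I expect to be genuinely delicate, and where I would lean most heavily on the cited works, are the sign in (1) and the effectivity in (3). For (1) the class $p^*R$ is \emph{horizontal}, i.e.\ not supported on $\mathcal{B}_0$, so the clean restriction argument does not apply; instead I would take a general $D\in|mR|$, reduce $(\mathcal{L}'-E)^n\cdot p^*R$ to the top self-intersection of $\mathcal{L}'-E$ on the strict transform of $D\times\mathbb{P}^1$, and invoke the Segre-class negativity estimate of \cite[Lemma 4.2]{MR2889147} to obtain $(\mathcal{L}'-E)^n\cdot p^*R\le(\mathcal{L}')^n\cdot p^*R=0$. The translation of the log canonical threshold into the divisorial bound $K_e\ge\alpha_\beta E$ underlying (3) is the other crux, since it is precisely the valuative mechanism by which the analytic alpha invariant enters the algebraic intersection formula; everything else is bookkeeping around the structural features listed in the first paragraph.
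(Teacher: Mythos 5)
Your reconstruction of items (2)--(6) is sound and matches what the paper's own (purely bibliographical) proof points to: the paper simply cites Odaka--Sano and Dervan, and your route --- effectivity of $K_e-\alpha_\beta E$ via the lct description of $\alpha_\beta$ as in \cite[Section 5]{MR3428958}, the decomposition $(\mathcal{L}'-E)^n K_e=(\mathcal{L}'-E)^n(K_e-\alpha_\beta E)+\alpha_\beta(\mathcal{L}'-E)^n E$ for (2), the Riemann--Roch identification of $\Vert\mathcal{B}\Vert_m$ for (5), and the formal chain (3)$+$(5)$\Rightarrow$(6) --- is exactly the content of those references. The problem is item (1), which you single out as delicate and then handle incorrectly. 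First, a nef divisor $R$ need not have any effective multiple, so ``a general $D\in|mR|$'' may simply not exist. Second, even granting effectivity, your reduction conflates the total transform with the strict transform: $p^*D$ equals the strict transform of $D\times\mathbb{P}^1$ plus an effective divisor $F$ supported on the central fibre $\mathcal{B}_0$, and since $(\mathcal{L}'-E)|_{\mathcal{B}_0}$ is nef (as you yourself record), one has $(\mathcal{L}'-E)^n\cdot F\ge 0$. So discarding $F$ bounds $(\mathcal{L}'-E)^n\cdot p^*R$ from \emph{below} by the strict-transform term, not from above; the negativity of the top self-intersection on the strict transform therefore does not yield $(\mathcal{L}'-E)^n\cdot p^*R\le 0$, and the inequality comes out on the wrong side.

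The deeper issue is your premise that ``the clean restriction argument does not apply'' to (1) because $p^*R$ is horizontal. In the standard proof (this is what \cite[Lemma 4.2]{MR2889147} and \cite[Lemma 3.10]{MR3564626} actually do), the effective vertical divisor to which one restricts is $E$, not $p^*R$: from the telescoping identity
\begin{equation*}
(\mathcal{L}')^n-(\mathcal{L}'-E)^n=E\cdot\sum_{i=0}^{n-1}(\mathcal{L}')^{i}(\mathcal{L}'-E)^{n-1-i}
\end{equation*}
and $(\mathcal{L}')^n\cdot p^*R=0$ one gets $(\mathcal{L}'-E)^n\cdot p^*R=-\sum_{i=0}^{n-1}E\cdot(\mathcal{L}')^{i}(\mathcal{L}'-E)^{n-1-i}\cdot p^*R$, and each summand is nonnegative because $E$ is effective with support in $\mathcal{B}_0$, where $\mathcal{L}'-E$ is nef, while $\mathcal{L}'$ and $p^*R$ are globally nef. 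Horizontality of $p^*R$ is irrelevant; it just rides along as one of the nef factors. Note also that repairing (1) this way is not optional for the rest of your argument: your derivation of the inequality in (5), and hence of (6), uses precisely $(\mathcal{L}'-E)^n\cdot\mathcal{L}'\le 0$, which is (1) applied with $R=L$.
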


\begin{proof}
	We simply indicate where the proof can be found in the literature and omit the details.
	
	The first and the fourth item is exactly as written in \cite[Lemma 3.10]{MR3564626}; see also \cite[Lemma 3.6]{MR3428958}. The second and the third item follows form \cite[equations (91)--(94)]{MR3428958} and \cite[Lemma 3.6]{MR3428958}. The fifth and the sixth item is exactly as written in \cite[page 4770]{MR3564626} (where we also use the third item above).
\end{proof}

Now we start to prove \thmref{log canonical}.

\begin{proof}[Proof of \thmref{log canonical}]
We rewrite the log Donaldson--Futaki invariant \eqref{Algebraic log Donaldson-Futaki} as
\begin{align*}
&DF (\mathcal B , \mathcal{B}_{\tri},\mathcal{L}' -E , \beta )\\
 & =(\mathcal{L}'-E)^n\cdot \{(\ul S_\beta-\eta)\mathcal{L}'+(n-1)[\pi^* ((K_X+(1-\beta)\tri) \times \mathbb{P}^1)]\\
 &-\frac{\ul S_\beta }{n+1}(n\mathcal{L}'+E)+(n-2)\eta \mathcal{L}'+K_e\\
 &+(2-n)[\eta \mathcal{L}'+\pi^* ((K_X+(1-\beta)\tri) \times \mathbb{P}^1)]\}.
\end{align*}
According to $(1)$, $(2)$ and $(3)$ in \lemref{conditions}, we have
\begin{align*}
&DF (\mathcal B , \mathcal{B}_{\tri} ,\mathcal{L}' -E , \beta  )\\
&\geq (\mathcal{L}'-E)^n\cdot \{ -\frac{\ul S_\beta }{n+1}(n\mathcal{L}'+E)+(n-2)\eta \mathcal{L}'+K_e\},\end{align*}
by noting $\eta \mathcal{L}'+\pi^* ((K_X+(1-\beta)\tri) \times \mathbb{P}^1) = p^* (\eta L + K_X+(1-\beta)\tri)$. We set $e=-\frac{n}{n+1}\ul S_\beta+(n-2)\eta$ and $f=-\frac{\ul S_\beta }{n+1}$. Then
\begin{align*}
DF (\mathcal B , \mathcal{B}_{\tri} ,\mathcal{L}' -E , \beta  )&
\geq (\mathcal{L}'-E)^n[e\mathcal{L}'+fE+K_\eps]\\
& =  (\mathcal{L}'-E)^n[e(\mathcal{L}'+nE)+(f-en)E+K_e].
\end{align*}
At last, $(5)$ in \lemref{conditions} implies that
\begin{align*}
&DF (\mathcal B , \mathcal{B}_{\tri} ,\mathcal{L}' -E , \beta )\\&
\geq e(n+1)\|\mathcal B\|_m+(f-en)\frac{n+1}{n} \Vert \mathcal B \Vert_m+(\mathcal{L}'-E)^nK_\eps\\
&
= -\frac{\ul S_\beta }{n} \Vert \mathcal B \Vert_m+(\mathcal{L}'-E)^nK_\eps.
\end{align*}
The first conclusion in the theorem is proved directly, since $\ul S_\beta<0$ and $(\mathcal{L}'-E)^nK_e\geq 0$ from $(2)$ in \lemref{conditions}.

The proof of the second conclusion is an analogue of \cite[Theorem 1.9]{MR3564626} for the twisted case and the result regarding log $K$-stability was proven in \cite[Theorem 1.3]{MR3428958}.

Applying $(1)$ and $(3)$ in \lemref{conditions}, with the hypothesis that $ -\ul S_\beta \Om+ (n+1)C_1(X,\tri)$ is nef, and the exceptional divisor $K_e- \alpha_\beta E\geq0$, we see that
\begin{align*}
&DF (\mathcal B , \mathcal{B}_{\tri} ,\mathcal{L}' -E , \beta )\\
 & =(\mathcal{L}'-E)^n\cdot \left[ \frac{\ul S_\beta}{n+1} \mathcal{L}'+ \pi^* ((K_X+(1-\beta)\tri) \times \mathbb{P}^1) -\frac{\ul S_\beta}{n+1} E+K_e \right]\\
 & =(\mathcal{L}'-E)^n\cdot p^* \left( \frac{\ul S_\beta}{n+1} L+ K_X+(1-\beta)\tri \right) +(\mathcal{L}'-E)^n\cdot  \left( \frac{-\ul S_\beta}{n+1} E+K_e \right) \\
 &\geq (\mathcal{L}'-E)^n\cdot \left( \alpha_\beta-\frac{\ul S_\beta}{n+1} \right) E 
 \geq \frac{n+1}{n} \left( \alpha_\beta-\frac{\ul S_\beta}{n+1}  \right) \|\mathcal B\|_m,
\end{align*}which completes the proof, by $(5)$ in \lemref{conditions}.
\end{proof}

\begin{rem}
It is shown in \cite[Theorem 5.7]{MR3428958} that a log canonical $(X, (1 - \beta) \tri)$ satisfying \eqref{cohomology conditions Kahler variety weaker} is log $K$-stable with cone angle $2 \pi \beta$. The proof is identical to the one given above, where the last inequality is strict, since $(4)$ is used instead of $(5)$ in \lemref{conditions}, that is
\begin{align*}
DF (\mathcal B , \mathcal{B}_{\tri} ,\mathcal{L}' -E ,\beta )\geq (\mathcal{L}'-E)^n\cdot (\alpha_\beta-\frac{\ul S_\beta}{n+1} ) E
>0.
\end{align*}
\end{rem}

%%%%%%%%%%%%%%%%%%%%%%%%%%%%%%%%%%%%%%%%%%%%%%%%%%%%%%%%%%%%%%%%%%%%%%%%%%%%%%%%%%%%%%%%%%%%%%%%%%%%%%%%%%%%%%%%%%%%%%%%%%%%%%%%%%%%%%%%%%%%%%%%%%%%%%%%%%%%%%%%%%%%%%%%

\subsection{From log semi-stability to singularity types of varieties}

Let $X$ be a $\mathbb{Q}$-Gorenstein normal projective variety and $\tri$ be an effective integral $\mathbb{Q}$-Cartier divisor, as before. The hypotheses on $X$ can be much weakened (see \cite[Definition 1.1]{MR3010808} and \cite[Section 6]{MR3403730}), but we content ourselves with the ones that we have been working with so far.

\begin{thm}\label{Kawamata log terminal inverse}
Suppose that $((X,L);\tri)$ is log $K$-semistable with angle $2 \pi \beta$ and that we have
\begin{align*}
C_1(X,\tri )>0, \quad \ul S_\beta C_1(L) - nC_1(X, \tri)\text{ is nef}.
\end{align*}
Then $(X, (1 - \beta) \tri)$ is Kawamata log terminal.
\end{thm}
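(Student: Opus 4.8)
The plan is to prove the contrapositive: assuming $(X, (1-\beta)\tri)$ is \emph{not} Kawamata log terminal, I will exhibit a normal log test configuration with negative log Donaldson--Futaki invariant, contradicting log $K$-semistability. The whole argument is modelled on the discrepancy/blow-up technique of Odaka--Sun \cite{MR3403730}; the point of the present statement is that the weakened hypothesis on the polarisation is still precisely strong enough to run it. By definition of klt, the failure of the klt property produces a prime divisor $F$ over $X$ with discrepancy $a(F, X, (1-\beta)\tri) \le -1$. Realising $F$ on a normal variety with a proper birational morphism to $X$, I would build the associated flag-ideal test configuration $((\mathcal B, \mathcal{L}'-E); \mathcal B_\tri)$ of the type used in \thmref{log DF}, arranged by a deformation-to-the-normal-cone construction as in the proof of \thmref{destabilizing test configuration}, so that the divisor over $F$ appears in the relative canonical with coefficient governed by $a(F, X, (1-\beta)\tri)$.

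The key algebraic step is to rewrite the intersection formula of \thmref{log DF} in a form that isolates the two hypotheses. Setting $N := \ul S_\beta C_1(L) - n\,C_1(X,\tri)$ and recalling that $\pi^*((K_X+(1-\beta)\tri)\times\mathbb P^1) = p^*(K_X+(1-\beta)\tri)$ and $\mathcal{L}' = p^* L$, a direct regrouping based on the identity $\frac{\ul S_\beta}{n+1}C_1(L) + (K_X+(1-\beta)\tri) = \frac1n N - \frac{\ul S_\beta}{n(n+1)}C_1(L)$, combined with item (5) of \lemref{conditions} (namely $(\mathcal{L}'-E)^n\cdot(\mathcal{L}'+nE) = (n+1)\Vert\mathcal B\Vert_m$), yields the clean identity
\begin{align*}
DF(\mathcal B, \mathcal B_\tri, \mathcal{L}'-E, \beta) = \frac1n (\mathcal{L}'-E)^n\cdot p^* N - \frac{\ul S_\beta}{n}\Vert\mathcal B\Vert_m + (\mathcal{L}'-E)^n\cdot K_e .
\end{align*}
Now both hypotheses enter transparently: $N$ is nef by assumption, so item (1) of \lemref{conditions} forces $(\mathcal{L}'-E)^n\cdot p^* N \le 0$; and $C_1(X,\tri) > 0$ makes $\ul S_\beta > 0$ through \eqref{intersection averaged scalar}, so the middle term is strictly negative whenever the configuration is nontrivial (i.e.~$\Vert\mathcal B\Vert_m > 0$). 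Hence
\begin{align*}
DF(\mathcal B, \mathcal B_\tri, \mathcal{L}'-E, \beta) \le -\frac{\ul S_\beta}{n}\Vert\mathcal B\Vert_m + (\mathcal{L}'-E)^n\cdot K_e .
\end{align*}

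It then remains to show that the configuration extracting $F$ can be chosen so that $(\mathcal{L}'-E)^n\cdot K_e \le 0$. This is exactly where non-klt-ness is used: whereas item (2) of \lemref{conditions} guarantees $(\mathcal{L}'-E)^n\cdot K_e \ge 0$ for log canonical pairs, the divisor over $F$ enters $K_e$ with coefficient $a(F, X, (1-\beta)\tri) \le -1$, and for the deformation-to-normal-cone configuration attached to $F$ this contribution should render the intersection nonpositive; combined with the strict negativity of the middle term this gives $DF < 0$, the desired contradiction. The main obstacle is precisely this last verification: one must construct the flag ideal so that, after intersecting with $(\mathcal{L}'-E)^n$, the coefficient $\le -1$ coming from $F$ in $K_e$ genuinely produces $(\mathcal{L}'-E)^n\cdot K_e \le 0$. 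In particular one must handle the delicate boundary case in which $(X, (1-\beta)\tri)$ is log canonical but not klt (so $a(F) = -1$ and $(\mathcal{L}'-E)^n\cdot K_e = 0$), where the conclusion survives only thanks to the strict inequality $-\frac{\ul S_\beta}{n}\Vert\mathcal B\Vert_m < 0$ supplied by the log Fano hypothesis $C_1(X,\tri) > 0$; this strictness is the reason the two hypotheses of the theorem are exactly what is needed.
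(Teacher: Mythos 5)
Your proposal is correct and is essentially the paper's own proof: your ``clean identity'' is precisely the paper's regrouping of \eqref{Algebraic log Donaldson-Futaki} as $(\mathcal{L}'-E)^n\cdot\bigl[-\tfrac{\ul S_\beta}{n(n+1)}(\mathcal{L}'+nE)+p^*\bigl(\tfrac{\ul S_\beta}{n}L+K_X+(1-\beta)\tri\bigr)+K_e\bigr]$, followed by the same appeal to items $(1)$ and $(5)$ of \lemref{conditions} and to $\ul S_\beta>0$ (forced by $C_1(X,\tri)>0$) for the strictly negative norm term. The step you flag as the ``main obstacle'' is exactly the one the paper settles by citation rather than by a new construction: log $K$-semistability already implies that $(X,(1-\beta)\tri)$ is log canonical by Odaka--Sun, so only the lc-but-not-klt case remains, and there the proof of \cite[Theorem 6.1]{MR3403730} produces a flag ideal cosupported on a non-klt centre with $K_e=0$ (one cannot do better than zero, by item $(2)$ of \lemref{conditions}, so your insistence on the strict inequality $-\tfrac{\ul S_\beta}{n}\Vert\mathcal{B}\Vert_m<0$ is exactly the right mechanism), whence $DF<(\mathcal{L}'-E)^n\cdot K_e=0$ as in your boundary-case analysis.
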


Note also that the log $K$-semistability of $((X,L);\tri)$ implies that $(X, (1 - \beta) \tri)$ is log canonical, by a result of Odaka--Sun who proved a more general version \cite[Theorem 6.1]{MR3403730}. They also proved that $(X, (1 - \beta) \tri)$ is Kawamata log terminal if $L$ is a positive multiple of $-K_X - (1 - \beta )\tri $; the result above relaxes this hypothesis on $L$.

\begin{proof}
We essentially repeat the argument in \cite[Proof of Theorem 6.1]{MR3403730} and \cite[Proof of Theorems 3.28 and 3.30]{MR3564626}, which the reader is referred to for the details, and only provide a brief summary here for the reader's convenience.

The formula \eqref{Algebraic log Donaldson-Futaki} gives
\begin{align*}
DF (\mathcal B , \mathcal{B}_{\tri} ,\mathcal{L}' -E , \beta )
 &=(\mathcal{L}'-E)^n\cdot [-\frac{\ul S_\beta}{n(n+1)} (\mathcal{L}'+nE)\\
 &+\frac{\ul S_\beta}{n}\mathcal{L}'+\pi^* ((K_X+(1-\beta)\tri) \times \mathbb{P}^1) +K_e].
\end{align*}
Then, applying $(1)$ and $(5)$ in \lemref{conditions} with the stated assumptions and noting $\frac{\ul S_\beta}{n}\mathcal{L}'+\pi^* ((K_X+(1-\beta)\tri) \times \mathbb{P}^1) = p^* (\frac{\ul S_\beta}{n} \Omega + K_X+(1-\beta)\tri)$, we get
\begin{align*}
DF (\mathcal B , \mathcal{B}_{\tri} ,\mathcal{L}' -E , \beta )
<(\mathcal{L}'-E)^n\cdot K_e.
\end{align*}
So, the proof is completed once there exists a flag ideal $\mathcal I$ with $K_e=0$ when $(X , (1- \beta) \tri )$ is not Kawamata log terminal, which can be done exactly as in \cite[Proof of Theorem 6.1]{MR3403730}.
\end{proof}

%%%%%%%%%%%%%%%%%%%%%%%%%%%%%%%%%%%%%%%%%%%%%%%%%%%%%%%%%%%%%%%%%%%%%%%%%%%%%%%%%%%%%%%%%%%%%%%%%%%%%%%%%%%%%%%%%%%%%%%%%%%%%%%%%%%%%%%%%%%%%%%%%%%%%%%%%%%%%%%%%%%%%%%%

%\begin{thm}\label{log canonical inverse}
%Suppose that \begin{align*}
%C_1(X,D)< 0,\quad \ul S_\beta L+n[K_X+(1-\beta)L_D]\leq 0.
%\end{align*} If $((X,L);\tri)$ is uniformly log $K$-stable, then $(X,\tri)$ is log canonical.
%\end{thm}
%\begin{proof}
%
%\end{proof}

\subsection{Uniform log $K$-stability for standard polarised pair}
We will apply \eqref{Algebraic log Donaldson-Futaki} to check uniform log $K$-stability for standard polarised pair $((X,L);D,m)$, which was defined in Definition \ref{canonical polarised pair}. In particular, we assume that $X$ and $D$ are both smooth in the rest of this section, and moreover $D$ is chosen to be sitting in the linear system $|mL|$.

For uniform log $K$-stability, the following \thmref{topological conditions to beta conditions} extends \thmref{small angle existence} for existence of cscK cone metrics.
We will also present results related to Question \ref{cone angle comparison}. We have a lower bound of $\beta_{ulKs}$, which was defined in Definition \ref{maximal uniformly log K-stable}, i.e.~the maximal angle $\beta_{ulKs}$ such that $((X,L);D)$ is uniformly log $K$-stable with angle $2\pi\beta$.

\begin{thm}\label{topological conditions to beta conditions}
Let $((X,L);D,m)$ be a standard polarised pair (Definition \ref{canonical polarised pair}). 
Suppose that 
\begin{align}\label{s1 m n}
\ul S_1\leq mn\text{ and }(n+1)\lambda\leq \ul S_1+m.
\end{align} %\todo[color=green]{A new condition is assumed to guarantee $0\leq \beta$.}

Then $((X,L);D,m)$ is uniformly log $K$-stable with cone angle $2\pi\beta$ satisfying the constraint 
\begin{align}\label{angle conditions 2 equation}
1-\frac{(n+1)\lambda-\ul S_1}{m}\leq \beta<\beta_u.
\end{align}
	Here, $\beta_u$ was defined in Definition \ref{beta u} and we fixed the constants $\lambda$ and $\Lambda$ as in Definition \ref{Lambda} so that it satisfies $ \lambda \Omega\leq C_1(X) \le \Lambda \Omega$.%was defined in Definition \ref{Lambda}. 

Consequently, we also have
\begin{align*}
\beta_{ulKs}\geq \beta_u.
\end{align*}

\end{thm}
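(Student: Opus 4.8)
The plan is to reduce everything to the coercivity criterion \thmref{properness criteria thm}: it suffices to produce, for each $\beta$ in the asserted range, a constant $\eta\ge 0$ satisfying the cohomological Condition \eqref{cohomology conditions Kahler}. Once this is done, \thmref{properness criteria thm} simultaneously yields that $\mathrm{Aut}_0((X,L);D)$ is trivial, that a cscK cone metric exists, and that $((X,L);D)$ is uniformly log $K$-stable with angle $2\pi\beta$; the concluding estimate $\beta_{ulKs}\ge\beta_u$ is then immediate by taking the supremum over the admissible $\beta$. This is the same mechanism that proves \thmref{thmalphainv} and \thmref{small angle existence}, so the real content of the present theorem is the verification of \eqref{cohomology conditions Kahler} on the sharper interval \eqref{angle conditions 2 equation}, upgrading the existence output of \thmref{small angle existence} to uniform log $K$-stability.

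First I would substitute the standard-pair identities of \lemref{lmlgmprp preliminary}, namely $C_1(X,D)=C_1(X)-m(1-\beta)C_1(L)$ and $\ul S_\beta=\ul S_1-mn(1-\beta)$, together with the lower bound $\alpha_\beta\ge\min\{m\beta,\alpha(L),m\alpha(L_D\vert_D)\}$ from \eqref{log alpha invariant lower bound}. Writing $\tilde\alpha=\min\{\alpha(L),m\alpha(L_D\vert_D)\}$ and invoking the nef thresholds $\lambda C_1(L)\le C_1(X)\le\Lambda C_1(L)$, each clause of \eqref{cohomology conditions Kahler} turns into a scalar inequality: clause (ii) becomes $\eta>\Lambda-m(1-\beta)$, clause (iii) becomes $\eta>\ul S_1-(n-1)\lambda-m(1-\beta)$, and clause (i) becomes $0\le\eta<\tfrac{n+1}{n}\alpha_\beta$. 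Mirroring the computation for case 2 of \thmref{small angle existence}, I would take $\eta$ near the slope value $\tfrac{\ul S_\beta}{n}+\epsilon=\tfrac{\ul S_1}{n}-m(1-\beta)+\epsilon$ with $\epsilon>0$ small, split clause (i) into the regimes $m\beta\le\tilde\alpha$ and $m\beta\ge\tilde\alpha$ forced by the minimum in the alpha bound, and observe that in the latter regime the constraint $\eta<\tfrac{n+1}{n}\tilde\alpha$ is exactly what produces the upper endpoint $\beta<\beta_u$. The hypotheses \eqref{s1 m n} are arranged so that, given $\ul S_1\le mn$ and $\lambda\le\tfrac{\ul S_1}{n}$, the quantity $1-\tfrac{(n+1)\lambda-\ul S_1}{m}$ is nonnegative, and so that clause (iii) holds throughout the interval once $\beta\ge 1-\tfrac{(n+1)\lambda-\ul S_1}{m}$.

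The step I expect to be the main obstacle is reconciling the two nef thresholds. Clause (iii) is controlled by $\lambda$ and feeds cleanly into the lower endpoint, but clause (ii) is controlled by $\Lambda$, and one must check that $\Lambda-m(1-\beta)$ never overtakes $\tfrac{n+1}{n}\alpha_\beta$ along the whole of \eqref{angle conditions 2 equation} — this is the delicate compatibility between the alpha-invariant side (clause (i)) and the positivity of $C_1(X,D)$ (clause (ii)), and it is precisely here that the smallness of $m(1-\beta)$ dictated by \eqref{angle conditions 2 equation} and the relation $\tfrac{\ul S_1}{n}\le\Lambda$ must be exploited (the situation simplifying markedly in the balanced regime $\Lambda=\tfrac{\ul S_1}{n}$, where clause (ii) reduces to the same bound as clause (iii)). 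Having secured all three scalar inequalities for small $\epsilon$ and all $\beta$ in \eqref{angle conditions 2 equation}, \thmref{properness criteria thm} applies verbatim, proving uniform log $K$-stability on the interval and hence $\beta_{ulKs}\ge\beta_u$.
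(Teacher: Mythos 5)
There is a genuine gap, and it sits exactly at the point you flag as ``the main obstacle'': clause (ii) of Condition \eqref{cohomology conditions Kahler} cannot be verified under the hypotheses \eqref{s1 m n}, because those hypotheses impose no control whatsoever on the upper threshold $\Lambda$. Your route needs an $\eta$ with $\Lambda - m(1-\beta) < \eta < \frac{n+1}{n}\alpha_\beta$. Take $\beta$ near the upper endpoint: when $\beta_u<1$ one has $m(1-\beta)\downarrow \frac{\ul S_1}{n}-\frac{n+1}{n}\tilde\alpha$ with $\tilde\alpha=\min\{\alpha(L),m\alpha(L_D\vert_D)\}$, and the only available estimate on the log alpha invariant, namely \eqref{log alpha invariant lower bound}, gives $\alpha_\beta\geq\tilde\alpha$ in this regime (since $\ul S_1\le mn$ forces $m\beta>\tilde\alpha$ there). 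The compatibility of clauses (i) and (ii) then reduces to $\Lambda - \frac{\ul S_1}{n} + \frac{n+1}{n}\tilde\alpha < \frac{n+1}{n}\tilde\alpha$, i.e.\ $\Lambda < \frac{\ul S_1}{n}$, which contradicts the universal inequality $\Lambda \geq \frac{\ul S_1}{n}$ (immediate from $C_1(X)\le\Lambda C_1(L)$ and the definition of $\ul S_1$). So your proposed fix --- ``exploiting the relation $\frac{\ul S_1}{n}\le\Lambda$'' --- goes the wrong way: that relation is precisely what kills clause (ii), and the ``balanced regime'' $\Lambda=\frac{\ul S_1}{n}$ in which you note the problem disappears is an extra hypothesis the theorem does not make. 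In effect your argument reproves case 2 of \thmref{small angle existence}, whose hypothesis \eqref{given m} contains exactly the missing condition $\Lambda\le\frac{\ul S_1}{n}$; it does not prove \thmref{topological conditions to beta conditions}, whose whole point is to dispense with any hypothesis on $\Lambda$.

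The paper's proof avoids this by not going through Condition \eqref{cohomology conditions Kahler} or \thmref{properness criteria thm} at all. It verifies instead the strictly weaker algebro-geometric Condition \eqref{cohomology conditions Kahler variety weaker}, which has no analogue of clause (ii): its nefness requirement, that $-\ul S_\beta C_1(L)+(n+1)C_1(X,D)$ be nef, is controlled by the lower threshold $\lambda$ alone. The chain is \eqref{angle conditions 2 equation} $\Rightarrow$ \eqref{angle conditions} $\Rightarrow$ \eqref{cohomology conditions Kahler variety weaker}, via \lemref{angle conditions 2} and \lemref{conditions for mL}, followed by \thmref{log canonical}, which rests on the Odaka--Sun blow-up formalism for the log Donaldson--Futaki invariant rather than on the existence of a cscK cone metric. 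This distinction is not cosmetic: the remark following \thmref{topological conditions to beta conditions} observes that the mismatch between \eqref{s1 m n} and \eqref{given m} may produce standard polarised pairs that are uniformly log $K$-stable without a (known) cscK cone metric, so any proof factoring through existence --- as yours does --- cannot establish the theorem in its stated generality.
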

\begin{proof}
$(X , (1 - \beta ) D )$ being log canonical necessarily implies $1 - \beta \le 1$, and hence $\beta \ge 0$. We also need $1 - \beta \ge 0$ since $D$ is effective. From \eqref{angle conditions 2 equation}, we have $\beta\geq 0$ by using $(n+1) \lambda\leq \ul S_1+m$, and $\beta\leq 1$ by the definition of $\beta_u$.

Then it is sufficient to check the condition \eqref{cohomology conditions Kahler variety weaker} in \thmref{log canonical}.
From the following two lemmas, \lemref{conditions for mL} and \lemref{angle conditions 2}, we have
$$\text{\eqref{angle conditions 2 equation}} \Longrightarrow \text{\eqref{angle conditions}} \Longrightarrow \text{\eqref{cohomology conditions Kahler variety weaker}}.$$ Therefore, \thmref{log canonical} implies the required result.
\end{proof}
\begin{rem}
It is interesting to compare conditions \eqref{s1 m n}, \eqref{angle conditions 2 equation} in \thmref{topological conditions to beta conditions} with the conditions \eqref{given m} and \eqref{case 2 angle} in \thmref{small angle existence}. It may suggest certain chance to find some parameters such that some standard polarised pair $((X,L);D,m)$ does not admit s cscK cone metric, but is uniformly log $K$-stable with angle $2\pi\beta$.
\end{rem}

We need to interpret the topological condition \eqref{cohomology conditions Kahler variety weaker} in \thmref{log canonical} into a cone angle constraint.
\begin{lem}\label{conditions for mL}
The condition \eqref{cohomology conditions Kahler variety weaker} is deduced from the following bound of the cone angle $\beta$,
\begin{align}\label{angle conditions}
1-\frac{(n+1)\lambda-\ul S_1}{m}\leq \beta<1-\frac{\ul S_1}{mn}+\frac{n+1}{mn}\alpha_\beta.
\end{align}
\end{lem}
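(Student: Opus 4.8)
The plan is to translate the two requirements comprising Condition \eqref{cohomology conditions Kahler variety weaker} into the two ends of the interval \eqref{angle conditions}, using only the cohomological formulas of \lemref{lmlgmprp preliminary} and the defining property of the nef threshold $\lambda$. Throughout I would substitute $\ul S_\beta=\ul S_1-mn(1-\beta)$ and $C_1(X,D)=C_1(X)-m(1-\beta)C_1(L)$ from \lemref{lmlgmprp preliminary}.

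First I would handle the scalar-curvature bound $\ul S_\beta<(n+1)\alpha_\beta$. After the substitution this reads $\ul S_1-mn(1-\beta)<(n+1)\alpha_\beta$, and dividing by $mn$ and rearranging produces verbatim the strict upper bound $\beta<1-\frac{\ul S_1}{mn}+\frac{n+1}{mn}\alpha_\beta$ of \eqref{angle conditions}. This step is a one-line manipulation, and is in fact an equivalence.

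Next I would address the nefness of $-\ul S_\beta C_1(L)+(n+1)C_1(X,D)$. Collecting the $C_1(X)$ and $C_1(L)$ contributions gives the class $(n+1)C_1(X)-[\ul S_1+m(1-\beta)]C_1(L)$. The key manoeuvre is to split this as $(n+1)\bigl(C_1(X)-\lambda C_1(L)\bigr)+\bigl[(n+1)\lambda-\ul S_1-m(1-\beta)\bigr]C_1(L)$. By Definition \ref{Lambda} the class $C_1(X)-\lambda C_1(L)$ is nef, so the first summand is nef; the second summand is a nonnegative multiple of the ample (hence nef) class $C_1(L)$ exactly when $(n+1)\lambda-\ul S_1-m(1-\beta)\ge0$, which rearranges to the lower bound $\beta\ge1-\frac{(n+1)\lambda-\ul S_1}{m}$ of \eqref{angle conditions}. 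Since a sum of nef classes is nef, this yields the second part of Condition \eqref{cohomology conditions Kahler variety weaker}.

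The argument is elementary linear bookkeeping, so there is no genuine analytic difficulty. The only point I would watch is the orientation of the $\lambda$-inequality: one must use that $\lambda$ is the nef threshold, so that $C_1(X)-\lambda C_1(L)$ is nef as a class rather than a mere numerical bound, and the chosen splitting is engineered precisely so that this nef class enters with the positive coefficient $n+1$ while all the $\beta$-dependence is pushed into the scalar coefficient of $C_1(L)$.
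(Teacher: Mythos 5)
Your proposal is correct and follows essentially the same route as the paper: both substitute the formulas of \lemref{lmlgmprp preliminary}, note that $\ul S_\beta<(n+1)\alpha_\beta$ is equivalent (after rearranging) to the upper bound on $\beta$, and extract the lower bound from the nefness requirement via the nef threshold $\lambda$. The only cosmetic difference is that the paper verifies nefness by showing $-\ul S_\beta C_1(L)+(n+1)C_1(X,D)+\eta C_1(L)$ is positive for every $\eta>0$ and letting $\eta\to 0$, whereas you write the class directly as $(n+1)\bigl(C_1(X)-\lambda C_1(L)\bigr)$ plus a nonnegative multiple of $C_1(L)$, invoking closedness of the nef cone.
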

\begin{proof}
We recall by \lemref{lmlgmprp preliminary} that $C_1(X,D)=C_1(X)- m (1-\beta) C_1(L)$ and 
\begin{align*}
\ul{S}_\beta=\ul S_1-mn(1-\beta), \quad \ul S_1=n \frac{\int_X C_1 (X) C_1(L)^{n-1}}{\int_X C_1(L)^n}.
\end{align*}
We then check the condition \eqref{cohomology conditions Kahler variety weaker}.
The first one becomes,
\begin{align*}
\ul{S}_\beta=\ul S_1-mn(1-\beta)<(n+1)\alpha_\beta,
\end{align*}
which means 
\begin{align*}
\beta<1-\frac{\ul S_1}{mn}+ \frac{n+1}{mn}\alpha_\beta.
\end{align*}
While the second condition says for all $\eta>0$,
\begin{align*}
(n+1)[C_1(X)- m (1-\beta) C_1(L) ]+\eta C_1(L) > [\ul S_1-mn(1-\beta)] C_1(L),
\end{align*} that is
\begin{align}\label{beta form}
 [m (1-\beta) +\ul S_1-\eta] C_1(L) < (n+1)C_1(X).
\end{align} 
With the lower bound of $C_1(X)$,
$
C_1(X)\geq \lambda\cdot C_1(L),
$ 
%\todo[color=green]{A mistake is corrected, we need $\lambda$ instead of $\Lambda$.}\todo{OK.}
the inequality above is strengthened to 
\begin{align*}
\beta > 1-\frac{(n+1)\lambda-\ul S_1+\eta}{m}.
\end{align*} Thus the lower bound of $\beta$ is obtained, as $\eta\rightarrow 0$.
\end{proof}
\begin{rem}
According to the definition \eqref{Lambda}, we have $n\Lambda\geq \ul S_1$, so the left-hand side of \eqref{angle conditions}
\begin{align*}
1-\frac{(n+1)\lambda-\ul S_1}{m}\leq 1-\frac{(n+1)\lambda-n\Lambda}{m}.
\end{align*}
\end{rem}
\begin{rem}
If $C_1(X)\leq 0$, the constant $\lambda$ is non-positive and we have
\begin{align*}
\beta\geq 1+\frac{\ul S_1}{m}.
\end{align*}
In the case when $C_1(X)>0$, we multiply \eqref{beta form} with $C_1(L)^{n-1}$ and make use of the definition of $\ul S_1$ to see
\begin{align*}
\beta\geq1-\frac{\ul S_1}{mn}.
\end{align*} 
\end{rem}

\begin{lem}\label{angle conditions 2}
Suppose $\ul S_1\leq mn$. Then the condition \eqref{angle conditions 2 equation} implies the condition \eqref{angle conditions}.
\end{lem}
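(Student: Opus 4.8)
The plan is to observe first that the two displayed constraints \eqref{angle conditions 2 equation} and \eqref{angle conditions} have \emph{identical} lower endpoints, both equal to $1-\frac{(n+1)\lambda-\ul S_1}{m}$; consequently there is nothing to check about the lower bound, and the whole lemma reduces to showing that the upper bound $\beta<\beta_u$ forces the (a priori weaker-looking) bound $\beta<1-\frac{\ul S_1}{mn}+\frac{n+1}{mn}\alpha_\beta$. The conceptual point is that $\beta_u$ from Definition \ref{beta u} is phrased using the explicit quantity $\tilde\alpha:=\min\{\alpha(L),m\alpha(L_D\vert_D)\}$, whereas the target bound involves the log alpha invariant $\alpha_\beta$ itself; the bridge between them is the elementary estimate $\alpha_\beta\geq\min\{m\beta,\tilde\alpha\}$ recorded in \eqref{log alpha invariant lower bound}.

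I would then split into two cases according to which term realises $\min\{m\beta,\tilde\alpha\}$. If $m\beta\leq\tilde\alpha$, then $\alpha_\beta\geq m\beta$, so the right-hand side of the target inequality is at least $1-\frac{\ul S_1}{mn}+\frac{n+1}{n}\beta$; the desired $\beta<1-\frac{\ul S_1}{mn}+\frac{n+1}{n}\beta$ rearranges to $\frac{1}{n}\beta>\frac{\ul S_1-mn}{mn}$, which holds because the hypothesis $\ul S_1\leq mn$ makes the right-hand side nonpositive while $\beta>0$. Notably this case uses only $\ul S_1\leq mn$ and positivity of the cone angle, not the bound $\beta<\beta_u$ at all.

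In the complementary case $m\beta>\tilde\alpha$, one has $\alpha_\beta\geq\tilde\alpha$, so the right-hand side of the target inequality is at least $1-\frac{\ul S_1}{mn}+\frac{n+1}{n}\frac{\tilde\alpha}{m}$; since $\beta<\beta_u\leq\frac{n+1}{n}\frac{\tilde\alpha}{m}+1-\frac{\ul S_1}{mn}$ directly from Definition \ref{beta u}, the target bound follows at once. Combining the two cases yields the implication \eqref{angle conditions 2 equation} $\Rightarrow$ \eqref{angle conditions}.

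I do not anticipate a genuine obstacle here: the assertion is essentially a bookkeeping comparison of two explicit thresholds, and the only real idea is the case split needed to turn the estimate $\alpha_\beta\geq\min\{m\beta,\tilde\alpha\}$ into usable form. The one point requiring a little care is the strictness in the first case when $\ul S_1=mn$, which is guaranteed by the positivity of the cone angle $2\pi\beta$.
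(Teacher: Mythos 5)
Your proof is correct and takes essentially the same route as the paper: the paper likewise reduces the lemma to the upper bound, invokes the estimate $\alpha_\beta \ge \min\{ m\beta, \alpha(L), m\alpha(L_D\vert_D)\}$ from \eqref{log alpha invariant lower bound}, and runs the same two-case split, with the case $m\beta \le \tilde\alpha$ settled by $\ul S_1 \le mn$ together with $\beta>0$, and the case $m\beta > \tilde\alpha$ settled by $\beta < \beta_u$. The only cosmetic difference is that the paper rewrites the target inequality as $\frac{n}{n+1}\mu < \alpha_\beta$ using the slope $\mu$ of \eqref{dfslopemu} and defers the case analysis to the proof of Theorem \ref{small angle existence}, whereas you carry it out explicitly.
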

\begin{proof}
We only need to check the right-hand side of \eqref{angle conditions} .
Recall that the slope $\mu$ defined in \eqref{dfslopemu} satisfies $\mu=\frac{\ul S_1}{n}-m(1-\beta)$. Then \eqref{angle conditions} becomes
\begin{align}\label{mu bound}
-n\lambda+\ul S_1\leq \frac{n}{n+1}\mu<\alpha_\beta.
\end{align}
The lower bound of the log alpha invariant \eqref{log alpha invariant lower bound} gives 
\begin{align*}
		\alpha_\beta \geq  \min\{ m\beta, \alpha(L), m\alpha(L_D\vert_D) \}.
\end{align*}
Under the assumption of $m$, i.e. $\ul S_1\leq mn$, we have $m\beta>\frac{n}{n+1}\mu$. Therefore, we obtain the conditions in \thmref{topological conditions to beta conditions}, as in the argument in \thmref{small angle existence}.
\end{proof}
At last, we link \thmref{small angle existence} and \thmref{topological conditions to beta conditions} to known results on K\"ahler--Einstein cone metrics on Fano manifolds.
\begin{cor}\label{KE beta uniform}Let $((X,L);D,m)$ be a standard polarised pair with $m\geq 1$. 
Suppose $C_1(X)>0$ and $L=-K_X$, i.e.~$X$ is Fano. If $0<\beta<\beta_u$ with 
\begin{align*}
\beta_{u}(-K_X)=\min\{1, 1-\frac{1}{m}+\frac{n+1}{mn}\min\{ \alpha(-K_X), \alpha(L_D\vert_D) \}\},
\end{align*} then 
\begin{enumerate}
\item $((X,-K_X);D,m)$ is uniformly log $K$-stable with angle $2\pi\beta$. Moreover, the maximal existence angle $\beta_{cscKc}$ satisfies that
\begin{align*}
\beta_{cscKc}\geq \beta_{u}.
\end{align*}
\item The log $K$-energy is proper.
$((X,-K_X);D,m)$ admits a cscK cone metric, which is a K\"ahler--Einstein cone metric satisfying
\begin{align}\label{KE cone metric}
Ric(\om)=\mu\cdot\om+2\pi(1-\beta)[D], \quad \mu=1-m(1-\beta).
\end{align}

\end{enumerate}
\end{cor}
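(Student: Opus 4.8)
The plan is to specialise the two results of this section, \thmref{small angle existence} and \thmref{topological conditions to beta conditions}, to the Fano polarisation $L=-K_X$, and then to promote the cscK cone metric so obtained to a K\"ahler--Einstein cone metric using the proportionality of the relevant cohomology classes.

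First I would record the numerics forced by $L=-K_X$. Since then $C_1(L)=C_1(X)$, Definition \ref{Lambda} gives $\lambda=\Lambda=1$, and $\ul S_1=n\frac{\int_X C_1(X) C_1(L)^{n-1}}{\int_X C_1(L)^n}=n$. Substituting $\ul S_1=n$ into Definition \ref{beta u} reduces $\beta_u$ to the expression displayed in the statement. With these values the hypotheses \eqref{given m} of \thmref{small angle existence} become $n\le mn$ and $1\le 1\le 1+m(1-\beta_u)$, all valid for every $m\ge 1$ (the last because $\beta_u\le 1$); likewise \eqref{s1 m n}, namely $\ul S_1=n\le mn$ and $(n+1)\lambda=n+1\le \ul S_1+m=n+m$, holds for $m\ge 1$.

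Next I would apply \thmref{small angle existence} in its second case. Its hypotheses being met, it yields a cscK cone metric of angle $2\pi\beta$ for every $0<\beta\le\beta_u$; this proves the existence assertion of item $(2)$ and the bound $\beta_{cscKc}\ge\beta_u$ of item $(1)$. The properness of the log $K$-energy then follows from \thmref{cone properness conjecture}, equivalently from the $J$-coercivity in \thmref{properness criteria thm}. For the uniform log $K$-stability in item $(1)$, I would use that the proof of \thmref{small angle existence} proceeds by verifying the cohomological Condition \eqref{cohomology conditions Kahler} for each $0<\beta\le\beta_u$; hence \thmref{properness criteria thm} applies directly and gives both $\mathrm{Aut}_0((X,-K_X);D)=0$ and uniform log $K$-stability on the whole range $0<\beta<\beta_u$. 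On the sub-range $1-\tfrac1m\le\beta<\beta_u$ this also drops out of \thmref{topological conditions to beta conditions}, whose lower bound $1-\frac{(n+1)\lambda-\ul S_1}{m}$ equals $1-\tfrac1m$ here.

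It remains to identify the metric as Einstein, i.e.~to establish \eqref{KE cone metric}. The key point is proportionality: by \lemref{lmlgmprp preliminary} we have $C_1(X,D)=C_1(X)-m(1-\beta)C_1(L)=\mu\,C_1(L)$ with $\mu=1-m(1-\beta)$, and accordingly $\ul S_\beta=n\mu$. Writing $\om=\om_{cscK}$, the current $Ric(\om)-\mu\om-2\pi(1-\beta)[D]$ is cohomologically trivial on $X$, so the $\p\bar\p$-lemma furnishes a function $f$ with $Ric(\om)-\mu\om-2\pi(1-\beta)[D]=i\p\bar\p f$; restricting to the regular part $X\setminus D$, where the divisor current drops out, and applying $\tr_\om$ gives $\tri_\om f=S(\om)-n\mu=0$, because $S(\om)=\ul S_\beta=n\mu$ there. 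Thus $f$ is constant and \eqref{KE cone metric} follows, while uniqueness of this metric within the class is then \thmref{uniqueness of cscK cone metric}. The main obstacle, and the only step not already packaged in the cited theorems, is to justify this last integration-by-parts/maximum-principle argument across the cone singularity along $D$: one must check that $f$ contributes no boundary term near $D$ when passing from $i\p\bar\p f$ to $\tri_\om f$, which I expect to control through the polyhomogeneous regularity of the cscK cone metric ($\om_{cscK}\in C^{4,\alpha,\beta}$ with the asymptotics of \cite{MR3911741}) as developed in \cite{MR4020314}.
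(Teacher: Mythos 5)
Your proposal is correct, and the numerics you record ($\lambda=\Lambda=1$, $\ul S_1=n$, hence $\mu=1-m(1-\beta)$ and $\ul S_\beta=n\mu$, with \eqref{given m} and \eqref{s1 m n} automatic for $m\ge 1$) are exactly the paper's starting point; the divergence is in how the two conclusions are then derived. For the uniform log $K$-stability in item (1), the paper does not route through existence as you do: it specialises \thmref{topological conditions to beta conditions}, i.e.\ the intersection-theoretic estimate of the log Donaldson--Futaki invariant, rewrites the constraint \eqref{angle conditions} as $0\le \frac{n}{n+1}\mu<\alpha_\beta$, and converts this into the stated range of $\beta$ via the alpha-invariant lower bound \eqref{log alpha invariant lower bound} and $m\beta\ge\frac{n}{n+1}\mu$; properness for item (2) is then obtained separately by checking that \eqref{cohomology conditions Kahler variety weaker} implies Condition \eqref{cohomology conditions Kahler} with the choice $\eta=\mu+\eps$, after which \thmref{properness criteria thm} gives existence. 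Your route (verify Condition \eqref{cohomology conditions Kahler} for each $0<\beta<\beta_u$ and invoke \thmref{properness criteria thm}, whose statement already contains triviality of the automorphism group, existence, \emph{and} uniform log $K$-stability) is legitimate, and in fact it covers the sub-range $0<\beta<1-\frac{1}{m}$, where $\mu<0$ and the constraint \eqref{angle conditions 2 equation} of \thmref{topological conditions to beta conditions} is vacuous, more transparently than the paper's own wording; the trade-off is that your stability is a consequence of existence, whereas the paper's argument establishes it by purely algebro-geometric means, which is the more informative direction for the log YTD picture. The second divergence is the Einstein identification \eqref{KE cone metric}: the paper disposes of it in one line via the uniqueness theorem (\thmref{uniqueness of cscK cone metric}), identifying the cscK cone metric with the K\"ahler--Einstein cone metric known to exist in this class from the cited literature, while you prove the stronger statement that \emph{any} cscK cone metric in a proportional class is Einstein, via $C_1(X,D)=\mu C_1(L)$, the $\p\bar\p$-lemma and harmonicity of the potential $f$ on $X\setminus D$. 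That argument is sound, but it carries exactly the burden you flag: concluding that the ($\om$-harmonic, bounded) function $f$ is constant requires control across the cone singularity, using the $C^{4,\alpha,\beta}$ regularity and the asymptotics of \cite{MR3911741}; the paper's appeal to uniqueness avoids this analysis entirely, at the cost of importing an external existence result for K\"ahler--Einstein cone metrics.
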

\begin{proof}Under the assumption, we have $\Lambda=\lambda=1$ and $\ul S_1=n$. Then $\ul S_\beta=n-mn(1-\beta)=n\mu$ and the cone angle constrain \eqref{angle conditions} becomes 
\begin{align*}
1-\frac{1}{m}\leq \beta<1-\frac{1}{m}+\frac{n+1}{mn}\alpha_\beta.
\end{align*} 
We get an equivalent inequality 
\begin{align*}
0\leq \frac{n}{n+1}\mu<\alpha_\beta.
\end{align*} 
By \eqref{log alpha invariant lower bound}, we know 
\begin{align*}
		\alpha_\beta \geq  \min\{ m\beta, \alpha(-K_X), m\alpha(L_D\vert_D) \}.
\end{align*}
Clearly, $m\beta\geq \frac{n}{n+1}\mu$. So \thmref{topological conditions to beta conditions} implies that
if 
\begin{align*}
\min\{ \alpha(-K_X), m\alpha(L_D\vert_D) \}> \frac{n}{n+1}\mu,
\end{align*} then $((X,-K_X);D,m)$ is uniformly log $K$-stable. The first conclusion follows from rewriting this condition in $\beta$.

The properness in second conclusion follows from Proposition \ref{properness criteria}, if we show that the condition \ref{cohomology conditions Kahler variety weaker} implies the Condition \eqref{cohomology conditions Kahler}. Actually, under the assumption, we have $C_1(X,D)=\mu C_1(X)$. Then \eqref{cohomology conditions Kahler variety weaker} becomes
	\begin{align*}
	n\mu<(n+1)\alpha_\beta,\quad -n\mu + (n+1)\mu \geq 0.
	\end{align*}
 While, the topological Condition \eqref{cohomology conditions Kahler} reads
$$
	0\leq \eta<\frac{n+1}{n}\alpha_\beta,\quad
	 \mu <\eta,\quad
	 (n\mu-\eta)<(n-1)\mu .
$$
We could choose $\eta=\mu+\eps$ with some sufficiently small $\eps>0$ and versify that all these three inequalities are satisfied. Thus, \eqref{cohomology conditions Kahler variety weaker} gives Condition \eqref{cohomology conditions Kahler}.

Furthermore, the existence \thmref{properness criteria thm} implies there exists cscK cone metric in $\Om$, see also \thmref{small angle existence}. Actually, it is K\"ahler--Einstein cone metric \eqref{KE cone metric} by uniqueness, \thmref{uniqueness of cscK cone metric} (see \cite{MR3761174} and references therein for more results on K\"ahler--Einstein cone metric).
\end{proof}

\begin{cor}\label{KE cone uniform stability}
Suppose that there exists $D \in |-K_X|$ which is smooth, and take $m=1$ in \corref{KE beta uniform}. Then
we have
$$0\leq \beta<\min\{1, \frac{n+1}{n}\min\{ \alpha(-K_X), \alpha(L_D\vert_D) \}\}.$$
Consequently, the maximal angle for uniform log $K$-stability (Definition \ref{maximal uniformly log K-stable}) has lower bound
\begin{align*}
\beta_{ulKs}\geq \min\{1, \frac{n+1}{n}\min\{ \alpha(-K_X), \alpha(L_D\vert_D) \}\}.
\end{align*} 
\end{cor}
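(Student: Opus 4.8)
The plan is to specialise \corref{KE beta uniform} to the case $m=1$; the statement is then a direct arithmetic consequence. First I would check that taking $m=1$ together with $L=-K_X$ forces $L_D = L^{\otimes m} = -K_X$, so that the hypothesis $D \in |mL| = |-K_X|$ coincides exactly with the assumption that $D$ be a smooth member of $|-K_X|$; all the remaining hypotheses of \corref{KE beta uniform} (that $X$ be Fano, $L=-K_X$, and $m \ge 1$) are then satisfied.

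Next I would substitute $m=1$ into the formula for $\beta_u(-K_X)$ recorded in \corref{KE beta uniform}, namely
\begin{align*}
\beta_u(-K_X) = \min\left\{1,\ 1 - \frac{1}{m} + \frac{n+1}{mn}\min\{\alpha(-K_X), \alpha(L_D\vert_D)\}\right\}.
\end{align*}
Since the term $1 - 1/m$ vanishes when $m=1$, this collapses to
\begin{align*}
\beta_u(-K_X) = \min\left\{1,\ \frac{n+1}{n}\min\{\alpha(-K_X), \alpha(L_D\vert_D)\}\right\},
\end{align*}
which is precisely the upper endpoint of the displayed range for $\beta$. Part (1) of \corref{KE beta uniform} then guarantees that $((X,-K_X);D,1)$ is uniformly log $K$-stable with angle $2\pi\beta$ for every $\beta$ in this range.

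Finally, recalling that $\beta_{ulKs}$ is defined in Definition \ref{maximal uniformly log K-stable} as the supremum of the cone angles for which uniform log $K$-stability holds, I would take the supremum over the admissible interval $[0, \beta_u(-K_X))$ to obtain the claimed lower bound $\beta_{ulKs} \ge \min\{1, \frac{n+1}{n}\min\{\alpha(-K_X), \alpha(L_D\vert_D)\}\}$. I do not anticipate any genuine obstacle here, since the result is a pure specialisation of the preceding corollary; the only point requiring a moment's care is the identification $L_D = -K_X$ at $m=1$, which ensures that the invariant $\alpha(L_D\vert_D)$ appearing in $\beta_u$ is read as the $\alpha$-invariant of $(-K_X)\vert_D$ on the divisor $D$.
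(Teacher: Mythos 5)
Your proposal is correct and matches the paper's treatment: the paper states this corollary without a separate proof precisely because it is the immediate specialisation of \corref{KE beta uniform} at $m=1$, where $1-\tfrac{1}{m}$ vanishes and $\beta_u(-K_X)$ collapses to $\min\{1, \tfrac{n+1}{n}\min\{\alpha(-K_X),\alpha(L_D\vert_D)\}\}$. Your additional check that $L_D = L^{\otimes m} = -K_X$ when $m=1$, and the passage to the supremum to bound $\beta_{ulKs}$, are exactly the (implicit) steps intended.
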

\begin{rem}
For K\"ahler--Einstein cone metric, the uniform log $K$-stability is also equivalent to the log $K$-stability, as a result of the resolution of the YTD conjecture for the K\"ahler--Einstein metric. In general, these two stabilities are not expected to be equivalent.
\end{rem}
\begin{rem}
Twisted K\"ahler--Einstein path $\om\in C_1(X)$ is defined to be
\begin{align*}
Ric(\om)=\beta\cdot\om+(1-\beta)\om_0,
\end{align*}with a smooth K\"ahler metric $\om_0\in C_1(X)$ and $0\leq \beta\leq 1$, 
the lower bound of the maximal existence time $\beta$ of the twisted K\"ahler--Einstein path was introduced in \cite{MR2771134} and its lower bound was given in \cite[Corollary 3.2]{MR3564626}.
\end{rem}
%%%%%%%%%%%%%%%%%%%%%%%%%%%%%%%%%%%%%%%%%%%%%%%%%%%%%%%%%%%%%%%%%%%%%%%%%%%%%%%%%%%%%%%%%%%%%%%%%%%%%%%%%%%%%%%%%%%%%%%%%%%%%%%%%%%%%%%%%%%%%%%%%%%%%%%%%%%%%%%%%%%%%%%%

%%%%%%%%%%%%%%%%%%%%%%%%%%%%%%%%%%%%%%%%%%%%%%%%%%%%%%%%%%%%%%%%%%%%%%%%%%%%%%%%%%%%%%%%%%%%%%%%%%%%%%%%%%%%%%%%%%%%%%%%%%%%%%%%%%%%%%%%%%%%%%%%%%%%%%%%%%%%%%%%%%%%%%%%

%%%%%%%%%%%%%%%%%%%%%%%%%%%%%%%%%%%%%%%%%%%%%%%%%%%%%%%%%%%%%%%%%%%%%%%%%%%%%%%%%%%%%%%%%%%%%%%%%%%%%%%%%%%%%%%%%%%%%%%%%%%%%%%%%%%%%%%%%%%%%%%%%%%%%%%%%%%%%%%%%%%%%%%%

%%%%%%%%%%%%%%%%%%%%%%%%%%%%%%%%%%%%%%%%%%%%%%%%%%%%%%%%%%%%%%%%%%%%%%%%%%%%%%%%%%%%%%%%%%%%%%%%%%%%%%%%%%%%%%%%%%%%%%%%%%%%%%%%%%%%%%%%%%%%%%%%%%%%%%%%%%%%%%%%%%%%%%%%

%%%%%%%%%%%%%%%%%%%%%%%%%%%%%%%%%%%%%%%%%%%%%%%%%%%%%%%%%%%%%%%%%%%%%%%%%%%%%%%%%%%%%%%%%%%%%%%%%%%%%%%%%%%%%%%%%%%%%%%%%%%%%%%%%%%%%%%%%%%%%%%%%%%%%%%%%%%%%%%%%%%%%%%%

%%%%%%%%%%%%%%%%%%%%%%%%%%%%%%%%%%%%%%%%%%%%%%%%%%%%%%%%%%%%%%%%%%%%%%%%%%%%%%%%%%%%%%%%%%%%%%%%%%%%%%%%%%%%%%%%%%%%%%%%%%%%%%%%%%%%%%%%%%%%%%%%%%%%%%%%%%%%%%%%%%%%%%%%

%%%%%%%%%%%%%%%%%%%%%%%%%%%%%%%%%%%%%%%%%%%%%%%%%%%%%%%%%%%%%%%%%%%%%%%%%%%%%%%%%%%%%%%%%%%%%%%%%%%%%%%%%%%%%%%%%%%%%%%%%%%%%%%%%%%%%%%%%%%%%%%%%%%%%%%%%%%%%%%%%%%%%%%%

%%%%%%%%%%%%%%%%%%%%%%%%%%%%%%%%%%%%%%%%%%%%%%%%%%%%%%%%%%%%%%%%%%%%%%%%%%%%%%%%%%%%%%%%%%%%%%%%%%%%%%%%%%%%%%%%%%%%%%%%%%%%%%%%%%%%%%%%%%%%%%%%%%%%%%%%%%%%%%%%%%%%%%%%

%%%%%%%%%%%%%%%%%%%%%%%%%%%%%%%%%%%%%%%%%%%%%%%%%%%%%%%%%%%%%%%%%%%%%%%%%%%%%%%%%%%%%%%%%%%%%%%%%%%%%%%%%%%%%%%%%%%%%%%%%%%%%%%%%%%%%%%%%%%%%%%%%%%%%%%%%%%%%%%%%%%%%%%%

\begin{bibdiv}
\begin{biblist}
\bib{Aoi}{article}{
   author={Aoi, Takahiro},
   title={Complete scalar-flat {K\"{a}hler} metrics on affine algebraic manifolds},
   journal={Math. Z.},
   volume={},
   date={2021},
   number={},
   pages={},
   issn={0025-5874},
   doi={10.1007/s00209-020-02684-5},
}

\bib{MR3716943}{article}{
    AUTHOR = {Auvray, Hugues},
     TITLE = {Asymptotic properties of extremal {K}\"{a}hler metrics of
              {P}oincar\'{e} type},
   JOURNAL = {Proc. Lond. Math. Soc. (3)},
  FJOURNAL = {Proceedings of the London Mathematical Society. Third Series},
    VOLUME = {115},
      YEAR = {2017},
    NUMBER = {4},
     PAGES = {813--853},
      ISSN = {0024-6115},
   MRCLASS = {53C55 (32Q15 53C21)},
  MRNUMBER = {3716943},
MRREVIEWER = {Martin de Borbon},
%       DOI = {10.1112/plms.12052},
%       URL = {https://doi.org/10.1112/plms.12052},
}

\bib{MR4238257}{article}{
   author={Auvray, Hugues},
   author={Ma, Xiaonan},
   author={Marinescu, George},
   title={Bergman kernels on punctured Riemann surfaces},
   journal={Math. Ann.},
   volume={379},
   date={2021},
   number={3-4},
   pages={951--1002},
   issn={0025-5831},
   review={\MR{4238257}},
%   doi={10.1007/s00208-020-01957-y},
}

\bib{MR3107540}{article}{
    AUTHOR = {Berman, Robert J.},
     TITLE = {A thermodynamical formalism for {M}onge-{A}mp\`ere equations,
              {M}oser-{T}rudinger inequalities and {K}\"{a}hler-{E}instein
              metrics},
   JOURNAL = {Adv. Math.},
  FJOURNAL = {Advances in Mathematics},
    VOLUME = {248},
      YEAR = {2013},
     PAGES = {1254--1297},
      ISSN = {0001-8708},
   MRCLASS = {32W20 (32Q20 35A15 58E30)},
  MRNUMBER = {3107540},
MRREVIEWER = {Vincent Guedj},
    %   DOI = {10.1016/j.aim.2013.08.024},
    %   URL = {https://doi.org/10.1016/j.aim.2013.08.024},
}

\bib{MR3461370}{article}{
    AUTHOR = {Berman, Robert J.},
     TITLE = {K-polystability of {${\Bbb Q}$}-{F}ano varieties admitting
              {K}\"{a}hler-{E}instein metrics},
   JOURNAL = {Invent. Math.},
  FJOURNAL = {Inventiones Mathematicae},
    VOLUME = {203},
      YEAR = {2016},
    NUMBER = {3},
     PAGES = {973--1025},
      ISSN = {0020-9910},
   MRCLASS = {53C55 (14J45 53C25)},
  MRNUMBER = {3461370},
MRREVIEWER = {Cristiano Spotti},
   %    DOI = {10.1007/s00222-015-0607-7},
  %     URL = {https://doi.org/10.1007/s00222-015-0607-7},
}

\bib{MR3956691}{article}{
   author={Berman, Robert J.},
   author={Boucksom, Sebastien},
   author={Eyssidieux, Philippe},
   author={Guedj, Vincent},
   author={Zeriahi, Ahmed},
   title={K\"{a}hler-Einstein metrics and the K\"{a}hler-Ricci flow on log Fano
   varieties},
   journal={J. Reine Angew. Math.},
   volume={751},
   date={2019},
   pages={27--89},
   issn={0075-4102},
   review={\MR{3956691}},
%   doi={10.1515/crelle-2016-0033},
}

\bib{arXiv:1509.04561}{article}{
  title={A variational approach to the {Y}au-{T}ian-{D}onaldson conjecture},
  author={Berman, Robert},
  author={Boucksom, S{\'e}bastien},
  author={Jonsson, Mattias},
  journal={J. Amer. Math. Soc.},
  year={to appear}
 %journal={arXiv preprint arXiv:1509.04561},
}

\bib{MR3687111}{article}{
    AUTHOR = {Berman, Robert J.},
    author= {Darvas, Tam\'{a}s},
    author= {Lu, Chinh H.},
     TITLE = {Convexity of the extended {K}-energy and the large time
              behavior of the weak {C}alabi flow},
   JOURNAL = {Geom. Topol.},
  FJOURNAL = {Geometry \& Topology},
    VOLUME = {21},
      YEAR = {2017},
    NUMBER = {5},
     PAGES = {2945--2988},
      ISSN = {1465-3060},
   MRCLASS = {53C55 (32U05 32W20)},
  MRNUMBER = {3687111},
MRREVIEWER = {Chi Li},
%       DOI = {10.2140/gt.2017.21.2945},
%       URL = {https://doi.org/10.2140/gt.2017.21.2945},
}

\bib{MR4094559}{article}{
   author={Berman, Robert J.},
   author={Darvas, Tam\'{a}s},
   author={Lu, Chinh H.},
   title={Regularity of weak minimizers of the K-energy and applications to
   properness and K-stability},
   language={English, with English and French summaries},
   journal={Ann. Sci. \'{E}c. Norm. Sup\'{e}r. (4)},
   volume={53},
   date={2020},
   number={2},
   pages={267--289},
   issn={0012-9593},
   review={\MR{4094559}},
 %  doi={10.24033/asens.2422},
}

\bib{MR3669511}{article}{
    AUTHOR = {Boucksom, S\'{e}bastien},
    AUTHOR = {Hisamoto, Tomoyuki},
    AUTHOR = {Jonsson, Mattias},
     TITLE = {Uniform {K}-stability, {D}uistermaat-{H}eckman measures and
              singularities of pairs},
   JOURNAL = {Ann. Inst. Fourier (Grenoble)},
  FJOURNAL = {Universit\'{e} de Grenoble. Annales de l'Institut Fourier},
    VOLUME = {67},
      YEAR = {2017},
    NUMBER = {2},
     PAGES = {743--841},
      ISSN = {0373-0956},
   MRCLASS = {32Q26 (14E30 14G22)},
  MRNUMBER = {3669511},
MRREVIEWER = {Yuji Odaka},
   %    URL = {http://aif.cedram.org/item?id=AIF_2017__67_2_743_0},
}

\bib{MR3985614}{article}{
    AUTHOR = {Boucksom, S\'{e}bastien},
    AUTHOR = {Hisamoto, Tomoyuki},
    AUTHOR = {Jonsson, Mattias},
     TITLE = {Uniform {K}-stability and asymptotics of energy functionals in
              {K}\"{a}hler geometry},
   JOURNAL = {J. Eur. Math. Soc. (JEMS)},
  FJOURNAL = {Journal of the European Mathematical Society (JEMS)},
    VOLUME = {21},
      YEAR = {2019},
    NUMBER = {9},
     PAGES = {2905--2944},
      ISSN = {1435-9855},
   MRCLASS = {53C55 (14G22 32P05 32Q20 32Q26)},
  MRNUMBER = {3985614},
MRREVIEWER = {Yuchen Liu},
  %     DOI = {10.4171/JEMS/894},
  %     URL = {https://doi.org/10.4171/JEMS/894},
}

\bib{MR3405866}{article}{
   author={Calamai, Simone},
   author={Zheng, Kai},
   title={Geodesics in the space of K\"ahler cone metrics, I},
   journal={Amer. J. Math.},
   volume={137},
   date={2015},
   number={5},
   pages={1149-1208},
}

%\bib{MR1699887}{article}{
%   author={Catlin, David},
%   title={The Bergman kernel and a theorem of Tian},
%   conference={
%      title={Analysis and geometry in several complex variables},
%      address={Katata},
%      date={1997},
%   },
%   book={
%      series={Trends Math.},
%      publisher={Birkh\"{a}user Boston, Boston, MA},
%   },
%   date={1999},
%   pages={1--23},
%   review={\MR{1699887}},
%}

\bib{MR2484031}{article}{
    AUTHOR = {Chel\cprime tsov, I. A.},
    AUTHOR = {Shramov, K. A.},
     TITLE = {Log-canonical thresholds for nonsingular {F}ano threefolds},
   JOURNAL = {Uspekhi Mat. Nauk},
  FJOURNAL = {Uspekhi Matematicheskikh Nauk},
    VOLUME = {63},
      YEAR = {2008},
    NUMBER = {5(383)},
     PAGES = {73--180},
      ISSN = {0042-1316},
   MRCLASS = {14J45 (14J30 32Q20)},
  MRNUMBER = {2484031},
MRREVIEWER = {Alexandr V. Pukhlikov},
%       DOI = {10.1070/RM2008v063n05ABEH004561},
%       URL = {https://doi.org/10.1070/RM2008v063n05ABEH004561},
}

\bib{MR3858468}{article}{
   author={Chen, Xiuxiong},
   title={On the existence of constant scalar curvature K\"{a}hler metric: a new
   perspective},
   language={English, with English and French summaries},
   journal={Ann. Math. Qu\'{e}.},
   volume={42},
   date={2018},
   number={2},
   pages={169--189},
   issn={2195-4755},
   review={\MR{3858468}},
%   doi={10.1007/s40316-017-0086-x},
}

\bib{Chen Cheng 1}{article}{
	AUTHOR = {Chen, Xiuxiong},
	AUTHOR = {Cheng, Jingrui},
     TITLE = {On the constant scalar curvature {K}\"{a}hler metrics ({I})---a
              priori estimates},
   JOURNAL = {J. Amer. Math. Soc.},
  FJOURNAL = {Journal of the American Mathematical Society},
    VOLUME = {34},
      YEAR = {2021},
    NUMBER = {4},
     PAGES = {909--936},
      ISSN = {0894-0347},
   MRCLASS = {53C55 (53C21)},
  MRNUMBER = {4301557},
%       DOI = {10.1090/jams/967},
%       URL = {https://doi.org/10.1090/jams/967},
}

\bib{Chen Cheng 2}{article}{
	AUTHOR = {Chen, Xiuxiong},
	AUTHOR = {Cheng, Jingrui},
     TITLE = {On the constant scalar curvature {K}\"{a}hler metrics
              ({II})---existence results},
   JOURNAL = {J. Amer. Math. Soc.},
  FJOURNAL = {Journal of the American Mathematical Society},
    VOLUME = {34},
      YEAR = {2021},
    NUMBER = {4},
     PAGES = {937--1009},
      ISSN = {0894-0347},
   MRCLASS = {53C55 (53C21)},
  MRNUMBER = {4301558},
%       DOI = {10.1090/jams/966},
%       URL = {https://doi.org/10.1090/jams/966},
}

%\bib{Chen Cheng 1}{article}{
%	AUTHOR = {Chen, Xiuxiong},
%	AUTHOR = {Cheng, Jingrui},
%	TITLE = {On the constant scalar curvature K\"ahler metrics I-Apriori estimates},
%  journal={J. Amer. Math. Soc.},
%  year={to appear}
%	
%}
%
%
%\bib{Chen Cheng 2}{article}{
%	AUTHOR = {Chen, Xiuxiong},
%	AUTHOR = {Cheng, Jingrui},
%	TITLE = {On the constant scalar curvature K\"ahler metrics II-Existence results},
%  journal={J. Amer. Math. Soc.},
%  year={to appear}
%	
%}

\bib{MR2521647}{incollection}{
    AUTHOR = {Chen, Xiuxiong},
    AUTHOR = {Tang, Yudong},
     TITLE = {Test configuration and geodesic rays},
      NOTE = {G\'{e}om\'{e}trie diff\'{e}rentielle, physique math\'{e}matique, math\'{e}matiques
              et soci\'{e}t\'{e}. I},
   JOURNAL = {Ast\'{e}risque},
  FJOURNAL = {Ast\'{e}risque},
    NUMBER = {321},
      YEAR = {2008},
     PAGES = {139--167},
      ISSN = {0303-1179},
      ISBN = {978-285629-258-7},
   MRCLASS = {32Q15 (32Q26 32W20 53C55 58D27)},
  MRNUMBER = {2521647},
MRREVIEWER = {David F. Mart\'{\i}nez-Torres},
}

%\bib{MR4025016}{article}{
%    AUTHOR = {Coman, Dan},
%    AUTHOR = {Guedj, Vincent},
%    AUTHOR = {Sahin, Sibel},
%    AUTHOR = {Zeriahi, Ahmed},
%     TITLE = {Toric pluripotential theory},
%   JOURNAL = {Ann. Polon. Math.},
%  FJOURNAL = {Annales Polonici Mathematici},
%    VOLUME = {123},
%      YEAR = {2019},
%    NUMBER = {1},
%     PAGES = {215--242},
%      ISSN = {0066-2216},
%   MRCLASS = {32W20 (14M25 32U05 32U20 32U25)},
%  MRNUMBER = {4025016},
%MRREVIEWER = {Sione N. Ma'u},
%   %    DOI = {10.4064/ap180409-3-7},
%   %    URL = {https://doi.org/10.4064/ap180409-3-7},
%}

\bib{MR3406499}{article}{
    AUTHOR = {Darvas, Tam\'{a}s},
     TITLE = {The {M}abuchi geometry of finite energy classes},
   JOURNAL = {Adv. Math.},
  FJOURNAL = {Advances in Mathematics},
    VOLUME = {285},
      YEAR = {2015},
     PAGES = {182--219},
      ISSN = {0001-8708},
   MRCLASS = {53C55 (32U05 32W20 53C60)},
  MRNUMBER = {3406499},
MRREVIEWER = {Kai Zheng},
%       DOI = {10.1016/j.aim.2015.08.005},
%       URL = {https://doi.org/10.1016/j.aim.2015.08.005},
}

\bib{MR3996485}{incollection}{
    AUTHOR = {Darvas, Tam\'{a}s},
     TITLE = {Geometric pluripotential theory on {K}\"{a}hler manifolds},
 BOOKTITLE = {Advances in complex geometry},
    SERIES = {Contemp. Math.},
    VOLUME = {735},
     PAGES = {1--104},
 PUBLISHER = {Amer. Math. Soc., RI},
      YEAR = {2019},
   MRCLASS = {32Q15 (32Q20 53C25 53C55)},
  MRNUMBER = {3996485},
%       DOI = {10.1090/conm/735/14822},
%       URL = {https://doi.org/10.1090/conm/735/14822},
}

\bib{MR3600039}{article}{
    AUTHOR = {Darvas, Tam\'{a}s},
    AUTHOR = {Rubinstein, Yanir A.},
     TITLE = {Tian's properness conjectures and {F}insler geometry of the
              space of {K}\"{a}hler metrics},
   JOURNAL = {J. Amer. Math. Soc.},
  FJOURNAL = {Journal of the American Mathematical Society},
    VOLUME = {30},
      YEAR = {2017},
    NUMBER = {2},
     PAGES = {347--387},
      ISSN = {0894-0347},
   MRCLASS = {32Q20 (14J45 32U05 32W20 58B20 58E11)},
  MRNUMBER = {3600039},
MRREVIEWER = {Eleonora Di Nezza},
       %DOI = {10.1090/jams/873},
       %URL = {https://doi.org/10.1090/jams/873},
}

%\bib{MR4104281}{article}{
%    AUTHOR = {Delcroix, Thibaut},
%     TITLE = {K\"{a}hler geometry of horosymmetric varieties, and application to
%              {M}abuchi's {K}-energy functional},
%   JOURNAL = {J. Reine Angew. Math.},
%  FJOURNAL = {Journal f\"{u}r die Reine und Angewandte Mathematik. [Crelle's
%              Journal]},
%    VOLUME = {763},
%      YEAR = {2020},
%     PAGES = {129--199},
%      ISSN = {0075-4102},
%   MRCLASS = {53C55 (14M27)},
%  MRNUMBER = {4104281},
% %      DOI = {10.1515/crelle-2018-0040},
% %      URL = {https://doi.org/10.1515/crelle-2018-0040},
%}

\bib{arXiv:2005.03502}{article}{
  title={Toric {S}asaki-{E}instein metrics with conical singularities},
  author={de Borbon, Martin},
  author={Legendre, Eveline}
  journal={arXiv preprint arXiv:2005.03502},
  year={2020}
}

\bib{MR3428958}{article}{
   author={Dervan, Ruadha\'{\i}},
   title={Alpha invariants and K-stability for general polarizations of Fano
   varieties},
   journal={Int. Math. Res. Not. IMRN},
   date={2015},
   number={16},
   pages={7162--7189},
   issn={1073-7928},
   review={\MR{3428958}},
%   doi={10.1093/imrn/rnu160},
}

\bib{MR3564626}{article}{
    AUTHOR = {Dervan, Ruadha\'{\i}},
     TITLE = {Uniform stability of twisted constant scalar curvature
              {K}\"{a}hler metrics},
   JOURNAL = {Int. Math. Res. Not. IMRN},
  FJOURNAL = {International Mathematics Research Notices. IMRN},
      YEAR = {2016},
    NUMBER = {15},
     PAGES = {4728--4783},
      ISSN = {1073-7928},
   MRCLASS = {32Q26 (32Q15 53C55)},
  MRNUMBER = {3564626},
MRREVIEWER = {Eleonora Di Nezza},
 %      DOI = {10.1093/imrn/rnv291},
%       URL = {https://doi.org/10.1093/imrn/rnv291},
}

\bib{MR3880285}{article}{
   author={Dervan, Ruadha\'{\i}},
   title={Relative K-stability for K\"{a}hler manifolds},
   journal={Math. Ann.},
   volume={372},
   date={2018},
   number={3-4},
   pages={859--889},
   issn={0025-5831},
   review={\MR{3880285}},
%   doi={10.1007/s00208-017-1592-5},
}

\bib{MR3696600}{article}{
   author={Dervan, Ruadha\'{\i}},
   author={Ross, Julius},
   title={K-stability for K\"{a}hler manifolds},
   journal={Math. Res. Lett.},
   volume={24},
   date={2017},
   number={3},
   pages={689--739},
   issn={1073-2780},
   review={\MR{3696600}},
%   doi={10.4310/MRL.2017.v24.n3.a5},
}

\bib{MR3985107}{article}{
    AUTHOR = {Dervan, Ruadha\'{\i}},
    AUTHOR = {Ross, Julius},
     TITLE = {Stable maps in higher dimensions},
   JOURNAL = {Math. Ann.},
  FJOURNAL = {Mathematische Annalen},
    VOLUME = {374},
      YEAR = {2019},
    NUMBER = {3-4},
     PAGES = {1033--1073},
      ISSN = {0025-5831},
   MRCLASS = {14D20 (14E30 14L24 14N35 32Q26)},
  MRNUMBER = {3985107},
MRREVIEWER = {Ziquan Zhuang},
 %      DOI = {10.1007/s00208-018-1706-8},
%       URL = {https://doi.org/10.1007/s00208-018-1706-8},
}

\bib{MR1988506}{article}{
   author={Donaldson, S. K.},
   title={Scalar curvature and stability of toric varieties},
   journal={J. Differential Geom.},
   volume={62},
   date={2002},
   number={2},
   pages={289--349},
   issn={0022-040X},
   review={\MR{1988506}},
}

\bib{MR2192937}{article}{
   author={Donaldson, S. K.},
   title={Lower bounds on the Calabi functional},
   journal={J. Differential Geom.},
   volume={70},
   date={2005},
   number={3},
   pages={453--472},
   issn={0022-040X},
   review={\MR{2192937}},
}

\bib{MR2975584}{article}{
   author={Donaldson, S. K.},
   title={K\"ahler metrics with cone singularities along a divisor},
   conference={
      title={Essays in mathematics and its applications},
   },
   book={
      publisher={Springer, Heidelberg},
   },
   date={2012},
   pages={49--79},
   review={\MR{2975584}},
}

\bib{MR3783213}{article}{
    AUTHOR = {Fujita, Kento},
     TITLE = {Optimal bounds for the volumes of {K}\"{a}hler-{E}instein {F}ano
              manifolds},
   JOURNAL = {Amer. J. Math.},
  FJOURNAL = {American Journal of Mathematics},
    VOLUME = {140},
      YEAR = {2018},
    NUMBER = {2},
     PAGES = {391--414},
      ISSN = {0002-9327},
   MRCLASS = {32Q25 (14J45 53C25 53C55)},
  MRNUMBER = {3783213},
MRREVIEWER = {Cristiano Spotti},
%       DOI = {10.1353/ajm.2018.0009},
%       URL = {https://doi.org/10.1353/ajm.2018.0009},
}

\bib{MR3956698}{article}{
    AUTHOR = {Fujita, Kento},
     TITLE = {A valuative criterion for uniform {K}-stability of {$\Bbb
              Q$}-{F}ano varieties},
   JOURNAL = {J. Reine Angew. Math.},
  FJOURNAL = {Journal f\"{u}r die Reine und Angewandte Mathematik. [Crelle's
              Journal]},
    VOLUME = {751},
      YEAR = {2019},
     PAGES = {309--338},
      ISSN = {0075-4102},
   MRCLASS = {14E05 (14J45 32Q26)},
  MRNUMBER = {3956698},
MRREVIEWER = {Ziquan Zhuang},
%       DOI = {10.1515/crelle-2016-0055},
%       URL = {https://doi.org/10.1515/crelle-2016-0055},
}

\bib{MR718940}{article}{
    AUTHOR = {Futaki, A.},
     TITLE = {An obstruction to the existence of {E}instein {K}\"{a}hler
              metrics},
   JOURNAL = {Invent. Math.},
  FJOURNAL = {Inventiones Mathematicae},
    VOLUME = {73},
      YEAR = {1983},
    NUMBER = {3},
     PAGES = {437--443},
      ISSN = {0020-9910},
   MRCLASS = {53C55 (32J99)},
  MRNUMBER = {718940},
MRREVIEWER = {J. L. Kazdan},
 %      DOI = {10.1007/BF01388438},
%       URL = {https://doi.org/10.1007/BF01388438},
}

\bib{MR3617346}{book}{
    AUTHOR = {Guedj, Vincent},
    AUTHOR = {Zeriahi, Ahmed},
     TITLE = {Degenerate complex {M}onge-{A}mp\`ere equations},
    SERIES = {EMS Tracts in Mathematics},
    VOLUME = {26},
 PUBLISHER = {European Mathematical Society (EMS), Z\"{u}rich},
      YEAR = {2017},
     PAGES = {xxiv+472},
      ISBN = {978-3-03719-167-5},
   MRCLASS = {32W20 (32Q20 32U15 32U20 32U40 35J96)},
  MRNUMBER = {3617346},
MRREVIEWER = {Slimane Benelkourchi},
%       DOI = {10.4171/167},
%       URL = {https://doi.org/10.4171/167},
}

\bib{MR3980270}{article}{
    AUTHOR = {Hashimoto, Yoshinori},
     TITLE = {Existence of twisted constant scalar curvature {K}\"{a}hler
              metrics with a large twist},
   JOURNAL = {Math. Z.},
  FJOURNAL = {Mathematische Zeitschrift},
    VOLUME = {292},
      YEAR = {2019},
    NUMBER = {3-4},
     PAGES = {791--803},
      ISSN = {0025-5874},
   MRCLASS = {53C55 (32Q26)},
  MRNUMBER = {3980270},
%       DOI = {10.1007/s00209-018-2133-y},
 %      URL = {https://doi.org/10.1007/s00209-018-2133-y},
}

\bib{MR3978320}{article}{
    AUTHOR = {Hashimoto, Yoshinori},
     TITLE = {Scalar curvature and {F}utaki invariant of {K}\"{a}hler metrics
              with cone singularities along a divisor},
   JOURNAL = {Ann. Inst. Fourier (Grenoble)},
  FJOURNAL = {Universit\'{e} de Grenoble. Annales de l'Institut Fourier},
    VOLUME = {69},
      YEAR = {2019},
    NUMBER = {2},
     PAGES = {591--652},
      ISSN = {0373-0956},
   MRCLASS = {53C55 (53C25)},
  MRNUMBER = {3978320},
%       URL = {http://aif.cedram.org/item?id=AIF_2019__69_2_591_0},
}

\bib{arXiv:1610.07998}{article}{
  author={Hisamoto, Tomoyuki},
  title={Stability and coercivity for toric polarizations},
  journal={arXiv preprint arXiv:1610.07998},
  year={2016}
}

\bib{MR1045639}{book}{
    AUTHOR = {H\"{o}rmander, Lars},
     TITLE = {An introduction to complex analysis in several variables},
    SERIES = {North-Holland Mathematical Library},
    VOLUME = {7},
   EDITION = {Third},
 PUBLISHER = {North-Holland Publishing Co., Amsterdam},
      YEAR = {1990},
     PAGES = {xii+254},
      ISBN = {0-444-88446-7},
   MRCLASS = {32-01 (35N15)},
  MRNUMBER = {1045639},
}

\bib{MR3857693}{article}{
   author={Keller, Julien},
   author={Zheng, Kai},
   title={Construction of constant scalar curvature K\"{a}hler cone metrics},
   journal={Proc. Lond. Math. Soc. (3)},
   volume={117},
   date={2018},
   number={3},
   pages={527--573},
   issn={0024-6115},
   review={\MR{3857693}},
 %  doi={10.1112/plms.12132},
}

\bib{MR1336823}{book}{
    AUTHOR = {Kobayashi, Shoshichi},
     TITLE = {Transformation groups in differential geometry},
    SERIES = {Classics in Mathematics},
      NOTE = {Reprint of the 1972 edition},
 PUBLISHER = {Springer-Verlag, Berlin},
      YEAR = {1995},
     PAGES = {viii+182},
      ISBN = {3-540-58659-8},
   MRCLASS = {53C10 (53-02)},
  MRNUMBER = {1336823},
}

\bib{MR1658959}{book}{
    AUTHOR = {Koll\'{a}r, J\'{a}nos},
    AUTHOR={Mori, Shigefumi},
     TITLE = {Birational geometry of algebraic varieties},
    SERIES = {Cambridge Tracts in Mathematics},
    VOLUME = {134},
      NOTE = {With the collaboration of C. H. Clemens and A. Corti,
              Translated from the 1998 Japanese original},
 PUBLISHER = {Cambridge University Press, Cambridge},
      YEAR = {1998},
     PAGES = {viii+254},
      ISBN = {0-521-63277-3},
   MRCLASS = {14E30},
  MRNUMBER = {1658959},
MRREVIEWER = {Mark Gross},
%       DOI = {10.1017/CBO9780511662560},
%       URL = {https://doi.org/10.1017/CBO9780511662560},
}

\bib{MR2095471}{book}{
    AUTHOR = {Lazarsfeld, Robert},
     TITLE = {Positivity in algebraic geometry. {I}},
    SERIES = {Ergebnisse der Mathematik und ihrer Grenzgebiete. 3. Folge. A
              Series of Modern Surveys in Mathematics [Results in
              Mathematics and Related Areas. 3rd Series. A Series of Modern
              Surveys in Mathematics]},
    VOLUME = {48},
      NOTE = {Classical setting: line bundles and linear series},
 PUBLISHER = {Springer-Verlag, Berlin},
      YEAR = {2004},
     PAGES = {xviii+387},
      ISBN = {3-540-22533-1},
   MRCLASS = {14-02 (14C20)},
  MRNUMBER = {2095471},
MRREVIEWER = {Mihnea Popa},
%       DOI = {10.1007/978-3-642-18808-4},
%       URL = {https://doi.org/10.1007/978-3-642-18808-4},
}

%\bib{MR3318155}{article}{
%    AUTHOR = {Lejmi, Mehdi},
%    AUTHOR = {Sz\'{e}kelyhidi, G\'{a}bor},
%     TITLE = {The {J}-flow and stability},
%   JOURNAL = {Adv. Math.},
%  FJOURNAL = {Advances in Mathematics},
%    VOLUME = {274},
%      YEAR = {2015},
%     PAGES = {404--431},
%      ISSN = {0001-8708},
%   MRCLASS = {53C44 (32Q26 53C55)},
%  MRNUMBER = {3318155},
%       DOI = {10.1016/j.aim.2015.01.012},
%       URL = {https://doi.org/10.1016/j.aim.2015.01.012},
%}

\bib{arXiv:1907.09399}{article}{
  title={G-uniform stability and {K}{\"a}hler-Einstein metrics on {F}ano varieties},
  author={Li, Chi},
  journal={arXiv preprint arXiv:1907.09399},
  year={2019}
}

\bib{MR3248054}{article}{
   author={Li, Chi},
   author={Sun, Song},
   title={Conical K\"{a}hler-Einstein metrics revisited},
   journal={Comm. Math. Phys.},
   volume={331},
   date={2014},
   number={3},
   pages={927--973},
   issn={0010-3616},
   review={\MR{3248054}},
 %  doi={10.1007/s00220-014-2123-9},
}
\bib{MR3194814}{article}{
   author={Li, Chi},
   author={Xu, Chenyang},
   title={Special test configuration and K-stability of Fano varieties},
   journal={Ann. of Math. (2)},
   volume={180},
   date={2014},
   number={1},
   pages={197--232},
   issn={0003-486X},
   review={\MR{3194814}},
%   doi={10.4007/annals.2014.180.1.4},
}

%\bib{arXiv:2001.11320}{article}{
%  title={Singular {K}{\"a}hler-{E}instein metrics on $\mathbb{Q}$-{F}ano compactifications of {L}ie groups},
%  author={Li, Yan},
%  author={Tian, Gang}, 
%  author={Zhu, Xiaohua},
%  journal={arXiv preprint arXiv:2001.11320}
%}

%\bib{arXiv:2008.13522}{article}{
%  title={{K}{\"a}hler-{E}instein metrics and {D}ing functional on $\mathbb{Q}$-{F}ano group compactifications},
%  author={Li,Yan},
%  author={Li, ZhenYe},
%  journal={arXiv preprint arXiv:2008.13522},
%  year={2020}
%}

\bib{MR4088684}{article}{
   author={Li, Long},
   author={Wang, Jian},
   author={Zheng, Kai},
   title={Conic singularities metrics with prescribed scalar curvature: a
   priori estimates for normal crossing divisors},
   language={English, with English and French summaries},
   journal={Bull. Soc. Math. France},
   volume={148},
   date={2020},
   number={1},
   pages={51--97},
   issn={0037-9484},
   review={\MR{4088684}},
%   doi={10.24033/bsmf.2799},
}

\bib{MR3761174}{article}{
   author={Li, Long},
   author={Zheng, Kai},
   title={Generalized Matsushima's theorem and K\"{a}hler-Einstein cone metrics},
   journal={Calc. Var. Partial Differential Equations},
   volume={57},
   date={2018},
   number={2},
   pages={Paper No. 31, 43},
   issn={0944-2669},
   review={\MR{3761174}},
%   doi={10.1007/s00526-018-1313-2},
}

\bib{MR3968885}{article}{
   author={Li, Long},
   author={Zheng, Kai},
   title={Uniqueness of constant scalar curvature K\"{a}hler metrics with cone
   singularities. I: reductivity},
   journal={Math. Ann.},
   volume={373},
   date={2019},
   number={1-2},
   pages={679--718},
   issn={0025-5831},
   review={\MR{3968885}},
 %  doi={10.1007/s00208-017-1626-z},
}

%\bib{MR1749048}{article}{
%   author={Lu, Zhiqin},
%   title={On the lower order terms of the asymptotic expansion of
%   Tian-Yau-Zelditch},
%   journal={Amer. J. Math.},
%   volume={122},
%   date={2000},
%   number={2},
%   pages={235--273},
%   issn={0002-9327},
%   review={\MR{1749048}},
%}

%\bib{MR2339952}{book}{
%   author={Ma, Xiaonan},
%   author={Marinescu, George},
%   title={Holomorphic Morse inequalities and Bergman kernels},
%   series={Progress in Mathematics},
%   volume={254},
%   publisher={Birkh\"{a}user Verlag, Basel},
%   date={2007},
%   pages={xiv+422},
%   isbn={978-3-7643-8096-0},
%   review={\MR{2339952}},
%}

\bib{MR662120}{article}{
    AUTHOR = {Mori, Shigefumi},
     TITLE = {Threefolds whose canonical bundles are not numerically
              effective},
   JOURNAL = {Ann. of Math. (2)},
  FJOURNAL = {Annals of Mathematics. Second Series},
    VOLUME = {116},
      YEAR = {1982},
    NUMBER = {1},
     PAGES = {133--176},
      ISSN = {0003-486X},
   MRCLASS = {14J30 (14E35)},
  MRNUMBER = {662120},
MRREVIEWER = {Ulf Persson},
 %      DOI = {10.2307/2007050},
 %      URL = {https://doi.org/10.2307/2007050},
}

\bib{MR3080636}{article}{
    AUTHOR = {Odaka, Yuji},
     TITLE = {A generalization of the {R}oss-{T}homas slope theory},
   JOURNAL = {Osaka J. Math.},
  FJOURNAL = {Osaka Journal of Mathematics},
    VOLUME = {50},
      YEAR = {2013},
    NUMBER = {1},
     PAGES = {171--185},
      ISSN = {0030-6126},
   MRCLASS = {14L24 (32Q15 32Q26)},
  MRNUMBER = {3080636},
MRREVIEWER = {Daniel Greb},
%       URL = {http://projecteuclid.org/euclid.ojm/1364390425},
}

\bib{MR3010808}{article}{
    AUTHOR = {Odaka, Yuji},
     TITLE = {The {GIT} stability of polarized varieties via discrepancy},
   JOURNAL = {Ann. of Math. (2)},
  FJOURNAL = {Annals of Mathematics. Second Series},
    VOLUME = {177},
      YEAR = {2013},
    NUMBER = {2},
     PAGES = {645--661},
      ISSN = {0003-486X},
   MRCLASS = {14C20 (14J17 14L24 32Q26)},
  MRNUMBER = {3010808},
MRREVIEWER = {Daniel Greb},
%       DOI = {10.4007/annals.2013.177.2.6},
%       URL = {https://doi.org/10.4007/annals.2013.177.2.6},
}

\bib{MR2889147}{article}{
   author={Odaka, Yuji},
   author={Sano, Yuji},
   title={Alpha invariant and K-stability of $\Bbb Q$-Fano varieties},
   journal={Adv. Math.},
   volume={229},
   date={2012},
   number={5},
   pages={2818--2834},
   issn={0001-8708},
   review={\MR{2889147}},
%   doi={10.1016/j.aim.2012.01.017},
}
\bib{MR3403730}{article}{
   author={Odaka, Yuji},
   author={Sun, Song},
   title={Testing log K-stability by blowing up formalism},
   language={English, with English and French summaries},
   journal={Ann. Fac. Sci. Toulouse Math. (6)},
   volume={24},
   date={2015},
   number={3},
   pages={505--522},
   issn={0240-2963},
   review={\MR{3403730}},
 %  doi={10.5802/afst.1453},
}

\bib{MR224263}{article}{
    AUTHOR = {Phong, D. H.},
    author = {Sturm, Jacob},
     TITLE = {The {M}onge-{A}mp\`ere operator and geodesics in the space of
              {K}\"{a}hler potentials},
   JOURNAL = {Invent. Math.},
  FJOURNAL = {Inventiones Mathematicae},
    VOLUME = {166},
      YEAR = {2006},
    NUMBER = {1},
     PAGES = {125--149},
      ISSN = {0020-9910},
   MRCLASS = {32Q15 (32Q20 32W20 58E11)},
  MRNUMBER = {2242635},
MRREVIEWER = {Julien Keller},
 %      DOI = {10.1007/s00222-006-0512-1},
 %      URL = {https://doi.org/10.1007/s00222-006-0512-1},
}

\bib{MR2377252}{article}{
    AUTHOR = {Phong, Duong H.},
    AUTHOR = {Sturm, Jacob},
     TITLE = {Test configurations for {K}-stability and geodesic rays},
   JOURNAL = {J. Symplectic Geom.},
  FJOURNAL = {The Journal of Symplectic Geometry},
    VOLUME = {5},
      YEAR = {2007},
    NUMBER = {2},
     PAGES = {221--247},
      ISSN = {1527-5256},
   MRCLASS = {32Q15 (32L10 32W20 58E11)},
  MRNUMBER = {2377252},
MRREVIEWER = {Julien Keller},
 %      URL = {http://projecteuclid.org/euclid.jsg/1202004456},
}

\bib{MR2661562}{article}{
    AUTHOR = {Phong, D. H.},
    AUTHOR = {Sturm, Jacob},
     TITLE = {Regularity of geodesic rays and {M}onge-{A}mp\`ere equations},
   JOURNAL = {Proc. Amer. Math. Soc.},
  FJOURNAL = {Proceedings of the American Mathematical Society},
    VOLUME = {138},
      YEAR = {2010},
    NUMBER = {10},
     PAGES = {3637--3650},
      ISSN = {0002-9939},
   MRCLASS = {32L10 (31C10 32Q15 32W20)},
  MRNUMBER = {2661562},
MRREVIEWER = {S\l awomir Dinew},
 %      DOI = {10.1090/S0002-9939-10-10371-2},
 %      URL = {https://doi.org/10.1090/S0002-9939-10-10371-2},
}
\bib{MR2274514}{article}{
   author={Ross, Julius},
   author={Thomas, Richard},
   title={A study of the Hilbert-Mumford criterion for the stability of
   projective varieties},
   journal={J. Algebraic Geom.},
   volume={16},
   date={2007},
   number={2},
   pages={201--255},
   issn={1056-3911},
   review={\MR{2274514}},
%   doi={10.1090/S1056-3911-06-00461-9},
}

\bib{MR2819757}{article}{
   author={Ross, Julius},
   author={Thomas, Richard},
   title={Weighted projective embeddings, stability of orbifolds, and
   constant scalar curvature K\"{a}hler metrics},
   journal={J. Differential Geom.},
   volume={88},
   date={2011},
   number={1},
   pages={109--159},
   issn={0022-040X},
   review={\MR{2819757}},
}

\bib{MR3881961}{article}{
   author={Sj\"{o}str\"{o}m Dyrefelt, Zakarias},
   title={K-semistability of cscK manifolds with transcendental cohomology
   class},
   journal={J. Geom. Anal.},
   volume={28},
   date={2018},
   number={4},
   pages={2927--2960},
   issn={1050-6926},
   review={\MR{3881961}},
%   doi={10.1007/s12220-017-9942-9},
}

\bib{MR4095424}{article}{
   author={Sj\"{o}str\"{o}m Dyrefelt, Zakarias},
   title={On K-polystability of cscK manifolds with transcendental
   cohomology class},
   journal={Int. Math. Res. Not. IMRN},
   date={2020},
   number={9},
   pages={2769--2817},
   issn={1073-7928},
   review={\MR{4095424}},
%   doi={10.1093/imrn/rny094},
}

\bib{MR3470713}{article}{
   author={Song, Jian},
   author={Wang, Xiaowei},
   title={The greatest Ricci lower bound, conical Einstein metrics and Chern
   number inequality},
   journal={Geom. Topol.},
   volume={20},
   date={2016},
   number={1},
   pages={49--102},
   issn={1465-3060},
   review={\MR{3470713}},
 %  doi={10.2140/gt.2016.20.49},
}

\bib{MR3004583}{article}{
   author={Sun, Song},
   title={Note on K-stability of pairs},
   journal={Math. Ann.},
   volume={355},
   date={2013},
   number={1},
   pages={259--272},
   issn={0025-5831},
   review={\MR{3004583}},
%   doi={10.1007/s00208-012-0788-y},
}

\bib{MR4023378}{article}{
   author={Sun, Jingzhou},
   title={Projective embedding of pairs and logarithmic K-stability},
   journal={Math. Ann.},
   volume={375},
   date={2019},
   number={3-4},
   pages={1307--1336},
   issn={0025-5831},
   review={\MR{4023378}},
%   doi={10.1007/s00208-018-1741-5},
}

\bib{MR4267616}{article}{
   author={Sun, Jingzhou},
   author={Sun, Song},
   title={Projective embedding of log Riemann surfaces and K-stability},
   journal={J. Geom. Anal.},
   volume={31},
   date={2021},
   number={6},
   pages={5526--5554},
   issn={1050-6926},
   review={\MR{4267616}},
%   doi={10.1007/s12220-020-00489-w},
}

\bib{MR2771134}{article}{
   author={Sz\'{e}kelyhidi, G\'{a}bor},
   title={Greatest lower bounds on the Ricci curvature of Fano manifolds},
   journal={Compos. Math.},
   volume={147},
   date={2011},
   number={1},
   pages={319--331},
   issn={0010-437X},
   review={\MR{2771134}},
%   doi={10.1112/S0010437X10004938},
}

\bib{MR3162849}{article}{
   author={Sz\'{e}kelyhidi, G\'{a}bor},
   title={A remark on conical K\"{a}hler-Einstein metrics},
   journal={Math. Res. Lett.},
   volume={20},
   date={2013},
   number={3},
   pages={581--590},
   issn={1073-2780},
   review={\MR{3162849}},
%   doi={10.4310/MRL.2013.v20.n3.a14},
}

\bib{MR3186384}{book}{
    AUTHOR = {Sz\'{e}kelyhidi, G\'{a}bor},
     TITLE = {An introduction to extremal {K}\"{a}hler metrics},
    SERIES = {Graduate Studies in Mathematics},
    VOLUME = {152},
 PUBLISHER = {American Mathematical Society, Providence, RI},
      YEAR = {2014},
     PAGES = {xvi+192},
      ISBN = {978-1-4704-1047-6},
   MRCLASS = {53C55 (14L24 32Q20 53C25)},
  MRNUMBER = {3186384},
MRREVIEWER = {Andrew Bucki},
}

\bib{MR894378}{article}{
	author={Tian, Gang},
	title={On K\"ahler-Einstein metrics on certain K\"ahler manifolds with
		$C_1(M)>0$},
	journal={Invent. Math.},
	volume={89},
	date={1987},
	number={2},
	pages={225--246},
	issn={0020-9910},
	review={\MR{894378}},
}

%\bib{MR894378}{article}{
%	author={Tian, Gang},
%	title={On K\"ahler-Einstein metrics on certain K\"ahler manifolds with
%		$C_1(M)>0$},
%	journal={Invent. Math.},
%	volume={89},
%	date={1987},
%	number={2},
%	pages={225--246},
%	issn={0020-9910},
%	review={\MR{894378}},
%}

%\bib{MR1471884}{article}{
%   author={Tian, Gang},
%   title={K\"{a}hler-Einstein metrics with positive scalar curvature},
%   journal={Invent. Math.},
%   volume={130},
%   date={1997},
%   number={1},
%   pages={1--37},
%   issn={0020-9910},
%   review={\MR{1471884}},
%%   doi={10.1007/s002220050176},
%}

\bib{MR1040196}{article}{
   author={Tian, G.},
   author={Yau, Shing-Tung},
   title={Complete K\"{a}hler manifolds with zero Ricci curvature. I},
   journal={J. Amer. Math. Soc.},
   volume={3},
   date={1990},
   number={3},
   pages={579--609},
   issn={0894-0347},
   review={\MR{1040196}},
%   doi={10.2307/1990928},
}

\bib{MR480350}{article}{
   author={Yau, Shing Tung},
   title={On the Ricci curvature of a compact K\"{a}hler manifold and the
   complex Monge-Amp\`ere equation. I},
   journal={Comm. Pure Appl. Math.},
   volume={31},
   date={1978},
   number={3},
   pages={339--411},
   issn={0010-3640},
   review={\MR{480350}},
%   doi={10.1002/cpa.3160310304},
}

%\bib{MR1616718}{article}{
%   author={Zelditch, Steve},
%   title={Szeg\H{o} kernels and a theorem of Tian},
%   journal={Internat. Math. Res. Notices},
%   date={1998},
%   number={6},
%   pages={317--331},
%   issn={1073-7928},
%   review={\MR{1616718}},
%%   doi={10.1155/S107379289800021X},
%}

\bib{MR3911741}{article}{
   author={Yin, Hao},
   author={Zheng, Kai},
   title={Expansion formula for complex Monge-Amp\`ere equation along cone
   singularities},
   journal={Calc. Var. Partial Differential Equations},
   volume={58},
   date={2019},
   number={2},
   pages={Paper No. 50, 32},
   issn={0944-2669},
   review={\MR{3911741}},
 %  doi={10.1007/s00526-019-1498-z},
}

%\bib{MR4288212}{article}{
%   author={Zhang, Kewei},
 %  title={Continuity of delta invariants and twisted K\"{a}hler-Einstein
 %  metrics},
%   journal={Adv. Math.},
 %  volume={388},
 %  date={2021},
 %  pages={Paper No. 107888},
 %  issn={0001-8708},
 %  review={\MR{4288212}},
%   doi={10.1016/j.aim.2021.107888},
%}

\bib{MR3412393}{article}{
   author={Zheng, Kai},
   title={$I$-properness of Mabuchi's $K$-energy},
   journal={Calc. Var. Partial Differential Equations},
   volume={54},
   date={2015},
   number={3},
   pages={2807--2830},
   issn={0944-2669},
   review={\MR{3412393}},
}

\bib{Zheng}{article}{
   author={Zheng, Kai},
   title={K\"ahler metrics with cone singularities and uniqueness problem},
   conference={
      title={Proceedings of the 9th ISAAC Congress, Krak\'ow 2013},
   },
   book={
   title={Current Trends in Analysis and its Applications},
      series={Trends in Mathematics},
      publisher={Springer International Publishing},
   },
   date={2015},
   pages={395-408},
}

\bib{MR4020314}{article}{
   author={Zheng, Kai},
   title={Geodesics in the space of K\"{a}hler cone metrics II: Uniqueness of
   constant scalar curvature K\"{a}hler cone metrics},
   journal={Comm. Pure Appl. Math.},
   volume={72},
   date={2019},
   number={12},
   pages={2621--2701},
   issn={0010-3640},
   review={\MR{4020314}},
%   doi={10.1002/cpa.21869},
}

\bib{arXiv:1803.09506}{article}{
	AUTHOR = {Zheng, Kai},
	TITLE = {Existence of constant scalar curvature Kaehler cone metrics, properness and geodesic stability},
	JOURNAL = {},
	VOLUME = {},
	PAGES= {arXiv:1803.09506},
	YEAR={},
	
}
\end{biblist}
\end{bibdiv}

\end{document}